\documentclass[11pt,reqno]{amsart}

\usepackage[utf8]{inputenc}
\usepackage{amsfonts,latexsym,amssymb,amsmath,amsthm,slashed,cancel}
\usepackage{amsrefs}
\usepackage{todonotes}
\usepackage{hyperref}
\usepackage{a4wide}
\usepackage{enumerate}
\usepackage{mathrsfs}
\usepackage{nicefrac}
\usepackage{MnSymbol}
\usepackage{ourbib}
\usepackage[all]{xy}

\synctex=1


\hypersetup{
    colorlinks,
    linkcolor={red!50!black},
    citecolor={blue!50!black},
    urlcolor={blue!80!black}
}

%
%

\setlength{\marginparwidth}{25mm}

\def\D{\slashed\nabla}

\newcommand{\myicon}{$\triangleright$}
\renewcommand{\SS}{\mathscr{S}}
\newcommand{\LL}{\mathscr{L}}
\newcommand{\reg}{\mathrm{reg}}

\newcommand{\nS}{\slashed n_\Sigma}
\newcommand{\nSp}{\slashed n_{\Sigma_+}}
\newcommand{\nSm}{\slashed n_{\Sigma_-}}
\newcommand{\Feyn}{\mathrm{Feyn}}

\newcommand{\dV}{\,\mathrm{dV}}

\DeclareMathOperator{\Tr}{Tr}
\DeclareMathOperator{\tr}{tr}

\newcommand{\vol}{\mathrm{vol}}

\newcommand{\N}{\mathbb{N}}
\newcommand{\Z}{\mathbb{Z}}
\newcommand{\R}{\mathbb{R}}
\newcommand{\C}{\mathbb{C}}
\newcommand{\CC}{\mathscr{C}}
\newcommand{\J}{\mathsf{J}}

\newcommand{\id}{\operatorname{id}}

\newcommand{\loc}{\mathrm{loc}}
\newcommand{\APS}{\mathrm{APS}}

\DeclareMathOperator{\supp}{supp}
\DeclareMathOperator{\singsupp}{sing--supp}

\newcommand{\res}{\mathrm{res}}

\renewcommand{\Im}{\mathrm{Im}}
\renewcommand{\Re}{\mathrm{Re}}
\newcommand{\eps}{\varepsilon}
\newcommand{\grad}{\mathrm{grad}}
\newcommand{\grads}{\cancel{\grad}}
\newcommand{\grade}{\grad_{(1)}}
\newcommand{\grades}{\cancel{\grad}_{(1)}}

\newcommand{\gradzs}{\cancel{\grad}_{(2)}}
\newcommand{\Boxz}{\Box_{(1)}}
\renewcommand{\div}{\mathrm{div}}
\newcommand{\divz}{\div_{(1)}}
\renewcommand{\Box}{\largesquare}
\newcommand{\WF}{\mathsf{WF}}

\newcommand{\FpmU}{\mathscr{F}^{\pm}_{\UU}}

\newcommand{\DS}{\D_{\!\Sigma}}
\newcommand{\Sol}{\mathrm{Sol}}

\newcommand{\dA}{\mathrm{dA}}
\newcommand{\Adach}{{\hat{\mathrm{A}}}}
\newcommand{\ch}{\mathrm{ch}}
\newcommand{\nE}{\nabla^E}

\newcommand{\rmi}{\mathrm{i}\,}
\newcommand{\e}{\mathrm{e}}

\newcommand{\UU}{\mathscr{U}}

\newcommand{\DD}{\mathscr{D}}
\newcommand{\<}{\langle}
\renewcommand{\>}{\rangle}
\newcommand{\ml}{\llangle}
\newcommand{\mr}{\rrangle}

\newcommand{\ret}{\mathrm{ret}}
\newcommand{\adv}{\mathrm{adv}}
\newcommand{\diag}{\mathrm{diag}}
\newcommand{\oo}{\mathfrak{o}}
\newcommand{\LOG}{\mathrm{LOG}}
\newcommand{\dirac}{\mathscr{D}}
\newcommand{\rg}{\mathrm{rg}}
\renewcommand{\O}{\mathrm{O}}

\renewcommand{\Xi}{\pmb{\xi}}

\DeclareMathOperator{\OO}{\mathsf{O}}


 \newtheorem{theorem}{Theorem}[section]
 \newtheorem*{theorem*}{Theorem}
 \newtheorem{lemma}[theorem]{Lemma}
 \newtheorem{corollary}[theorem]{Corollary}
 \newtheorem{prop}[theorem]{Proposition}
 \newtheorem*{prop*}{Proposition}

 \theoremstyle{definition}
 \newtheorem{definition}[theorem]{Definition}
 \newtheorem{example}[theorem]{Example}
 \newtheorem{remark}[theorem]{Remark}

 \newtheorem{notation}[theorem]{Notation}


\sloppy
\allowdisplaybreaks
\parindent5mm
	

\title[Local Index Theory for Lorentzian Manifolds]{Local Index Theory for Lorentzian Manifolds}

\author[C.~B\"ar]{Christian B\"ar}
\address{Institut f\"ur Mathematik, Universit\"at Potsdam, 14476 Potsdam, Germany}
\email{cbaer@uni-potsdam.de}

\author[A.~Strohmaier]{Alexander Strohmaier}
\address{School of Mathematics \\
University of Leeds\\
Leeds, LS2 9JT, UK}
\email{a.strohmaier@leeds.ac.uk}
\address{Leibniz University Hannover, Institute of Analysis, 30167 Hannover, Germany}  \email{a.strohmaier@math.uni-hannover.de} 

\subjclass[2010]{primary: 58J20, 58J45; secondary: 35L02, 35L05, 58J32}
\keywords{Dirac-type operator, globally hyperbolic Lorentzian manifold, Atiyah-Patodi-Singer boundary conditions, Feynman propagator, local index theorem, Hadamard expansion}
\thanks{}

\date{\today}


\begin{document}

\begin{abstract}
Index theory for Lorentzian Dirac operators is a young subject with significant differences to elliptic index theory. 
It is based on microlocal analysis instead of standard elliptic theory and one of the main features is that a nontrivial index is caused by topologically nontrivial dynamics rather than nontrivial topology of the base manifold.
In this paper we establish a local index formula for Lorentzian Dirac-type operators on globally hyperbolic spacetimes. 
This local formula implies an index theorem for general Dirac-type operators on spatially compact spacetimes with Atiyah-Patodi-Singer boundary conditions on Cauchy hypersurfaces.
This is significantly more general than the previously known theorems that require the compatibility of the connection with Clifford multiplication and the spatial Dirac operator on the Cauchy hypersurface to be selfadjoint with respect to a positive definite inner product.
\bigskip
\bigskip

\noindent
\textsc{R\'esum\'e.}
\textbf{Théorie locale de l'indice  pour les variétés lorentziennes.}
La théorie de l'indice pour les opérateurs de Dirac lorentziens est un sujet récent qui présente des différences importantes avec la théorie d'indice elliptique.
Elle est basée sur l'analyse microlocale au lieu de la théorie elliptique standard et l'une de ses principales caractéristiques est qu'un indice non trivial est causé par une dynamique topologiquement non triviale plutôt que par la topologie non triviale de la variété de base.
Dans cet article, nous obtenons une formule locale d'indice pour les opérateurs lorentziens de type Dirac sur les espaces-temps globalement hyperboliques.
Cette formule locale implique un théorème d'indice pour les opérateurs généraux de type Dirac sur des espaces-temps spatialement compacts avec des conditions limites d'Atiyah-Patodi-Singer sur les hypersurfaces de Cauchy.
Il s'agit d'un théorème beaucoup plus général que les théorèmes connus précédemment, qui exigent la compatibilité de la connexion avec la multiplication de Clifford et que l'opérateur de Dirac spatial sur l'hypersurface de Cauchy soit auto-adjoint par rapport à un produit scalaire défini positif.
\end{abstract}

\maketitle


\section*{Introduction}

The Atiyah-Singer index theorem \cite{MR236950} is one of the most important achievements of mathematics in the 20th century.
It provides many conceptual insights by identifying topological invariants as indices of geometrically defined differential operators, thereby linking geometry, topology, and analysis. 
The index theorem is most conveniently stated for a twisted Dirac operator $\D$ on a closed Riemannian manifold, and it then gives a formula for the index as an integral over a density that is determined by local geometric quantities.
In fact, there is a local version of the index theorem that expresses the difference of the local traces of two heat kernels associated to the Laplace-type operators $\D \D^{*}$ and $\D^{*} \D$ in the limit as time goes to zero. 
This local index formula implies the Atiyah-Singer index theorem on a closed Riemannian manifold by the McKean-Singer formula.
We refer to the monograph \cite{BGV} for details and further references.
\smallskip

It is the local index theorem that allows the more general treatment of operators in the $L^2$-settings, in relative settings, and on manifolds with boundary.
As an example we mention Atiyah's theorem on the equality of the index and the $L^2$-index on the universal cover \cite{MR0420729}, the proof of which relies heavily on the local index theorem.
For manifolds with boundary an index formula was given by Atiyah, Patodi, and Singer, see \cites{MR397797, MR397798, MR397799} or also \cite{MR1348401}.
This formula contains a boundary contribution which gave rise to the study of the $\eta$-invariant and all its applications.
Local index theory also established the link between Quillen's theory of superconnections with the index theorem for families \cite{MR813584}.
There are further generalizations of the index theorem dealing with elliptic operators on manifolds with singularities, noncompact manifolds, or hypoelliptic operators, see for example \cites{MR730920,MR720933,MR1876286,MR1156670,MR2680395} to mention only a few.
\smallskip

Developing index theory for Lorentzian manifolds seems hopeless at first, since Dirac-type operators are hyperbolic in this case.
On a closed manifold an operator needs to be elliptic to be Fredholm.
So there is no Lorentzian analog to the Atiyah-Singer index theorem.
Surprisingly, the situation changes in the presence of boundary.
In \cite{Baer:2015aa} we proved a Lorentzian index theorem for Dirac operators on globally hyperbolic spacetimes with appropriate boundary conditions imposed in the timelike future and past.
This is an analog to the Atiyah-Patodi-Singer index theorem in the Riemannian setting.
In fact, the boundary conditions and the geometric formula for the index are exactly the same as in the Riemannian setting.
\smallskip

This allows for the direct application of index theory in situations relevant in relativistic physics, when separation of variables and reduction to an elliptic system or Wick rotation is not possible.
For example, the chiral anomaly in quantum field theory can be understood as such an index \cite{baerstroh2015chiral}.
\smallskip

The Lorentzian index theorem was extended to more general boundary conditions by the first author and Hannes in \cite{BaerHannes}, to the noncompact Callias setting by Braverman in \cite{MR4054812}, and to a noncompact $\Gamma$-equivariant setting by Damaschke in \cite{damaschke}. 
Parts of the theorem have also been generalized to a more abstract framework by van den Dungen and  Ronge in \cites{ronge,dungen}.
In \cite{shenwrochna}, Shen and Wrochna replace the timelike compact dynamics by asymptotic conditions for infinite times.
\smallskip

The fact that this hyperbolic operator is Fredholm is in all these cases not based on elliptic theory, but rather relies on H\"ormander's propagation of singularities theorem. 
This is reminiscent of the use of propagation estimates by H\"ormander showing finite dimensionality of the space of solutions of operators of real principal type, see \cite{MR0388464}. 
We mention that refined propagation of singularities is also a key behind the Fredholm theory introduced by Vasy \cite{MR3117526}, which has found many applications.
\smallskip

The present paper extends the Lorentzian index theorem in two directions:
\smallskip

{(1)}
We drop the assumption of a positive definite inner product on the vector bundle along the spacelike hypersurfaces with respect to which the induced spatial Dirac operator is formally selfadjoint.
This requirement is indeed not natural from a Lorentzian point of view and it is not satisfied by many geometric operators.
Dropping this assumption, one can no longer reduce to the Riemannian APS-index theorem using spectral flow arguments as in \cite{Baer:2015aa}.
The deformation argument that was used there is manifestly nonlocal and relies heavily on the invariance properties of characteristic classes.
A local proof cannot be carried out in that way.
Instead, we will give a purely Lorentzian proof and the Riemannian index theorem by Atiyah, Patodi, and Singer is no longer used.
This also allows us to treat the significantly larger class of general Dirac-type operators instead of certain twisted spinorial Dirac operators only.
\smallskip

(2)
We provide a local interpretation of the index in the Lorentzian context, and we show that a local index formula holds for this quantity, also in the context of spatially noncompact spacetimes.
The role played by the heat kernel in the Riemannian setting is now taken over by Feynman parametrices.
The short-time asymptotics of the heat kernel is replaced by the Hadamard expansion.
We show that the index of the Lorentzian Dirac operator is the integral of a locally defined current over the Cauchy hypersurface. 
This can be seen as the Lorentzian analog of the McKean-Singer formula. 
The $\eta$-invariant appears naturally in this context. 
We note that the treatment of analogous currents using the Hadamard expansion has appeared in the physics literature on quantum field theory on curved spacetimes (see the work by Zahn \cite{zahn2015locally}) and our approach also generalizes these to Dirac-type operators.
\smallskip

Due to the fundamental role played by Feynman parametrices we study them in quite some detail. 
We give a detailed construction that is similar to that of the classical Hadamard parametrix but is based on a family of distributions with distinguished microlocal properties.
This construction results in a similar expansion as the one for Hadamard states in the physics literature (see \cite{MR1133130}) but is simpler and gives a more conceptual approach to the corresponding expansion for Feynman parametrices.  
Indeed, it has been known for a while that in even dimensions one has to add logarithmic terms to such an expansion (see for example \cite{Lewandowski} for a direct construction of the Feynman propagator or \cite{Zelditch} for a nice survey in the ultrastatic case). 
In our treatment, the logarithmic terms  appear naturally as derivatives, and the recursion formulae are obtained by simply differentiating the Hadamard recursion formulae.
More concretely, a special case of Proposition~\ref{prop:ZerlegeG} says

\begin{prop*}
Let $P$ be a normally hyperbolic operator acting on sections of a vector bundle $\SS$ over a $2m$-dimensional globally hyperbolic manifold $X$.

Then every Feynman parametrix $G$ of $P$ is of the form 
$$
G = G^\loc + G^\reg
$$%
where $G^\reg$ is a distribution on $X\times X$ which is $C^2$ on a neighborhood of the diagonal and near the diagonal we have
\begin{equation*}
G^\loc 
=
\sum\limits_{k=0}^{m+1} V_{k} \cdot G^{+,\UU}_{1-m+k}.
\end{equation*}
\end{prop*}

Here the $G^{+,\UU}_{\ell}$ are explicitly given scalar distributions (including logarithmic terms) which encode the singularity structure of $G$ while the Hadamard coefficients $V_{k}$ are smooth and can be computed locally from the coefficients of $P$ and the underlying geometry. 
Hadamard expansions involving distributions with distinguished microlocal properties have also been used by Dang and Wrochna in \cite{DangWrochna1} and \cite{DangWrochna3} in the context of complex powers and residues of normally hyperbolic operators. 
These expansions involve slightly different families of distributions that cover another asymptotic parameter-range.
In particular, in even spacetime dimensions the distinguished properties of the family $G^{+,\UU}_{\beta}$ are crucial for the compution of the index density.
\smallskip

Both, the local part $G^\loc$ and the regular part $G^\reg$ of $G$ contribute to the index formula.
Combining Theorems~\ref{thm:indexJ} and \ref{thm:deltaJHadamard}, we obtain

\begin{theorem*}
Let $X$ be a $2m$-dimensional compact globally hyperbolic manifold with boundary $\partial X=\Sigma_- \sqcup \Sigma_+$. 
Let $\dirac=
   \begin{pmatrix}
   0 & \dirac_R \\
   \dirac_L & 0
   \end{pmatrix}$ be an odd Dirac-type operator over $X$.
Let $V_{R,k}$ be the Hadamard coefficients of $\dirac_L\dirac_R$ and $V_{L,k}$ those of $\dirac_R\dirac_L$.
Suppose that $X$ and $\dirac$ have product structure near $\Sigma_\pm$.

Then under $\APS$-boundary conditions, the Dirac operator
$$
\dirac_L:C^\infty_{\APS}( X;\SS_L)\to C^\infty( X;\SS_R)
$$
as a continuous linear map between Fr\'echet spaces is Fredholm and its index is given by
$$
\mathrm{ind}(\dirac_L)
=
\Xi(\D_{\Sigma_+}) - \Xi(\D_{\Sigma_-}) + \int_{X} \frac{\tr(V_{R,m})  -\tr(V_{L,m})}{(4 \pi)^m\cdot m!}  \dV.
$$
\end{theorem*}
   
If the Riemannian Dirac-type operators $\D_{\Sigma_\pm}$ on the boundary of $X$ are selfadjoint, then $\Xi(\D_{\Sigma_\pm})$ coincides with the $\xi$-invariant (which is essentially the $\eta$-invariant) by Atiyah, Patodi, and Singer.
We provide a definition also for the nonselfadjoint case and express it in Proposition~\ref{prop:etah} in terms of the regular part of the Feynman propagator of $\dirac_L\dirac_R$.
This is where the assumption that $X$ and $\dirac$ have product structure near the boundary enters.
Feynman propagators of a normally hyperbolic operator are not unique.
They all have the same local part and hence the same Hadamard coefficients, but the regular parts can be different.
For the $\xi$-invariant we need to use a specific choice of Feynman propagator canonically constructed on a product manifold, see Theorem~\ref{thm:FeynmanProductWave}.
Even in the selfadjoint case, our definition of the $\xi$-invariant is better suited for local computations as it avoids the heat kernel and associated technical difficulties with its properties on noncompact manifolds.
In fact, there is a pointwise version of the relation between the $\xi$-invariant and the regular part of the Feynman propagator provided we assume that the Dirac-type operator is a (generalized) Dirac operator in the sense of Gromov and Lawson.
\smallskip

The paper is organized as follows. 
After introducing the basic setup in Section~\ref{setup} we describe the functional calculus of Laplace-type and Dirac-type operators on a Riemannian manifold $\Sigma$ in Section~\ref{sec:LaplaceDirac}.
Here we either assume that the operators are selfadjoint with respect to a positive definite bundle metric or that $\Sigma$ is compact. 
Since we do not assume selfadjointness in the compact case, the spectrum will not necessarily be contained in the real line and nontrivial Jordan-blocks may be present.
Section~\ref{GanzFein:Section} reviews the properties of Feynman parametrices as introduced by Duistermaat and H\"ormander and defines a Feynman propagator associated to a Laplace-type operator.  
In case the operator is selfadjoint, this construction is analogous to the construction based on frequency splitting in the physics literature, and also the one given in \cite{Baer:2015aa}. 
In Section~\ref{Dirac-Fein} we construct the Feynman propagator for the Dirac operator and give a relation to the $\xi$-invariant in the case $\Sigma$ is compact and the Dirac operator is selfadjoint. 
This motivates the definition of a more general $\xi$-invariant that is associated with any Feynman propagator. 
Section~\ref{index:Section} contains the statement and the proof of the local index theorem. 
In the case of compact Cauchy hypersurface, we find the index theorem as stated above as a consequence.
Finally, in the appendix we give a new construction of Feynman parametrices based on a family of distributions which is similar to the family of Riesz distributions but has the singularity structure of the Feynman propagator.
\smallskip

\emph{Acknowledgments.}
This work was financially supported by \href{https://www.spp2026.de/projects/2nd-period/boundary-value-problems-and-index-theory-on-riemannian-and-lorentzian-manifolds}{SPP~2026} funded by Deutsche Forschungsgemeinschaft.


\section{The setup} \label{setup}

Let $(X,g)$ be an $n$-dimensional Lorentzian manifold where $g$ has signature $(-,+,\ldots,+)$.
Denote the induced ``musical'' isomorphisms $TX\to T^*X$ and $T^*X\to TX$ by $v\mapsto v^\flat$ and $\xi\mapsto \xi^\sharp$, respectively.

\subsection{Normally hyperbolic and Dirac-type operators}
Let $\SS\to X$ be a complex vector bundle.

\begin{definition}
A linear differential operator $P$ of second order acting on sections of $\SS$ is called \emph{normally hyperbolic} if its principal symbol is given by the metric, i.e.\ $\sigma_P(\xi) = g(\xi^\sharp,\xi^\sharp)\cdot\id_{\SS_x}$ for each $x\in X$ and $\xi\in T^*_xX$.
\end{definition}
In other words, in local coordinates $x^1,\dots ,x^n$ on $X$ and with respect to a local trivialization of $\SS$, the operator $P$ takes the form
$$
P = -\sum_{i,j=1}^n g^{ij}(x)\frac{\partial^2}{\partial x^i \partial x^j}
+ \sum_{j=1}^n A_j(x) \frac{\partial}{\partial x^j} + B(x)
$$
where $A_j$ and $B$ are matrix-valued coefficients depending smoothly on $x$ and $(g^{ij})_{ij}$ is the inverse matrix of  $(g_{ij})_{ij}$ with
$g_{ij}=g\big(\frac{\partial}{\partial x^i},\frac{\partial}{\partial x^j}\big)$.\footnote{We use the convention that if an operator is locally given as $P=\sum_{|\alpha|\le m}A_\alpha \frac{\partial^{|\alpha|}}{\partial x^\alpha}$ and $\xi=\sum_k \xi_k dx^k$ then its principal symbol is $\sigma_P(\xi)=\rmi^m\sum_{|\alpha|= m}A_\alpha \xi^\alpha$.}

\begin{example}\label{ex:dAlembert}
Let $\nabla$ be a connection on $\SS$.
Together with the Levi-Civita connection on $TX$ this induces a connection on $T^*X\otimes\SS$, again denoted by $\nabla$.
Then $P = -\tr(\nabla\circ\nabla)$ is a normally hyperbolic operator.
Here $\tr: T^*X\otimes T^*X \otimes \SS \to\SS$ is the trace induced by the Lorentzian metric.
We will write 
$$
\Box^\nabla = -\tr(\nabla\circ\nabla)
$$
for this operator and call it the connection-d'Alembert operator for $\nabla$.
If $\SS$ is the trivial line bundle and $\nabla$ the standard flat connection, then $\Box=\Box^\nabla$ is the usual d'Alembertian.
\end{example}

\begin{remark}\label{rem:dAlembert}
Up to zero-order terms, all normally hyperbolic operators are of the form given in Example~\ref{ex:dAlembert}.
Namely, for each normally hyperbolic operator $P$ there exists a unique connection $\nabla$ on $\SS$ and a unique endomorphism field $B$ on $\SS$ such that 
$$
P = \Box^\nabla + B,
$$
compare \cite{BGP07}*{Lemma~1.5.5}.
We call $\nabla$ the connection \emph{induced by $P$}.
\end{remark}

\begin{definition}
A linear differential operator $\dirac$ of first order acting on sections of $\SS$ is said to be of \emph{Dirac type} if its square $\dirac^2$ is normally hyperbolic.
\end{definition}

Then the principal symbol $\sigma_\dirac$ satisfies $\sigma_\dirac(\xi)^2=\sigma_{\dirac^2}(\xi) = g(\xi^\sharp,\xi^\sharp)\cdot\id_{\SS_x}$ and hence, by polarization, 
\begin{equation}
\sigma_\dirac(\xi)\sigma_\dirac(\eta)+\sigma_\dirac(\eta)\sigma_\dirac(\xi) = 2g(\xi^\sharp,\eta^\sharp)\cdot\id_{\SS_x}.
\label{eq:Clifford1}
\end{equation}

We use Feynman's slash notation $\slashed v = \sigma_\dirac(v^\flat)$ for any $v\in TX$.
Then we have for any function $f$ and section $u$ of $\SS$
\begin{equation}
\dirac(fu) = f\dirac u - \rmi \sigma_\dirac(df)u = f\dirac u - \rmi \cancel{\grad f}\, u.
\label{eq:DiracLeibnitz}
\end{equation}
Equation~\eqref{eq:Clifford1} can be rewritten as
\begin{equation}
\slashed v \slashed w + \slashed w \slashed v = 2 g(v,w)\cdot\id_{\SS_x}
\label{eq:Clifford2}
\end{equation}
for any $v,w \in T_xX$.
Similarly, if $\nabla$ is a connection on $\SS$, we write $\D = \sum_{ij} g^{ij}\slashed b_i\nabla_j$
where $b_i=\nicefrac{\partial}{\partial x^i}$.
This definition of $\D$ is independent of the choice of coordinates.

From $\sigma_{\nabla_v}(\xi) = \rmi \xi(v) \id_\SS$ for any vector field $v$, we find
$$
\sigma_{-\rmi\D}(\xi)
=
-\rmi \sum_{k\ell} g^{k\ell}\slashed b_k \rmi \xi(b_\ell)
=
\slashed{\xi}^\sharp
=
\sigma_\dirac(\xi).
$$ 
Thus $\dirac$ and $-\rmi\D$ coincide up to a zero-order term, $\dirac=-\rmi\D + V$.

\begin{definition}
A Dirac-type operator $\dirac$ is called a \emph{Dirac operator in the sense of Gromov and Lawson} if $\dirac=-\rmi\D$ where $\nabla$ is a connection on $\SS$ such that $\sigma_\dirac$ is parallel, i.e., $\nabla_u(\sigma_\dirac(\xi)\varphi) = \sigma_\dirac(\nabla_u\xi)\varphi + \sigma_\dirac(\xi)\nabla_u\varphi$ for all $u\in TX$ and all differentiable 1-forms $\xi$ and sections $\varphi$ of $\SS$.
\end{definition}

\begin{example}
We assume a spin structure on $X$ is given so that the complex spinor bundle $SX\to X$ is defined.
Suppose that $E\to X$ is an additional complex vector bundle with a connection $\nE$. 

Spinors are sections of the twisted spinor bundle $\SS := SX\otimes E$.
Let $\nabla$ be the connection on $\SS$ induced by the Levi-Civita connection $\nabla^S$ on $SX$ and the connection $\nE$ on $E$.
Then Clifford multiplication on $SX$ induces the spinorial Dirac operator $\dirac = -\rmi\D$.
The principal symbol of $\dirac$ coincides with Clifford multiplication and is parallel with respect to $\nabla$.
Thus, the spinorial Dirac operator is a Dirac operator in the sense of Gromov and Lawson.
\end{example}

\subsection{Restriction to hypersurfaces}

Let $\dirac$ be a Dirac-type operator acting on sections of $\SS$ and let $\Sigma\subset X$ be a smooth spacelike hypersurface.
We write $\dirac = -\rmi\D + V$ where $\nabla$ is a connection on $\SS$.
Let $n_\Sigma$ be a unit normal field along $\Sigma$.
Choose a local orthonormal tangent frame $b_2,\ldots,b_n$ along $\Sigma$.
We compute, using $\nS^2 = g(n_\Sigma,n_\Sigma)=-1$,
\begin{align*}
-\rmi\D
&=
-\rmi \Big(-\nS \nabla_{\nS} + \sum_{k=2}^n \slashed b_k\nabla_{b_k}\Big) \\
&=
\rmi\nS \Big(\nabla_{\nS} +\nS \sum_{k=2}^n \slashed b_k\nabla_{b_k}\Big) \\
&=
\rmi\nS \Big(\nabla_{\nS} +\rmi\sum_{k=2}^n \gamma_\Sigma(b_k)\nabla_{b_k}\Big) 
\end{align*}
where we have put $\gamma_\Sigma(b) = -\rmi\nS\slashed b$.
Equation~\eqref{eq:Clifford2} implies the relations 
$$
\gamma_\Sigma(b)\gamma_\Sigma(c)+\gamma_\Sigma(c)\gamma_\Sigma(b)=-2g(b,c)\cdot\id_\SS
$$ 
for all $b,c\in T\Sigma$.
Defining the Riemannian Dirac operator on $\Sigma$ by $\DS = \sum_{k=2}^n \gamma_\Sigma(b_k)\nabla_{b_k}$ we find
$$
\dirac = \rmi\nS \big(\nabla_{\nS} +\rmi\DS\big) + V.
$$
Note that the induced Dirac operator $\DS$ depends on the chosen sign of the unit normal vector field $n_\Sigma$. 
We will later choose this field to be future directed.

\begin{example}
Let $\dirac$ be the spinorial Dirac operator acting on sections of $SX$ and $\nabla$ the Levi-Civita connection.
Then $V=0$ and we have 
\begin{equation}
\dirac = \rmi\nS \big(\nabla^{SX}_{\nS} +\rmi\DS^{SX}\big).
\label{eq:DiracSpin}
\end{equation}
Let $n$ be odd; the even-dimensional case will be treated in Example~\ref{ex:Dirac-evendim}.
For $n$ odd, $SX|_\Sigma$ can be a naturally identified with the spinor bundle of $\Sigma$ and $\gamma_\Sigma$ is Clifford multiplication on $\Sigma$.
The operator $\DS$ is not in general the spinorial Dirac operator of $\Sigma$ however, because the restriction of the Levi-Civita connection of $X$ to $\Sigma$ does not coincide with the Levi-Civita connection of $\Sigma$.
Instead, we have the spinorial Gauss formula
$$
\nabla^{SX}_b = \nabla^{S\Sigma}_b - \tfrac12  \nS \cancel{W(b)}
$$
where $W$ is the Weingarten map with respect to $n_\Sigma$, see \cite{BGM}*{eq.~(3.5)}.
Using an orthonormal eigenbasis $b_2,\ldots,b_n$ of $W$ for the eigenvalues $\kappa_2,\ldots,\kappa_n$ we find
$$
\DS^{SX} 
= \DS^{S\Sigma} - \tfrac12 \sum_{k=2}^{n} \gamma_\Sigma(b_k)\nS \kappa_k \slashed b_k
= \DS^{S\Sigma} - \tfrac\rmi 2 \sum_{k=2}^{n} \kappa_k 
= \DS^{S\Sigma} - \tfrac{\rmi (n-1)}{2}H .
$$
Here $H=\frac{1}{n-1}\tr(W)$ is the mean curvature of $\Sigma$ with respect to the normal $n_\Sigma$.
Inserting this into \eqref{eq:DiracSpin} we get 
\begin{equation}
\dirac = \rmi\nS \big(\nabla^{SX}_{\nS} +\rmi\DS^{S\Sigma} +\tfrac{n-1}{2}H \big).
\label{eq:spinDirac}
\end{equation}
Tensoring by a complex vector bundle $E$, i.e.\ replacing $SX$ by $\SS = SX\otimes E$ does not affect this formula.
\end{example}

\begin{definition}\label{def:ProductStructure}
We say that a Dirac-type operator $\dirac$ has \emph{product structure near} $\Sigma$ if the following hold:
\begin{enumerate}[(1)]
\item 
The manifold $X$ is a metric product in a neighborhood $\UU$ of $\Sigma$.
This means that there is an isometry $\Psi:\UU\to (-\delta,\delta)\times\Sigma$ with $\Psi(s)=(0,s)$ for $s\in\Sigma$ and $(-\delta,\delta)\times\Sigma$ carries the product metric $-dt^2+h$.
Here $t$ is the variable in $(-\delta,\delta)$ and $h$ is the Riemannian metric induced on $\Sigma$.
Note that $h$ does not depend on $t$.
There is no loss of generality in additionally assuming that $\Psi_*n_\Sigma=\nicefrac{\partial}{\partial t}$.
\item
The isometry $\Psi$ is covered by a connection-preserving vector bundle isomorphism $\SS|_\UU \to \pi^*(\SS|_\Sigma)$ where $\pi:(-\delta,\delta)\times\Sigma\to\Sigma$ is the projection.
Here $\SS$ carries some connection and $\pi^*(\SS|_\Sigma)$ carries the pullback of the induced connection on $\SS|_\Sigma$.
\item
Under this vector bundle isomorphism the Dirac-type operator takes the form
\begin{equation}
\dirac = \rmi\nS \bigg(\frac{\partial}{\partial t} +\rmi\DS\bigg).
\label{eq:DiracProduct}
\end{equation}
\end{enumerate}
If we want to emphasize the neighborhood, we also say that $\dirac$ has \emph{product structure over} $\UU$.
\end{definition}

Note that the Riemannian Dirac-type operator $\DS$ in \eqref{eq:DiracProduct} is independent of $t$.
For the spinorial Dirac operator there is no mean curvature correction term in this case because the product structure ensures that $\Sigma$ is totally geodesic in $X$.

If $\dirac$ has product structure then $\nS$ and $\nicefrac{\partial}{\partial t}$ commute, $\nS$ and $\DS$ anticommute, and $\DS$ and $\nicefrac{\partial}{\partial t}$ commute.
This then implies
$$
\dirac^2 = \frac{\partial^2}{\partial t^2} + \DS^2 
$$
and \eqref{eq:DiracProduct} can be rewritten as
\begin{equation}\label{eq:DiracProduct2}
\dirac = \bigg(\frac{\partial}{\partial t} -\rmi\DS\bigg)\rmi\nS .
\end{equation}

In particular, if $\dirac$ has product structure then so has $\dirac^2$ in the following sense:

\begin{definition}\label{def:ProductStructureNormHyp}
We say that a normally hyperbolic operator $P$ has \emph{product structure near} $\Sigma$ (or \emph{product structure over} $\UU$) if (1) and (2) hold as in Definition~\ref{def:ProductStructure} and
\begin{enumerate}[(1)]
\setcounter{enumi}{2}
\item
under the vector bundle isomorphism $\Phi$ the operator $P$ takes the form
\begin{equation*}
P = \frac{\partial^2}{\partial t^2} +\Delta
\end{equation*}
where $\Delta$ is a Laplace-type operator on $\Sigma$, independent of $t$.
\end{enumerate}

\end{definition}

\subsection{Odd Dirac-type operators}

Classical spinors on even-dimensional spacetimes decompose into left-handed and right-handed spinors.
This leads to the concept of odd Dirac-type operators.

\begin{definition}
Let the bundle $\SS$ carry a connection $\nabla$ and come with a splitting $\SS=\SS_L\oplus\SS_R$ which is parallel with respect to $\nabla$.
Then a Dirac-type operator $\dirac$ is called \emph{odd} if it interchanges the subbundles, i.e.\ it takes the form
$$
\dirac =
\begin{pmatrix}
0 & \dirac_R \\
\dirac_L & 0
\end{pmatrix}
$$
with respect to this splitting.
\end{definition}

\begin{example}\label{ex:Dirac-evendim}
If $X$ is even-dimensional then the spinor bundle $SX$ splits into the bundles of left-handed and right-handed spinors, $SX=S_LX\oplus S_RX$ and, accordingly, $\SS=\SS_L\oplus\SS_R$ where $\SS_{L/R} = S_{L/R}X\otimes E$.
The spinorial Dirac operator is then indeed an odd operator.

Let $\Sigma\subset X$ be a spacelike hypersurface.
Both restrictions $S_LX|_\Sigma$ and $S_RX|_\Sigma$ can be identified with the spinor bundle $S\Sigma$ of $\Sigma$.
Clifford multiplication on $S\Sigma$ corresponds to $\gamma_\Sigma$ on $S_LX|_\Sigma$ and to $-\gamma_\Sigma$ on $S_RX|_\Sigma$.
Note that $\nS:S_LX|_\Sigma\to S_RX|_\Sigma$ is a vector bundle isomorphism which intertwines Clifford multiplication.
Denoting the spinorial Dirac operator on $\Sigma$ by $\DS$, equation~\eqref{eq:spinDirac} becomes
\begin{align*}
\dirac_L &= \rmi\nS \big(\nabla^{SX}_{\nS} +\rmi\DS +\tfrac{n-1}{2}H \big), \\ 
\dirac_R &= \rmi\nS \big(\nabla^{SX}_{\nS} -\rmi\DS +\tfrac{n-1}{2}H \big). 
\end{align*}
Again, twisting with $E$ does not affect these formulas.
\end{example}

\begin{definition}
We say that an odd Dirac-type operator $\dirac$ has \emph{product structure near} $\Sigma$ if in addition to the conditions in Definition~\ref{def:ProductStructure} the vector bundle isomorphism $\SS|_\UU = \SS_L|_\UU\oplus\SS_R|_\UU \to \pi^*(\SS|_\Sigma)=\pi^*(\SS_L|_\Sigma)\oplus\pi^*(\SS_R|_\Sigma)$ preserves the splittings.
\end{definition}

\subsection{The Cauchy problem}

From now on let us assume that $X$ is globally hyperbolic meaning that there exist Cauchy hypersurfaces.
Denote by $C^\infty_\mathrm{sc}(X;\SS)$ the space of smooth sections with \emph{spatially compact} support.
This means that the support is contained in a set of the form $\J^+(K)\cup \J^-(K)$ where $K\subset X$ is compact and $\J^\pm$ denotes the causal future and past, respectively.
Denote the space of smooth solutions of $\dirac u=0$ with spatially compact support by 
$$
\Sol(\dirac):=\{u\in C^\infty_\mathrm{sc}(X;\SS) \mid \dirac u=0\}
$$ 
and similarly for $\dirac_R$ and $\dirac_L$ in the odd case.
The Cauchy problem for the Dirac equation on globally hyperbolic manifolds is well posed, see e.g.\ \cite{MR637032}*{Thm.~2.3}.
This means that for any smooth spacelike Cauchy hypersurface $\Sigma\subset X$ the restriction map
$$
\rho_\Sigma : \Sol(\dirac) \to C^\infty_\mathrm{c}(\Sigma;\SS),\quad
u \mapsto u|_\Sigma,
$$
is an isomorphism of topological vector spaces.
If $\Sigma$ and $\Sigma'$ are two smooth spacelike Cauchy hypersurfaces of $X$, then we put 
$$
U_{\Sigma',\Sigma}:=\rho_{\Sigma'}\circ(\rho_\Sigma)^{-1} : 
C^\infty_\mathrm{c}(\Sigma;\SS)\to C^\infty_\mathrm{c}(\Sigma';\SS).
$$
By duality, $U_{\Sigma',\Sigma}$ extends to an isomorphism on distributional sections,
\begin{equation*}
U_{\Sigma',\Sigma}:\DD'(\Sigma;\SS)\to \DD'(\Sigma';\SS)
\end{equation*}
and then restricts to an isomorphism 
\begin{equation}
U_{\Sigma',\Sigma}:L^2_\mathrm{c}(\Sigma;\SS)\to L^2_\mathrm{c}(\Sigma';\SS).
\label{eq:U}
\end{equation}
Here $L^2_\mathrm{c}(\Sigma;\SS)$ denotes compactly supported square integrable sections.

\subsection{The formally dual operator}
Let $\dirac$ be a Dirac-type operator acting on sections of $\SS$.
Then there exists a unique first-order differential operator $\dirac^*$ acting on sections of $\SS^*$ such that the following \emph{Green's formula} holds:
\begin{equation}
\int_X v(\dirac u) \dV - \int_X (\dirac^*v)(u) \dV
=
\rmi\left\{\int_{\Sigma_-}v(\nSm u) \dA - \int_{\Sigma_+}v(\nSp u) \dA \right\} .
\label{eq:Green}
\end{equation}
Here $\Sigma_-, \Sigma_+ \subset X$ are two disjoint smooth spacelike Cauchy hypersurfaces of $X$ with $\Sigma_-$ lying in the past of $\Sigma_+$ and $X=\J^+(\Sigma_-)\cap \J^-(\Sigma_+)$ is the spacetime region in between these two Cauchy hypersurfaces.
Note that $\partial X = \Sigma_- \cup \Sigma_+$.
Moreover, $u$ and $v$ are compactly supported smooth sections of $\SS$ and $\SS^*$, respectively, and $\slashed n_{\Sigma_\pm}$ are the future directed unit normal fields along $\Sigma_\pm$.

The operator $\dirac^*$ is again a Dirac-type operator and is called the \emph{formal dual} of $\dirac$.
Now let 
$$
U = U_{\Sigma_+,\Sigma_-}\colon L^2_\mathrm{c}(\Sigma_-;\SS)\to L^2_\mathrm{c}(\Sigma_+;\SS)
$$
be the Cauchy solution operator for $\dirac$ and
$$
\tilde U = \tilde U_{\Sigma_+,\Sigma_-}\colon L^2_\mathrm{c}(\Sigma_-;\SS^*)\to L^2_\mathrm{c}(\Sigma_+;\SS^*)
$$
that of $\dirac^*$.
This induces an operator
$$
\tilde U^*\colon L^2_\mathrm{c}(\Sigma_+;\SS)\to L^2_\mathrm{c}(\Sigma_-;\SS)
$$
characterized by 
$$
\int_{\Sigma_+} (\tilde Uv)(u)\dA = \int_{\Sigma_-} v(\tilde U^*u)\dA 
$$
for all $u\in L^2_\mathrm{c}(\Sigma_+;\SS)$ and $v\in L^2_\mathrm{c}(\Sigma_-;\SS^*)$.

Green's formula \eqref{eq:Green} with $\dirac^*u=\dirac v=0$ on $X$ yields 
$$
\int_{\Sigma_-}v_-(\nSm u_-) \dA  
= \int_{\Sigma_+}(\tilde Uv_-)(\nSp Uu_-) \dA
= \int_{\Sigma_+}v_-(\tilde U^*\nSp Uu_-) \dA
$$
for all $u_-\in C^\infty_\mathrm{c}(\Sigma_-;\SS)$ and $v_-\in C^\infty_\mathrm{c}(\Sigma_-;\SS^*)$.
Hence, $\tilde U^*\nSp U=\nSm$.
From $\nSp^2=\nSm^2=-\id$ we conclude that the following diagram commutes:
\begin{equation}
\begin{gathered}
\xymatrix{
L^2_\mathrm{c}(\Sigma_-;\SS) \ar[r]^{U}  & L^2_\mathrm{c}(\Sigma_+;\SS) \\
L^2_\mathrm{c}(\Sigma_-;\SS) \ar[u]^{\nSm} & L^2_\mathrm{c}(\Sigma_+;\SS) \ar[l]_{\tilde U^*} \ar[u]^{\nSp}.
}
\label{eq:diagram-U}
\end{gathered}
\end{equation}

\section{\texorpdfstring{Laplace-type and Dirac-type operators on $\Sigma$}{Laplace-type and Dirac-type operators on Sigma}}
\label{sec:LaplaceDirac}

In case $X$ is a metric product, $(X,g)= (\R \times \Sigma, -dt^2+h)$, the study of normally hyperbolic operators of product type as well as Dirac-type operators of product type is reduced to the study of Laplace-type and Dirac-type operators on the Cauchy hypersurface $\Sigma$.
Note that the metric product $\R \times \Sigma$ is globally hyperbolic if and only if $(\Sigma,h)$ is a complete Riemannian manifold, which we will assume.

\subsection{Laplace-type operators} 
Let $\SS \to \Sigma$ be a complex vector bundle and $\Delta$ a second-order differential operator of Laplace type, i.e.\ $\sigma_\Delta(\xi)=h(\xi^\sharp,\xi^\sharp)\cdot\id_\SS$.
We are now going to assume that \eqref{FallA} or \eqref{FallB} below holds:
\begin{enumerate}[(A)]
\item\label{FallA}
There exists a positive definite Hermitian inner product on $\SS$ with respect to which $\Delta$ is formally selfadjoint and nonnegative.
Furthermore, the essential spectrum of the Friedrichs extension on compactly supported smooth sections is contained in $[\delta,\infty)$ for some $\delta>0$.
\item\label{FallB}
$\Sigma$ is compact.
\end{enumerate}

Condition \eqref{FallA} is a condition on the behavior of the operator at infinity. There are natural examples, even squares of Dirac operators on non-compact spin manifolds, where the spectral gap can be made arbitrarily large or infinite (see \cite{Baer-Hyp} for an example).

In either case, there is a well-defined meaning of $L^2(\Sigma; \SS)$ as a Hilbertizable locally convex topological vector space.
In case~\eqref{FallA}, the operator $\Delta$ is essentially selfadjoint on $C^\infty_\mathrm{c}(\Sigma;\SS)$ and the spectrum of the selfadjoint closure is contained in the nonnegative real axis.
In case~\eqref{FallB}, the resolvent of the elliptic operator $\Delta$ is compact and the spectrum is discrete (but not necessarily real).

In either case, zero lies in the resolvent set of $\Delta$ or is an isolated eigenvalue of finite multiplicity.
In case \eqref{FallB}, $\Delta$ restricted to the range of $p_0$ may have a nontrivial Jordan normal form.
We can define the projection onto the generalized kernel of $\Delta$ by 
\begin{equation}
 p_0 := \frac{1}{2 \pi \rmi} \int_{C} (\Delta-\lambda)^{-1} d \lambda,
\label{eq:p0}
\end{equation}
where $C$ is a small circle about $0$ that encloses no point in the spectrum of $\Delta$ other than zero.
Integration is taken counter-clockwise.
In case \eqref{FallA}, $p_0$ coincides with the operator obtained by applying the characteristic function of $0$ in $\R$ to the operator $\Delta$ using Borel functional calculus for selfadjoint operators.

In both cases we have $L^2(\Sigma;\SS) = \rg(p_0)\oplus \ker(p_0)$.
By elliptic regularity, $\rg(p_0)\subset C^\infty(\Sigma;\SS)$ and, by our assumptions, $\dim\rg(p_0)<\infty$.
Hence, $p_0$ has a smooth Schwartz kernel.

Next we need a square root of $\Delta$ and its inverse, at least on $\ker(p_0)$.
In case~\eqref{FallA}, $\Delta+p_0$ is invertible and selfadjoint and, using functional calculus for selfadjoint operators, we put for $s\in\C$: 
$$
\Delta^{s}_\theta := \exp(s\log(\Delta+p_0)) \circ (1-p_0).
$$ 
Here $\log$ is the standard branch of the logarithm defined on $\C\setminus (-\infty,0]$.

In case~\eqref{FallB}, $\Delta+p_0$ is an invertible elliptic pseudodifferential operator of second order whose principal symbol is scalar and takes values on the positive real half-line.
Thus, we can choose a ray of minimal growth $\{\arg(z)=\theta\}$ and form the complex powers $(\Delta+p_0)^s$ as constructed in \cite{Seeley}.
We put 
$$
\Delta^s_\theta := (\Delta+p_0)^s \circ (1-p_0).
$$
The subindex $\theta$ reminds us of the dependence of the complex powers on the choice of ray of minimal growth in case~\eqref{FallB}.
In case~\eqref{FallA}, the definition of $\Delta^{s}_\theta$ is independent of $\theta$.

\begin{prop}\label{prop:powers}
Let $s,t\in\C$.
In both cases \eqref{FallA} and \eqref{FallB}, the operator $\Delta_\theta^{s}$ is closed and densely defined in $L^2(\Sigma;\SS)$ with core $C^\infty_\mathrm{c}(\Sigma;\SS)$.
The following holds:
\begin{enumerate}[(1)]
\item\label{power1}
$\Delta_\theta^{s}$ is bounded if $\Re(s)\le0$.
\item\label{power2}
If $\Re(s)\leq0$ and $B$ is a bounded operator that commutes with the resolvent of $\Delta$, then $B$ commutes with $\Delta_\theta^{s}$.
\item\label{power3}
$\Delta_\theta^0=1-p_0$.
\item\label{power4}
$\Delta_\theta^1=\Delta\circ(1-p_0)$.
\item\label{power5}
$\Delta_\theta^s\circ\Delta_\theta^t = \Delta_\theta^{s+t}|_{\mathrm{dom}(\Delta_\theta^{s+t})\cap\mathrm{dom}(\Delta_\theta^{t})}$.
\item\label{power6}
$\Delta_\theta^{s}\circ p_0 = 0$ and $p_0 \circ\Delta_\theta^{s}=0|_{\mathrm{dom}(\Delta_\theta^{s})}$.
\item\label{power7}
$\Delta\circ\Delta_\theta^s = \Delta_\theta^{1+s}$ and $\Delta_\theta^s\circ\Delta = \Delta_\theta^{1+s}|_{\mathrm{dom}(\Delta_\theta^{1+s})\cap\mathrm{dom}(\Delta)}$.
\item\label{power9}
The Schwartz kernel of $p_0$ is smooth.
\end{enumerate}
In case \eqref{FallA} we have furthermore:
\begin{enumerate}[(1)]
\setcounter{enumi}{8}
\item\label{power8}
If $s\in\R$ then the operator $\Delta_\theta^{s}$ is selfadjoint and nonnegative.
\end{enumerate}
In case \eqref{FallB} we have furthermore:
\begin{enumerate}[(1')]
\setcounter{enumi}{8}
\item\label{power8strich}
The operator $\Delta_\theta^{s}$ is a classical pseudodifferential operator of order $2\Re(s)$ with positive scalar principal symbol.
\end{enumerate}
\end{prop}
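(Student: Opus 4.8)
The plan is to treat cases~\eqref{FallA} and~\eqref{FallB} in parallel, writing in either case $\Delta_\theta^s=\phi_s(\Delta+p_0)\circ(1-p_0)$, where $\phi_s(z)=z^s$ is formed by the Borel functional calculus for the self-adjoint operator $\Delta+p_0$ in case~\eqref{FallA} and by the Seeley complex powers along the ray $\{\arg(z)=\theta\}$ in case~\eqref{FallB} (\cite{Seeley}). Three preliminary observations carry the weight. First, $\Delta+p_0$ is invertible: on $\ker(p_0)$ it coincides with $\Delta$, whose spectrum misses a neighborhood of $0$ by construction of $p_0$, while on the finite-dimensional space $\rg(p_0)$ it equals $\Delta+\id$, whose only eigenvalue is $1$. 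Second, $p_0$ is idempotent and commutes with $\Delta$ --- being a contour integral of resolvents $(\Delta-\lambda)^{-1}$ --- hence commutes with $\Delta+p_0$ and with every function of it used below. Third, $\rg(p_0)\subset C^\infty(\Sigma;\SS)$ by elliptic regularity applied to $\Delta^N\varphi=0$; together with the analogous statement for the formal adjoint this gives \eqref{power9}, the smoothness of the Schwartz kernel of $p_0$.

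Granting these, the algebraic parts are routine functional-calculus bookkeeping. Parts \eqref{power3} and \eqref{power4} amount to $\phi_0\equiv1$ and $\phi_1(z)=z$, so $\Delta_\theta^1=(\Delta+p_0)(1-p_0)=\Delta(1-p_0)$ using $p_0^2=p_0$ and $[\Delta,p_0]=0$. Part \eqref{power6} follows from $(1-p_0)p_0=0$ and the commutation of $p_0$ with $\phi_s(\Delta+p_0)$. For \eqref{power1}: when $\Re(s)\le0$ the function $z\mapsto z^s$ is bounded on the spectrum $\{0\}\cup[\delta,\infty)$ in case~\eqref{FallA}, whereas in case~\eqref{FallB} the operator $(\Delta+p_0)^s$ is a pseudodifferential operator of nonpositive order on a closed manifold and hence $L^2$-bounded. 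Part \eqref{power2} holds because a bounded operator commuting with all resolvents of $\Delta$ commutes with $p_0$ and --- by the spectral theorem in case~\eqref{FallA}, by the resolvent-integral definition of the Seeley powers in case~\eqref{FallB} --- with $\phi_s(\Delta+p_0)$. The composition law \eqref{power5} reduces to $\phi_s\phi_t=\phi_{s+t}$ on the spectrum, valid since the branch, respectively the ray $\theta$, is fixed and the spectrum misses the cut; the domain restriction just records that $\Delta_\theta^{s+t}$ may a priori be defined on a larger set. Finally, since $p_0\Delta_\theta^s=0$ by \eqref{power6}, the range of $\Delta_\theta^s$ lies in $\ker(p_0)$, on which $\Delta$ agrees with $\Delta(1-p_0)=\Delta_\theta^1$; thus $\Delta\circ\Delta_\theta^s=\Delta_\theta^1\circ\Delta_\theta^s=\Delta_\theta^{1+s}$ by \eqref{power5}, the domains matching because $\Delta+p_0$ is bounded below, and the restriction in the identity $\Delta_\theta^s\circ\Delta=\Delta_\theta^{1+s}|_{\dots}$ is needed exactly when $\Re(s)<0$; this is \eqref{power7}.

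For the case-specific statements: in case~\eqref{FallA} with $s\in\R$ the function $z\mapsto z^s$ is real and nonnegative on the spectrum, so $\phi_s(\Delta+p_0)$ is self-adjoint and nonnegative, and since $1-p_0$ is here the \emph{orthogonal} projection onto the invariant subspace $\ker(p_0)$ we may write $\Delta_\theta^s=(1-p_0)\phi_s(\Delta+p_0)(1-p_0)$, which is manifestly self-adjoint and nonnegative; this is \eqref{power8}. In case~\eqref{FallB}, Seeley's theorem gives that $(\Delta+p_0)^s$ is a classical pseudodifferential operator of order $2\Re(s)$ whose principal symbol is the $s$-th power of the scalar principal symbol $h(\xi^\sharp,\xi^\sharp)$ of $\Delta$ --- the smoothing perturbation $p_0$ not affecting the symbol --- and subtracting the smoothing operator $(\Delta+p_0)^sp_0$ changes neither the order nor the principal symbol, giving \eqref{power8strich}.

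The step I expect to be most delicate is the core statement, that $C^\infty_\mathrm{c}(\Sigma;\SS)$ is a core for $\Delta_\theta^s$; closedness, by contrast, is built into the construction in both cases. In case~\eqref{FallB} the core statement is immediate since $\Delta_\theta^s$ is pseudodifferential on a closed manifold and $C^\infty_\mathrm{c}(\Sigma;\SS)=C^\infty(\Sigma;\SS)$ is dense in every Sobolev space. In case~\eqref{FallA} cores are not preserved under functional calculus, so one proceeds in stages: completeness of $(\Sigma,h)$ and finite propagation speed of $\cos(t\sqrt{\Delta})$ give that $C^\infty_\mathrm{c}(\Sigma;\SS)$ is a core for every integer power $\Delta^m$, hence --- the graph norms of $\Delta^m$ and $(\Delta+p_0)^m$ being equivalent --- also for $(\Delta+p_0)^m$; for $u\in\mathrm{dom}(\Delta_\theta^s)$ the spectral truncations $\mathbf{1}_{[0,R]}(\Delta+p_0)u$ lie in $\mathrm{dom}((\Delta+p_0)^m)$ for all $m$ and converge to $u$ in the graph norm of $\Delta_\theta^s$; and each truncation is approximated by $C^\infty_\mathrm{c}$ sections in the graph norm of $(\Delta+p_0)^m$ with $m=\lceil\Re(s)\rceil$, which dominates the graph norm of $\Delta_\theta^s$ because $|z^s|\lesssim1+|z|^m$ on the spectrum.
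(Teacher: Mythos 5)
Your proof is correct and follows the same framework as the paper's (Borel functional calculus in case \eqref{FallA}, Seeley's complex powers in case \eqref{FallB}), but it differs in two substantive places and is more careful in a third.

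For \eqref{power9} the paper's argument is to build the elliptic operator $\LL = L\otimes 1 + 1\otimes L^*$ on $\Sigma\times\Sigma$ (with $L=\Delta^N$) and show that the Schwartz kernel $K$ of $p_0$ satisfies $\LL K=0$, so smoothness follows from elliptic regularity on the product. You instead argue directly from the finite-rank structure: $\rg(p_0)\subset C^\infty(\Sigma;\SS)$ because $\Delta|_{\rg(p_0)}$ is nilpotent, and the transposed operator $p_0^t$ is the analogous spectral projection for the formal dual $\Delta^*$, so its range consists of smooth sections of $\SS^*$; a finite-rank operator whose range and co-range both consist of smooth sections has a smooth kernel. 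Both routes are valid; the paper's product-space trick packages the two regularity facts into a single elliptic equation, while your argument is more transparent about where the two halves of the regularity come from, at the mild cost of invoking the identification of $p_0^t$ with the projection for $\Delta^*$.

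For the preamble claim that $C^\infty_\mathrm{c}(\Sigma;\SS)$ is a core, the paper defers silently to the cited functional-calculus results, while you supply the actual mechanism in case \eqref{FallA}: Chernoff's finite-propagation-speed theorem on the complete manifold $\Sigma$ gives essential self-adjointness of all integer powers $\Delta^m$ on $C^\infty_\mathrm{c}$, then spectral truncation and the domination $|z^s|\lesssim 1+|z|^m$ on $\mathrm{spec}(\Delta+p_0)$ for $m\ge\Re(s)$ reduce the general complex power to integer powers. This chain is correct and fills a genuine gap in the paper's terse exposition. One small slip: in your proof of \eqref{power1} for case \eqref{FallA} you assert that $z\mapsto z^s$ is bounded on "$\{0\}\cup[\delta,\infty)$", which is the spectrum of $\Delta$, not of $\Delta+p_0$, and the function is unbounded near $0$ when $\Re(s)<0$. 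What you mean is that the relevant operator is $(\Delta+p_0)^s$, whose spectrum is contained in $[\min(1,\delta),\infty)$, where $z^s$ is indeed bounded for $\Re(s)\le 0$; the factor $(1-p_0)$ makes the value near $0$ irrelevant. Worth restating to avoid confusion, but the substance is right.
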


\begin{proof}
In case \eqref{FallA}, assertions~\eqref{power1}--\eqref{power7} and \eqref{power8} follow from properties of the Borel functional calculus for selfadjoint operators.
Note that all functions of $\Delta$ commute.
In particular, we have $\exp(s\log(\Delta+p_0)) \circ (1-p_0)= (1-p_0)\circ\exp(s\log(\Delta+p_0))$.

In case \eqref{FallB}, assertions \eqref{power1}, \eqref{power3}--\eqref{power5}, and (\ref{power8strich}') follow from \cite{Seeley}*{Thm.~3}. 
Assertion~\eqref{power2} follows from the contour integration formula for the complex powers in \cite{Seeley}*{p.~289}.
This also shows $(\Delta+p_0)^s \circ (1-p_0)= (1-p_0)\circ(\Delta+p_0)^s$.
Assertions \eqref{power6} and \eqref{power7} follow by elementary computation using $(1-p_0)p_0=0$.

It remains to show \eqref{power9}.
Since $\Delta|_{\rg(p_0)}$ is nilpotent, $\rg(p_0)$ coincides with the kernel of $L:=\Delta^N$ for $N$ sufficiently large.
Let $L^*$ be the formally dual operator acting on sections of $\SS^*\to\Sigma$ and consider the elliptic operator $\LL := L\otimes 1 + 1 \otimes L^*$ acting on sections of $\SS\boxtimes\SS^*\to\Sigma\times\Sigma$.
Let $K\in \DD'(\Sigma\times\Sigma;\SS\boxtimes\SS^*)$ be the Schwartz kernel of $p_0$ and let $u\in C^\infty_\mathrm{c}(\Sigma; \SS^*)$ and $v\in C^\infty_\mathrm{c}(\Sigma; \SS)$ be test sections.
Then
\begin{align*}
\LL(K)(u\otimes v)
&=
K(\LL^*(u\otimes v))
=
K(L^*u\otimes v+u\otimes Lv) \\
&=
p_0(v)(L^*u) + p_0(Lv)(u)
=
2p_0(Lv)(u)
=
0.
\end{align*}
Thus, $\LL(K)=0$ and hence $K$ is smooth by elliptic regularity.
\end{proof}

We are mostly interested in the cases $s=\nicefrac12$ and $s=-\nicefrac12$.

\begin{prop}
The spectrum of $\Delta_\theta^{\nicefrac{1}{2}}$ is contained in a strip of the form
\begin{equation}
  \mathrm{spec}\big(\Delta_\theta^{\nicefrac{1}{2}}\big) \subset \{ z \in \C \mid \Re(z) \geq -C, \; | \Im(z) | \leq C  \}
  \label{eq:SpecWurzel}
\end{equation}
for some $C>0$, see Figure~\ref{fig:strip}.
The operator $\Delta_\theta^{\nicefrac{1}{2}}$ generates a strongly continuous semigroup $t\mapsto e^{\rmi t \Delta_\theta^{\nicefrac{1}{2}}}$ on the closed upper half-plane, that is analytic on the open upper half-plane. 
\end{prop}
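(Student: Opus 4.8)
The plan is to reduce both cases to the same elementary situation by writing $A:=\Delta_\theta^{\nicefrac{1}{2}}$ in the form $A=A_0+R$, where $A_0$ is self-adjoint and bounded from below on $L^2(\Sigma;\SS)$ and $R$ is bounded. In case~\eqref{FallA} this is immediate: by Proposition~\ref{prop:powers}\eqref{power8} the operator $A$ is itself self-adjoint and nonnegative, so one takes $A_0=A$ and $R=0$. In case~\eqref{FallB} one fixes an auxiliary positive definite Hermitian metric on $\SS$, lets $A^*$ be the formal adjoint of $A$ with respect to it and the Riemannian volume density, and sets $A_0:=\tfrac12(A+A^*)$ and $R:=\tfrac12(A-A^*)$. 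By Proposition~\ref{prop:powers} the operator $A$ is a classical pseudodifferential operator of order $1$ with scalar principal symbol $h(\xi^\sharp,\xi^\sharp)^{\nicefrac{1}{2}}\cdot\id_\SS$; hence $A_0$ is an elliptic formally self-adjoint operator of order $1$ with this same real nonnegative principal symbol --- so its closure is self-adjoint --- while $R$ has order $0$ and is therefore bounded on $L^2(\Sigma;\SS)$. The sharp G{\aa}rding inequality applied to $A_0$ then yields $\langle A_0u,u\rangle\ge-C\,\|u\|^2$ for some constant $C$ and all $u\in C^\infty(\Sigma;\SS)$; enlarging $C$ we may in addition assume $\|R\|\le C$.

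For the spectral statement~\eqref{eq:SpecWurzel} the key observation is that the numerical range of $A$ is contained in the strip: for $u$ in the domain of $A$ with $\|u\|=1$ one has $\Re\langle Au,u\rangle=\langle A_0u,u\rangle\ge-C$ and $|\Im\langle Au,u\rangle|=|\langle\tfrac{1}{2\rmi}(A-A^*)u,u\rangle|\le\|R\|\le C$; in case~\eqref{FallA} the numerical range even lies in $[0,\infty)$. It remains to see that $\mathrm{spec}(A)$ is contained in the closure of the numerical range. This is clear in case~\eqref{FallA}. In case~\eqref{FallB}, $A$ is elliptic of positive order on the compact manifold $\Sigma$ (and the bound just obtained shows in particular that $\rho(A)\ne\emptyset$), hence $A$ has compact resolvent and discrete spectrum, and every eigenvalue $\lambda$ equals $\langle Au,u\rangle$ for a unit eigenvector $u$ and so lies in the strip. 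This proves~\eqref{eq:SpecWurzel}.

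For the semigroup, since $A_0$ is self-adjoint with $\mathrm{spec}(A_0)\subset[-C,\infty)$, the Borel functional calculus defines
\[
S_0(t):=e^{\rmi tA_0}=\int_{[-C,\infty)}e^{\rmi t\lambda}\,dE_{A_0}(\lambda)
\]
for every $t$ in the closed upper half-plane, with $\|S_0(t)\|\le e^{C\,\Im t}$. The family $(S_0(t))$ satisfies the semigroup law, is strongly continuous on $\{\Im t\ge0\}$ and, by differentiation under the spectral integral (legitimate because $\sup_{\lambda\ge-C}|\lambda|\,e^{-\lambda\Im t}<\infty$ when $\Im t>0$), is operator-norm holomorphic on $\{\Im t>0\}$; its generator is $\rmi A_0$. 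To incorporate the bounded perturbation $\rmi R$ I would use the Dyson series: setting $S_n(t):=\int_{[0,t]}S_0(t-s)\,(\rmi R)\,S_{n-1}(s)\,ds$, with the integral taken along the segment from $0$ to $t$, an induction based on $\|S_0(\cdot)\|\le e^{C\,\Im t}$ along that segment yields $\|S_n(t)\|\le e^{C\,\Im t}\,(\|R\|\,|t|)^n/n!$. Hence $S(t):=\sum_{n\ge0}S_n(t)$ converges in operator norm, uniformly on bounded subsets of $\{\Im t\ge0\}$; each $S_n$ is strongly continuous on the closed upper half-plane and operator-norm holomorphic on the open one, and therefore so is $S$. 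The standard rearrangement of the iterated integrals shows that $S$ obeys the semigroup law, and comparing the behaviour at $t=0$ (the terms with $n\ge1$ contribute $\rmi R$ from $S_1$ and $O(|t|)$ from the rest) identifies the generator as $\rmi A_0+\rmi R=\rmi A$; this $S$ is the asserted semigroup $e^{\rmi t\Delta_\theta^{\nicefrac{1}{2}}}$. In case~\eqref{FallA} one simply has $S=S_0$ and $\|S(t)\|\le1$.

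The only step requiring genuine care is the last one: ensuring that the perturbed family $S$ remains holomorphic on the \emph{full} open upper half-plane (the borderline opening angle $\tfrac{\pi}{2}$) and strongly continuous up to the real axis. This works precisely because the segment $[0,t]$ stays in the closed upper half-plane whenever $\Im t\ge0$, so that the bound $\|S_n(t)\|\le e^{C\,\Im t}(\|R\|\,|t|)^n/n!$ holds throughout $\{\Im t\ge0\}$ and makes the Dyson series converge uniformly on compacta there; holomorphy and strong continuity then pass from the $S_n$ to $S$. Everything else --- the numerical-range bound, the G{\aa}rding estimate, the order count for $R$, and the discreteness of the spectrum in case~\eqref{FallB} --- is routine.
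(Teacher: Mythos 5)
Your proof is correct, but it takes a genuinely different route from the paper. The paper compares $\Delta_\theta^{\nicefrac12}$ to the square root $A^{\nicefrac12}$ of an \emph{auxiliary} positive self-adjoint second-order operator with the same leading symbol, obtains the strip and a resolvent bound via a Neumann-series argument, proves $\omega$-sectoriality of $\Delta_\theta^{\nicefrac12}+C'$ for arbitrarily small $\omega$ to get a holomorphic semigroup on the open upper half-plane via Kato, and then handles strong continuity at the real boundary \emph{separately} by solving the symmetric hyperbolic system $u'=\rmi\Delta_\theta^{\nicefrac12}u$ on $L^2$ and gluing the resulting $C^0$-group to the holomorphic semigroup. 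You instead decompose $\Delta_\theta^{\nicefrac12}$ into its Hermitian and anti-Hermitian parts with respect to an auxiliary fiber metric, use a G\aa rding lower bound and the numerical range to pin down the strip, generate $S_0(t)=e^{\rmi tA_0}$ by the spectral theorem (which is already strongly continuous on the closed upper half-plane and operator-norm holomorphic on the open one), and then use a Dyson series in the complex parameter to graft on the bounded skew part $\rmi R$. This gives holomorphy on the open half-plane and strong continuity on the closed half-plane in one stroke, and it is arguably cleaner than the paper's two-step argument; the price is that you need the Dyson series with a complex parameter and a Morera/Fubini argument to transfer holomorphy through the contour integrals, which is slightly nonstandard but works because the segment $[0,t]$ stays in the closed half-plane.

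One small imprecision worth flagging: the parenthetical ``the bound just obtained shows in particular that $\rho(A)\neq\emptyset$'' is not literally correct --- containment of the numerical range in a strip does not, by itself, force the resolvent set of an unbounded operator to be nonempty (it only yields injectivity and a coercive lower bound on the range). This is easily repaired with the material you already have: since $A_0$ is self-adjoint with spectrum in $[-C,\infty)$, one has $\|(A_0-z)^{-1}\|\le\dist(z,[-C,\infty))^{-1}$, so for $\dist(z,[-C,\infty))>\|R\|$ the Neumann series $(A-z)^{-1}=(A_0-z)^{-1}\sum_{k\ge0}\big(-R(A_0-z)^{-1}\big)^k$ converges and shows $z\in\rho(A)$. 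Alternatively, note that $A^*=A_0-R$ has the same numerical-range bound, so for $z$ outside the strip both $A-z$ and $A^*-\bar z$ are injective with closed range, and surjectivity of $A-z$ follows by orthogonal complements. Either way the rest of your argument (compact resolvent from ellipticity on the compact $\Sigma$, discreteness, eigenvectors realizing every spectral value in the numerical range) then goes through as stated.
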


\begin{figure}[!ht]
\includegraphics[width=0.6\textwidth]{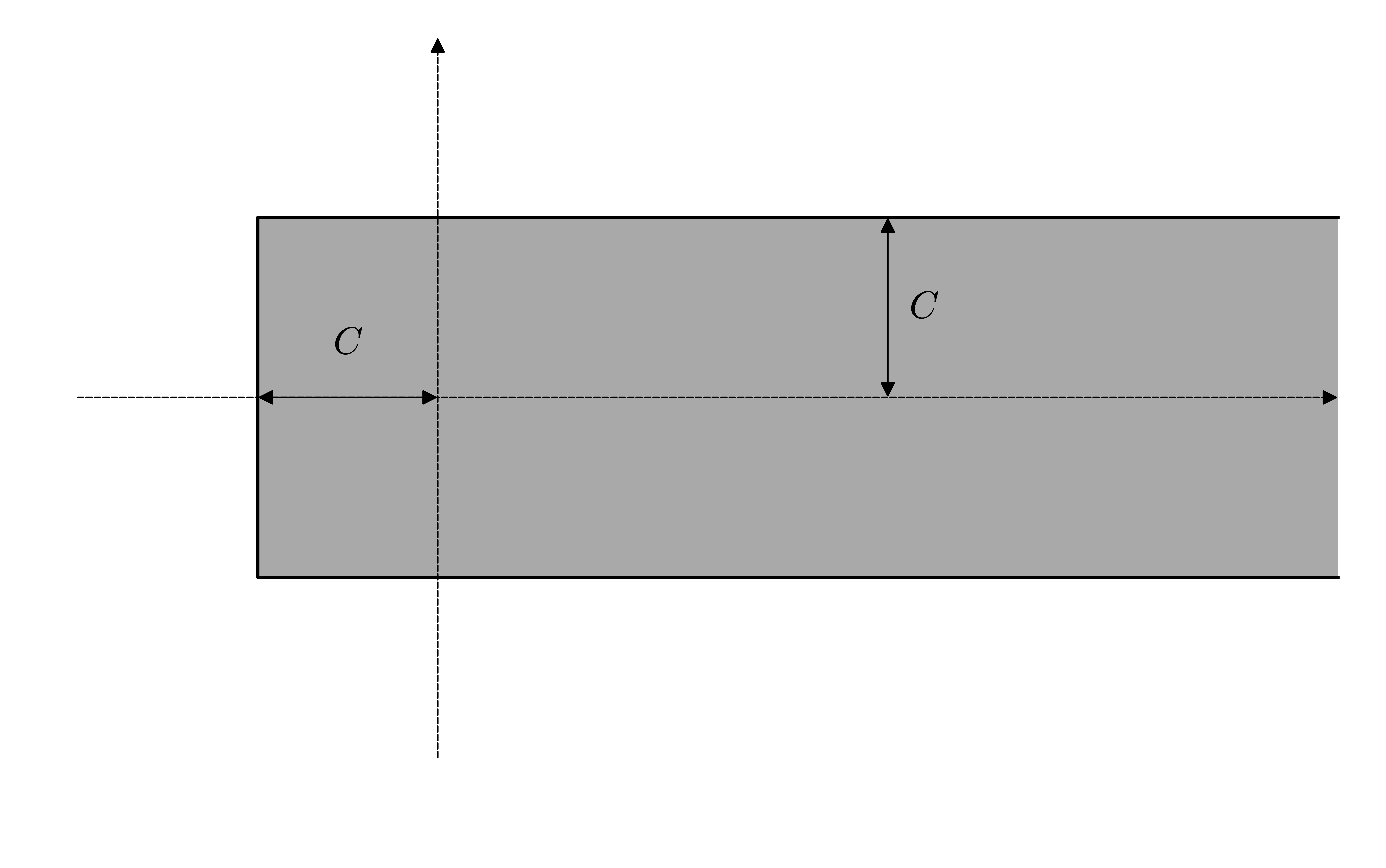}
\caption{Strip containing the spectrum of $\Delta_\theta^{\nicefrac{1}{2}}$}
\label{fig:strip}
\end{figure}

\begin{proof}
In case \eqref{FallA}, the spectrum of $\Delta_\theta^{\nicefrac{1}{2}}$ is positive real.
Thus, \eqref{eq:SpecWurzel} holds trivially in that case.

In case \eqref{FallB}, choose a positive definite inner product on the vector bundle $\SS$ and a positive selfadjoint second-order differential operator $A$ with the same principal symbol as $\Delta$.
Then $A^{\nicefrac{1}{2}}$ is a positive selfadjoint pseudodifferential operator of first order and $B:=\Delta_{\theta}^{\nicefrac{1}{2}} - A^{\nicefrac{1}{2}}$ is a zero-order pseudodifferential operator.
In particular, $B$ is $L^2$-bounded. 

For any $z$ with $\mathrm{dist}(z,\R_+)> 2 \| B \|$ we have $\|B (A^{\nicefrac{1}{2}} - z)^{-1}\|\le\|B\|\cdot\frac{1}{\mathrm{dist}(z,\R_+)}<\frac{1}{2}$ and therefore
$$
  (\Delta_\theta^{\nicefrac{1}{2}} - z)^{-1} =  (A^{\nicefrac{1}{2}} - z)^{-1} ( 1 + B (A^{\nicefrac{1}{2}} - z)^{-1})^{-1}.
$$
In particular, the spectrum of $\Delta_{\theta}^{\nicefrac{1}{2}}$ is contained in the $2\|B\|$-neighborhood of $\R_+$.
This proves \eqref{eq:SpecWurzel} in case~\eqref{FallB}.

Moreover, still in case \eqref{FallB}, we get for $\mathrm{dist}(z,\R_+)> 2 \| B \|$ the resolvent estimate
 \begin{align*}
  \| (\Delta_\theta^{\nicefrac{1}{2}} - z)^{-1} \| 
  &\leq
  \| (A^{\nicefrac{1}{2}} - z)^{-1} \| \cdot \| ( 1 + B (A^{\nicefrac{1}{2}} - z)^{-1})^{-1}\| \\
  &\leq
  \| (A^{\nicefrac{1}{2}} - z)^{-1} \| \cdot \frac{1}{1-\|B (A^{\nicefrac{1}{2}} - z)^{-1}\|}   \\
  &\leq 2 \,\| (A^{\nicefrac{1}{2}} - z)^{-1} \|  \\
  &\leq \frac{2}{\mathrm{dist}(z,\R_+)} .
 \end{align*}
 
In case \eqref{FallA}, the resolvent estimate $\| (\Delta_\theta^{\nicefrac{1}{2}} - z)^{-1} \| \leq \frac{1}{\mathrm{dist}\big(z,\mathrm{spec}(\Delta_\theta^{\nicefrac{1}{2}})\big)}$ holds because $\Delta_\theta^{\nicefrac{1}{2}}$ is selfadjoint.

In both cases \eqref{FallA} and \eqref{FallB}, the spectrum of the operator $\Delta_\theta^{\nicefrac{1}{2}}+C'$ is contained in the strip $S(C'):=\{ z \in \C \mid \Re(z) \geq C'-C, \; | \Im(z) | \leq C  \}$ and for $z$ outside this strip we have the resolvent estimate 
$$
\| (\Delta_\theta^{\nicefrac{1}{2}}+C' - z)^{-1}\| \le \frac{2}{\mathrm{dist}(z,S(C'))}.
$$
In particular, the spectrum of $\Delta_\theta^{\nicefrac{1}{2}}+C'$ is contained in the closed sector $\Lambda:=\{|\arg(z)|\le\omega\}\cup\{0\}$.
Here $\omega>0$ can be made arbitrarily small by choosing $C'$ sufficiently large, see Figure~\ref{fig:sector}.
Outside any larger sector $\Lambda'$, the ratio $\frac{|z|}{\mathrm{dist}(z,S(C'))}$ is bounded.
Hence, we get the resolvent estimate $\| (\Delta_\theta^{\nicefrac{1}{2}}+C' - z)^{-1}\| \le \frac{C''}{|z|}$ on $\C\setminus\Lambda'$.
Thus, $\Delta_\theta^{\nicefrac{1}{2}}+C'$ is sectorial of angle $\omega$.

\begin{figure}[!ht]
\includegraphics[width=0.6\textwidth]{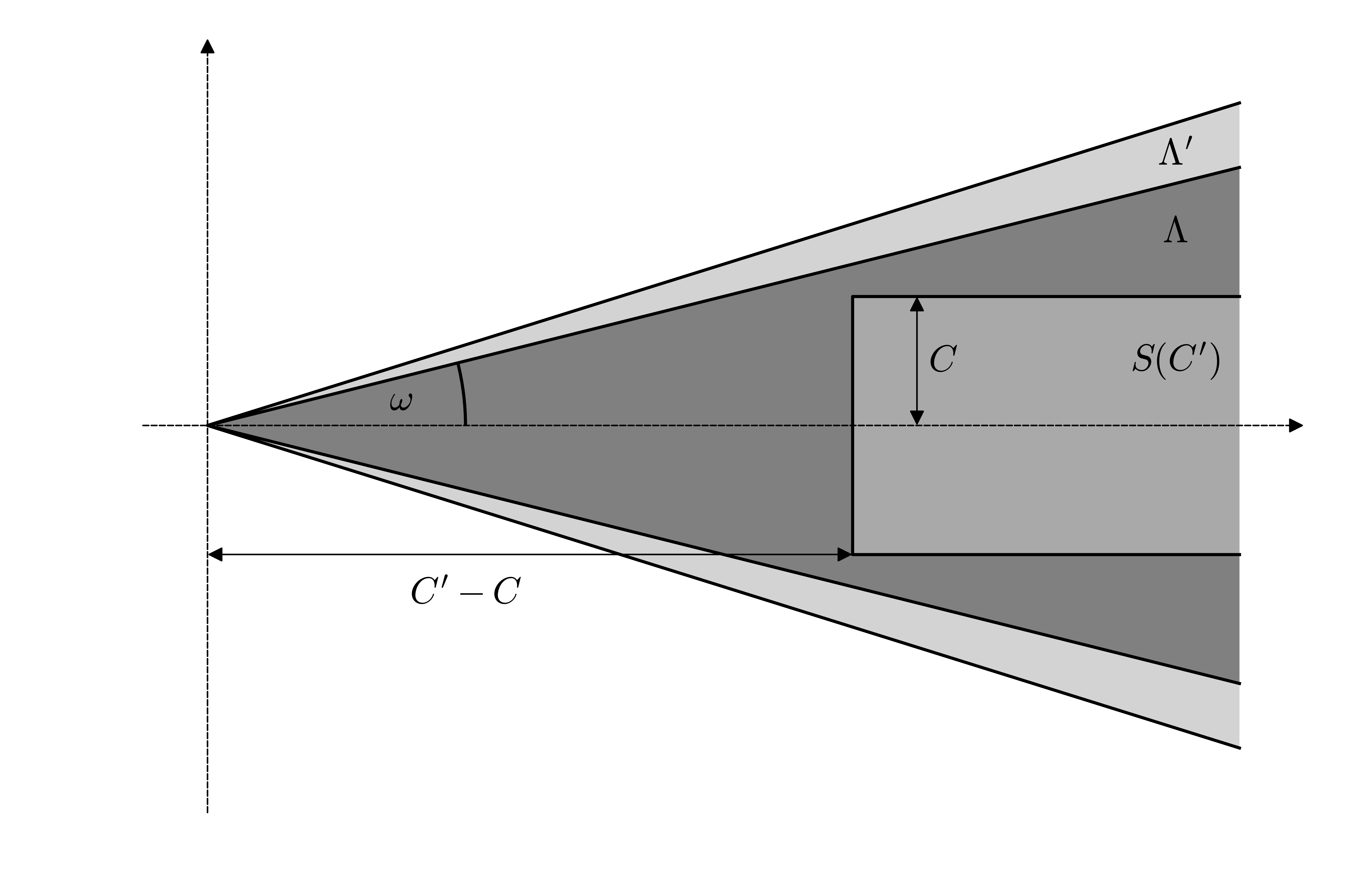}
\caption{$\omega$-sectoriality of $\Delta_\theta^{\nicefrac{1}{2}}+C'$}
\label{fig:sector}
\end{figure}

By \cite{Kato}*{Ch.~9, \S1.6}, $\Delta_\theta^{\nicefrac{1}{2}}+C'$ generates a holomorphic semigroup $e^{\rmi t(\Delta_\theta^{\nicefrac{1}{2}}+C')}$ on the open sector $\{\omega<\arg(t)<\pi-\omega\}$.
Then $e^{\rmi t(\Delta_\theta^{\nicefrac{1}{2}})}=e^{-\rmi tC'}e^{\rmi t(\Delta_\theta^{\nicefrac{1}{2}}+C')}$ is a holomorphic semigroup on this sector for $\Delta_\theta^{\nicefrac{1}{2}}$.
Since we can make $\omega$ arbitrarily small, we get the holomorphic semigroup $e^{\rmi t(\Delta_\theta^{\nicefrac{1}{2}})}$ on the open upper half-plane.

In case~\eqref{FallA}, we can also define $e^{\rmi t(\Delta_\theta^{\nicefrac{1}{2}})}$ by functional calculus thus extending it to the closed upper half-plane.
The family of functions $e^{\rmi tx}$ is uniformly bounded for $t\in\C$ with $\Im(t)\ge0$ and $x\in\R$ with $x\ge0$ and is continuous in $t$.
Lebesgue's dominated convergence theorem applied to the representation of $e^{\rmi t(\Delta_\theta^{\nicefrac{1}{2}})}u$ for $u\in L^2(\Sigma;\SS)$ as an integral against the spectral measure of $\Delta_\theta^{\nicefrac{1}{2}}$ shows strong continuity.

In case $\eqref{FallB}$, we note that $\Delta_\theta^{\nicefrac{1}{2}}$ is a pseudodifferential operator of order one with diagonal real principal symbol. 
Therefore, for any $u_0 \in L^2(\Sigma;\SS)$ the initial value problem of the symmetric hyperbolic system
$$
 \frac{du}{ds}(s) = \rmi \Delta_\theta^{\nicefrac{1}{2}} u(s), \quad u(0)=u_0,
$$
has a unique solution in $C(\R,L^2(\Sigma;\SS))$, see for example \cite{MR618463}*{Thm.~2.3 on p.~75}. 
Thus, the operator $\Delta_\theta^{\nicefrac{1}{2}}$ is the generator of a $C^0$-group $s\mapsto e^{\rmi s(\Delta_\theta^{\nicefrac{1}{2}})}$ defined for $s \in \R$.
This group commutes with the semigroup $e^{\rmi t(\Delta_\theta^{\nicefrac{1}{2}})}$ defined for $t$ in the open upper half-plane. 
In particular,
$$
 e^{\rmi t(\Delta_\theta^{\nicefrac{1}{2}})} e^{\rmi s(\Delta_\theta^{\nicefrac{1}{2}})} 
 = 
 e^{\rmi s(\Delta_\theta^{\nicefrac{1}{2}})} e^{\rmi t(\Delta_\theta^{\nicefrac{1}{2}})}
 =
 e^{\rmi (t+s)(\Delta_\theta^{\nicefrac{1}{2}})}.
$$
By sequential continuity of multiplication in the strong operator topology, the function $e^{\rmi t(\Delta_\theta^{\nicefrac{1}{2}})} u$ is sequentially norm-continuous on the closed upper half-plane for any $u \in L^2(\Sigma;\SS)$.
This implies that it is strongly continuous.
\end{proof}

\subsection{Dirac-type operators} 
Assume again that $\SS$ is a complex vector bundle and that $\DS$ is a first-order operator of Dirac type acting on sections of $\SS$.
Then $\Delta = \DS^2$ is of Laplace type. 
We will now find analogues of spectral projections in the case of compact $\Sigma$ without any assumption of selfadjointness. 

Consider the operators $\Delta_\theta^{-\nicefrac{1}{2}}$ and $p_0$ constructed above.
Then  $\Delta_\theta^{-\nicefrac{1}{2}}$ commutes with $\DS$ and $\Delta_\theta^{-\nicefrac{1}{2}} \DS$ is a zero-order pseudodifferential operator that squares to $1 - p_0$ by Proposition~\ref{prop:powers}~\eqref{power3}, \eqref{power5}, and \eqref{power7}.
Now define
\begin{align}
 p_{>} &:= \tfrac{1}{2} \left( 1 - p_0 +\Delta_\theta^{-\nicefrac{1}{2}} \DS \right), \label{eq:p>}\\ 
 p_{<} &:=  \tfrac{1}{2} \left( 1 - p_0 -\Delta_\theta^{-\nicefrac{1}{2}} \DS \right).\label{eq:p<}
\end{align}
In this notation for $p_{>}$ and $p_{<}$ we suppress the dependence on the choice of ray of minimal growth. 
We also define the projections 
\begin{equation}
p_\geq := p_> + p_0 \text{ and } p_\leq := p_< + p_0.
\label{eq:p=} 
\end{equation}

In case $\Sigma$ is a noncompact complete Riemannian manifold and there exists a positive definite Hermitian inner product on $\SS$ with respect to which $\DS$ is selfadjoint and $0$ does not lie in the essential spectrum, we are in case~\eqref{FallA}.
We can apply the Borel functional calculus for unbounded selfadjoint operators to $\DS$ and we define
$$
p_0 := \chi_{\{0\}}(\DS), \;  p_> := \chi_{(0,\infty)}(\DS), \; p_\geq := \chi_{[0,\infty)}(\DS),  \;
p_< := \chi_{(-\infty,0)}(\DS),\textrm{ and } \; p_\leq := \chi_{(-\infty,0]}(\DS).
$$
The projections are orthogonal in this case and the equations \eqref{eq:p0} and \eqref{eq:p>}--\eqref{eq:p=} still hold.

The definition of these projections is very natural and apart from the dependence on the ray of minimal growth is independent of the precise choice of functional calculus.
Proposition~\ref{prop:powers} and simple computation yields the following good properties.
\begin{prop}\label{prop:projectors}
Assume \eqref{FallA} or \eqref{FallB} as stated on page~\pageref{FallA}.
Then the following holds:
\begin{enumerate}[(1)]
\item 
$p_>$, $p_<$, and $p_0$ are projections, i.e. $p_>^2=p_>$, $p_<^2=p_<$, $p_0^2=p_0$.
\item 
$p_> + p_< + p_0 = 1$.
\item 
$p_> p_<= p_> p_0 = p_< p_0 =0$.
\item 
$p_>, p_<, p_0$ commute with each other and with $\DS$.
\item\label{projectors5} 
$(p_> - p_<)  \DS =\Delta_\theta^{\nicefrac{1}{2}}$.
\item 
The Schwartz kernel of $p_0$ is smooth.
\end{enumerate}
If $\Sigma$ is compact, we have furthermore:
\begin{enumerate}[(1)]
\setcounter{enumi}{6}
\item\label{projectors7} 
$p_>$ and $p_<$ are pseudodifferential operators of order zero.
\hfill$\Box$
\end{enumerate}
\end{prop}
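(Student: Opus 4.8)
The plan is to reduce every assertion to elementary algebra in the bounded operators $p_0$ and $T:=\Delta_\theta^{-\nicefrac{1}{2}}\DS$, once their basic relations have been harvested from Proposition~\ref{prop:powers}. First I would record what is needed: $\Delta_\theta^{-\nicefrac{1}{2}}$ is bounded by Proposition~\ref{prop:powers}~\eqref{power1} and commutes with $\DS$ (noted just before the statement), so $T$ is bounded and commutes with $\DS$; $T^2=1-p_0$ by \eqref{power3}, \eqref{power5}, and \eqref{power7}; $p_0^2=p_0$, and $\DS$ commutes with $p_0$ because it preserves the generalized kernel of $\Delta=\DS^2$; and $Tp_0=p_0T=0$ by \eqref{power6} together with $\DS p_0=p_0\DS$. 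In case~\eqref{FallA} all of this is immediate from the Borel functional calculus for the self-adjoint operator $\DS$: there $\Delta_\theta^{-\nicefrac{1}{2}}=|\DS|^{-1}(1-p_0)$, hence $T=\operatorname{sgn}(\DS)$, and the identification of $p_>,p_<,p_\geq,p_\leq$ with $\chi_{(0,\infty)}(\DS)$, $\chi_{(-\infty,0)}(\DS)$, $\chi_{[0,\infty)}(\DS)$, $\chi_{(-\infty,0]}(\DS)$ stated before the proposition follows from identities such as $\chi_{(0,\infty)}=\tfrac{1}{2}(1-\chi_{\{0\}}+\operatorname{sgn})$.

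With these relations in hand the first five assertions are pure bookkeeping. From $(1-p_0)^2=1-p_0$, $(1-p_0)T=T(1-p_0)=T$ (using $p_0T=Tp_0=0$), and $T^2=1-p_0$, expanding the squares of \eqref{eq:p>} and \eqref{eq:p<} yields $p_>^2=p_>$ and $p_<^2=p_<$, which together with $p_0^2=p_0$ is assertion~(1). Adding \eqref{eq:p>} and \eqref{eq:p<} gives $p_>+p_<=1-p_0$, i.e.\ assertion~(2). Multiplying them out gives $p_>p_<=\tfrac{1}{4}\big((1-p_0)^2-T^2\big)=0$, $p_>p_0=\tfrac{1}{2}\big((1-p_0)p_0+Tp_0\big)=0$, and likewise $p_<p_0=0$, which is assertion~(3). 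For assertion~(4), $p_0$ and $T$ commute (indeed $p_0T=Tp_0=0$), so $p_>$ and $p_<$, being polynomials in $p_0$ and $T$, commute with each other and with $p_0$, and all three commute with $\DS$ because $p_0$ and $T$ do. For assertion~(5), $(p_>-p_<)\DS=T\DS=\Delta_\theta^{-\nicefrac{1}{2}}\DS^2=\Delta_\theta^{-\nicefrac{1}{2}}\Delta=\Delta_\theta^{\nicefrac{1}{2}}$ by \eqref{power7}. Assertion~(6) is a restatement of \eqref{power9}.

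For assertion~(7) I would assume $\Sigma$ compact and argue that $\Delta_\theta^{-\nicefrac{1}{2}}$ is a classical pseudodifferential operator of order $-1$ by Proposition~\ref{prop:powers}~(\ref{power8strich}') while $\DS$ is a differential operator of order $1$, so $T=\Delta_\theta^{-\nicefrac{1}{2}}\DS$ has order $0$; since $p_0$ has finite rank and a smooth Schwartz kernel it is smoothing, and therefore $p_>=\tfrac{1}{2}(1-p_0+T)$ and $p_<=\tfrac{1}{2}(1-p_0-T)$ are pseudodifferential operators of order $0$.

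I do not expect a genuine obstacle here. The only point requiring any care is the bookkeeping of domains in case~\eqref{FallA}, where $\DS$ is unbounded, so that identities like $\Delta_\theta^{-\nicefrac{1}{2}}\Delta=\Delta_\theta^{\nicefrac{1}{2}}$ and the commutation of $\Delta_\theta^{-\nicefrac{1}{2}}$ with $\DS$ have to be read on the appropriate dense domains; but self-adjointness of $\DS$ together with the functional calculus makes this routine, and in case~\eqref{FallB} every operator in sight is a pseudodifferential operator, so the manipulations are just compositions of continuous maps.
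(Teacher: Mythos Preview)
Your proposal is correct and matches the paper's approach exactly: the paper's proof consists only of the sentence ``Proposition~\ref{prop:powers} and simple computation yields'' together with a $\Box$, and what you have written is precisely that simple computation carried out in full. Introducing $T=\Delta_\theta^{-\nicefrac{1}{2}}\DS$ and reducing everything to the relations $T^2=1-p_0$, $Tp_0=p_0T=0$, $p_0^2=p_0$ is the natural way to do it, and your verification of (1)--(7) is accurate.
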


\section{Distinguished parametrices for normally hyperbolic operators} \label{GanzFein:Section}

In this section we temporarily forget about Dirac-type operators and study special solutions to certain second-order equations.
Later this will be applied to the square of the Dirac operator.

Let $\SS\to X$ be a vector bundle over a globally hyperbolic Lorentzian manifold $X$.
For any distributional section 
$$
F \in \DD'(X \times X;\SS \boxtimes \SS^*)
$$
the associated operator $C_\mathrm{c}^\infty(X;\SS) \to \DD'(X;\SS)$ with Schwartz kernel $F$ will be denoted by $\hat F$.
In other words, for $u\in C_\mathrm{c}^\infty(X;\SS)$ and $v\in C_\mathrm{c}^\infty(X;\SS^*)$ we have
$$
\hat F(u)(v) = F(v\otimes u).
$$

For a distribution $F \in \DD'( X \times X, \SS \boxtimes \SS^*)$ let $\WF(F)\subset\dot T^*(X \times X)$ be its wavefront set.
Here $\dot T^*$ denotes the cotangent bundle with the zero-section removed.
As usual, we put
$$
 \WF'(F) := \{(x,\xi;x',\xi') \in \dot T^*(X \times X) \mid (x,\xi;x',-\xi') \in \WF(F)\}.
$$
Let $\Phi_t$ be the geodesic flow on $\dot T^*X$, as obtained from the geodesic flow on $TX$ by identifying $T^*X$ and $TX$ using the metric.
A causal covector is future directed if it gives a positive number when evaluated on a future directed timelike vector.
Hence, by our choice of signature, the metric identifies a future directed covector with a past directed vector. 
Therefore, somewhat unintuitively, $\Phi_t$ moves future directed covectors to the past and past directed covectors to the future for positive values of the parameter $t$.
We define the following subsets of $\dot T^*(X\times X)$:
\begin{align*}
\Delta^* 
&:= 
\{(x,\xi;x,\xi) \in \dot T^*(X \times X) \mid (x,\xi)\in\dot T^*X\},\notag\\
\Lambda_\Feyn^\pm
&:=
\Delta^* \cup \{(x,\xi;x',\xi') \in \dot T^*(X \times X) \mid \xi \textrm{ is lightlike,}\; \exists\, t>0: \Phi_{\mp t}(x,\xi) = (x',\xi') \},
\\
\Lambda_{\ret}
&:=
\Delta^* \cup \{(x,\xi;x',\xi') \in \dot T^*(X \times X) \mid \xi \textrm{ is lightlike,}\; x \in \J^+(x'),\; \exists\, t \in \R: \Phi_t(x',\xi') = (x,\xi) \}. \notag
 \\
\Lambda_{\adv}
&:=
\Delta^* \cup \{(x,\xi;x',\xi') \in \dot T^*(X \times X) \mid \xi \textrm{ is lightlike,}\; x \in \J^-(x'),\; \exists\, t \in \R: \Phi_t(x',\xi') = (x,\xi) \}. \notag
\end{align*}

Let $P$ be a normally hyperbolic operator on $X$ acting on the sections of $\SS$. 
Let us denote by $\hat G_{\ret/\adv}: C_\mathrm{c}^\infty(X;\SS) \to C^\infty(X;\SS)$ the unique retarded and advanced fundamental solutions of $P$, see \cite{BGP07}*{Sec.~3.4}.
Recall that $F$ is called a \emph{parametrix} for $P$ if the operators $P \hat F - 1$ and $\hat F P -1$ have smooth kernels. 
In particular, the retarded and advanced fundamental solutions are parametrices. 
The theory of distinguished parametrices for normally hyperbolic operators characterizes parametrices by the above wavefront sets, cf.\ \cite{MR0388464}*{Sect.~6.6}: 
the retarded fundamental solution satisfies $\WF'(G_\ret) = \Lambda_{\ret}$ and any other parametrix $G$ with $\WF'(G) \subset \Lambda_{\ret}$ coincides with $G_\ret$ modulo smoothing operators. 
A similar statement holds for the advanced fundamental solution. 
The sets $\Lambda_\Feyn^\pm$ characterize two more types of distinguished parametrices, the Feynman and anti-Feynman parametrices. 
We focus here on Feynman parametrices but similar statements hold for anti-Feynman parametrices.

\begin{definition}
A parametrix $G$ for $P$ is called a \emph{Feynman parametrix} if $\WF'(G) \subset \Lambda_\Feyn^+$. 
A fundamental solution is called a \emph{Feynman propagator} if it is a Feynman parametrix.
\end{definition}

It is shown in \cite{MR0388464}*{Thm.~6.5.3} that Feynman parametrices for \emph{scalar} normally hyperbolic operators are distinguished parametrices and hence are unique modulo smoothing operators. 
Moreover, any Feynman parametrix $G$ satisfies $\WF'(G) = \Lambda_\Feyn^+$. 
These results have been generalized to operators on vector bundles in \cite{strohmaierislam}*{Thm.~1.2}.

\subsection{The Feynman propagator in the product case}

Suppose that $X = \R \times \Sigma$ is equipped with a product metric and that $P$ has product structure over $X$,
$$
 P=\frac{\partial^2}{\partial t^2} + \Delta.
$$
Note that the operator $\Delta$ leaves the range of $p_0$ invariant and that $A := \Delta |_{\textrm{rg}(p_0)}$ is nilpotent.
We can therefore define the operator $A^{-\nicefrac{1}{2}} \sin(tA^{\nicefrac{1}{2}})$ on $\textrm{rg}(p_0)$ as the polynomial 
$$
A^{-\nicefrac{1}{2}} \sin(tA^{\nicefrac{1}{2}})=\sum_{k=0}^\infty \frac{(-1)^k}{(2k+1)!} t^{2k+1} A^{k},
$$ 
slightly abusing notation.
\begin{theorem}\label{thm:FeynmanProductWave}
Assume \eqref{FallA} or \eqref{FallB} as stated on page~\pageref{FallA}.
Suppose furthermore that $P$ has product structure over $X=\R\times\Sigma$.
Then the family of operators
\begin{gather} \label{prodfeyn}
 k(t) := 
 \tfrac{\rmi}{2} \e^{-\rmi | t | \Delta_\theta^{\nicefrac{1}{2}} }   \Delta_\theta^{-\nicefrac{1}{2}} +  \chi_{[0,\infty)} (t) A^{-\nicefrac{1}{2}} \sin(tA^{\nicefrac{1}{2}}) \, p_0
\end{gather}
defines a Feynman propagator $G$ via
\begin{gather} \label{LaplaceHadamard}
 (\hat G u)(t,\cdot) = \int_\R k(t-s) u(s,\cdot) ds .
\end{gather}
\end{theorem}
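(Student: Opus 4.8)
The plan is to check the two defining properties of a Feynman propagator in turn: that $\hat G$ is an exact fundamental solution of $P$, and that $\WF'(G)\subset\Lambda_\Feyn^+$. Since $P=\partial_t^2+\Delta$ is translation invariant in $t$ and $\hat G$ acts by convolution in $t$ with $k$, both $P\hat G=\id$ and $\hat G P=\id$ amount — the latter after integrating by parts twice in the convolution variable, which is harmless since the relevant sections have compact support there — to the single operator-valued distributional identity $(\partial_t^2+\Delta)k(t)=\delta(t)\cdot\id$ on $L^2(\Sigma;\SS)=\ker(p_0)\oplus\rg(p_0)$, which I would verify on each summand.

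\emph{Fundamental solution.} On $\ker(p_0)$ only the first term $k_1(t)=\tfrac{\rmi}{2}\e^{-\rmi|t|\Delta_\theta^{\nicefrac{1}{2}}}\Delta_\theta^{-\nicefrac{1}{2}}$ matters. It is continuous at $t=0$, and for $t\neq0$ Proposition~\ref{prop:powers} gives $(\Delta_\theta^{\nicefrac{1}{2}})^2=\Delta_\theta^1=\Delta\circ(1-p_0)$ and $\Delta\circ\Delta_\theta^{-\nicefrac{1}{2}}=\Delta_\theta^{\nicefrac{1}{2}}$, so $(\partial_t^2+\Delta)k_1(t)=0$ there; the jump of $\partial_t k_1$ across $t=0$ is $\Delta_\theta^{\nicefrac{1}{2}}\Delta_\theta^{-\nicefrac{1}{2}}=\Delta_\theta^0=1-p_0$, whence $(\partial_t^2+\Delta)k_1(t)=(1-p_0)\,\delta(t)$. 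On $\rg(p_0)$ the polynomial $q(t):=A^{-\nicefrac{1}{2}}\sin(tA^{\nicefrac{1}{2}})$ (a finite sum, as $A$ is nilpotent) satisfies $q(0)=0$, $q'(0)=\id$ and $q''=-Aq$, so $\chi_{[0,\infty)}(t)q(t)$ is a fundamental solution of $\partial_t^2+A$ and $(\partial_t^2+\Delta)k_2(t)=p_0\,\delta(t)$ for the second term $k_2(t)=\chi_{[0,\infty)}(t)q(t)p_0$. Adding and using $\id=(1-p_0)+p_0$ gives $(\partial_t^2+\Delta)k(t)=\delta(t)\cdot\id$. That $\hat G$ maps $C^\infty_{\mathrm c}(X;\SS)$ into $C^\infty(X;\SS)$ follows from boundedness of $\Delta_\theta^{-\nicefrac{1}{2}}$ and from strong continuity and at most exponential growth of $t\mapsto\e^{-\rmi|t|\Delta_\theta^{\nicefrac{1}{2}}}$.

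\emph{Wavefront set.} Split $\e^{-\rmi|t|\Delta_\theta^{\nicefrac{1}{2}}}=\chi_{[0,\infty)}(t)\,\e^{-\rmi t\Delta_\theta^{\nicefrac{1}{2}}}+\chi_{(-\infty,0)}(t)\,\e^{\rmi t\Delta_\theta^{\nicefrac{1}{2}}}$. The propagators $\e^{\mp\rmi t\Delta_\theta^{\nicefrac{1}{2}}}$ move wavefront sets along the cogeodesic flow of $(\Sigma,h)$: in case~\eqref{FallB}, $\Delta_\theta^{\nicefrac{1}{2}}$ is a classical first-order pseudodifferential operator with scalar principal symbol $|\xi|_h$ (Proposition~\ref{prop:powers}), so $\e^{\mp\rmi t\Delta_\theta^{\nicefrac{1}{2}}}$ is the half-wave Fourier integral operator with that canonical relation; in case~\eqref{FallA} the same follows from H\"ormander's propagation of singularities theorem applied to the wave equation $(\partial_t^2+\Delta)v=0$ solved on $\ker(p_0)$ by $v(t)=\e^{\mp\rmi t\Delta_\theta^{\nicefrac{1}{2}}}u$. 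Regarding the Schwartz kernel of the first term of $k$ as a distribution on $\R\times\Sigma\times\Sigma$, composing with the Heaviside factors — which adds only the conormal of $\{t=0\}$ over the diagonal of $\Sigma$, a subset of $\Delta^*$ — and with the order $-1$ operator $\Delta_\theta^{-\nicefrac{1}{2}}$, and finally pulling back along $(t,s)\mapsto t-s$ on $X\times X$, one finds that the $\WF'$ of this term lies in $\Delta^*$ together with pairs $\bigl((t,x),\zeta;(s,y),\zeta'\bigr)$ with $\zeta$ lightlike for $-dt^2+h$ and $(s,y,\zeta')$ on the null geodesic through $(t,x,\zeta)$; the sign of the frequency in $\e^{\mp\rmi t\Delta_\theta^{\nicefrac{1}{2}}}$ together with the cut-offs $\chi_{[0,\infty)}$, $\chi_{(-\infty,0)}$ singles out exactly the Feynman branch $\Phi_{-r}(t,x,\zeta)=(s,y,\zeta')$ with $r>0$, and not the anti-Feynman one. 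Hence the first term contributes only inside $\Lambda_\Feyn^+$. The second term $k_2$ is a finite-rank correction, added only to turn the parametrix into an exact fundamental solution on the finite-dimensional space $\rg(p_0)$; since $p_0$ has smooth Schwartz kernel (Proposition~\ref{prop:powers}~\eqref{power9}) and $q$ is polynomial, it introduces into $\WF'(G)$ nothing beyond $\Lambda_\Feyn^+$. Thus $\WF'(G)\subset\Lambda_\Feyn^+$, so $G$ is a Feynman parametrix, and being a fundamental solution it is a Feynman propagator.

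\emph{Main obstacle and an alternative.} The delicate part is the wavefront analysis of $k_1$: identifying $\e^{\mp\rmi t\Delta_\theta^{\nicefrac{1}{2}}}$ as a half-wave propagator uniformly in the two cases — in case~\eqref{FallA} the operator $\Delta_\theta^{\nicefrac{1}{2}}$ is only defined through functional calculus, not as a pseudodifferential operator — and tracking signs carefully through the composition with the Heaviside cut-offs and the pullback so that precisely the Feynman half of the null flowout, rather than the anti-Feynman one, appears. A route that avoids this bookkeeping is to combine the uniqueness of Feynman parametrices modulo smoothing operators (\cite{MR0388464}, \cite{strohmaierislam}) with the Feynman parametrix constructed in the Appendix, and to show only that $G$ differs from it by a smoothing operator.
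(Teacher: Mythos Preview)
Your fundamental-solution computation is correct and is essentially what the paper does (it just says ``a direct computation using Proposition~\ref{prop:powers}'' for this step).

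For the wavefront set you take a different route from the paper, and in case~\eqref{FallA} there is a gap. You want to treat $e^{\mp\rmi t\Delta_\theta^{1/2}}$ as half-wave Fourier integral operators and read off their canonical relation. In case~\eqref{FallB} this is legitimate, since $\Delta_\theta^{1/2}$ is a classical first-order $\Psi$DO with real scalar principal symbol. But in case~\eqref{FallA} your appeal to ``propagation of singularities for $(\partial_t^2+\Delta)v=0$'' does not deliver the sign: both $e^{+\rmi t\Delta_\theta^{1/2}}u$ and $e^{-\rmi t\Delta_\theta^{1/2}}u$ solve this second-order equation, and propagation of singularities for $\partial_t^2+\Delta$ only tells you singularities travel along null bicharacteristics, not along which half. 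Separating the two would require knowing that $\Delta_\theta^{1/2}$ is, at least microlocally, a $\Psi$DO with principal symbol $|\xi|_h$ --- which on non-compact $\Sigma$ is not free from the Borel functional calculus definition. Your alternative of invoking uniqueness of Feynman parametrices and comparing with the Hadamard construction of the Appendix is circular here: uniqueness yields only a smoothing difference \emph{once you already know} that $G$ is a Feynman parametrix, which is precisely what remains to be shown.

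The paper closes this gap by an argument that works uniformly in \eqref{FallA} and \eqref{FallB} and avoids any $\Psi$DO structure for $\Delta_\theta^{1/2}$. From $P\hat G=\id$ one has $\Delta^*\subset\WF'(G)$, and $\WF'(G)\setminus\Delta^*$ consists only of lightlike covectors (microlocal ellipticity) and is flow-invariant (propagation of singularities); this already eliminates any timelike contribution from the Heaviside factor in $k_2$. It then suffices to determine the time-orientation of the lightlike covectors on the regions $Y_\pm$ where $\pm(s-t)>0$. On $Y_+$ the kernel is $\tfrac{\rmi}{2}\,e^{\rmi(t-s)\Delta_\theta^{1/2}}\Delta_\theta^{-1/2}$ modulo smooth terms, and by the proposition establishing that $t\mapsto e^{\rmi t\Delta_\theta^{1/2}}$ extends to a semigroup holomorphic on the open upper half-plane, this is the boundary value as $\Im(t-s)\searrow 0$ of a function strongly holomorphic in $t-s$. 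Theorem~8.1.6 in \cite{Ho1}, applied in the $(t,s)$ variables, then forces the covectors in $\WF'$ over $Y_+$ to be future-directed; the argument on $Y_-$ is symmetric. This holomorphic-boundary-value step is exactly the sign information your half-wave argument was after, obtained from sectoriality of $\Delta_\theta^{1/2}$ rather than from any pseudodifferential calculus.
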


\begin{proof}
Let $u \in C_\mathrm{c}^\infty(\R \times \Sigma; \SS)$. 
Using Proposition~\ref{prop:powers}, a direct computation gives
$$
\Big(\frac{\partial^2}{\partial t^2} + \Delta\Big)\circ \hat G (u)= \hat G \circ \Big(\frac{\partial^2}{\partial t^2} + \Delta\Big)(u) = u . 
$$
It remains to show the wavefront set relation. 
Because of the above equation, $\WF'(G)$ contains $\Delta^*$, and $\WF'(G) \setminus \Delta^*$ contains only lightlike nonzero covectors.
Moreover, by H\"ormander's propagation of singularities theorem applied to both of the above equations considered on the product manifold $X \times X$, it is invariant under the geodesic flow in the sense that it is invariant under the $\R^2$-action induced by $\Phi_t \times \Phi_s$.
Define the open subsets $Y_+=\{((t,x),(s,y)) \in X \times X \mid t < s\} \subset X \times X$ and $Y_-=\{((t,x),(s,y)) \in X \times X \mid t > s\} \subset X \times X$.
By invariance under $\Phi_t \times \Phi_s$ it is sufficient to show that upon restriction to $Y_+$ only vectors of the form $((x,\xi),(x',\xi'))$ with future directed (possibly zero) $\xi,\xi'$ are in $\WF'(G)$, and that upon restriction to $Y_-$ only vectors of the form $((x,\xi),(x',\xi'))$ with past directed (possibly zero) $\xi,\xi'$  are in $\WF'(G)$. 
Indeed, this implies that $((x,\xi),(x',\xi'))$ can only be in $\WF'(G)$ if an element in $\Delta^*$ is on the same orbit. 
Otherwise, one can apply the flow to move the nonzero covector from $Y_\pm$ to $Y_\mp$, 
which would imply that $\xi$ as well as $\xi'$ are both future and past directed. 
Thus, $\WF'(G)$ can only contain elements of the form $((x,\xi),(x',\xi'))$ with $(x',\xi') = \Phi_t (x,\xi)$ for some $t \in \R$. 
If $t \not= 0$ this means that $(x,x')$ is either in $Y_+$ or in $Y_-$. 
It then follows that $\WF'(G) \subset \Lambda_\Feyn^+$.

We therefore only need to show that the restriction of $G$ to $Y_\pm$ has the claimed structure. 
In $Y_\pm$ the kernel of the second term $\chi_{[0,\infty)} (t-s) A^{-\frac{1}{2}} \sin(A^\frac{1}{2} (t-s)) \, p_0$ is smooth. 
It is therefore sufficient to consider the term $\frac{\rmi}{2} \e^{-\rmi | t-s | \Delta_\theta^{\nicefrac12} }$ only. 

First consider its restriction to $Y_+$.
For $t<s$ this is of the form $\frac{\rmi}{2} \e^{\rmi ( t-s ) \Delta_\theta^{\nicefrac12} }$ and therefore the strong boundary value of a function that is strongly holomorphic in the parameters $t$ and $s$ in the region $\Im(t-s)>0$ and smooth in other variables. 
We now apply Theorem 8.1.6 in \cite{Ho1} in the variables $t$ and $s$ to conclude that $\WF(G)$ can only contain points $((x,\xi),(x',\xi'))$ with 
$\xi(a \partial_t) - \xi'(b \partial_s) \geq 0$ when $a+b>0$. 
This means that $\xi$ must be future directed and $\xi'$ must be past directed.
Note that this theorem is stated for open cones and can therefore not be immediately applied to the kernel $G$. 
However, it can be applied to a subset of the variables as for example shown in \cite{MR1936535}*{Theorem 2.8} for the analytic wavefront set. 
We omit the discussion here and refer for details to \cite{MR1936535}. 
The argument for $Y_-$ is analogous.
\end{proof}

\subsection{Feynman propagators on general globally hyperbolic spacetimes}

Feynman propagators exist in fact on any globally hyperbolic spacetime and various constructions for them are known in the literature. 
The first general proof for scalar wave equations is due to Duistermaat and H\"ormander \cite{MR0388464}. 
It relies on microlocalization and can also be carried out for normally hyperbolic operators acting on vector bundles, see \cite{strohmaierislam}.
The existence of Feynman propagators is essentially equivalent to the existence of Hadamard states, each Hadamard state giving rise to a Feynman propagator. 
They form an important class of states in quantum field theory on curved spacetimes as they allow for renormalization of perturbative theories, see for example \cite{MR1736329}. 
Different types of constructions of such states have been found, for example \cite{MR641893,MR1400751,MR1421547,MR3148100}, and this remains an active research area, see for example \cite{noMRGerard} for a recent review.

For the sake of completeness we give here a direct general argument for the existence of Feynman propagators, similar to the deformation construction of \cite{MR641893}. 
The main observation is that a Feynman propagator defined near a Cauchy hypersurface can be extended uniquely to a Feynman propagator on the entire spacetime.

\begin{prop}\label{prop:FeynmanExtend}
Let $P$ be a normally hyperbolic operator on a globally hyperbolic spacetime $X$.
Let $\Sigma\subset X$ be a Cauchy hypersurface and let $\UU$ be a globally hyperbolic neighborhood of $\Sigma$ such that $\Sigma$ is also a Cauchy hypersurface in $\UU$.
Let $G$ be a Feynman propagator of $P$ on $\UU\times\UU$.
 
Then there is a unique Feynman propagator $\tilde G$ of $P$ on $X\times X$ that extends $G$.
\end{prop}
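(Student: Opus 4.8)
The strategy is a deformation argument exploiting the fact that retarded and advanced fundamental solutions propagate along the causal structure, so that a parametrix is determined near a Cauchy hypersurface up to causally correct tails. The key players are the (global) retarded and advanced fundamental solutions $\hat G_{\ret}$ and $\hat G_{\adv}$ of $P$ on $X$, which always exist by global hyperbolicity (\cite{BGP07}*{Sec.~3.4}) and are compactly supported in the past/future direction, together with the difference $G_{\adv}-G_{\ret}$, whose kernel is a bisolution of $P$ whose wavefront set is contained in $\Lambda_\ret\cup\Lambda_\adv$.

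\textbf{Construction.}
First I would localize $G$ near $\Sigma$. Pick a slightly smaller globally hyperbolic neighborhood $\VV\Subset\UU$ of $\Sigma$ that still has $\Sigma$ as a Cauchy hypersurface, and a cutoff $\chi\in C^\infty(X)$ with $\chi\equiv 1$ on a neighborhood of $\Sigma$ and $\supp\chi\subset\VV$; more precisely choose $\chi$ to be a function of the Cauchy time function equal to $1$ on $\{|t|\le\epsilon_1\}$ and $0$ on $\{|t|\ge\epsilon_2\}$ with $\epsilon_1<\epsilon_2$ small. Then $E_0:=\chi\,\hat G\,\chi$ has Schwartz kernel supported in $\VV\times\VV$, hence extends by zero to a properly supported operator on all of $X$, and $\WF'(E_0)\subset\Lambda_\Feyn^+$ because wavefront sets only shrink under multiplication by smooth functions and, inside $\VV$, the Feynman sets for $\VV$ and for $X$ agree (geodesics entering $\VV$ near $\Sigma$ stay in the relevant cones — this is where global hyperbolicity of $\VV$ and the fact that $\Sigma$ is Cauchy in $\VV$ is used). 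Now compute the error: $P E_0 - 1$ has smooth kernel on a neighborhood of the diagonal in $\{|t|\le\epsilon_1\}$ since there $P E_0 = P\hat G = 1 + (\text{smoothing})$, and off that region the kernel of $PE_0-1$ is smooth because either $\chi$ is being differentiated (producing a kernel supported away from $\Sigma$) or we are outside $\supp\chi$ (kernel is $-\delta$, but then we are away from the region where we need $PE_0=1$ — more carefully, one arranges $\chi$ so that $P E_0 - 1 =: R$ has kernel supported in $\{\epsilon_1\le |t|\le\epsilon_2\}\times X$ and smooth there). The point is that $R$ is a smoothing operator whose kernel is supported in $\J^+(\Sigma_{\epsilon_1})\setminus\J^+(\Sigma_{\epsilon_2})$ type sets, i.e., causally to the future and past of a compact-in-time slab. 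Then correct: set
\[
\tilde G := E_0 - \hat G_{\ret}\, R_+ - \hat G_{\adv}\, R_-,
\]
where $R=R_++R_-$ is the decomposition according to whether the kernel sits to the future or to the past of $\Sigma$ (this decomposition is clean because $\supp_x R$ avoids a whole slab around $\Sigma$). One checks $P\tilde G = P E_0 - R_+ - R_- = 1$ (using $P\hat G_{\ret}=P\hat G_{\adv}=1$), and the correction terms have smooth kernels since $R_\pm$ are smoothing and $\hat G_{\ret/\adv}$ preserve smoothness; hence $\WF'(\tilde G)=\WF'(E_0)\subset\Lambda_\Feyn^+$, so $\tilde G$ is a Feynman propagator. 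That $\tilde G$ extends $G$, i.e.\ agrees with $\hat G$ microlocally (modulo smoothing) near $\Sigma$, follows because on $\{|t|<\epsilon_1\}$ we have $\tilde G=E_0=\hat G$ minus the contribution of $\hat G_{\ret/\adv}R_\pm$, whose kernels vanish near $\Sigma$ by the support properties of $R_\pm$ and the retarded/advanced supports.

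\textbf{Uniqueness.}
If $\tilde G_1,\tilde G_2$ are two Feynman propagators of $P$ on $X$ extending $G$, their difference $D:=\tilde G_1-\tilde G_2$ satisfies $PD=0$ and $\hat D$ agrees near $\Sigma$ (modulo smoothing) with $\hat G-\hat G=0$, so $\WF'(D)\cap(T^*\VV\times T^*\VV)$ is contained in the smooth part; in particular the Cauchy data of $D$ along $\Sigma$ are smooth. Since $\WF'(D)\subset\Lambda_\Feyn^+$ consists of pairs on a single null geodesic, and the propagation of singularities theorem together with well-posedness of the Cauchy problem for $P$ forces a bisolution with smooth Cauchy data to be smooth, we get that $\hat D$ has smooth kernel, i.e.\ $\tilde G_1=\tilde G_2$ modulo smoothing. (Alternatively, one invokes directly that Feynman parametrices are distinguished and hence unique modulo smoothing, \cite{strohmaierislam}*{Thm.~1.2}, combined with the fact that two parametrices agreeing near a Cauchy surface and differing by a smoothing term everywhere is exactly the statement that the smoothing ambiguity is rigid once pinned down near $\Sigma$ — but the propagation-of-singularities argument is the honest one.)

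\textbf{Main obstacle.}
The delicate point is not the formal algebra of the correction $E_0-\hat G_{\ret}R_+-\hat G_{\adv}R_-$, which is routine, but rather bookkeeping the wavefront set of the localized kernel $E_0$ and of the error $R$: one must verify that cutting off with $\chi$ does not create new singularities outside $\Lambda_\Feyn^+$ and that the error $R$ genuinely splits into a future and a past piece with disjoint, causally separated supports so that applying $\hat G_{\ret}$ resp.\ $\hat G_{\adv}$ is legitimate (properness) and does not reintroduce singularities near $\Sigma$. This in turn rests on choosing the temporal cutoff and the auxiliary neighborhoods $\VV\Subset\UU$ carefully, and on the compatibility of the Feynman wavefront condition with restriction to the globally hyperbolic sub-spacetime $\UU$ in which $\Sigma$ is still Cauchy — i.e.\ that null geodesics and the geodesic flow relevant to $\Lambda_\Feyn^+$ behave the same way seen from $\UU$ and from $X$ as long as one stays near $\Sigma$.
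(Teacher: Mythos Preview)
Your construction has a genuine gap. The claim that $R:=PE_0-1$ is smoothing (or even that its kernel is supported in the transition strip $\{\epsilon_1\le|t|\le\epsilon_2\}\times X$) is false. Outside $\supp\chi$ you have $E_0=0$ and hence $R=-\id$ there, whose kernel is $-\diag_*\vol_X$, certainly not smooth. Moreover, the term $[P,\chi]\,\hat G\,\chi$ that appears in $PE_0$ has Schwartz kernel with wavefront set contained in $\Lambda_\Feyn^+$ (restricted to $\{d\chi\neq0\}\times\supp\chi$), so it is not smoothing either. Consequently $\hat G_\ret R_+$ and $\hat G_\adv R_-$ are \emph{not} smoothing corrections: feeding the $-\id$ piece of $R_+$ into $\hat G_\ret$ literally gives $\hat G_\ret$ on the far future, so your $\tilde G$ equals $\hat G_\ret$ there and $\hat G_\adv$ in the far past. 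Since $\WF'(G_\ret)=\Lambda_\ret\not\subset\Lambda_\Feyn^+$, your $\tilde G$ is not a Feynman propagator.

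The paper avoids this entirely by a much shorter trick that you should adopt: do not cut off $G$; instead subtract the retarded propagator. On $\UU$ the difference $G':=G-G_\ret$ is a \emph{bisolution} of $P$, and bisolutions are uniquely determined by their restriction to a Cauchy hypersurface neighborhood. Hence $G'$ extends uniquely to a bisolution $\tilde G'$ on $X\times X$, and one sets $\tilde G:=\tilde G'+\tilde G_\ret$ with the global retarded propagator $\tilde G_\ret$. This is automatically a fundamental solution, and the Feynman wavefront condition follows by propagation of singularities, since every null bicharacteristic meets $T^*\UU$. Note also that the statement asserts \emph{exact} uniqueness, not uniqueness modulo smoothing: if $\tilde G_1,\tilde G_2$ both extend $G$ then their difference is a bisolution vanishing on $\UU\times\UU$, hence vanishing identically by well-posedness of the Cauchy problem. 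Your repeated ``modulo smoothing'' hedges are unnecessary and obscure this.
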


\begin{proof}
Let $G_\ret$ be the retarded fundamental solution of $P$ on $\UU$ and $\tilde G_\ret$ the one on $X$.
By uniqueness of retarded fundamental solutions, $G_\ret$ is just the restriction of $\tilde G_\ret$.
Since $G'=G-G_\ret$ is a bi-solution near $\Sigma \times \Sigma$ it uniquely extends to a bisolution $\tilde G'$ on $X$.
Then $\tilde G = \tilde G' + \tilde G_\ret$ is the unique extension of $G$. 

It remains to show that $\tilde G$ has wavefront set contained in $\Lambda_\Feyn^+$ over all of $X\times X$.
This follows from propagation of singularities as $\mathrm{WF} \setminus \Delta^*$ is invariant under the geodesic flow and every orbit intersects $T^* (\UU \times \UU)$ because each maximal lightlike geodesic hits $\Sigma$.
\end{proof}
  
\begin{prop}\label{prop:FeynmanExist}
Let $P$ be a normally hyperbolic operator on a globally hyperbolic spacetime $X$. 
Then there exists a Feynman propagator.
\end{prop}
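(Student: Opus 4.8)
The plan is to combine the deformation trick for globally hyperbolic metrics with the local existence result in the product case (Theorem~\ref{thm:FeynmanProductWave}) and the extension result just proved (Proposition~\ref{prop:FeynmanExtend}). First I would pick a smooth spacelike Cauchy hypersurface $\Sigma\subset X$ and recall that, by the structure theory of globally hyperbolic spacetimes, $X$ is diffeomorphic to $\R\times\Sigma$ with metric $-\beta\, dt^2 + h_t$ for a positive function $\beta$ and a $t$-dependent family of Riemannian metrics $h_t$ on $\Sigma$. The key point is that near $\Sigma=\{t=0\}$ one can deform the metric (and the bundle data, keeping the connection fixed near $\Sigma$) to make it an exact product $-dt^2 + h$ on a slightly smaller neighborhood $\UU$ of $\Sigma$, while staying globally hyperbolic with $\Sigma$ still a Cauchy hypersurface; simultaneously one conjugates $P$ so that it has product structure over $\UU$ in the sense of Definition~\ref{def:ProductStructureNormHyp}. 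One has to be a little careful that the deformation of $P$ remains normally hyperbolic — but this is automatic since normal hyperbolicity is a condition only on the principal symbol, which is the (new) metric.

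Next, on the product neighborhood $\UU\cong(-\delta,\delta)\times\Sigma$ the hypothesis of Theorem~\ref{thm:FeynmanProductWave} is satisfied once we know that either $\Sigma$ is compact (case~\eqref{FallB}) or we are in case~\eqref{FallA}. If $\Sigma$ is compact this is immediate. In general we may choose $\Sigma$ compact precisely when $X$ has a compact Cauchy hypersurface, so to cover the general spatially non-compact case one should instead invoke the microlocal construction of Duistermaat--H\"ormander / Strohmaier--Islam (\cite{MR0388464}, \cite{strohmaierislam}) to produce a Feynman parametrix on $\UU$ and then correct it to a genuine fundamental solution on $\UU$ by adding a smooth bisolution — this uses that on the small globally hyperbolic piece $\UU$ the error term is smooth with spatially compact support and hence can be absorbed using the advanced/retarded fundamental solutions of $P$ on $\UU$. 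Either way, one obtains a Feynman propagator $G$ for $P$ on $\UU\times\UU$.

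Finally, I would apply Proposition~\ref{prop:FeynmanExtend} to this $G$: it extends uniquely to a Feynman propagator $\tilde G$ of $P$ on all of $X\times X$. This is where the geodesic flow / propagation of singularities argument does the work, guaranteeing $\WF'(\tilde G)\subset\Lambda_\Feyn^+$ globally because every maximal lightlike geodesic meets $\Sigma$ and hence enters $\UU$. I expect the main obstacle to be the bookkeeping of the deformation step: one must verify that the deformed data still form an admissible triple (globally hyperbolic product structure near $\Sigma$, $P$ of product type, normal hyperbolicity preserved) and that the deformation can be taken to agree with the original structure outside a neighborhood of $\Sigma$, so that Proposition~\ref{prop:FeynmanExtend} applies verbatim. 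An alternative, cleaner route — and probably the one the paper takes — is to reduce directly to the already-cited existence statement of \cite{strohmaierislam} for the microlocal construction, in which case this proposition is essentially a citation; but presenting the self-contained deformation argument has the advantage of only relying on Theorem~\ref{thm:FeynmanProductWave} and Proposition~\ref{prop:FeynmanExtend}.
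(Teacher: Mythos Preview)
Your deformation is pointed in the wrong direction, and this creates a genuine gap. If you deform the metric and conjugate $P$ \emph{near} $\Sigma$ so that the resulting operator $P'$ has product structure over $\UU$, then Theorem~\ref{thm:FeynmanProductWave} produces a Feynman propagator on $\UU$ for $P'$, not for $P$. Proposition~\ref{prop:FeynmanExtend}, however, requires as input a Feynman propagator for $P$ on $\UU$; since $P'\neq P$ on $\UU$ in general (a normally hyperbolic operator is not canonically of product form near a hypersurface --- the lower-order terms are arbitrary), the extension step does not apply verbatim.

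The paper avoids this by deforming \emph{away} from the region of interest, in the spirit of Fulling--Narcowich--Wald. One keeps $(\UU,P)$ completely untouched and instead builds an auxiliary globally hyperbolic spacetime $X'$ which contains an isometric copy $\UU_1$ of $\UU$ (with $P'|_{\UU_1}=P$) together with a \emph{second} Cauchy hypersurface $\Sigma_2$ in the future, near which the metric and $P'$ are freely chosen to have product structure with $\Delta$ self-adjoint for a positive definite inner product. The Feynman propagator is constructed via Theorem~\ref{thm:FeynmanProductWave} on $\UU_2$, propagated by Proposition~\ref{prop:FeynmanExtend} (on $X'$) to all of $X'$, restricted to $\UU_1\cong\UU$ --- where it is now genuinely a Feynman propagator for $P$ --- and finally propagated again by Proposition~\ref{prop:FeynmanExtend} (on $X$) to all of $X$. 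This double use of the extension proposition is what bridges the deformed and undeformed operators.

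This route also dissolves your case~\eqref{FallA}/\eqref{FallB} dichotomy: since the product region $\UU_2$ and the operator there are designed from scratch, one can always choose $\Delta$ self-adjoint and bounded below by a positive constant, so case~\eqref{FallA} holds regardless of whether $\Sigma$ is compact. No appeal to the Duistermaat--H\"ormander parametrix (and no correction-to-fundamental-solution argument, whose support issues you would still have to address) is needed.
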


\begin{proof}
By Proposition~\ref{prop:FeynmanExtend}, it suffices to construct a Feynman propagator in a globally hyperbolic neighborhood $\UU$ of a Cauchy hypersurface $\Sigma$. 
The metric can be deformed in the future of $\UU$ to have product structure near another Cauchy hypersurface in such a way that the interpolating manifold is globally hyperbolic. 
To be more precise, there exists another globally hyperbolic spacetime $X'$ which contains a Cauchy hypersurface $\Sigma_1\subset X'$ with a globally hyperbolic neighborhood $\UU_1\subset X'$ and a  Cauchy hypersurface $\Sigma_2\subset X'$ with a globally hyperbolic neighborhood $\UU_2\subset X'$ such that:
\begin{enumerate}[\myicon]
\item
the neighborhood $\UU_1$ is isometric to $\UU$, 
\item
the metric has product structure over $\UU_2$,
\end{enumerate}
see \cite[Thm.~3]{omueller}.

The (isometric copy of) the restriction of $P$ to $\UU$ can be extended to a normally hyperbolic operator $P'$ on $X'$ and then be deformed on $\UU_2$ to be of the form $P'=\frac{\partial^2}{\partial t^2} + \Delta$.
If we are in case~\eqref{FallA}, we can furthermore assume that $\Delta$ is formally selfadjoint with respect to a positive definite Hermitian inner product and bounded from below by 1, say.

Now one constructs a Feynman propagator for $P'$ on $\UU_2$ as in \eqref{LaplaceHadamard} and extends it to a Feynman propagator for $P'$ on $X'$. 
The restriction of this operator to $\UU_1$ then defines a Feynman propagator for $P$ on $\UU$. 
\end{proof}
   
While Feynman propagators are not unique, their singularity structure is. i.e.\ different Feynman propagators differ by smoothing operators. 
In particular, Feynman propagators that are constructed from Cauchy surfaces near which the metric is of product type in the above manner will in general be different. 
This difference, in a way, is the origin of the appearance of a nontrivial index. 
Similarly, the fact that the difference is smoothing reflects the Fredholmness of the problem. 
The singularity structure of Feynman propagators near the diagonal can be described quite explicitly on a symbolic level.
For our purposes we will need only the singularities up to a certain order. 

Suppose that $n=\dim(X)$ is even and that $\UU$ is a sufficiently small neighborhood of the diagonal in $X \times X$. 
We apply Proposition~\ref{prop:ZerlegeG} with $N=2$ and obtain a decomposition
\begin{equation}
G = G^{\loc} + G^{\reg},
\label{eq:DecompositionOfG}
\end{equation}
where
$G^{\reg}$ is $C^2$ near the diagonal and 
\begin{equation}
G^{\loc}|_{\UU} =\sum_{j=0}^{\frac{n+2}{2}} V_j  G^{+,\UU}_{-\frac{n}{2}+1+j}.
\label{eq:ExpansionOfGloc}
\end{equation}
For a definition of the distributions $G^{+,\UU}_\beta$ and the Hadamard coefficients $V_j$ see Appendix~\ref{sec:HadDist}.

\begin{notation}
If $\hat K$ is an operator with continuous Schwartz kernel $K \in C(X \times X; \SS \boxtimes \SS^*)$ then the restriction of $K$ to the diagonal will be denoted by $[\hat K] \in C(X;\SS \otimes \SS^*)$, i.e.\ $[\hat K](x) = K(x,x)$.
We will write $\mathrm{Tr}_x$ for the pointwise trace of an operator $\hat K$ with continuous integral kernel , i.e.
$$
 \mathrm{Tr}_x \left( \hat K\right) = \tr(K(x,x)) =  \tr ([\hat K](x)).
$$
\end{notation}

\section{Feynman propagators for Dirac-type operators} \label{Dirac-Fein}

Now we return to a Dirac-type operator $\dirac$ acting on sections of the bundle $\SS$. 
The operator $\dirac^2$ is normally hyperbolic and also acts on $C_\mathrm{c}^\infty(X;\SS)$.
It therefore has retarded and advanced fundamental solutions $G_{\ret},G_{\adv}\in\DD'(X \times X;\SS \boxtimes \SS^*)$. 

It is easy to see that $\hat D_{\ret/\adv}:=\dirac\circ \hat G_{\ret/\adv} = \hat G_{\ret/\adv} \circ\dirac$ yield the retarded and advanced fundamental solutions for $\dirac$, see \cite{MR3302643}*{Example~3.16}. 

As for normally hyperbolic operators, a distinguished parametrix $D$ for $\dirac$ is called \emph{Feynman parametrix} if $\WF'(D) \subset \Lambda_\Feyn^+$.
The operator $\hat D$ is called a \emph{Feynman propagator} if $D$ is a Feynman parametrix and it in addition satisfies $\dirac \hat D =   \dirac \hat D =\id$ on $C_\mathrm{c}^\infty(X; \SS)$.

\subsection{APS boundary conditions}

If $\Sigma$ is compact, we do not assume that the Dirac operator $\DS$ is selfadjoint with respect to some positive definite inner product.
Thus, its eigenvalues of $\DS$ may fail to be real-valued.
In order to pose APS boundary conditions, one needs to make a choice of which eigenvalues are considered positive and which are considered negative. 
We use the projectors introduced in \eqref{eq:p>}--\eqref{eq:p=} to achieve this.

Let $\Sigma_-, \Sigma_+ \subset X$ be two disjoint smooth spacelike Cauchy hypersurfaces of $X$ with $\Sigma_-$ lying in the past of $\Sigma_+$.
Let $M=\J^+(\Sigma_-)\cap \J^-(\Sigma_+)$ be the spacetime region in between these two Cauchy hypersurfaces.
Then $M$ is a manifold with boundary $\partial M = \Sigma_- \cup \Sigma_+$.
We introduce the Atiyah-Patodi-Singer spaces as
\begin{align*}
C^\infty_{\APS}( M;\SS)
&:=
\{ u\in C^\infty( M;\SS) \mid p_{\geq}(\D_{\Sigma_-})( u|_{\Sigma_-}) = 0 = p_{<}(\D_{\Sigma_+})( u|_{\Sigma_+})\} .
\end{align*}

\subsection{The Feynman propagator in the product case}

Suppose that $X = \R \times \Sigma$ and the Dirac operator $\dirac=\rmi\nS (\tfrac{\partial}{\partial t}+\rmi\DS)$ has product structure over $X$.
Then the normally hyperbolic operator $\dirac^2$ also has product structure over $X$.
As we have seen, a Feynman propagator $G$ for this operator can then be explicitly given by \eqref{prodfeyn}. 
Similarly to the advanced and retarded fundamental solutions, we get a Feynman propagator for $\dirac$ by setting
\begin{alignat*}{2}
 \hat D &:= \dirac \hat G &&= \hat G \dirac .
\end{alignat*}
More explicitly, we have
\begin{theorem}
Assume \eqref{FallA} or \eqref{FallB} as stated on page~\pageref{FallA}.
Suppose furthermore that $\dirac$ has product structure over $X = \R \times \Sigma$.
Then the family of operators
\begin{equation}
k_D(t) := 
\rmi\big(\chi_{[0,\infty)}(t)\, p_\ge - \chi_{(-\infty,0]}(t)\, p_<\big)\e^{-\rmi t \DS} \nS
\label{prodfeyndirac}\end{equation}
defines a Feynman propagator $D$ for $\dirac$ via
\begin{gather}
 (\hat D u)(t,\cdot) = \int_\R k_D(t-s) u(s,\cdot) ds .
 \label{DiracFeyn}
 \end{gather}
\end{theorem}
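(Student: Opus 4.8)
The plan is to derive \eqref{prodfeyndirac}--\eqref{DiracFeyn} directly from the already-established Feynman propagator $G$ for $\dirac^2$ in Theorem~\ref{thm:FeynmanProductWave} by applying $\dirac$. Since $\dirac=\rmi\nS(\tfrac{\partial}{\partial t}+\rmi\DS)$ and $G$ acts as convolution in $t$ with the operator-valued kernel $k(t)$ from \eqref{prodfeyn}, the operator $\hat D=\dirac\hat G$ acts as convolution with $k_D(t) = \rmi\nS(\tfrac{\partial}{\partial t}+\rmi\DS)k(t)$, where the $t$-derivative is taken in the distributional sense on $\R$. First I would carry out this differentiation. In the $t\neq0$ region, $k(t) = \tfrac{\rmi}{2}\e^{-\rmi|t|\Delta_\theta^{\nicefrac12}}\Delta_\theta^{-\nicefrac12}$ plus the $p_0$-term; away from $t=0$ one has $\Delta_\theta^{\nicefrac12}=(p_>-p_<)\DS$ on $\ker(p_0)$ by Proposition~\ref{prop:projectors}~\eqref{projectors5}, so on $\rg(1-p_0)$ the kernel $\e^{-\rmi|t|\Delta_\theta^{\nicefrac12}}$ equals $\e^{-\rmi t\DS}p_> + \e^{\rmi t\DS}p_<$ for $t>0$ and the mirror image for $t<0$. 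A short computation then shows $(\tfrac{\partial}{\partial t}+\rmi\DS)$ applied to these pieces collapses many terms: on the $p_>$ part one gets $-\rmi\DS\e^{-\rmi t\DS}p_>+\rmi\DS\e^{-\rmi t\DS}p_> = 0$ away from $t=0$, and similarly the $p_<$ part contributes only for $t<0$ through the jump. The jump discontinuities of $k$ (and of the $p_0$-term, whose derivative produces $\chi_{[0,\infty)}(t)\cos(tA^{\nicefrac12})p_0$ plus a contribution supported at $0$) at $t=0$ produce Dirac deltas; assembling these with care should yield exactly $k_D(t)=\rmi(\chi_{[0,\infty)}(t)p_\ge - \chi_{(-\infty,0]}(t)p_<)\e^{-\rmi t\DS}\nS$ after using $p_\ge=p_>+p_0$, $e^{-\rmi t\DS}$ restricted to $\rg(p_0)$ being $\cos(tA^{\nicefrac12})+\ldots$, and the anticommutation $\nS\DS=-\DS\nS$ to move $\nS$ to the right.

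Next I would verify that $\hat D$ so defined is a genuine fundamental solution: $\dirac\hat D=\id=\hat D\dirac$ on $C^\infty_\mathrm{c}(X;\SS)$. One direction is immediate from $\dirac\hat G=\hat G\dirac^2$-style identities: since $\dirac^2\hat G=\id$ (established in Theorem~\ref{thm:FeynmanProductWave}) we get $\dirac\hat D=\dirac^2\hat G=\id$, and $\hat D\dirac=\hat G\dirac\cdot\dirac=\hat G\dirac^2=\id$, using that $\dirac$ and $\hat G$ commute (which in turn follows because $\dirac$ commutes with $\dirac^2$ and $G$ is built functorially from $\Delta=\DS^2$ and $\nS$, all of which commute or anticommute with $\DS$ appropriately). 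Alternatively, one checks directly from \eqref{prodfeyndirac} that $(\tfrac{\partial}{\partial t}+\rmi\DS)$ applied to the convolution kernel gives $\delta(t)$ times the identity on $\SS$, using $p_\ge+p_<=\id$.

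Finally, for the wavefront set condition $\WF'(D)\subset\Lambda_\Feyn^+$: this follows from the corresponding property of $G$, because $D=\dirac\hat G$ and applying a differential operator does not enlarge the wavefront set, so $\WF'(D)\subset\WF'(G)\subset\Lambda_\Feyn^+$ by Theorem~\ref{thm:FeynmanProductWave}. One should also note that $D$ is a parametrix (indeed a fundamental solution) for the first-order operator $\dirac$, so by the Duistermaat--H\"ormander classification (as generalized in \cite{strohmaierislam}) $D$ is the Feynman propagator, unique modulo smoothing. \textbf{The main obstacle} I anticipate is the careful bookkeeping of the distributional $t$-derivatives at $t=0$: the absolute value $|t|$, the indicator functions $\chi_{[0,\infty)}$ and $\chi_{(-\infty,0]}$ (which overlap at $t=0$), and the $p_0$-term all contribute boundary terms, and one must confirm that the delta contributions combine to exactly $\delta(t)\,\id_\SS\nS/(\rmi\nS)^{-1}$--worth of identity, i.e. that the coefficient of $\delta(t)$ is $\id_\SS$ after the overall $\rmi\nS$ is applied; getting the signs and the $p_0$ versus $1-p_0$ split to match \eqref{prodfeyndirac} requires the relations in Proposition~\ref{prop:projectors} used in the right order.
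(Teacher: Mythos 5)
Your proposal takes essentially the same route as the paper's proof: apply $\dirac$ to the explicit kernel $k(t)$ of the Feynman propagator for $\dirac^2$ from Theorem~\ref{thm:FeynmanProductWave}, use the (anti)commutation relations between $\nS$, $\DS$, $\Delta_\theta^{\pm\nicefrac12}$, $p_0$, $p_>$, $p_<$, and simplify. The paper uses the equivalent form $\dirac=\rmi(\tfrac{\partial}{\partial t}-\rmi\DS)\nS$ and moves $\nS$ to the right before differentiating, which makes the algebra slightly cleaner, but the substance is the same. One point worth clarifying in your write-up: the kernel $k(t)$ is actually \emph{continuous} at $t=0$ (both the $\e^{-\rmi|t|\Delta_\theta^{\nicefrac12}}\Delta_\theta^{-\nicefrac12}$ term and the $\chi_{[0,\infty)}(t)A^{-\nicefrac12}\sin(tA^{\nicefrac12})p_0$ term have zero jump there), so the distributional $t$-derivative produces no Dirac delta at all — the worry you flag as the ``main obstacle'' evaporates. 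The delta in $\dirac\hat D=\id$ arises instead from the jump of $k_D$ itself, which is exactly $\rmi(p_\ge+p_<)\nS=\rmi\nS$, as your verification step (2) implicitly requires. Also, your intermediate statements about which of the $p_>$/$p_<$ pieces survive for $t>0$ versus $t<0$ are garbled (before conjugating by $\nS$, the $p_<$ piece is what survives for $t>0$, not $t<0$), but since $\nS$ intertwines $p_>$ and $p_<$, carrying out the bookkeeping carefully gives the stated formula.
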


\begin{proof}
From \eqref{eq:DiracProduct2} we have
$$
\dirac= \rmi\bigg(\frac{\partial}{\partial t} -\rmi\DS\bigg)\nS .
$$
From Theorem~\ref{thm:FeynmanProductWave} we know
$$
(\hat D u)(t,\cdot) = \int_\R (k_1(t-s)+k_2(t-s)) u(s,\cdot) ds 
$$
where 
\begin{align*}
k_1(t) 
&= 
-\frac{1}{2}\bigg(\frac{\partial}{\partial t} -\rmi\DS\bigg)\nS \e^{-\rmi | t | \Delta_\theta^{\nicefrac{1}{2}} } \Delta_\theta^{-\nicefrac{1}{2}} , \\
k_2(t) 
&= 
\rmi\chi_{[0,\infty)} (t)\bigg(\frac{\partial}{\partial t} -\rmi\DS\bigg)\nS A^{-\nicefrac{1}{2}} \sin(tA^{\nicefrac{1}{2}}) \, p_0 .
\end{align*}
Recall that $\Delta=\DS^2$ and that $A$ is the restriction of $\Delta$ to its generalized kernel.
Since $\nS$ anticommutes with $\DS$ it commutes with $\Delta$, $A$, all powers $\Delta_\theta^s$, and $p_0$.
Thus,
\begin{align*}
k_1(t) 
&= 
-\frac{1}{2}\bigg(\frac{\partial}{\partial t} -\rmi\DS\bigg) \e^{-\rmi | t | \Delta_\theta^{\nicefrac{1}{2}} } \Delta_\theta^{-\nicefrac{1}{2}}\nS , \\
k_2(t) 
&= 
\rmi\chi_{[0,\infty)} (t)\bigg(\frac{\partial}{\partial t} -\rmi\DS\bigg) A^{-\nicefrac{1}{2}} \sin(tA^{\nicefrac{1}{2}}) \, p_0\nS .
\end{align*}
We compute, using Proposition~\ref{prop:powers},
\begin{align}
k_1(t)
&=
-\tfrac{1}{2}\Big(-\rmi\mathrm{sign}(t)\Delta_\theta^{\nicefrac{1}{2}} -\rmi \DS \Big)\e^{-\rmi | t | \Delta_\theta^{\nicefrac{1}{2}} }\Delta_\theta^{-\nicefrac{1}{2}}\nS \notag\\
&=
\tfrac{\rmi}{2}\Big(\mathrm{sign}(t)(1-p_0) + \DS \Delta_\theta^{-\nicefrac{1}{2}}\Big)\e^{-\rmi | t | \Delta_\theta^{\nicefrac{1}{2}} }\nS \notag\\
&=
\rmi\Big(\chi_{[0,\infty)}(t)\, p_> - \chi_{(-\infty,0]}(t)\, p_<\Big)\e^{-\rmi | t | \Delta_\theta^{\nicefrac{1}{2}} }\nS .\notag
\end{align}
For $t\ge0$ and on $\rg(p_>)$ we have, by Proposition~\ref{prop:projectors}~\eqref{projectors5}, $\Delta_\theta^{\nicefrac{1}{2}}=\DS$ and hence $\chi_{[0,\infty)}(t)\, p_>\, \e^{-\rmi | t | \Delta_\theta^{\nicefrac{1}{2}} }=\chi_{[0,\infty)}(t)\, p_>\, \e^{-\rmi t \DS}$.
Similarly, for $t\le0$ and on $\rg(p_<)$ we get $\Delta_\theta^{\nicefrac{1}{2}}=-\DS$ and hence $\chi_{(-\infty,0]}(t)\, p_<\, \e^{-\rmi | t | \Delta_\theta^{\nicefrac{1}{2}} }=\chi_{(-\infty,0]}(t)\, p_<\, \e^{-\rmi t \DS}$.
Thus,
\begin{equation}
k_1(t) = \rmi\Big(\chi_{[0,\infty)}(t)\, p_> - \chi_{(-\infty,0]}(t)\, p_<\Big)\e^{-\rmi t \DS}\,\nS .
\label{eq:k1}
\end{equation}
Furthermore, we have on $\rg(p_0)$
\begin{align}
k_2(t) 
&= 
\rmi\chi_{[0,\infty)} (t)\big(\cos(tA^{\nicefrac{1}{2}}) -\rmi A^{-\nicefrac{1}{2}} \sin(tA^{\nicefrac{1}{2}})\DS \big) p_0\,\nS \notag\\
&=
\rmi\chi_{[0,\infty)} (t)\big( \cos(t\DS) - \rmi \sin(t\DS) \big) p_0\,\nS \notag\\
&=
\rmi\chi_{[0,\infty)} (t)\, \e^{-\rmi t\DS} \, p_0\,\nS .
\label{eq:k2}
\end{align}
Adding \eqref{eq:k1} and \eqref{eq:k2} yields \eqref{prodfeyndirac}.
\end{proof}

\begin{remark}
For the advanced and retarded fundamental solutions of $\dirac$ we have similar but simpler formulas.
Namely, 
$$
(\hat G_{\adv/\ret}^\dirac u)(t,\cdot) = \int_R k_{\adv/\ret}(t-s)u(s,\cdot)ds
$$
where
\begin{align*}
k_\adv(t) &= -\rmi \chi_{(-\infty,0]}(t)\e^{-\rmi t \DS} \nS 
\quad\text{ and}\\
k_\ret(t) &=\rmi \chi_{[0,\infty)}(t)\e^{-\rmi t \DS} \nS.
\end{align*}
For later reference we note that this implies
\begin{equation}
\big(\hat D - \tfrac12 (\hat G_\ret^\dirac + \hat G_\adv^\dirac)\big)u(t,\cdot)
=
\tfrac\rmi2 \int_\R (p_\ge - p_<)\e^{-\rmi (t-s) \DS} \nS u(s,\cdot) ds .
\label{eq:FeynmanRegularized}
\end{equation}
\end{remark}

In the product case with compact Cauchy hypersurface, the nonlocal regular part $[G^{\reg}]$ of the Feynman propagator for $\dirac^2$ is related to the $\eta$-invariant $\eta(\DS)$ of the Riemannian Dirac operator $\DS$, as we will see. 

\subsection{\texorpdfstring{The $\pmb{\eta}$-invariant of a Dirac-type operator on a compact Riemannian manifold}{The eta-invariant of a Dirac-type operator on a compact Riemannian manifold}}
Let $\Sigma$ be a compact Riemannian manifold and $\DS$ a Dirac-type operator on $\Sigma$.
Before we treat the general case let us review the classical $\eta$-invariant for selfadjoint Dirac operators.

\subsubsection{\texorpdfstring{The $\eta$-invariant of a selfadjoint Dirac operator in the sense of Gromov and Lawson}{The eta-invariant of a selfadjoint Dirac-type operator}}
Suppose that $\DS$ is selfadjoint with respect to a positive definite inner product.
Then $\DS$  has real and discrete spectrum. 
The $\eta$-invariant can be defined as the value at $s=0$ of the analytic continuation of the $\eta$-function which is defined for $\mathrm{Re}(s)> n$ by
$$
 \eta(s) = \sum_{\mu \not= 0} \mathrm{sign}(\mu) |\mu|^{-s}.
$$
Here the summation is taken over all nonzero eigenvalues $\mu$ of $\DS$, repeated according to their multiplicity.
If $|\DS|^{s}$ is defined by functional calculus as $\chi_s(|\DS|)$, where 
$$
 \chi_s(x) = 
 \begin{cases} 
 x^s, & x >0, \\ 
 0, & x \leq 0,
 \end{cases}
$$ 
then this can also be written as $\eta(s) = \Tr (\mathrm{sign}(\DS) |\DS|^{-s} )$. 
We also have the corresponding local $\eta$-function, that is, the meromorphic continuation of the function defined by 
$$
\eta_y(s) = \Tr_y (\mathrm{sign}(\DS) |\DS|^{-s} ), \qquad \mathrm{Re}(s)> n .
$$
Obviously, the local $\eta$-function integrates to the $\eta$-invariant,
$$
\eta(s) = \int_\Sigma \eta_y(s)\, dA(y).
$$
Using the Mellin transform one easily verifies the relations
\begin{align*}
\eta_y(s) 
&= 
\Tr_y ( (p_> - p_<) |\DS|^{-s} ) \\ 
&=  
\Tr_y \big( \DS |\DS|^{-(s+1)}\big) \\ 
&= 
\Tr_y \big( \DS (\DS^2)^{-\frac{s+1}{2}}\big) \\
&= 
\frac{1}{\Gamma(\frac{s}{2})} \int_0^\infty t^{\frac{s}{2}-1} \Tr_y \big( (p_> - p_<) \e^{-t \DS^2}\big) \, dt\\ 
&=  
\frac{1}{\Gamma(\frac{s+1}{2})} \int_0^\infty t^{\frac{s-1}{2}} \Tr_y \big(\DS \e^{-t \DS^2}\big) \, dt .
\end{align*}
It is well known that the asymptotic expansion of the local trace $\Tr_y (\DS \e^{-t \DS^2})$ as $t \searrow 0$ gives a meromorphic continuation of the local $\eta$-function. 

In the case of a Dirac operator in the sense of Gromov and Lawson, one has $\Tr_y (\DS \e^{-t \DS^2}) = \O(t^{\nicefrac{1}{2}})$ uniformly in $y$ as $t \searrow 0$, see \cite{bismut1986analysis}*{Thm.~2.4}.
In particular, $\eta_y$ is regular at $s=0$. Comparison of expansion coefficients then shows that
$$
 \Tr_y \big( (p_> - p_<) \e^{-t \DS^2} \big) = \O(1),
$$
as $t \searrow 0$ uniformly in $y$.  
The local $\eta$-invariant $\eta_y(\DS)$ is defined for $y \in \Sigma$ to be the value $\eta_y(0)$. 
The above shows that in fact
\begin{gather*} 
 \eta_y(\DS) = \lim_{t \searrow 0}  \Tr_y \big( (p_> - p_<) \e^{-t \DS^2} \big).
\end{gather*}

Similarly, let $h_y(\DS) = \Tr_y p_0(\DS)$. 
This is the trace of the restriction to the diagonal of the integral kernel of $p_0(\DS)$.
Note that $h_y(\DS)$ integrates to $h(\DS) := \dim\ker(\DS)$.

\subsubsection{\texorpdfstring{The $\eta$-invariant of a general Dirac-type operator}{The eta-invariant of a general Dirac-type operator}}

Now we no longer assume that $\DS$ is selfadjoint with respect to a positive definite inner product.
Then we are in Case~\ref{FallB} on page~\pageref{FallB}.
This means that the spectrum of $\DS$ may be nonreal and there may be nontrivial Jordan blocks in the generalized eigenspaces.
In order to measure spectral asymmetry, one needs to specify what ``positive'' and ``negative'' eigenvalues are. 
This is achieved by considering the operators $\Delta_\theta^{-\nicefrac{1}{2}}$, $p_>$, $p_<$, and $p_0$ for $\Delta = \D^2_\Sigma$ as defined in Section~\ref{sec:LaplaceDirac}. 
By Propositions~\ref{prop:powers}~(\ref{power8strich}') and \ref{prop:projectors}~\eqref{projectors7}, $(p_>-p_<) \Delta^{-\nicefrac{s}{2}}_\theta$ is a classical pseudodifferential operator of order $-\Re(s)$. 
In particular, $(p_>-p_<) \Delta^{-\nicefrac{s}{2}}_\theta$ is a trace-class operator for $\Re(s) > n$. 
The function
$$
 \eta(s) := \Tr\big( (p_> - p_<) \Delta_\theta^{-\nicefrac{s}{2}} \big)
$$
is holomorphic and extends to a meromorphic function on the whole complex plane.
Zero is a pole of order at most one. 
The residue at zero is proportional to the Wodzicki residue of $p_> - p_<$, see for example \cite{MR1714485} and references therein.
By a deep result of Gilkey \cite{MR624667} and Wodzicki \cite{MR728144}, pseudodifferential projections have vanishing Wodzicki residues, see also \cite{MR3080490} for an algebraic proof.
Therefore, $\eta(s)$ is regular at $s=0$, and we can define the $\eta$-invariant of $\DS$ by
$$
\eta(\DS) := \eta(0).
$$
By the general theory of residues of  pseudodifferential operators, we have for any zero-order (polyhomogeneous) pseudodifferential operator
$A$ that
$$
 \Tr \left( A  e^{-t \Delta_\theta} \right) \sim \sum_{j=0}^{n-2}  a_{j} t^{-\frac{n-1}{2}+\frac{j}{2}} + b + c \log(t) +\O(t^{\nicefrac12} \log t) \quad \textrm{ as } t \searrow 0,
$$
where $c$ is proportional to the Wodzicki residue of $A$, see for example \cite{MR1714485}*{(1.14) and (1.16)}.
Hence,
\begin{equation} \label{good-equationnr1}
\Tr \left( (p_> - p_<)  e^{-t \Delta_\theta} \right) 
\sim 
\sum_{j=0}^{n-2}  a_{j} t^{-\frac{n-1}{2}+\frac{j}{2}} + \eta(\DS) +\O(t^{\nicefrac12} \log t) \quad \textrm{ as } t \searrow 0.
\end{equation}
There is also a corresponding local expansion
\begin{equation}
\Tr_y \left( (p_> - p_<)  e^{-t \Delta_\theta} \right) 
\sim 
\sum_{j=0}^{n-2}  a_{y,j} t^{-\frac{n-1}{2}+\frac{j}{2}} + b_y + c_y \log(t) +\O(t^{\nicefrac12} \log t) 
\quad \textrm{ as } t \searrow 0
\label{good-equationnr1pointwise}
\end{equation}
and therefore
$$
 a_j = \int_\Sigma a_{y,j} dy, \quad \eta(\DS) = b = \int_\Sigma b_y dy, \quad \int_\Sigma c_y dy =0.
$$
While in the case of selfadjoint Dirac operator in the sense of Gromov and Lawson we have $c_y=0$ and $a_{y,j}=0$ for $0 \leq j\leq n-2$ this is no longer true for general operators of Dirac type.
It was shown in \cite{MR3483832}*{Thm.~1.3} that for a selfadjoint operator of Dirac type, $a_{y,0}=0$ if and only if it is a Dirac operator in the sense of Gromov and Lawson. One can use the results by
Branson and Gilkey \cite{MR1174158} to see that $c_y$ is in general nonzero for Dirac-type operators. 

\subsection{\texorpdfstring{Feynman propagator and $\pmb{\eta}$-invariant}{Feynman propagator and eta-invariant}}

We return to Lorentzian manifolds and will derive a relation between the Feynman propagator of the square of the Dirac operator and the $\eta$-invariant of the induced Riemannian Dirac operator on a Cauchy hypersurface.

\subsubsection{The product case}

We start by considering the product case.
Let $X = \R \times \Sigma$ with product metric $-dt^2+g_\Sigma$ where $\Sigma$ is compact with Riemannian metric $g_\Sigma$.
Let $\dirac =
\begin{pmatrix}
0 & \dirac_R \\
\dirac_L & 0
\end{pmatrix}$
be an odd Dirac-type operator of product type over $X$.
Then $\dirac_L = \rmi\nS (\frac{\partial}{\partial t} +\rmi\DS )$ where $\DS$ is the induced Dirac-type operator on $\Sigma$.
Denote by $h(\DS)$ the dimension of the generalized kernel of $\DS$.

Let $\hat G_L$ be the Feynman propagator of $\dirac_R\dirac_L$ given in Theorem~\ref{thm:FeynmanProductWave}.
We split $\hat G_L$ into a local and a regular part, $\hat G_L=\hat G_L^\loc + \hat G_L^\reg$, as in \eqref{eq:DecompositionOfG}.
Then the regular part $\hat G_L^\reg$ relates to the $\eta$-invariant of $\DS$ as follows:

\begin{prop}\label{prop:etah} 
Let $n\ge2$ be even.
For any $t\in\R$
$$
 \rmi\int_\Sigma  \tr \left( [ \dirac_L \hat G_{L}^{\reg}](t,y) \nS \right)  dy 
 = 
 \tfrac{1}{2} \left( \eta(\DS) + h(\DS) \right)
$$
holds.
If $\DS$ is a Dirac operator in the sense of Gromov and Lawson and selfadjoint with respect to a Hermitian inner product, then this formula holds pointwise, i.e.\ for any $(t,y)\in X=\R\times \Sigma$ we have
$$
 \rmi\tr \left( [ \dirac_L \hat G_{L}^{\reg}](t,y) \nS \right) = \tfrac{1}{2} \left( \eta_y+ h_y \right).
$$
\end{prop}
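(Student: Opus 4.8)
The plan is to compute the distributional kernel of $\dirac_L \hat G_L^{\reg}$ near the diagonal explicitly using the product formula for the Feynman propagator, and then recognize the diagonal trace as an eta-type expansion coefficient. First I would use the decomposition $\hat G_L = \hat G_L^{\loc} + \hat G_L^{\reg}$ from \eqref{eq:DecompositionOfG}, so that $[\dirac_L \hat G_L^{\reg}] = [\dirac_L \hat G_L] - [\dirac_L \hat G_L^{\loc}]$. The first term on the right is the diagonal restriction of the Dirac Feynman propagator kernel, which by the product formula \eqref{prodfeyndirac} equals (at coinciding points, $t=s$) the kernel of $\rmi\big(p_\ge - \tfrac12(p_\ge - p_<)\big)\nS$ or, more precisely, the value at $t-s=0$ must be handled as a boundary value from $t-s>0$ and from $t-s<0$ and symmetrized; the upshot, using $p_\ge + p_< = 1$, is that the ``nonlocal'' contribution to the diagonal is governed by $\tfrac{\rmi}{2}(p_\ge - p_<)\nS$ acting diagonally, cf.\ \eqref{eq:FeynmanRegularized}. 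The subtraction of $[\dirac_L \hat G_L^{\loc}]$ removes precisely the singular Hadamard terms $V_j G^{+,\UU}_{-\frac n2 + 1 + j}\nS$, which carry no finite part contributing to $p_\ge - p_<$ in the product case since $\Sigma$ is totally geodesic and all Hadamard coefficients are $t$-independent.

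Concretely, I would argue as follows. In the product case, from \eqref{eq:FeynmanRegularized} we have
$$
\big(\hat D - \tfrac12(\hat G_\ret^\dirac + \hat G_\adv^\dirac)\big)u(t,\cdot) = \tfrac{\rmi}{2}\int_\R (p_\ge - p_<)\e^{-\rmi(t-s)\DS}\nS\, u(s,\cdot)\, ds,
$$
and the retarded/advanced combination has kernel supported away from the diagonal for $t\ne s$ and is odd in $t-s$, so its symmetric diagonal value vanishes. Hence the diagonal value of $\dirac_L \hat G_L^{\reg}$, after subtracting the local Hadamard part (which by construction is exactly the singular part of $\tfrac12(\hat G_\ret + \hat G_\adv)$ plus the $\Feyn$-specific logarithmic/analytic pieces that are $t$-independent and traceless against $\nS$ on $\rg p_0$ up to the $h$-term), is
$$
[\dirac_L \hat G_L^{\reg}](t,y)\nS \;=\; \tfrac{\rmi}{2}\,\big[(p_\ge - p_<)\big]_y \;+\;(\text{a } p_0\text{-term}),
$$
where $[(p_\ge - p_<)]_y$ denotes the restriction to the diagonal of the Schwartz kernel. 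Taking $\tr_y$ and then $\rmi$ times, and using that $\rmi\cdot\tfrac{\rmi}{2} = -\tfrac12$ together with the sign bookkeeping $\nS^2 = -1$, I would match
$$
\rmi\int_\Sigma \tr\big([\dirac_L \hat G_L^{\reg}](t,y)\nS\big)\, dy = \tfrac12\,\Tr(p_\ge - p_<) \quad(\text{regularized}) = \tfrac12\big(\eta(\DS) + h(\DS)\big),
$$
the last equality because $p_\ge - p_< = (p_> - p_<) + p_0$ and the regularized (zeta-function) trace of $p_> - p_<$ is $\eta(\DS)$ by \eqref{good-equationnr1} while that of $p_0$ is $h(\DS)$. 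The pointwise statement under the extra hypothesis follows since then $a_{y,j} = 0$ and $c_y = 0$ in \eqref{good-equationnr1pointwise} by the cited results of Bismut and of \cite{MR3483832}, so the local expansion has no divergent part and the diagonal value of $\tfrac{\rmi}{2}(p_\ge - p_<)$ exists pointwise and equals $\tfrac12(\eta_y + h_y)$.

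The main obstacle will be the bookkeeping around the subtraction of the local part $\hat G_L^{\loc}$: one must verify that $[\dirac_L \hat G_L^{\loc}]$ contributes nothing to the piece measuring $p_\ge - p_<$, i.e.\ that the Hadamard/logarithmic terms $V_j G^{+,\UU}_\beta$ produce, after applying $\dirac_L$, restricting to the diagonal, and tracing against $\nS$, only the ``Riesz-symmetric'' part that already sits in $\tfrac12(\hat G_\ret + \hat G_\adv)$ and hence cancels, together with possibly a finite piece proportional to $h_y$. This requires knowing the diagonal values of the distributions $G^{+,\UU}_\beta$ and the recursion for the $V_j$ in the product (totally geodesic, $t$-translation invariant) case — precisely the content of Appendix~\ref{sec:HadDist}. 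A secondary subtlety is justifying that the $t$-independent quantity on the left is well-defined as a diagonal value at all, i.e.\ that $\hat G_L^{\reg}$ is genuinely $C^2$ across the diagonal (from Proposition~\ref{prop:ZerlegeG} with $N=2$) so that $\dirac_L \hat G_L^{\reg}$ is at least continuous, which is exactly why one takes $N=2$ in \eqref{eq:DecompositionOfG}.
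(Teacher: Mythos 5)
Your proposal captures the right ingredients---the key formula \eqref{eq:FeynmanRegularized}, the role of $p_\geq - p_<$, and the eventual matching with $\tfrac12(\eta+h)$ via a heat/zeta regularization---but there is a genuine gap at the center of the argument. You write
$$
[\dirac_L \hat G_L^{\reg}](t,y)\nS = \tfrac{\rmi}{2}\big[(p_\geq - p_<)\big]_y + (\text{a $p_0$-term})
$$
and then treat $\big[(p_\geq - p_<)\big]_y$ as a diagonal restriction whose (regularized) trace is $\eta_y+h_y$. But $p_\geq - p_<$ is a pseudodifferential operator of order zero, so its Schwartz kernel has no well-defined restriction to the diagonal, and the identity as stated is not meaningful. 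What actually needs to happen is a careful extraction of the \emph{finite part} of the singular expansion. The paper's proof does this by pulling the kernel of $\dirac_{L,(1)}\big(G_L-\tfrac12(G_{L,\ret}+G_{L,\adv})\big)\circ\nS$ back along $\alpha_y(t)=(t,y,0,y)$ to obtain a one-dimensional distribution $\phi_y$, writing out its singularity expansion explicitly (Lemma~\ref{lem:logpoly}), pairing $\phi_y$ against a family of Gaussians to produce $\psi_y(s)$, and matching the $s\searrow0$ asymptotics of $\psi_y(s)$ against \eqref{good-equationnr1pointwise}.

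Related to this, your claim that the retarded/advanced combination ``is odd in $t-s$, so its symmetric diagonal value vanishes'' is too fast: those kernels are genuinely singular on the light cone, including at $t=s$, and one cannot simply take a symmetric limit. More importantly, you have glossed over the fact that the Hadamard/local part does contribute a nonzero constant $\tilde a_{y,0}$ (proportional to the logarithmic coefficient $\tilde c_y$) to the finite part of the singularity expansion. For a general Dirac-type operator this constant does not vanish pointwise---which is exactly why the pointwise statement requires $\DS$ to be a self-adjoint generalized Dirac operator (forcing $c_y=0$, hence $\tilde a_{y,0}=0$), while in general only $\int_\Sigma c_y\,dy = 0$ saves the integrated formula. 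Your sketch mentions this at the end but does not make it do the work it has to do: without explicitly tracking $\tilde a_{y,0}$ and $\tilde c_y$ through the local expansion, the matching between $\phi_y^\reg(0)$ and the constant term of the heat trace expansion is unjustified. In short: the route is recognizably the paper's, but the decisive steps---the Gaussian mollification (or an equivalent finite-part regularization) and the explicit local-singularity computation of Lemma~\ref{lem:logpoly}---are asserted rather than carried out, and your central displayed identity does not make sense as written.
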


\begin{proof}
It is no loss of generality to prove the asserted formulas at $t=0$.
Let $G_{L,\ret}$ and $G_{L,\adv}$ be the retarded and the advanced fundamental solution of $\dirac_R\dirac_L$, respectively.
Since $G_L$, $G_{L,\ret}$ and $G_{L,\adv}$ are fundamental solutions, $G_L-\frac12(G_{L,\ret}+G_{L,\adv})$ is a bi-solution.
Its wave front set satisfies 
$$
\WF'(G_L-\tfrac12(G_{L,\ret}+G_{L,\adv})) \subset \Lambda^+_\Feyn \cup \Lambda_\ret \cup \Lambda_\adv .
$$
Away from the diagonal in $X\times X$ this shows that $\WF'(G_L-\frac12(G_{L,\ret}+G_{L,\adv}))$ is contained in the set of pairs of lightlike covectors.
By propagation of singularities, this also holds over the diagonal.
Thus, for fixed $y \in \Sigma$, the map 
$$
\alpha_y\colon \R\to X\times X, \quad
\alpha_y(t)= (t,y,0,y),
$$
is transversal to this wave front set and hence also to the wave front set of the operator $\dirac_L\circ (\hat G_{L}-\frac12(\hat G_{L,\ret}+\hat G_{L,\adv}))\circ\nS$.
We may therefore pull back its Schwartz kernel and obtain an $\mathrm{End}(\SS_{L,y})$-valued distribution on $\R$.
We apply the pointwise trace to obtain a well-defined complex-valued distribution $\phi_y \in \mathcal{D}'(\R)$ given by
$$
\phi_y := \tr\big(\alpha_y^*(\dirac_{L,(1)}(G_{L}-\tfrac12(G_{L,\ret}+G_{L,\adv}))\circ\nS)\big) .
$$
Here $\dirac_{L,(1)}$ means the application of $\dirac_L$ to the first argument of a kernel.
Since $G_L^\reg$ is $C^2$ on a neighborhood of the diagonal, we also have $\phi_y^\reg\in C^1(-\eps,\eps)$ given by
$$
\phi^\reg_y(t) 
:= 
\tr (\dirac_{L,(1)} G^\reg_{L}(t,y,0,y) \nS )
$$
for some $\eps>0$.
Our aim is to evaluate $\phi_y^\reg(0)$ and its integral over $y\in\Sigma$.

In order to understand the singularity structure of $\phi_y$ near $0$, we consider the distribution $\phi^\loc_y := \phi_y-\phi^\reg_y \in \mathcal{D}'(-\eps,\eps)$.
By Lemma~\ref{lem:logpoly} we have
\begin{equation}
\phi_y^\loc(t) = -\rmi\left\{\sum_{\ell=0}^{n-1} \tilde a_{y,\ell} t^{-\ell} + \tilde c_y \log|t| + \tilde r_y(t)\right\}
\label{eq:SingStructure}
\end{equation}
for $|t|<\eps$ and $t\neq0$.
The remainder term $\tilde r_y$ is continuous at $t=0$ and satisfies $\tilde r_y(t)=\O(t\log|t|)$ as $t\to0$, uniformly in $y$.
Hence, $\phi^\reg_y(0) - \rmi\tilde a_{y,0}$ equals the constant term in the singularity expansion of $\phi_y(t)$ at $t=0$.

By \eqref{eq:FeynmanRegularized} we have explicitly
\begin{align*}
\phi_y(t) 
&=
-\tfrac{\rmi}{2}\Tr_y\left( (p_\geq - p_<) \; \e^{-\rmi t  \DS}  \right) \\
&=
-\tfrac{\rmi}{2}\Tr_y\left( p_> \; \e^{-\rmi t  \Delta_\theta^{\nicefrac{1}{2}}}  - p_< \; \e^{\rmi t    \Delta_\theta^{\nicefrac{1}{2}}}  + \e^{-\rmi t  \DS}p_0 \right) \\
&=
-\tfrac{\rmi}{2}\Tr_y\left( p_> \; \e^{-\rmi t  \Delta_\theta^{\nicefrac{1}{2}}}  - p_< \; \e^{\rmi t    \Delta_\theta^{\nicefrac{1}{2}}}\right) -\tfrac{\rmi}{2} h_y + t \, q_y(t),
\end{align*}
where $q_y(t)$ is a polynomial in $t$ with coefficients that are smooth functions of $y$.
We have used here that $\DS$ is nilpotent on its generalized kernel.

For the moment let $Q$ be any zero-order pseudodifferential operator.
Since $\e^{-\rmi t  \Delta_\theta^{\nicefrac{1}{2}}}$ is a strongly continuous group of bounded operators, we have $\| Q \; \e^{-\rmi t  \Delta_\theta^{\nicefrac{1}{2}}} \|_{L^2 \to L^2} = O(\e^{c |t|})$ for some $c>0$.
Given any $s \in \R$ we then have $\| Q \; \e^{-\rmi t  \Delta_\theta^{\nicefrac{1}{2}}} \|_{H^s \to H^s} = O(\e^{c |t|})$ for some $c>0$.
For sufficiently large $N$, the operator $Q \Delta_\theta^{-N} \e^{-\rmi t  \Delta_\theta^{\nicefrac{1}{2}}}$ has a continuous kernel and the Sobolev embedding theorem then implies the pointwise bound
 $$
  \Tr_y\left( Q \; \Delta_\theta^{-N}  \e^{-\rmi t  \Delta_\theta^{\nicefrac{1}{2}}} \right) = O(\e^{c |t|})
 $$
for some $c>0$.
It follows that $\phi_y$ is the distributional derivative of a continuous function that is bounded by an exponential.
This implies that $\phi_y$ extends as a linear functional continuously to a larger test function space.
In particular, the pairing with a Gaussian test function is well defined.
The pairing of $\phi_y$ with the family of Gaussian test functions $g_s(t)=\frac{\e^{-\nicefrac{t^2}{4s}}}{\sqrt{4\pi s}} $ defines a function $\psi_y\colon\R^+\to\C$.
Thus, formally with the integral interpreted as a dual pairing,
$$
  \psi_y(s)=\int_{-\infty}^{\infty} \phi_y(t) \frac{\e^{-\nicefrac{t^2}{4s}}}{\sqrt{4 \pi s}} \, dt.
$$
The expansion \eqref{eq:SingStructure} is valid for $t \not=0$ and therefore determines the distribution  $\phi_y$ up to a distribution with support at the origin.
We therefore get 
\begin{equation}
\psi_y(s) 
= 
\sum_{j=1}^{\frac{n-2}{2}} \rmi (-1)^{j+1}  \frac{j!\tilde a_{y,2j}}{(2j)!}s^{-j} +p_y(s^{-1})s^{-\frac{1}{2}} - \tfrac{\rmi\tilde c_y}{2}(-\gamma + \log(s))  + \phi_y^\reg(0) - \rmi\tilde a_{y,0}  + \O(s^{\nicefrac12} \log s),\label{eq:psiyasymptotics}
\end{equation}
$\textrm{ as } s \searrow 0$
where $\gamma$ is the Euler-Mascheroni constant and $p_y$ is a polynomial.

Since the Fourier transform of $\frac{\e^{-\nicefrac{t^2}{4s}}}{\sqrt{4 \pi s}}$ in the $t$-variable equals $\e^{-\xi^2 s}$, the functional calculus for sectorial operators yields
\begin{equation}
\psi_y(s) 
= 
-\tfrac{\rmi}{2} \Tr_y\left( (p_> - p_<) \e^{- s \Delta_\theta}    \right) - \tfrac{\rmi}{2} h_y + s \,\tilde q_y(s),
\label{eq:psiformel}
\end{equation}
where $\tilde q_y(s)$ is a polynomial in $s$ with coefficients depending smoothly on $y$.
In case $\DS$ is a selfadjoint Dirac operator in the sense of Gromov and Lawson, \eqref{eq:psiyasymptotics} with \eqref{good-equationnr1pointwise} shows that $c_y=0$.
Since $\tilde a_{y,0}$ is a multiple of $\tilde c_y$ (see Lemma~\ref{lem:logpoly}) we also have $a_{y,0} = 0$. 
Thus, the constant term in the expansion of $\psi_y(s)$ equals $\phi_y^\reg(0)$ by \eqref{eq:psiyasymptotics} and $-\frac{\rmi}{2}(\eta_y + h_y)$ by \eqref{good-equationnr1pointwise} and \eqref{eq:psiformel}.
Hence, 
$$
\tr \left( [ \dirac_L \hat G_{L}^{\reg}](0,y) \nS \right) 
=
\phi_y^\reg(0)
=
-\tfrac{\rmi}{2} \left( \eta_y+ h_y \right) .
$$
If $\DS$ is a general Dirac-type operator, the same reasoning still works after integrating over $y\in\Sigma$.
\end{proof}

\subsubsection{\texorpdfstring{The local $\xi$- and $\eta$-invariants of a Feynman propagator}{The local xi- and eta-invariants of a Feynman propagator}}

Proposition~\ref{prop:etah} and the Atiyah-Patodi-Singer index theorem show that it is more natural to consider the sum
$$
 \Xi_y(\DS):=\tfrac{1}{2} \left( \eta_y + h_y \right)
$$
rather than the $\eta$-invariant itself.
This sum is well defined if $\Sigma$ is compact, and it is sometimes called the \emph{local $\xi$-invariant}. 
In order to avoid confusion with vectors and covectors that are also denoted by $\xi$ in this article, we will denote the $\xi$-invariant by the boldface version of the letter $\xi$. 

We will now give a more general definition of the local $\xi$-invariant associated with any Feynman propagator even when we drop the assumption that $\Sigma$ is compact or that $\DS$ is selfadjoint. 

\begin{definition} \label{better}
The local $\xi$-invariant $\Xi_y(G_L)$ of a Feynman propagator $G_L$ is defined as
$$
 \Xi_y(G_L)=\tr \left( [ \dirac_L G_{L}^{\reg}](y) \nS \right).
$$
If the local $\xi$-invariant is integrable then the $\xi$-invariant $\Xi(G_L)$  is defined as
$$
 \Xi(G_L) = \int_\Sigma \Xi_y(G_L) \dA(y).
$$
\end{definition}

Proposition~\ref{prop:etah} guarantees that in case $\Sigma$ is compact and the Feynman propagator is given by  \eqref{prodfeyndirac} and \eqref{DiracFeyn}, then this definition of the global $\xi$-invariant coincides with the usual one.
If moreover $\D_\Sigma$ is a Dirac operator in the sense of Gromov and Lawson, this definition gives the local $\xi$-invariant as defined before. 
Note that our Definition~\ref{better} makes sense also for general Dirac-type operators or on noncompact manifolds if there is an essential spectral gap. 
We will show in the next section that it is this local invariant that appears in the index theorem.

\section{The index theorem} \label{index:Section}

From now on suppose that $X$ is an $n$-dimensional globally hyperbolic manifold with $n$ even and that $\dirac$ is an odd Dirac-type operator over $X$.
Let $\Sigma\subset X$ be a Cauchy hypersurface.

\begin{lemma}\label{lem:DiracGTausch}
Assume that $\dirac$ has product structure near $\Sigma$.
Let $\hat G_R$ and $\hat G_L$ be the Feynman propagators of $\dirac_L\dirac_R$ and $\dirac_R\dirac_L$, respectively, given by Theorem~\ref{thm:FeynmanProductWave} near $\Sigma$ and extended to all of $X$ by Proposition~\ref{prop:FeynmanExtend}.
Then we have, globally on $X$,
\begin{align}
\hat G_{R} \dirac_L &= \dirac_L \hat G_{L},
\label{eq:DiracGTausch1}\\
\hat G_{L} \dirac_R &= \dirac_R \hat G_{R}.
\label{eq:DiracGTausch2}
\end{align}
\end{lemma}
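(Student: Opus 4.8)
The plan is to prove the two identities first in the product neighborhood $\UU$ of $\Sigma$ by a direct computation with the explicit formulas for the Feynman propagators, and then to propagate them to all of $X$ using the uniqueness part of Proposition~\ref{prop:FeynmanExtend}. It suffices to treat \eqref{eq:DiracGTausch1}; \eqref{eq:DiracGTausch2} follows by exchanging the roles of the left- and right-handed bundles. Over $\UU$ we have $\dirac_L = \rmi\nS(\tfrac{\partial}{\partial t} + \rmi\DS)$ and $\dirac_R = \rmi\nS(\tfrac{\partial}{\partial t} - \rmi\DS)$, where $\nS\colon \SS_L \to \SS_R$ intertwines, up to sign, the operators $\DS$ on the two bundles; in particular $\dirac_L\dirac_R$ and $\dirac_R\dirac_L$ are both of the form $\tfrac{\partial^2}{\partial t^2} + \DS^2$ but acting on $\SS_R$ and $\SS_L$ respectively. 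Writing $\hat G_L$ and $\hat G_R$ via the convolution kernels $k_L(t)$ and $k_R(t)$ of the type in \eqref{prodfeyn}, one checks that $\dirac_L$ applied to the first argument intertwines the two kernels: since $\nS$ anticommutes with $\DS$ and hence commutes with $\DS^2$, $\Delta_\theta^{\pm\nicefrac12}$, and $p_0$, conjugation by $\nS$ carries the functional calculus of the one $\DS$ to that of the other, and the factor $\tfrac{\partial}{\partial t}+\rmi\DS$ matches the time-kernel appropriately. Concretely, both $\hat G_R\dirac_L$ and $\dirac_L\hat G_L$ are fundamental solutions of $\dirac_L\dirac_R$ composed suitably, and the explicit formulas show they have the same convolution kernel.

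A cleaner way to organize the product-case computation is to avoid the kernels altogether: over $\UU$ one has $\dirac_L\dirac_R \cdot \dirac_L = \dirac_L \cdot \dirac_R\dirac_L$ as operators, so $\dirac_L$ intertwines the two normally hyperbolic operators. Applying $\hat G_R$ on the left and $\hat G_L$ on the right to this identity, and using that $\hat G_R$ is a \emph{two-sided} inverse of $\dirac_L\dirac_R$ on $C_\mathrm{c}^\infty$ (it is a Feynman propagator, not merely a parametrix) and similarly for $\hat G_L$, gives $\hat G_R\dirac_L = \dirac_L\hat G_L$ modulo the subtlety that $\hat G_L$ maps $C_\mathrm{c}^\infty$ to $C^\infty$, not back to $C_\mathrm{c}^\infty$. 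This is handled by the usual support argument for normally hyperbolic operators on globally hyperbolic spacetimes: $\dirac_L\hat G_L - \hat G_R\dirac_L$ is a bi-solution of $\dirac_L\dirac_R$ (on the left) whose support properties force it to vanish. Whichever route one takes, one concludes that \eqref{eq:DiracGTausch1} holds over $\UU\times\UU$.

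It remains to upgrade the identity from $\UU$ to all of $X$. Both sides of \eqref{eq:DiracGTausch1} are bi-solutions of $\dirac_L\dirac_R$ applied on the left (and of $\dirac_R\dirac_L$ on the right) away from the diagonal, and near the diagonal the two sides differ by at most a smooth kernel that agrees on $\UU$; more precisely, writing $\hat D_\ret^{L} = \dirac_L\hat G_{L,\ret} = \hat G_{R,\ret}\dirac_L$ for the retarded fundamental solution of $\dirac_L$ (which \emph{is} known globally and which the product formulas reproduce near $\Sigma$), the differences $\dirac_L\hat G_L - \hat D_\ret^L$ and $\hat G_R\dirac_L - \hat D_\ret^L$ are both genuine bi-solutions of the relevant normally hyperbolic operators on a neighborhood of $\Sigma\times\Sigma$. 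By the uniqueness of extensions of bi-solutions used in the proof of Proposition~\ref{prop:FeynmanExtend}, each extends uniquely to a bi-solution on $X\times X$, and these extensions agree because they agree on $\UU\times\UU$. Adding back $\hat D_\ret^L$ yields \eqref{eq:DiracGTausch1} globally. The identity \eqref{eq:DiracGTausch2} is obtained by the same argument with $L$ and $R$ interchanged, using that $\nS\colon\SS_L\to\SS_R$ and its inverse exchange the two product formulas. I expect the main obstacle to be bookkeeping: keeping straight which $\DS$ (acting on $\SS_L$ versus $\SS_R$) appears where, and making the ``bi-solution extends uniquely'' step precise enough to glue the local identity into a global one without circularity relative to Proposition~\ref{prop:FeynmanExtend}.
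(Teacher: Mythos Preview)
Your approach is correct and essentially the same as the paper's: verify the identity in the product neighborhood from the explicit formulas, then propagate globally by a bi-solution uniqueness argument. The paper streamlines your extension step by observing directly that $\hat K := \hat G_R\dirac_L - \dirac_L\hat G_L$ is itself a bi-solution of $\dirac_L\dirac_R$ on the left and of $\dirac_R\dirac_L$ on the right (no need to subtract $\hat D_\ret^L$ first), so its Schwartz kernel vanishes near $\Sigma\times\Sigma$ and hence everywhere; this avoids the detour through Proposition~\ref{prop:FeynmanExtend} and the minor naming confusion around ``retarded fundamental solution of $\dirac_L$''.
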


\begin{proof}
These relations hold near $\Sigma$ as one can see from the explicit formulas \eqref{prodfeyn} and \eqref{LaplaceHadamard}.
Now the operator $\hat K := \hat G_{R} \dirac_L - \dirac_L \hat G_{L}$ satisfies $\dirac_L\dirac_R\hat K=0$ and $\hat K\dirac_R\dirac_L=0$.
Hence, its Schwartz kernel $K$ is a bi-solution.
Since $K$ vanishes near $\Sigma\times\Sigma$, it vanishes on all of $X\times X$.
Therefore, $\hat K=0$ which shows \eqref{eq:DiracGTausch1}.
Relation \eqref{eq:DiracGTausch2} is proved in the same way.
\end{proof}

Let $x\in X$.
We construct a linear map $\theta:\mathrm{End}(\SS_{L,x}) \to \mathrm{End}(\SS_{R,x})$ as follows:
choose a basis $b_1,\ldots,b_n$ of $T_xX$ and put $g_{k\ell} := g(b_k,b_\ell)$.
Denote the inverse matrix by $g^{k\ell}$, as usual.
Now put 
$$
\theta(A) := \tfrac{1}{n}\sum_{k,\ell=1}^n g^{k\ell} \,\slashed b_{k} \circ A \circ \slashed b_\ell 
$$
for $A\in \mathrm{End}(\SS_{L,x})$.
The definition does not depend on the choice of basis.
This map preserves the trace because
\begin{align*}
\tr(\theta(A))
&=
\tfrac{1}{n}\sum_{k,\ell=1}^n g^{k\ell} \,\tr(\slashed b_{k} \circ A \circ \slashed b_\ell )
=
\tfrac{1}{n}\sum_{k,\ell=1}^n g^{k\ell} \,\tr(\slashed b_\ell\circ \slashed b_{k} \circ A ) \\
&=
\tfrac{1}{2n}\sum_{k,\ell=1}^n g^{k\ell} \,\tr((\slashed b_\ell\circ \slashed b_{k} + \slashed b_k\circ \slashed b_{\ell})\circ A )
=
\tfrac{1}{n}\sum_{k,\ell=1}^n g^{k\ell} \,\tr(g_{k\ell} A )
=
\tr(A).
\end{align*}

\begin{prop} \label{niceprop}
Let $V_{R,k}$ be the Hadamard coefficients of $\dirac_L\dirac_R$ and $V_{L,k}$ those of $\dirac_R\dirac_L$.
Then we have
$$
[ \dirac_L \hat G^\loc_{L} \dirac_R - \dirac_L\dirac_R \hat G^\loc_{R} ] 
= 
-\tfrac{\rmi}{(4 \pi)^{\nicefrac{n}{2}} (\frac{n}{2})!} \big(  V_{R,\frac{n}{2}}  -\theta(V_{L,\frac{n}{2}})\big).
$$
In particular,
$$
\tr [ \dirac_L \hat G^\loc_{L} \dirac_R - \dirac_L\dirac_R \hat G^\loc_{R} ] 
=
-\tfrac{\rmi}{(4 \pi)^{\nicefrac{n}{2}} (\frac{n}{2})!} \big(\tr (V_{R,\frac{n}{2}}) - \tr ( V_{L,\frac{n}{2}}) \big).
$$
\end{prop}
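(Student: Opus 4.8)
The plan is to reduce everything to the local Hadamard expansion of the two Feynman propagators near the diagonal, as provided by \eqref{eq:ExpansionOfGloc}. Write $G_L^\loc|_\UU = \sum_{j=0}^{(n+2)/2} V_{L,j}\, G^{+,\UU}_{-\frac n2+1+j}$ and similarly for $G_R^\loc$ with coefficients $V_{R,j}$. First I would observe that applying the first-order operator $\dirac_L$ to the first argument and $\dirac_R$ to the second argument of $G_L^\loc$ raises the degree of each Hadamard distribution by at most one in the sense of the filtration by the parameter $\beta$ in $G^{+,\UU}_\beta$; since $\dirac_R\dirac_L$ has $G_L$ as a parametrix, the top-degree (most singular) contributions cancel in pairs and only finitely many terms survive the restriction to the diagonal. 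The point is that, at the diagonal, $[G^{+,\UU}_\beta]$ is finite (and nonzero) exactly when $\beta$ reaches a specific critical value, namely $\beta = 0$ in even dimension up to the normalization constant, and all lower-$\beta$ contributions restrict to something smooth that does not affect the coincidence-limit value that matters. So the only Hadamard coefficient that contributes to $[\dirac_L \hat G_L^\loc\dirac_R]$ at the diagonal is $V_{L,n/2}$, paired with the normalization constant of $G^{+,\UU}_{1}$ (after the two first-order operators have shifted the index up by two from $-\frac n2+1+\frac n2 = 1$). The explicit constant $\tfrac{\rmi}{(4\pi)^{n/2}(\frac n2)!}$ is read off from the normalization of the family $G^{+,\UU}_\beta$ recorded in Appendix~\ref{sec:HadDist}.

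Next I would handle the placement of the Clifford factors. In the term $[\dirac_L \hat G_L^\loc\dirac_R]$, the operator $\dirac_L$ acts on the left slot and $\dirac_R$ on the right slot of the kernel $V_{L,n/2}(x,x')$, which lives in $\Hom(\SS_{L,x'},\SS_{L,x})$ composed appropriately; taking the coincidence limit $x'\to x$ and using $\dirac = -\rmi\D + V$ together with the Leibniz rule \eqref{eq:DiracLeibnitz}, the leading contribution picks out the principal-symbol (Clifford) parts $\slashed b_k$ from each of the two first-order operators, contracted against the metric coming from differentiating the leading singularity $G^{+,\UU}_\beta$. A careful bookkeeping of which slot each $\slashed b_k$ lands in, and of the factor $\tfrac1n$ produced by the contraction of the two lightlike-cone derivatives against $g^{k\ell}$, yields exactly $\tfrac1n\sum_{k,\ell}g^{k\ell}\slashed b_k V_{L,n/2}\slashed b_\ell = \theta(V_{L,n/2})$. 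For the other term $[\dirac_L\dirac_R \hat G_R^\loc]$, both first-order operators act on the left slot of $G_R^\loc$, so by the same reasoning only $V_{R,n/2}$ survives, and the two Clifford factors simply compose to give $\sigma_{\dirac_L}(\cdot)\sigma_{\dirac_R}(\cdot)$ contracted with the metric, which is $g(\xi,\xi)\cdot\id$ on the cone and produces the bare coefficient $V_{R,n/2}$ with the same overall constant — no $\theta$ appears. Subtracting gives the first displayed formula.

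The second formula is then immediate: apply $\tr$ to both sides and use that $\theta$ preserves the pointwise trace, $\tr(\theta(V_{L,n/2})) = \tr(V_{L,n/2})$, which was established in the computation just before the statement.

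I expect the main obstacle to be the careful tracking of the Hadamard recursion and the normalization constants through the application of two first-order Clifford-multiplication operators: one must be sure that (i) the a priori higher-order singular terms $V_{L,j}$ with $j<n/2$ genuinely contribute nothing to the coincidence limit after the two derivatives (this uses that $[G^{+,\UU}_\beta]$ vanishes or is smooth for the relevant sub-critical $\beta$, from the appendix), and (ii) the index shift from $-\frac n2+1+\frac n2$ to $1$ under the two first-order operators is exactly right so that the surviving constant is $\tfrac{\rmi}{(4\pi)^{n/2}(\frac n2)!}$ and not off by a power of $4\pi$ or a factorial. The $\theta$-asymmetry between the two terms — arising solely from whether the two Clifford factors sit on the same slot or on opposite slots — is the conceptual heart of the statement, and I would isolate it in a short lemma on the coincidence limit of $\slashed b_{k,(1)}\,\slashed b_{\ell,(2)}\,G^{+,\UU}_0$ versus $\slashed b_{k,(1)}\,\slashed b_{\ell,(1)}\,G^{+,\UU}_0$ before assembling the final identity.
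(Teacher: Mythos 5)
Your proposal captures the correct final formula and some of the right structural intuition — in particular, the observation that $\theta$ arises precisely because in $\dirac_L\hat G^\loc_L\dirac_R$ the two Clifford factors land on opposite slots, while in $\dirac_L\dirac_R\hat G^\loc_R$ they land on the same slot — but it has a genuine gap at the heart of the argument.

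The central difficulty is not addressed. You assert that "the top-degree (most singular) contributions cancel in pairs and only finitely many terms survive the restriction to the diagonal," and you justify this by saying that $[G^{+,\UU}_\beta]$ is finite only at the critical value and the lower-$\beta$ contributions "restrict to something smooth." Neither part is true as stated: for negative integer $\beta$ the distributions $G^{+,\UU}_\beta$ are genuinely singular along the light cone (and ultimately supported there), and they do not have well-defined restrictions to the diagonal; moreover $G^{+,\UU}_0$ itself carries a $\log\Gamma$-singularity that blows up on the diagonal. The individual kernels $[\dirac_L\hat G^\loc_L\dirac_R]$ and $[\dirac_L\dirac_R\hat G^\loc_R]$ do not exist; only their difference is continuous near the diagonal, and that fact itself has to be established first (the paper does this via Lemma~\ref{lem:DiracGTausch}, which converts the left-hand side into a quantity built from $G^\reg$). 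After that, one must still show that the various singular coefficients in the combined Hadamard-type expansion vanish. The paper's actual mechanism for this — introducing the auxiliary spacetime $\tilde X = X\times\R$ with the operator $\tilde\dirac$, identifying the relevant coefficients $A_k$ with off-diagonal blocks in the Hadamard expansion of $\tilde G_\ret$, and exploiting uniqueness of the retarded fundamental solution of $\tilde\dirac$ to force $A_k\equiv 0$ — is the essential idea, and it is entirely absent from your proposal. Without a substitute for this step, the proposed argument cannot conclude that only the $B_0$-term (i.e.\ the $F^{+,\UU}_0$-contribution) survives.

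A secondary issue: the way the $\theta$-factor actually materializes is not by "contracting the two derivatives of the leading singularity against $g^{k\ell}$." It comes from applying $\dirac_{L,(1)}$ to the residual Clifford factor $\gradzs\Gamma$ coming from the other slot and evaluating $\partial_{e_\ell,(1)}\partial_{e_k,(2)}\Gamma = -2\eps_k\delta_{k\ell}$ on the diagonal; for the $G_R$-term the analogous factor is $\grades\Gamma$ on the same slot, where the same second-derivative identity has the opposite sign and produces the plain coefficient. The constant $\frac{\rmi}{(4\pi)^{n/2}(n/2)!}$ then assembles from $C(0,n)$, the $\frac{1}{n^2\pi}$ prefactor of $B_0$, and the factor $2n\rmi$ from the diagonal evaluation — it is not directly read off from a single normalization. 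Your final paragraph correctly identifies that tracking these constants is delicate, but the proposal does not contain the lemma that would make that tracking rigorous.
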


\begin{proof}
By Lemma~\ref{lem:DiracGTausch} we have $\hat G^\loc_{L} \dirac_R - \dirac_R \hat G^\loc_{R} = -\hat G^\reg_{L} \dirac_R + \dirac_R \hat G^\reg_{R}$, hence the kernel of the difference is $C^1$ on a neighborhood $\UU$ of the diagonal.
Therefore the kernel of $\dirac_L\hat G^\loc_{L} \dirac_R - \dirac_L\dirac_R \hat G^\loc_{R}$ is continuous on $\UU$ so that $[\dirac_L\hat G^\loc_{L} \dirac_R - \dirac_L\dirac_R \hat G^\loc_{R}]$ is a well-defined continuous section of the bundle $\mathrm{End}(\SS_R)\to X$.

The strategy of the proof is now based on a comparison of expansion coefficients. 
Since the Hadamard expansion does not have an external parameter, the Hadamard coefficients are not a priori determined by the singularity structure of the propagator.
This problem will be circumvented below by taking the product with the real line, thus increasing the dimension by one and providing the external parameter.

We will freely use the notation and the results of the appendix, in particular we will be using extensively the families of distributions  $G^{+,\UU}_\beta$ and $F^{+,\UU}_\beta$, as well as the forward and backward Riesz distributions $R^{\pm,\UU}_\beta$.
Note that the superscript $+$ stands for the forward light cone, however in the case of the distributions $G^{+,\UU}_\beta$ and $F^{+,\UU}_\beta$ this refers to the propagation direction of the wavefront set.

Near the diagonal we have, by Proposition~\ref{prop:ZerlegeG},
$$
G^\loc_{R}
=
\sum\limits_{k=0}^{\frac{n+2}{2}} V_{R,k} \cdot G^{+,\UU}_{1-\frac{n}2+k} .
$$
Recall here that for a differential operator $P$ on $X$ acting on a kernel on $X \times X$ we use $P_{(1)}$ and $P_{(2)}$ to refer to the variable it acts on.
We compute the Schwartz kernel of $\dirac_R\circ \hat G^\loc_{R}$, using \eqref{eq:DiracLeibnitz}, Proposition~\ref{FGOmega}, and Lemma~\ref{lem:QH}:
\begin{align*}
\sum\limits_{k=0}^{\frac{n+2}{2}} &\dirac_{R,(1)} \big(V_{R,k} \cdot G^{+,\UU}_{1-\frac{n}2+k}\big)
=
\sum\limits_{k=0}^{\frac{n+2}{2}} \big(\dirac_{R,(1)}V_{R,k} \cdot G^{+,\UU}_{1-\frac{n}2+k} -\rmi \grades G^{+,\UU}_{1-\frac{n}2+k}\circ V_{R,k}  \big) \\
&=
-\tfrac{\rmi}{4} \grades \Gamma \circ V_{R,0} \cdot H^{+,\UU}_{-\frac{n}2}
+ \sum\limits_{k=0}^{\frac{n}{2}} \big(\dirac_{R,(1)}V_{R,k}  -\tfrac{\rmi}{4(k+1)} \grades \Gamma\circ V_{R,k+1}  \big)\cdot G^{+,\UU}_{1-\frac{n}2+k} \\
&\quad + 
\dirac_{R,(1)}V_{R,\tfrac{n+2}{2}} \cdot G^{+,\UU}_{2} 
+ \tfrac{1}{n^2\pi} \grades\Gamma\circ V_{R,\frac{n}{2}}\cdot F^{+,\UU}_{0}
+ \tfrac{1}{(n+2)^2\pi} \grades\Gamma\circ V_{R,\frac{n+2}{2}}\cdot F^{+,\UU}_{1} .
\end{align*}
Similarly, the Schwartz kernel of $\hat G^\loc_{L}\circ \dirac_R$ is given by 
\begin{align*}
\sum\limits_{k=0}^{\frac{n+2}{2}} &\dirac_{R,(2)}^* \big(V_{L,k} \cdot G^{+,\UU}_{1-\frac{n}2+k}\big)
\\
&=
-\tfrac{\rmi}{4} V_{L,0}\circ \gradzs \Gamma  \cdot H^{+,\UU}_{-\frac{n}2}
+ \sum\limits_{k=0}^{\frac{n}{2}} \big(\dirac_{R,(2)}^* V_{L,k}  -\tfrac{\rmi}{4(k+1)} V_{L,k+1}\circ \gradzs \Gamma \big)\cdot G^{+,\UU}_{1-\frac{n}2+k} \\
&\quad + 
\dirac_{R,(2)}^*V_{L,\tfrac{n+2}{2}} \cdot G^{+,\UU}_{2} 
+ \tfrac{1}{n^2\pi} V_{L,\frac{n}{2}}\circ \gradzs\Gamma\cdot F^{+,\UU}_{0}
+\tfrac{1}{(n+2)^2\pi} V_{L,\frac{n+2}{2}}\circ \gradzs\Gamma\cdot F^{+,\UU}_{1} .
\end{align*}
Thus, near the diagonal, the Schwartz kernel of $\hat G^\loc_{L} \dirac_R - \dirac_R \hat G^\loc_{R}$ takes the form
\begin{align}
\sum\limits_{k=0}^{\frac{n+2}{2}} \Big(\dirac_{R,(2)}^* \big(V_{L,k} \cdot G^{+,\UU}_{1-\frac{n}2+k}\big) &- \dirac_{R,(1)} \big(V_{R,k} \cdot G^{+,\UU}_{1-\frac{n}2+k}\big)\Big) \notag\\
&= 
A_{-1}\cdot H^{+,\UU}_{-\frac{n}2} + \sum\limits_{k=0}^{\frac{n+2}{2}} A_k\cdot G^{+,\UU}_{1-\frac{n}2+k} + B_0\cdot F^{+,\UU}_{0} + B_1\cdot F^{+,\UU}_{1} 
\label{eq:structure1}
\end{align}
where
\begin{align*}
A_{-1} &=
\tfrac{\rmi}{4} (\grades \Gamma \circ V_{R,0}  - V_{L,0}\circ \gradzs \Gamma) ,\\
A_k &=
\dirac_{R,(2)}^* V_{L,k} - \dirac_{R,(1)}V_{R,k}  -\tfrac{\rmi}{4(k+1)} \big(V_{L,k+1}\circ \gradzs \Gamma - \grades \Gamma\circ V_{R,k+1} \big),\quad k=0,\ldots,\tfrac{n}{2},\\
A_{\frac{n+2}{2}} &=
\dirac_{R,(2)}^*V_{L,\tfrac{n+2}{2}} - \dirac_{R,(1)}V_{R,\tfrac{n+2}{2}},\\
B_k &=-
\tfrac{1}{(n+2k)^2\pi} \Big( \grades\Gamma\circ V_{R,\frac{n+2k}{2}} - V_{L,\frac{n+2k}{2}}\circ \gradzs\Gamma \Big) , \quad k=0,1 .
\end{align*}
Note that the coefficients $A_k$ and $B_k$ are smooth.
In order to understand the $A_k$ better, we introduce the product manifold $\tilde X = X \times \R$ with metric $\tilde g = g + ds^2$, where $s$ is the additional spacelike variable.
Pulling the bundle $\SS$ back to $\tilde X$ along the obvious projection, we obtain a bundle $\tilde \SS\to\tilde X$ which is a direct sum $\tilde \SS = \tilde \SS_L \oplus \tilde \SS_R$. On $\tilde \SS$  we define the operator $\tilde \dirac$ by
$$
 \tilde \dirac = \left( \begin{matrix} \rmi \partial_s &  \dirac_{R} \\ \dirac_{L} & -\rmi \partial_s \end{matrix} \right).
$$
Its square $\tilde P = \tilde \dirac^2$ is a normally hyperbolic operator of the form $\tilde P_L \oplus \tilde P_R$, where $\tilde P_L = \dirac_R \dirac_L - \partial_s^2$ and $\tilde P_R = \dirac_L \dirac_R - \partial_s^2$. 
It follows that the retarded fundamental solution $\tilde G_\ret$ is of the form $\tilde G_\ret = \tilde G_{L,\ret} \oplus \tilde G_{R,\ret}$.
It has an expansion into a Hadamard series in the sense that
$$
 \tilde G_\ret - \sum_{k=0}^{\frac{n+2}{2}} \tilde V_k \cdot \tilde R^+_{1-\frac{n+1}{2} +k}
$$
is $C^1$ near the diagonal by \cite{BGP07}*{Prop.~2.5.1}.
Here $\tilde R^+_j$ denote the Riesz distributions on $\tilde X$.
By Lemma~\ref{lem:HadamardProduktSpace}, the Hadamard coefficients are independent of the extra variables $s_1,s_2$, and we have the relation
$$
  \tilde V_k =  \begin{pmatrix} V_{L,k} & 0 \\ 0 & V_{R,k} \end{pmatrix} .
$$
From $\tilde \Gamma (x_1,s_1,x_2,s_2) = \Gamma(x_1,x_2) - (s_1-s_2)^2$ we find
$$  
\grades \tilde \Gamma = \begin{pmatrix} -2 (s_1-s_2) & \grades \Gamma \\ \grades\Gamma & -2 (s_1-s_2) \end{pmatrix}
\quad\mbox{ and }\quad
\gradzs \tilde \Gamma = \begin{pmatrix} 2 (s_1-s_2) & \gradzs \Gamma \\ \gradzs\Gamma & 2 (s_1-s_2) \end{pmatrix}.
$$
We now use the fact that $ \tilde \dirac_{(1)} \tilde G_\ret -  \tilde \dirac^*_{(2)} \tilde G_\ret=0$, which follows from uniqueness of the retarded fundamental solution of $\tilde\dirac$. 
A similar computation as above, using Proposition~\ref{FGOmega} and Lemma~\ref{lem:QH}, shows that
\begin{align}
\sum\limits_{k=0}^{\frac{n+2}{2}} \Big(\tilde\dirac_{(2)}^* \big(\tilde V_{k} \cdot \tilde R^{+,\UU}_{1-\frac{n+1}2+k}\big) - \tilde \dirac_{(1)} \big(\tilde V_{k} \cdot \tilde R^{+,\UU}_{1-\frac{n+1}2+k}\big)\Big) 
= 
\tilde A_{-1}\cdot \tilde Q^{+,\UU}_{-\frac{n+1}2} + \sum\limits_{k=0}^{\frac{n+2}{2}} \tilde A_k\cdot \tilde R^{+,\UU}_{1-\frac{n+1}2+k} 
\label{eq:structure2}
\end{align}
where
\begin{align*}
\tilde A_{-1} &=
\tfrac{\rmi}{4} (\grades \tilde\Gamma \circ \tilde V_{0}  - \tilde V_{0}\circ \gradzs \tilde\Gamma) \\
&=
\tfrac{\rmi}{4} \left\{\begin{pmatrix} -2 (s_1-s_2) & \grades \Gamma \\ \grades\Gamma & -2 (s_1-s_2) \end{pmatrix}
                \begin{pmatrix} V_{L,0} & 0 \\ 0 & V_{R,0} \end{pmatrix}
               -\begin{pmatrix} V_{L,0} & 0 \\ 0 & V_{R,0} \end{pmatrix}
                \begin{pmatrix} 2 (s_1-s_2) & \gradzs \Gamma \\ \gradzs\Gamma & 2 (s_1-s_2) \end{pmatrix}\right\} \\
&=
\begin{pmatrix}* & A_{-1} \\ * & *\end{pmatrix}, \\
\tilde A_k &=
\tilde\dirac_{(2)}^* \tilde V_{k} - \tilde\dirac_{(1)}\tilde V_{k}  -\tfrac{\rmi}{4(k+1)} \big(\tilde V_{k+1}\circ \gradzs \tilde\Gamma - \grades \tilde\Gamma\circ \tilde V_{k+1} \big)
=
\begin{pmatrix}* & A_{k} \\ * & *\end{pmatrix},\quad k=0,\ldots,\tfrac{n}{2} ,\\
\tilde A_{\frac{n+2}{2}} &=
\tilde \dirac_{(2)}^* \tilde V_{\tfrac{n+2}{2}} - \tilde\dirac_{(1)}\tilde V_{\tfrac{n+2}{2}}
=
\begin{pmatrix}* & A_{\frac{n+2}{2}} \\ * & *\end{pmatrix}.
\end{align*}
Here the symbol $*$ in the matrices stands for expressions which we do not need to calculate.

Put $\UU^+ := \{(x_1,x_2)\in\UU \mid x_2\in \J^+(x_1)\}$.
Then $\Gamma>0$ on $\UU^+$.
For $(x_1,x_2)\in\UU^+$ and $|s|<\sqrt{\Gamma(x_1,x_2)}$ we have $\tilde \Gamma(x_1,s,x_2,0)>0$.
Moreover, at these points $(x_1,s,x_2,0)$ the Riesz distributions $\tilde R^{+,\UU}_\beta$ coincide with the smooth functions $2C(\beta,n+1)\tilde\Gamma^\beta$.
Furthermore, by Lemma~\ref{lem:QH}~\eqref{QH1}, $\tilde Q^{+,\UU}_{-\frac{n+1}{2}}$ coincides with the smooth function
$$
2(2-(n+1))\tilde R^{+,\UU}_{1-\frac{n+1}{2}}\cdot\tilde\Gamma^{-1}
=
4(1-n)C(1-\tfrac{n+1}{2},n+1)\tilde\Gamma^{-\frac{n+1}{2}}
=
\tfrac{2\pi^{\frac{1-n}{2}}}{(-\tfrac{n+1}{2})!}\tilde\Gamma^{-\frac{n+1}{2}} .
$$
We find that the $C^1$-Schwartz kernel in \eqref{eq:structure2} takes the following form on $\UU^+$:
$$
\tilde A_{-1} \tfrac{2\pi^{\frac{1-n}{2}}}{(-\tfrac{n+1}{2})!}(\Gamma-s^2)^{-\frac{n+1}{2}}
+ 2\sum\limits_{k=0}^{\frac{n+2}{2}} \tilde A_k \cdot C(1-\tfrac{n+1}{2}+k,n+1) \cdot (\Gamma-s^2)^{1-\frac{n+1}{2}+k} .
$$
Thus, the smooth function
\begin{align*}
(x_1,s,x_2)\mapsto A_{-1}&(x_1,x_2) \frac{2\pi^{\frac{1-n}{2}}}{(-\tfrac{n+1}{2})!}(\Gamma(x_1,x_2)-s^2)^{-\frac{n+1}{2}} \\
&+ 2\sum\limits_{k=0}^{\frac{n+2}{2}} A_k(x_1,x_2) \cdot C(1-\tfrac{n+1}{2}+k,n+1) \cdot (\Gamma(x_1,x_2)-s^2)^{1-\frac{n+1}{2}+k}
\end{align*}
on $\UU^+\times\R$ is the restriction of a $C^1$-function on $\UU\times\R$.
Taking the limit $s^2\to\Gamma(x_1,x_2)$ shows that $A_k\equiv0$ on $\UU^+$ for $k=-1,0,\ldots,\frac{n+2}{2}$.

Returning to Feynman propagators, \eqref{eq:structure1} simplifies on $\UU^+$ to
\begin{align*}
\sum\limits_{k=0}^{\frac{n+2}{2}} \Big(\dirac_{R,(2)}^* \big(V_{L,k} \cdot G^{+,\UU}_{1-\frac{n}2+k}\big) &- \dirac_{R,(1)} \big(V_{R,k} \cdot G^{+,\UU}_{1-\frac{n}2+k}\big)\Big)
=
B_0\cdot F^{+,\UU}_{0} + B_1\cdot F^{+,\UU}_{1} .
\end{align*}
Applying $\dirac_L$ to the first argument, we find for the continuous Schwartz kernel of $\dirac_L\hat G^\loc_{L} \dirac_R - \dirac_L\dirac_R\hat G^\loc_{R}$ on $\UU^+$:
\begin{align}
\sum\limits_{k=0}^{\frac{n+2}{2}} \Big(\dirac_{L,(1)}\dirac_{R,(2)}^* \big(V_{L,k} \cdot G^{+,\UU}_{1-\frac{n}2+k}\big) &- \dirac_{L,(1)}\dirac_{R,(1)} \big(V_{R,k} \cdot G^{+,\UU}_{1-\frac{n}2+k}\big)\Big)
=
\dirac_{L,(1)}\big(B_0 F^{+,\UU}_{0} + B_1 F^{+,\UU}_{1}\big) .
\label{eq:structure3}
\end{align}
All terms of the RHS are smooth.
Since the diagonal of $X\times X$ is contained in the closure of $\UU^+$, continuity of the kernel implies that \eqref{eq:structure3} also holds along the diagonal.
Since $\grades\Gamma$ and $\gradzs\Gamma$ vanish along the diagonal, so do $B_0$ and $B_1$.
Moreover, $F^{+,\UU}_1$ vanishes along the diagonal too.
Thus restricting to the diagonal yields
\begin{align}
[\dirac_L\hat G^\loc_{L} \dirac_R - \dirac_L\dirac_R \hat G^\loc_{R}]
&=
\dirac_{L,(1)}B_0\cdot F^{+,\UU}_{0} - \rmi \grades F^{+,\UU}_{0} \circ B_0 +\dirac_{L,(1)}B_1\cdot F^{+,\UU}_{1} \notag\\
&\quad - \rmi \grades F^{+,\UU}_{1} \circ B_1 \notag\\
&=
\dirac_{L,(1)}B_0\cdot F^{+,\UU}_{0} \notag\\
&=
-\tfrac{1}{n^2\pi}\dirac_{L,(1)} \Big( \grades\Gamma\circ V_{R,\frac{n}{2}} - V_{L,\frac{n}{2}}\circ \gradzs\Gamma \Big) \cdot C(0,n)\notag\\
&=
-\tfrac{1}{n^2\pi}\dirac_{L,(1)} \Big( \grades\Gamma\circ V_{R,\frac{n}{2}} - V_{L,\frac{n}{2}}\circ \gradzs\Gamma \Big) \cdot \frac{\pi^{\frac{2-n}{2}}}{2^n (\frac{n-2}{2})!}\notag\\
&=
-\frac{1}{2n (4\pi)^{\nicefrac{n}{2}}(\frac{n}{2})!}\dirac_{L,(1)} \Big( \grades\Gamma\circ V_{R,\frac{n}{2}} - V_{L,\frac{n}{2}}\circ \gradzs\Gamma \Big) .
\label{eq:fastamziel}
\end{align}
Let $e_1,\ldots,e_n$ be a local Lorentz-orthonormal tangent frame near a point on the diagonal.
Then $g(e_k,e_\ell) = \eps_k\delta_{k\ell}$ with $\eps_k=\pm 1$.
Since $\gradzs\Gamma$ vanishes along the diagonal, using \eqref{eq:DiracLeibnitz} we find along the diagonal:
\begin{align*}
\dirac_{L,(1)} \Big( V_{L,\frac{n}{2}}\circ \gradzs\Gamma\Big)
&=
\dirac_{L,(1)} \sum_k \eps_k (\partial_{e_k,(2)}\Gamma) V_{L,\frac{n}{2}}\circ \slashed e_k \\
&=
-\rmi \sum_k \eps_k \grades(\partial_{e_k,(2)}\Gamma) \circ V_{L,\frac{n}{2}}\circ \slashed e_k \\
&=
-\rmi \sum_{k,\ell} \eps_\ell\eps_k (\partial_{e_\ell,(1)}\partial_{e_k,(2)}\Gamma) \slashed e_\ell \circ V_{L,\frac{n}{2}}\circ \slashed e_k \\
&=
2\rmi \sum_{k} \eps_k  \slashed e_k \circ V_{L,\frac{n}{2}}\circ \slashed e_k \\
&=
2n\rmi \theta(V_{L,\frac{n}{2}}) .
\end{align*}
The verification of
$$
\dirac_{L,(1)} \Big( \grades\Gamma\circ V_{R,\frac{n}{2}}\Big)
=
2n\rmi V_{R,\frac{n}{2}}
$$
is even simpler.
Inserting this into \eqref{eq:fastamziel} concludes the proof.
\end{proof}

We define the \emph{Dirac currents} $J^{{\pm}}$ by
\begin{equation}
 J^{\pm}(\xi)(x) :=  \rmi\mathrm{tr}(\slashed \xi \circ[\dirac_L \hat G_{\pm,L}^{\reg}](x))
\label{eq:DefJ+-}
\end{equation}
for any $x\in X$ and $\xi\in T_xX$.
Then $J^+$ and $J^-$ are $1$-forms of $C^1$-regularity.
The difference $J= J^+ - J^-$ is given by
$$
 J(\xi)(x)
=  
\rmi\mathrm{tr}(\slashed \xi \circ[\dirac_L (\hat G^{\reg}_{+,L}- \hat G^{\reg}_{-,L})](x))
=  
\rmi\mathrm{tr}(\slashed \xi \circ[\dirac_L (\hat G^+_{L}- \hat G^-_{L})](x)).
$$

\begin{lemma}
\label{lem:Jclosed}
The $1$-form $J$ is co-closed,
$$
\delta J=0.
$$
\end{lemma}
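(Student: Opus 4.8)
The plan is to identify $J$, up to the factor $\rmi$, with the Dirac current of a \emph{smooth} bi-solution of the odd operator $\dirac$, and then to invoke the fact that such currents are co-closed, which is a pointwise reformulation of Green's formula~\eqref{eq:Green}. Concretely: by the formula preceding the lemma, $J(\xi)(x)=\rmi\,\tr\big(\slashed\xi\circ[\dirac_L(\hat G^{+}_{L}-\hat G^{-}_{L})](x)\big)$. Put $\hat W:=\dirac_L\circ(\hat G^{+}_{L}-\hat G^{-}_{L})\colon C^\infty_\mathrm{c}(X;\SS_L)\to C^\infty(X;\SS_R)$ with Schwartz kernel $W$. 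Since different Feynman parametrices of the normally hyperbolic operator $\dirac_R\dirac_L$ differ by a smoothing operator, $\hat G^{+}_{L}-\hat G^{-}_{L}$ is smoothing, so $W$ is a smooth section of $\SS_R\boxtimes\SS_L^{*}$ over $X\times X$. Moreover, since $\hat G^{\pm}_{L}$ and $\hat G^{\pm}_{R}$ are fundamental solutions, $\dirac_R\circ\hat W=(\dirac_R\dirac_L)(\hat G^{+}_{L}-\hat G^{-}_{L})=0$ and, by Lemma~\ref{lem:DiracGTausch}, $\hat W\circ\dirac_R=(\hat G^{+}_{R}-\hat G^{-}_{R})(\dirac_L\dirac_R)=0$. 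Hence $W$ is a bi-solution: it is annihilated by $\dirac_R\colon\SS_R\to\SS_L$ in the first variable and by the formal transpose $\dirac_R^{*}\colon\SS_L^{*}\to\SS_R^{*}$ of $\dirac_R$ in the second variable (these being the off-diagonal blocks of the odd operators $\dirac$ and $\dirac^{*}$).

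The key ingredient is a pointwise identity, valid for any smooth section $W$ of $\SS_R\boxtimes\SS_L^{*}$ near the diagonal: the $1$-form $J_W(\xi)(x):=\tr(\slashed\xi\circ W(x,x))$, with $\slashed\xi\colon\SS_{R,x}\to\SS_{L,x}$ so that $\slashed\xi\circ W(x,x)\in\mathrm{End}(\SS_{L,x})$, satisfies
\[
\delta J_W=c\,\Big(\tr\big[(\dirac_{R,(1)}W)(x,x)\big]-\tr\big[(\dirac_{R,(2)}^{*}W)(x,x)\big]\Big)
\]
for a universal nonzero constant $c$ which we need not determine. Both sides are linear first-order differential expressions in the $1$-jet of $W$ along the diagonal, so it is enough to verify the identity for rank-one kernels $W=u\otimes v$ with $u\in C^\infty(X;\SS_R)$ and $v\in C^\infty(X;\SS_L^{*})$, because finite sums of such products realize every $1$-jet at every diagonal point. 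For $W=u\otimes v$ one has $J_W(\xi)=v(\slashed\xi\,u)$, $\tr[(\dirac_{R,(1)}W)(x,x)]=v(\dirac_R u)$ and $\tr[(\dirac_{R,(2)}^{*}W)(x,x)]=(\dirac_R^{*}v)(u)$, so the assertion reduces to $\delta\big(v(\slashed\xi\,u)\big)=c\,\big(v(\dirac_R u)-(\dirac_R^{*}v)(u)\big)$, which is exactly the infinitesimal (local, coordinatewise) form of Green's formula~\eqref{eq:Green} for the pair $(\dirac_R,\dirac_R^{*})$: it is the divergence identity that, integrated over the region $M$ between two Cauchy hypersurfaces and combined with Stokes' theorem, yields~\eqref{eq:Green}.

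Applying this identity to the bi-solution $W$ constructed in the first paragraph, the right-hand side vanishes identically because $\dirac_{R,(1)}W=0=\dirac_{R,(2)}^{*}W$, whence $\delta J=\rmi\,\delta J_W=0$, as claimed.

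I expect the main obstacle to be the careful justification of the pointwise identity in the second paragraph: either re-deriving it directly in a local trivialization, keeping track of the connection terms and the sign of $c$, or extracting it from~\eqref{eq:Green} by testing against a sufficiently rich family of regions between Cauchy hypersurfaces; together with the short linear-algebra step that upgrades the rank-one case to arbitrary smooth $W$, for which one must confirm that both $\delta J_W$ and the right-hand side are genuine first-order differential operators in the diagonal $1$-jet of $W$, so that agreement on the jet-spanning family of rank-one kernels forces agreement for all $W$.
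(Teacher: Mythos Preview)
Your proof is correct and uses the same ingredients as the paper's---the Leibniz rule \eqref{eq:DiracLeibnitz}, Lemma~\ref{lem:DiracGTausch}, and the bi-solution property of $\hat G^+_{L}-\hat G^-_{L}$---but packages them differently. The paper computes $\int_X(\delta J)\,u\,\dV$ for a test function $u$ directly: it writes $\rmi\,\grads u=u\dirac_R-\dirac_R\circ u$ as operators, converts the integral of the pointwise trace into an operator trace $\Tr$ of a smoothing operator, and then uses cyclicity of $\Tr$ together with $\dirac_R\dirac_L(\hat G^+_L-\hat G^-_L)=0$ and $\dirac_L\dirac_R(\hat G^+_R-\hat G^-_R)=0$. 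You instead isolate a general pointwise divergence identity $\delta J_W=c\big(\tr[(\dirac_{R,(1)}W)|_{\diag}]-\tr[(\dirac_{R,(2)}^{*}W)|_{\diag}]\big)$ for arbitrary smooth kernels $W$, verify it on rank-one kernels via the infinitesimal Green's formula, and upgrade by a jet argument before applying the bi-solution property. The paper's route is shorter because the operator-trace cyclicity absorbs your rank-one-to-general step in one line; your route has the advantage of making the underlying divergence identity explicit and reusable (indeed, the paper uses the same computation again in the proof of Theorem~\ref{thm:deltaJHadamard}, where the bi-solution property fails and the right-hand side of your identity is exactly what survives).
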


\begin{proof}
Since $\delta J$ is continuous, it suffices to show that $\delta J=0$ in the distributional sense in the interior of $X$.
Let $u\in C^\infty_c(\mathring X)$.
Then, using \eqref{eq:DiracLeibnitz}, $\dirac_R \dirac_L (\hat G^+_{L}- \hat G^-_{L}) =\dirac_L \dirac_R (\hat G^+_{R}- \hat G^-_{R}) =0$ and Lemma~\ref{lem:DiracGTausch}, we find
\begin{align*}
\int_X \delta J\cdot u \dV
&=
\int_X J(\grad u) \dV \\
&=
\rmi\int_X \mathrm{tr}(\grads u\circ[\dirac_L (\hat G^+_{L}- \hat G^-_{L})]) \dV \\
&=
\rmi\int_X \mathrm{tr}[\grads u\dirac_L (\hat G^+_{L}- \hat G^-_{L})] \dV \\
&=
-\int_X \mathrm{tr}[(\dirac_R\circ u-u\dirac_R)\dirac_L (\hat G^+_{L}- \hat G^-_{L})] \dV \\
&=
-\Tr(\dirac_R\circ u\circ \dirac_L\circ (\hat G^+_{L}- \hat G^-_{L})) \\
&=
-\Tr(u\circ \dirac_L\circ (\hat G^+_{L}- \hat G^-_{L})\circ \dirac_R) \\
&=
-\Tr(u\circ \dirac_L\circ \dirac_R\circ (\hat G^+_{R}- \hat G^-_{R})) \\
&=
0.
\qedhere
\end{align*}
\end{proof}

Let $U= U_{\Sigma_+,\Sigma_-}: L^2_\mathrm{c}(\Sigma_-;\SS_R)\to L^2_\mathrm{c}(\Sigma_+;\SS_R)$ be the time-evolution operator as defined in~\eqref{eq:U}.

\begin{theorem}
 Suppose that $X$ is spatially compact and that $\dirac_L$ has product structure near $\Sigma_+$ and $\Sigma_-$.
 Let $P_+ = p_\geq(\D_{\Sigma_+})$ and $P_- = U p_\geq(\D_{\Sigma_-}) U^{-1}$. 
 Then $P_+ -P_-$ has a smooth integral kernel. 
 In particular, $(P_+,P_-)$ is a Fredholm pair and 
 $$
  \mathrm{ind}(P_+,P_-) = \Tr(P_+ -P_-) = \int_\Sigma J(n_{\Sigma}) \dA,
 $$
 where integration is over any smooth spacelike Cauchy hypersurface $\Sigma$.
\end{theorem}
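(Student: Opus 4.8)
The plan is to evaluate $\int_\Sigma J(n_\Sigma)\,\dA$ on $\Sigma=\Sigma_+$, where $\dirac_L$ has product structure, to identify it with $\Tr(P_+-P_-)$, and then to pass to an arbitrary Cauchy hypersurface by Lemma~\ref{lem:Jclosed}. Write $\hat G_{+,L}$ and $\hat G_{-,L}$ for the global Feynman propagators of $\dirac_R\dirac_L$ obtained by extending, via Proposition~\ref{prop:FeynmanExtend}, the explicit product-type propagators of Theorem~\ref{thm:FeynmanProductWave} from globally hyperbolic neighbourhoods of $\Sigma_+$ and of $\Sigma_-$; these are the propagators entering the definitions of $J^+$ and $J^-$. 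Put $\hat D_{\pm,RL}:=\dirac_L\hat G_{\pm,L}$. By Lemma~\ref{lem:DiracGTausch} one also has $\hat D_{\pm,RL}=\hat G_{\pm,R}\,\dirac_L$, so $\hat D_{\pm,RL}$ is a two-sided Feynman fundamental solution of the first-order operator $\dirac_R$, namely $\dirac_R\hat D_{\pm,RL}=\id$ on $C^\infty_{\mathrm c}(X;\SS_L)$ and $\hat D_{\pm,RL}\dirac_R=\id$ on $C^\infty_{\mathrm c}(X;\SS_R)$. Since Feynman parametrices are unique modulo smoothing operators and both $\hat G_{\pm,L}$ are exact fundamental solutions, their difference $G'=\hat G_{+,L}-\hat G_{-,L}$ has smooth kernel on all of $X\times X$ and is a bi-solution of $\dirac_R\dirac_L$; hence $\calG:=\dirac_L G'=\hat D_{+,RL}-\hat D_{-,RL}$ has smooth kernel and is a bi-solution of $\dirac_R$, i.e.\ $\dirac_R\calG=0$ in the first and $\calG\dirac_R=0$ in the second argument. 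As the $\hat G_{\pm,L}$ have the same locally determined Hadamard part, $\calG=\dirac_L(\hat G_{+,L}^{\reg}-\hat G_{-,L}^{\reg})$, so $J(\slashed\xi)(x)=\rmi\,\tr(\slashed\xi\,\calG(x,x))$.

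Next I read off the boundary values of $\calG$. Set $\hat D_{\ret,RL}:=\dirac_L\hat G_{\ret,L}$ and $\hat D_{\adv,RL}:=\dirac_L\hat G_{\adv,L}$, the retarded and advanced fundamental solutions of $\dirac_R$, and $B_{\pm,RL}:=\hat D_{\pm,RL}-\tfrac12(\hat D_{\ret,RL}+\hat D_{\adv,RL})$; then $B_{\pm,RL}$ is again a bi-solution of $\dirac_R$ and $B_{+,RL}-B_{-,RL}=\calG$. Because near $\Sigma_\pm$ the propagator $\hat G_{\pm,L}$ coincides with the product one, formula \eqref{eq:FeynmanRegularized} applies there: near $\Sigma_-$ the kernel of $B_{-,RL}$ equals $\tfrac{\rmi}{2}(p_{\geq}-p_{<})\,\e^{-\rmi\tau\DS}\,\nSm$ with $\DS=\D_{\Sigma_-}$, and near $\Sigma_+$ the kernel of $B_{+,RL}$ is the same expression with $\DS=\D_{\Sigma_+}$ and $\nSp$, where $\tau$ is the product time. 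This kernel is smooth in $\tau$, so $B_{\pm,RL}$ restricts to $\Sigma_\mp\times\Sigma_\mp$, and setting $\tau=0$ and using $p_{\geq}+p_{<}=1$ gives $B_{-,RL}|_{\Sigma_-\times\Sigma_-}=\rmi(p_{\geq}(\D_{\Sigma_-})-\tfrac12)\nSm$ and $B_{+,RL}|_{\Sigma_+\times\Sigma_+}=\rmi(P_+-\tfrac12)\nSp$. Since $B_{-,RL}$ is a bi-solution of $\dirac_R$, its restriction to $\Sigma_+\times\Sigma_+$ is obtained from its restriction to $\Sigma_-\times\Sigma_-$ by propagating the first argument with the Cauchy evolution $U$ and the second with the dual evolution; feeding this through the commuting diagram \eqref{eq:diagram-U} --- in which $\nSm\tilde U^*=U^{-1}\nSp$ --- collapses it to $B_{-,RL}|_{\Sigma_+\times\Sigma_+}=\rmi(Up_{\geq}(\D_{\Sigma_-})U^{-1}-\tfrac12)\nSp=\rmi(P_--\tfrac12)\nSp$. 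Subtracting, and noting that the retarded and advanced parts cancel, yields the key identity
\[
  \calG|_{\Sigma_+\times\Sigma_+}=\rmi\,(P_+-P_-)\,\nSp .
\]

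Now $\calG$ has smooth kernel, so composing the key identity with $\nSp^{-1}$ shows that $P_+-P_-$ has smooth integral kernel; as $\Sigma_+$ is compact it is trace-class. A pair of projections whose difference is trace-class is a Fredholm pair with $\mathrm{ind}(P_+,P_-)=\Tr(P_+-P_-)$ (Avron--Seiler--Simon). Taking the fibrewise trace of the key identity, using cyclicity and the definition of $J$, gives
\[
  \Tr(P_+-P_-)=\int_{\Sigma_+}J(n_{\Sigma_+})\,\dA .
\]
Finally, $J$ is a $C^1$ one-form and co-closed by Lemma~\ref{lem:Jclosed}, so the divergence theorem applied to the region between two spacelike Cauchy hypersurfaces --- compact because $X$ is spatially compact --- shows that $\int_\Sigma J(n_\Sigma)\,\dA$ takes the same value on every smooth spacelike Cauchy hypersurface $\Sigma$. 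This proves all three assertions.

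The hard part will be the middle step. The operators $\hat D_{\pm,RL}$ and $B_{\pm,RL}$ are genuine distributions --- their kernels are pseudodifferential of order zero, and only the difference $\calG$ is smooth --- so ``restriction to a Cauchy hypersurface squared'' has to be justified microlocally; this is legitimate because $\WF'(B_{\pm,RL})$ consists of pairs of lightlike covectors, which are transverse to the conormal of $\Sigma\times\Sigma$ for spacelike $\Sigma$. The genuinely delicate point is then the propagation of these bi-solutions simultaneously in both arguments, together with the precise bookkeeping of $\nS$, $U$ and $\tilde U^*$ through \eqref{eq:diagram-U}, signs included; by comparison, the co-closedness input, the Fredholm-pair statement, and the identification of $J$ with $\calG$ are routine.
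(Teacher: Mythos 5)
Your proof is correct and follows the same strategy as the paper's: use \eqref{eq:FeynmanRegularized} to read off boundary kernels of the regularized Feynman operator on $\Sigma_\pm\times\Sigma_\pm$, propagate the bi-solution from $\Sigma_-\times\Sigma_-$ to $\Sigma_+\times\Sigma_+$ via the Cauchy evolution operators and diagram \eqref{eq:diagram-U}, identify the result with $\rmi(P_+-P_-)\nSp$, and then invoke Mercer's theorem and the co-closedness of $J$ (Lemma~\ref{lem:Jclosed}). The repackaging via $B_{\pm,RL}$ and $\calG$ and the explicit mention of the microlocal transversality justifying restriction of the bi-solution kernels to $\Sigma\times\Sigma$ are cosmetic clarifications, not a different route.
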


\begin{proof}
The theorem can be inferred from the analysis in \cite{Baer:2015aa} and \cite{baerstroh2015chiral}. 
For the sake of completeness we give a direct proof.
By \eqref{eq:FeynmanRegularized}, the integral kernel of
 $$
  -\rmi\dirac_L (\hat G^+_{L} - \tfrac{1}{2}(\hat G_{L,\ret}+\hat G_{L,\adv}))
  =
  -\rmi\big(\hat D - \tfrac12 (\hat G_\ret^{\dirac_L} + \hat G_\adv^{\dirac_L})\big)
 $$
 restricted to $\Sigma_+ \times \Sigma_+$ coincides with the integral kernel of $\frac{1}{2} (p_\geq(\D_{\Sigma_+}) - p_<(\D_{\Sigma_+}))\nSp = (P_+ - \frac{1}{2} \id)\nSp$. 
 Similarly, the Schwartz kernel of $-\rmi\dirac_L (\hat G^-_{L} - \tfrac{1}{2}(\hat G_{L,\ret}+\hat G_{L,\adv}))$ restricts on $\Sigma_-\times\Sigma_-$ to that of $(p_\geq(\D_{\Sigma_-}) - \frac{1}{2} \id)\nSm$.

Temporarily denote the Schwartz kernel of $-\rmi \dirac_L (\hat G^-_{L} - \tfrac{1}{2}(\hat G_{L,\ret}+\hat G_{L,\adv}))$ by $H$ and that of $p_\geq(\D_{\Sigma_-}) - \tfrac{1}{2} \id$ by $Q$.
Then $(p_\geq(\D_{\Sigma_-}) - \tfrac{1}{2} \id)\nSm$ has the kernel $(\id\otimes(\nSm)^*)Q$.
Let $U = U_{\Sigma_+,\Sigma_-}\colon L^2_\mathrm{c}(\Sigma_-;\SS)\to L^2_\mathrm{c}(\Sigma_+;\SS)$ be the time-evolution operator for $\dirac$ and $\tilde U = \tilde U_{\Sigma_+,\Sigma_-}\colon L^2_\mathrm{c}(\Sigma_-;\SS^*)\to L^2_\mathrm{c}(\Sigma_+;\SS^*)$ that of $\dirac^*$.
From $\dirac_{R,(1)}H=0$ we get
$$
(U\otimes\id) (H|_{\Sigma_-\times X}) = H|_{\Sigma_+\times X}
$$
and from $\dirac_{R,(2)}^*H=0$ we find
$$
(\id\otimes\tilde U)(H|_{X\times\Sigma_-}) = H|_{X\times\Sigma_+}.
$$
Using the commutative diagram \eqref{eq:diagram-U} we find
\begin{align*}
H|_{\Sigma_+\times\Sigma_+}
&=
(\id\otimes\tilde U)(H|_{\Sigma_+\times\Sigma_-}) \\
&=
(U\otimes\tilde U)(H|_{\Sigma_-\times\Sigma_-}) \\
&=
(U\otimes\tilde U)(\id\otimes (\nSm)^*)Q \\
&=
(U\otimes (\nSm\tilde U^*)^*)Q\\
&=
(U\otimes (U^{-1}\nSp)^*)Q\\
&=
(\id\otimes(\nSp)^*)(U\otimes (U^{-1})^*)Q .
\end{align*}
This says that $H|_{\Sigma_+\times\Sigma_+}$ is the kernel of $U(p_\geq(\D_{\Sigma_-}) - \tfrac{1}{2} \id)U^{-1}\nSp=(P_- - \tfrac{1}{2} \id)\nSp$.
It follows that the integral kernel of $P_+-P_-$ is given by the restriction to $\Sigma_+ \times \Sigma_+$ of the smooth kernel of
$$
- \rmi \dirac_L (\hat G^+_{L} -\hat G^-_{L})\nSp^{-1}
=
\rmi \dirac_L (\hat G^+_{L} -\hat G^-_{L})\nSp .
$$
In particular, $P_+ - P_-$ has a smooth integral kernel.
By Mercer's theorem, the trace of $P_+ - P_-$ is given by the integral
$$
\int_{\Sigma_+} \mathrm{tr}[\rmi \dirac_L (\hat G^+_{L}- \hat G^-_{L})\nSp ](y) dA(y)
=
\int_{\Sigma_+} J(n_{\Sigma_+}) \dA.
$$
Since $J$ is co-closed, the integral over any other Cauchy hypersurface gives the same result by Gauss' divergence theorem.
\end{proof}

\begin{theorem}
\label{thm:indexJ}
Let $X$ be an  even dimensional compact globally hyperbolic manifold with boundary $\partial X=\Sigma_- \sqcup \Sigma_+$.
Let $\dirac=
\begin{pmatrix}
0 & \dirac_R \\
\dirac_L & 0
\end{pmatrix}$ be an odd Dirac-type operator over $X$.
Suppose that $X$ and $\dirac$ have product structure near $\Sigma_\pm$.
Then under $\APS$-boundary conditions, the Dirac operator
$$
\dirac_L:C^\infty_{\APS}( X;\SS_L)\to C^\infty( X;\SS_R)
$$
as a continuous linear map between Fr\'echet spaces is Fredholm and its index is given by
$$
\mathrm{ind}(\dirac_L)
=
\Xi(\D_{\Sigma_+}) - \Xi(\D_{\Sigma_-}) - \int_{X} \delta J^{-}  \dV.
$$
\end{theorem}

\begin{proof}
The Fredholm property of Dirac-type operators on spatially compact globally hyperbolic Lorentzian manifolds subject to APS-boundary conditions was shown in \cite{Baer:2015aa}.
Moreover, it was shown there that $\mathrm{ind}(\dirac_L)=  \mathrm{ind}(P_+,P_-)$.
We therefore can apply the above theorem to compute 
\begin{equation}\label{eq:IndexLadung}
\mathrm{ind}(\dirac_L) = \int_{\Sigma_+} \big(J_{}^{{+}}(n_{\Sigma_+}) - J_{}^{{-}}(n_{\Sigma_+})\big)\dA.
\end{equation}
Gauss' divergence theorem yields
\begin{equation*} 
  \int_{\Sigma_+} J^{{-}}(n_{\Sigma_+})   \dA -  \int_{\Sigma_-} J^{{-}}(n_{\Sigma_-})    \dA  =
  \int_{X} \delta J^{{-}}    \dV .
\end{equation*}
Recall from Proposition~\ref{prop:etah} that
\begin{equation}
 \int_{\Sigma_\pm} J^{{\pm}}(n_{\Sigma_\pm}) \dA
 =
 \Xi(\D_{\Sigma_\pm}).
 \label{eq:StdRandterm}
 \end{equation}
Combining equations \eqref{eq:IndexLadung}--\eqref{eq:StdRandterm} we find
\begin{align*}
\mathrm{ind}(\dirac_L)
&=  
\int_{\Sigma_+} \left( J^{{+}}(n_{\Sigma_+}) - J^{{-}}(n_{\Sigma_+}) \right) \dA \\
&= 
\int_{\Sigma_+} J^{{+}}(n_{\Sigma_+})  \dA  -  \int_{\Sigma_-} J^{{-}}(n_{\Sigma_-})  \dA  -  \int_{X} \delta J^{{-}}   \dV \\
&=
\Xi(\D_{\Sigma_+}) - \Xi(\D_{\Sigma_-})  - \int_{X} \delta J^{{-}}  \dV.\qedhere
\end{align*}
\end{proof}

By the above, the following statement can be interpreted as a local version of the index theorem which holds irrespective of spatial compactness.

\begin{theorem}
\label{thm:deltaJHadamard}
Let $X$ be an $n$-dimensional globally hyperbolic manifold with boundary $\partial X=\Sigma_-\sqcup\Sigma_+$.
Let $n$ be even and let $\dirac=
\begin{pmatrix}
0 & \dirac_R \\
\dirac_L & 0
\end{pmatrix}$ be an odd Dirac-type operator over $X$.
Let $V_{R,k}$ be the Hadamard coefficients of $\dirac_L\dirac_R$ and $V_{L,k}$ those of $\dirac_R\dirac_L$.
Let $J^-$ be the Dirac current for the Feynman propagator of $\dirac_R\dirac_L$, as defined in \eqref{eq:DefJ+-}.

Then the index density $\delta J^{{-}}$ can be expressed in terms of the Hadamard coefficients via
$$
\delta J^{{-}}
=
\frac{\tr(V_{L,\frac{n}{2}})  -\tr(V_{R,\frac{n}{2}})}{(4 \pi)^\frac{n}{2} (\frac{n}{2})!}.
$$
\end{theorem}

\begin{proof}
The same computation as in the proof of Lemma~\ref{lem:Jclosed} with $J^-$ instead of $J$ yields for an arbitrary test function $u \in C_\mathrm{c}^\infty(\mathring{X})$:
\begin{align*}
\int_X u\delta J^{-}\dV
&=
\rmi \int_X u \{ \tr[\dirac_L\hat{G}^\reg_{-,L}\dirac_R]-\tr[\dirac_R\dirac_L\hat{G}^\reg_{-,L}] \}\dV .\end{align*}
Proposition~\ref{prop:ZerlegeG} implies that the kernels of $\dirac_R\dirac_L\hat{G}^\reg_{-,L}$ and $\dirac_L\dirac_R\hat{G}^\reg_{-,R}$ vanish along the diagonal.
Thus, 
\begin{align*}
\delta J^{-}
&=
\rmi \{ \tr[\dirac_L\hat{G}^\reg_{-,L}\dirac_R]-\tr[\dirac_R\dirac_L\hat{G}^\reg_{-,L}] \} \\
&=
\rmi \{ \tr[\dirac_L\hat{G}^\reg_{-,L}\dirac_R]-\tr[\dirac_L\dirac_R\hat{G}^\reg_{-,R}] \} \\
&=
-\rmi \{ \tr[\dirac_L\hat{G}^\loc_{-,L}\dirac_R]-\tr[\dirac_L\dirac_R\hat{G}^\loc_{-,R}] \},
\end{align*}
by Lemma~\ref{lem:DiracGTausch}.
Proposition~\ref{niceprop} concludes the proof.
\end{proof}

\begin{corollary}
\label{cor:charforms}
Let $X$ be spin$^c$ with associated determinant line bundle $(L,\nabla^L)$ and let $\dirac$ be a twisted Dirac operator with twisting bundle $(E,\nabla^E)$.
Then
$$
 \delta J^{{-}} \dV
 =
 \Big[-\Adach(\nabla^g) \wedge \exp\big(\tfrac12 c_1(\nabla^L)\big) \wedge \ch(\nabla^E)\Big]_n .
$$
Here $[\ldots]_n$ denotes the homogeneous part of degree $n$ of a mixed form, $\Adach(\nabla^g)$ the $\Adach$-form of the tangent bundle equipped with the Levi-Civita connection, $c_1(\nabla^L)$ the first Chern form of the determinant line bundle $(L,\nabla^L)$, and $\ch(\nabla^E)$ the Chern character form of the twist bundle $(E,\nabla^E)$.
\end{corollary}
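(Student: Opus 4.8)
The plan is to combine Theorem~\ref{thm:deltaJHadamard} with the classical heat-equation proof of the Atiyah--Singer index theorem. Theorem~\ref{thm:deltaJHadamard} has already reduced the index density to the local quantity
\[
\delta J^{-}=\frac{\tr(V_{R,\frac n2})-\tr(V_{L,\frac n2})}{(4\pi)^{\frac n2}(\tfrac n2)!},
\]
so only the two diagonal Hadamard coefficients $V_{L,\frac n2}$ and $V_{R,\frac n2}$ of the normally hyperbolic operators $\dirac_R\dirac_L$ and $\dirac_L\dirac_R$ enter. For a twisted spin$^{c}$ Dirac operator the Lichnerowicz formula writes these squares in the form $\Box^{\nabla}+B$ (cf.\ Remark~\ref{rem:dAlembert} and \cite{BGP07}), where $\nabla$ is the tensor-product connection on $SX\otimes E$ built from the spin$^{c}$ connection (hence involving $\nabla^{L}$) and $\nabla^{E}$, and $B$ is the Clifford action of the twisting curvature together with a scalar-curvature term. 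Hence $V_{L,\frac n2}(x,x)$ and $V_{R,\frac n2}(x,x)$ are given by universal polynomials in the curvature of $g$, the curvatures of $\nabla^{L}$ and $\nabla^{E}$, and finitely many of their covariant derivatives.

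The central step is to identify $\tr(V_{R,\frac n2})-\tr(V_{L,\frac n2})$ with the supertrace of the $\frac n2$-th heat-kernel coefficient of $\dirac^{2}$. The diagonal values $V_{k}(x,x)$ are governed by the Hadamard transport equations that are recalled, and differentiated, in the Appendix; as universal polynomials in curvature and $B$ these coincide, up to a combinatorial constant $c_{n}$ depending only on $k=\tfrac n2$, with the polynomials giving the Minakshisundaram--Seeley--DeWitt coefficients $a_{k}(x)$ of a Laplace-type operator $\nabla^{*}\nabla+B$ with the same symbol. The reason is that the two recursions have the same shape, being algebraic along radial geodesics, so the signature of $g$ is irrelevant; moreover $c_{n}$ is exactly the factor that has been absorbed into the normalisation $(4\pi)^{\frac n2}(\tfrac n2)!$ in Theorem~\ref{thm:deltaJHadamard}. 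It follows that $\delta J^{-}$ equals $(4\pi)^{-\frac n2}\big(\tr a^{R}_{\frac n2}-\tr a^{L}_{\frac n2}\big)$, the $(4\pi)^{-\frac n2}$-normalised supertrace of the top heat coefficient of $\dirac^{2}$, which is precisely the integrand of the local index formula for the twisted spin$^{c}$ Dirac operator.

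It then remains to quote the classical local index theorem: by the ``fantastic cancellations'' of Atiyah--Bott--Patodi and Gilkey, or by Getzler's rescaling argument (see \cite{BGV}), the supertrace of the $\frac n2$-th heat coefficient of a twisted spin$^{c}$ Dirac operator is $(4\pi)^{\frac n2}$ times the degree-$n$ component of $\Adach(\nabla^{g})\wedge\exp\big(\tfrac12 c_{1}(\nabla^{L})\big)\wedge\ch(\nabla^{E})$, the factor $\exp\big(\tfrac12 c_{1}(\nabla^{L})\big)$ arising because $B$ carries the curvature of $\nabla^{L}$. Together with the previous step this yields the asserted identity of $n$-forms up to an overall sign; the minus sign is already produced by the identification of $\SS_{L},\SS_{R}$ with the positive and negative chirality bundles (so that $\dirac_R\dirac_L$ plays the role of $\Delta^{+}$), and it can be confirmed by comparing the integrated version with the Lorentzian Atiyah--Patodi--Singer formula of \cite{Baer:2015aa} through Theorem~\ref{thm:indexJ} (the $\Xi$-boundary terms matching the $\eta$-contributions), or simply by tracking orientation conventions through the Hadamard recursion. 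Since both sides are continuous $n$-forms agreeing at every point, the corollary follows.

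The main obstacle is the second step: making the passage from the Lorentzian Hadamard recursion for $\dirac^{2}$ to the Riemannian heat-kernel recursion for the operator with the same symbol completely precise, i.e.\ computing the constant $c_{n}$ relating $V_{\frac n2}(x,x)$ to $a_{\frac n2}(x)$ and verifying that it is exactly $(\tfrac n2)!$ once the power of $4\pi$ has been separated off. Concretely this amounts to reconciling the normalisations of the distributions $G^{+,\UU}_{\beta}$ constructed in the Appendix with those of the heat kernel and keeping careful track of signs; the signature itself causes no difficulty because the transport equations are algebraic, but the book-keeping of constants is the delicate part. Everything else is either the elementary algebra of the Lichnerowicz formula or an application of the classical local index theorem.
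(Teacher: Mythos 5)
Your proposal is correct and follows essentially the same route as the paper: reduce via Theorem~\ref{thm:deltaJHadamard} to the diagonal Hadamard coefficients $V_{L/R,\frac n2}$, pass to the Riemannian heat coefficients, and quote the classical local index theorem. The ``main obstacle'' you single out---pinning down the combinatorial constant relating $V_k(x,x)$ to $a_k(x,x)$---is in fact already dispatched in Remark~\ref{theremark}, where the comparison of the Hadamard and heat-coefficient recursions gives $V_k(x,x)=k!\,a_k(x,x)$ (equation~\eqref{eq:Vkak}), which is exactly the $(\tfrac n2)!$ you anticipate.
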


\begin{proof}
On the diagonal, the Hadamard coefficients are given by essentially the same universal polynomials of the curvatures and their covariant derivatives as the heat coefficients on Riemannian manifolds, see \eqref{eq:Vkak}.
Together with the local index theorem in the elliptic case this implies
\begin{equation*}
\frac{\tr(V_{R,\frac{n}{2}})  -\tr(V_{L,\frac{n}{2}})}{(4 \pi)^\frac{n}{2} (\frac{n}{2})!}
=
\Big[\Adach(\nabla^g) \wedge \exp\big(\tfrac12 c_1(\nabla^L)\big) \wedge \ch(\nabla^E)\Big]_n .
\qedhere
\end{equation*}
\end{proof}

Theorems~\ref{thm:indexJ} and \ref{thm:deltaJHadamard} and Corollary~\ref{cor:charforms} combine to give

\begin{corollary}
\label{cor:index}
Suppose that $X$ is spatially compact and $\D$ has product structure near $\Sigma_\pm$.
Then under $\APS$-boundary conditions, the Dirac-type operator
$$
\dirac_L:C^\infty_{\APS}( X;\SS_L)\to C^\infty( X;\SS_R)
$$
as a continuous linear map between Fr\'echet spaces is Fredholm and its index is given by
$$
\mathrm{ind}(\dirac_L)
=
\Xi(\D_{\Sigma_+}) - \Xi(\D_{\Sigma_-}) + \int_{X}  \frac{\tr(V_{R,\frac{n}{2}})  -\tr(V_{L,\frac{n}{2}})}{(4 \pi)^\frac{n}{2} (\frac{n}{2})!} \dV.
$$
If $\dirac$ is a twisted spin$^c$-Dirac operator with twisting bundle $(E,\nabla^E)$, then
\begin{equation}
\mathrm{ind}(\dirac_L)
=
\Xi(\D_{\Sigma_+}) - \Xi(\D_{\Sigma_-})
+ \int_{X} \Adach(\nabla^g) \wedge \exp\big(\tfrac12 c_1(\nabla^L)\big) \wedge \ch(\nabla^E) .
\tag*{$\qed$}
\end{equation}
\end{corollary}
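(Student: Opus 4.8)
The plan is to obtain the corollary by merely assembling the three preceding results, so essentially no new work is required. First I would quote Theorem~\ref{thm:indexJ}: under precisely the hypotheses of the corollary (spatial compactness of $X$ together with product structure of $\dirac_L$ near the Cauchy hypersurfaces $\Sigma_\pm$) it already asserts that $\dirac_L$ with $\APS$-boundary conditions is a Fredholm operator between the stated Fr\'echet spaces and that $\mathrm{ind}(\dirac_L) = \Xi(\D_{\Sigma_+}) - \Xi(\D_{\Sigma_-}) - \int_M \delta J^{-}\dV$. This settles the Fredholm assertion and reduces everything to rewriting the index density $\delta J^{-}$.

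Next I would substitute the pointwise identity of Theorem~\ref{thm:deltaJHadamard}, namely $\delta J^{-} = \big(\tr(V_{R,\frac{n}{2}}) - \tr(V_{L,\frac{n}{2}})\big)\big/\big((4\pi)^{\frac{n}{2}}(\frac{n}{2})!\big)$, into that integral. Since Theorem~\ref{thm:deltaJHadamard} carries no compactness or product-structure hypothesis, it is automatically available here, and the substitution produces the first displayed formula of the corollary.

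For the second, spin$^c$ formula I would then invoke Corollary~\ref{cor:charforms}, which identifies the scalar density $\delta J^{-}\dV$ with the degree-$n$ part of the mixed form $-\Adach(\nabla^g)\wedge\exp\big(\tfrac12 c_1(\nabla^L)\big)\wedge\ch(\nabla^E)$ whenever $\dirac$ is a twisted spin$^c$-Dirac operator. Because integration over the $n$-dimensional $M$ automatically extracts the top-degree component, this turns $-\int_M \delta J^{-}\dV$ into $\int_M \Adach(\nabla^g)\wedge\exp\big(\tfrac12 c_1(\nabla^L)\big)\wedge\ch(\nabla^E)$; the only mild subtlety is the sign bookkeeping, the minus sign in Theorem~\ref{thm:indexJ} cancelling the minus sign built into Corollary~\ref{cor:charforms}. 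Inserting this into the first formula yields the second.

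There is no genuine obstacle at this stage: all of the substance lives in the cited statements, above all in the local identification of $\delta J^{-}$ with Hadamard coefficients (Theorem~\ref{thm:deltaJHadamard}) and in the comparison of the Lorentzian Hadamard coefficients with the Riemannian heat coefficients feeding into the elliptic local index theorem (Corollary~\ref{cor:charforms}). The one point to verify is that the hypotheses of the three results are met simultaneously, which they are, since Theorem~\ref{thm:deltaJHadamard} needs neither compactness nor product structure and Corollary~\ref{cor:charforms} applies exactly when $\dirac$ is a twisted spin$^c$-Dirac operator, the assumption of the second part.
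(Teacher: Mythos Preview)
Your proposal is correct and matches the paper's own approach exactly: the paper simply states that Theorems~\ref{thm:indexJ} and \ref{thm:deltaJHadamard} and Corollary~\ref{cor:charforms} combine to give the corollary, with no further argument. Your breakdown of how the three results fit together, including the sign cancellation, is accurate.
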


\appendix

\section{Expansions of distinguished parametrices}
\label{sec:HadDist}

In this appendix we give a new construction of Feynman parametrices based on a family of distributions which is similar to 
that of Riesz distributions.
We give a precise description of their singularity structures which looks different depending on the parity of the dimension of the underlying manifold.
The Hadamard coefficients which occur are central to our derivation of the local index density, but the appendix is self contained and should be of independent interest.

\subsection{\texorpdfstring{Two families of homogeneous distributions on $\pmb{\R}$}{Two families of homogeneous distributions on R}}

Let $\log$ be the branch of the logarithm defined on $\C\setminus (-\infty,0]$ which coincides with the standard logarithm on positive real numbers.
Then the imaginary part of $\log$ takes values in $(-\pi,\pi)$.
For $\beta\in\C$ the function $z\mapsto z^{\beta} := \exp(\beta\log(z))$ is holomorphic on $\C\setminus (-\infty,0]$.
By \cite{Ho1}*{Thm.~3.1.11}, the function $z\mapsto z^{\beta}$, restricted to the strip $\{z\in\C \mid 0<\Im(z)<1\}$, has a distributional limit $f_\beta^+\in\DD'(\R)$ as $\Im(z)\searrow 0$.

Then $f^+_\beta$ coincides with the function $z\mapsto z^{\beta}$ on positive real numbers and with $z\mapsto |z|^\beta\cdot\exp(i\beta\pi)$ on negative real numbers.
In particular, $f^+_\beta$ is smooth on $\R\setminus\{0\}$, hence $\singsupp (f^+_\beta)\subset\{0\}$.
By \cite{Ho1}*{Thm.~8.1.6} the wavefront set satisfies $\WF(f^+_\beta)\subset (0,\infty)dt \subset T^*_0\R$ where $t$ is the standard coordinate on $\R$.
For $\beta\in \N_0$ the distribution is just the smooth function $t\mapsto t^\beta$.
Otherwise, the wavefront set is nonempty and hence $\WF(f^+_\beta) = (0,\infty)dt \subset T^*_0\R$. 
We also define $f_{\beta}^-:=\overline {f_{\overline \beta}^+}$ and note $\WF(f^-_\beta) = (-\infty,0)dt \subset T^*_0\R$ if $\beta\notin\N_0$.

If $k\in\N_0$ and $\Re(\beta)>k$ then $f^\pm_\beta$ is $C^k$ and vanishes to $k^\mathrm{th}$ order at $0$.

Note that $z^\beta=\exp(\beta\log(z))$ depends holomorphically on $\beta$.
Since, as $\Im(z) \searrow 0$, the limit is linear and continuous, the limit distribution $f^+_\beta$ depends holomorphically on $\beta$ in the sense that for each test function $\phi\in C_\mathrm{c}^\infty(\R)$, the map $\C\to\C$, $\beta\mapsto f^\pm_\beta[\phi]$, is holomorphic.
The same holds for the family $f^-_\beta$.

We differentiate the $f_\beta^\pm$ in $\beta$ to obtain another holomorphic family of distributions,
$$
 h_{\beta}^\pm := \tfrac{d}{d\beta} f_\beta^\pm.
$$
If $\Re(\beta)>0$ then $h_{\beta}^+$ is given by the function $z^\beta\, \LOG(z)$ where $\LOG$ is a branch of the logarithm with cut in the lower half plane and which coincides with the usual logarithm on positive real numbers.

Moreover, $h_{\beta}^-= \overline{h_{\overline{\beta}}^+}$. 
By the same argument as before, we get $\WF(h^\pm_\beta) = \pm(0,\infty)dt \subset T^*_0 \R$. 
Again, if $k\in\N_0$ and $\Re(\beta)>k$ then $h^\pm_\beta$ is $C^k$ and vanishes to $k^\mathrm{th}$ order at $0$.

Yet another family of homogeneous distributions on $\R$ is defined by $t_\pm^\beta := \chi_{[0,\infty)}(\pm t) | t |^\beta$ for $\Re(\beta)> -1$ and then by meromorphic continuation for arbitrary $\beta\in\C$ by means of the formula $\frac{d}{dt} t_\pm^\beta = \pm \beta  t_\pm^{\beta-1}$ where $\beta\in\C\setminus\{0,-1,-2,\ldots\}$. 
The only poles of $\beta \mapsto t_\pm^\beta$ are simple poles at negative integers.
Their residues are given in terms of derivatives of the delta distribution $\delta_0$, see for example \cite{Ho1}, Section~3.2.
Note that
\begin{align}
 f_\beta^+ + f_\beta^- &= 2\, t_+^\beta + 2 \cos(\beta \pi)  t_-^\beta, \label{beziehung1}\\
 -\rmi ( f_\beta^+ - f_\beta^-) &= 2 \sin(\beta \pi)  t_-^\beta \label{beziehung2}.
\end{align}
One checks this directly for large $\Re(\beta)$ and then concludes by unique continuation that the relations hold for all $\beta\in\C$.

\subsection{Lorentz invariant homogeneous distributions on Minkowski space}

In this appendix we construct a distinguished family of distributions that in some ways is similar to the Riesz distributions on $n$-dimensional Minkowski space, except that their wavefront set is that of the Feynman propagator.
In the same way as the Riesz distributions are used in the Hadamard construction of retarded and advanced fundamental solutions (see for example \cites{paul2014huygens, baum1996normally, BGP07} and references therein in the geometric setting), this family of distributions can be used directly to construct a Feynman parametrix.

Let $n\in\N$.
We equip $\R^n$ with the Minkowski metric $\ml x,y\mr =-x_1y_1+x_2y_2+\ldots+x_ny_n$.
We denote the negative Minkowski square by $\gamma:\R^n\to\R$, $\gamma(X)=-\ml X,X\mr$.
We provide Minkowski space with the time orientation for which $\frac{\partial}{\partial x_1}$ is future directed.
Let $\OO(n-1,1)$ be the group of Lorentz transformations, i.e.\ the group of linear automorphisms of $\R^n$ which preserve $\ml\cdot,\cdot\mr$.
Since $\gamma:\R^n\setminus\{0\} \to \R$ is a submersion we can pull back $f^\pm_\beta$ and obtain $\gamma^* f^\pm_\beta \in \DD'(\R^n\setminus\{0\})$.
By \cite{GS}*{Prop.~VI.3.2}, the wavefront set satisfies $\WF(\gamma^* f^\pm_\beta)\subset \{\lambda d\gamma(x) \mid x\in \CC\setminus\{0\},\, \pm\lambda>0\}$.
Note that the covector $d\gamma(x)=2(x_1dx_1-x_2dx_2-\ldots-x_ndx_n)$ is lightlike for $x\in\CC\setminus\{0\}$.
For $\beta\notin\N_0$ the distribution $\gamma^* f^\pm_\beta$ is not smooth at $\CC$ and hence $\WF(\gamma^* f^\pm_\beta) = \{\lambda d\gamma(x) \mid x\in \CC\setminus\{0\},\, \pm\lambda>0\}$. 
Since the $f^\pm_\beta$ depend holomorphically on $\beta$ so do the pullbacks $\gamma^* f^\pm_\beta$.

Since $\gamma$ is quadratic, the distributions $\gamma^* f^\pm_\beta$ are homogeneous of degree $2\beta$.
If $2\beta$ is not an integer $\le -n$ then $\gamma^* f^\pm_\beta$ has a unique extension to a homogeneous distribution on $\R^n$ by \cite{Ho1}*{Thm.~3.2.3}.
We denote the extension of $\gamma^* f^\pm_\beta$ by $(\gamma\pm i0)^\beta\in\DD'(\R^n)$. 
Then 
\begin{gather} \label{giwavefront}
\WF((\gamma\pm i0)^\beta) \subset \{\lambda d\gamma(x) \mid x\in \CC\setminus\{0\},\, \pm\lambda>0\} \cup \dot{T}_0^*\R^n. 
\end{gather}
The extension mapping in \cite{Ho1}*{Thm.~3.2.3} is linear and continuous, hence $(\gamma\pm i0)^\beta$ form a holomorphic family of distributions on $\R^n$ for $\beta\in\C\setminus\{-\frac{n}{2}, -\frac{n+1}{2},-\frac{n+2}{2},\ldots\}$.

Similarly, the derivative of $(\gamma \pm \rmi 0)^\beta$ with respect to $\beta$, will be denoted by $(\gamma \pm \rmi 0)^\beta \log(\gamma \pm \rmi 0)$. 
Hence, $(\gamma \pm \rmi 0)^\beta \log(\gamma \pm \rmi 0)$ is a holomorphic family of distributions defined on the set of $\beta$ for which $2 \beta$ is not a negative integer. 
On $\R^n\setminus\{0\}$ this distribution equals the pullback $\gamma^* h^\pm_\beta$, and we therefore have the inclusion
 \begin{gather} \label{giwavefrontlog}
\WF((\gamma\pm \rmi 0)^\beta\log(\gamma\pm \rmi 0)^\beta) \subset \{\lambda d\gamma(x) \mid x\in \CC\setminus\{0\},\, \pm\lambda>0\} \cup \dot{T}_0^*\R^n. 
\end{gather}
We put 
\begin{equation}
C(\beta,n) := \frac{2^{-n-2\beta}\pi^{\frac{2-n}{2}}}{(\beta+\frac{n-2}{2})! \beta!}
\label{eq:defC}
\end{equation}
and define
\begin{align*}
F^+_\beta &:= C(\beta,n)(\gamma - i0)^\beta,\\
F^-_\beta &:=   C(\beta,n)(\gamma + i0)^\beta = \overline{F^+_{\overline \beta}}.
\end{align*}%
If $\Re(\beta)>k\in\N_0$ then $F^\pm_\beta$ is $C^k$ and vanishes to $k^\mathrm{th}$ order along $\CC$.
Since $\beta\mapsto C(\beta,n)$ is an entire function, the $F^\pm_\beta$ are again holomorphic for $\beta\in\C\setminus\{-\frac{n}{2}, -\frac{n+1}{2},-\frac{n+2}{2},\ldots\}$.

Recall also the holomorphic family of \emph{Riesz distributions} $R^\pm_\beta$ on $\R^n$. 
This family can be introduced in a very general setting of arbitrary signature, and we refer to \cite{kolk1991riesz} and references therein for the general theory. 
They relate to the Riesz distributions in \cite{BGP07}*{Ch.~1} by $R^\pm_\beta=R_\pm(\alpha)$ where $\alpha=n+2\beta$. 
For $\Re(\beta)>-1$ they are defined as 
$$
R^\pm_\beta[\phi] = 2C(\beta,n)\int_{J^\pm(0)}\gamma^\beta \phi\, dx,
$$
where $J^\pm(0)$ is the future or past light cone, i.e.\ the set of future or past directed causal vectors, respectively. 
Note that for $\Re(\beta)>-1$ we have $R^+_\beta + R^-_\beta = 2 C(\beta,n) \gamma^*(t_+^\beta)$ on $\R^n\setminus\{0\}$.

As an auxiliary tool we also define the \emph{complementary Riesz distributions} for $\Re(\beta)>-1$ by
\begin{align*}
\tilde R_\beta[\phi] &:= 2C(\beta,n)\int_{\R^n \setminus J(0)}(-\gamma)^\beta \phi\, dx,\mbox{ i.e.}\\
\tilde R_\beta       &= 2 C(\beta,n) \gamma^*(t_-^\beta) \textrm{ on }\R^n\setminus\{0\}.
\end{align*}
In the same way as for the Riesz distributions one shows that $\tilde R_\beta$ extends to the entire complex plane as a holomorphic family of distributions.
Note that we have $\Box \tilde R_{\beta+1} = -\tilde R_\beta$, see e.g.\ \cite{kolk1991riesz}*{Eq.~(2.6)}, and 
\begin{gather*} 
\tilde R_{-\frac{n}{2}} = 
\begin{cases}
  (-1)^{\frac{n}{2}-1} 2\delta_0, & \textrm{for } n \textrm{ even}, \\ 
  0, &  \textrm{for } n \textrm{ odd},
\end{cases}
\end{gather*}
see \cite{kolk1991riesz}*{Eq.~(2.10)}.

\begin{prop} \label{F:properties}
The family $\beta\mapsto F^\pm_\beta$ of distributions on $\R^n$, $n\ge2$, extends uniquely to a holomorphic family of distributions defined for all $\beta\in\C$. 
The following holds for all $\beta\in\C$:
\begin{enumerate}[(a)]
\item\label{F:mult}
$\gamma\cdot F^\pm_\beta = (2\beta+2)(2\beta+n) F^\pm_{\beta+1}$;
\item\label{F:grad}
$(\grad\gamma)\cdot F^\pm_{\beta} = 2(2\beta+n)\grad (F^\pm_{\beta+1})$;
\item\label{F:invariance}
$F^\pm_\beta$ is $\OO(n-1,1)$-invariant, i.e.\ $A^*F^\pm_\beta=F^\pm_\beta$ for all $A\in\OO(n-1,1)$;
\item\label{F:realpart}
$
 F^+_\beta + F^-_\beta = \left(  R^+_\beta + R^-_\beta \right) + \cos(\pi \beta)  \tilde R_\beta ;
$
\item\label{F:impart} 
$
\rmi (F^+_\beta - F^-_\beta) = \sin(\pi \beta) \tilde R_\beta ;
$
\item\label{F:box}
$\Box F^\pm_{\beta+1}=F^\pm_{\beta}$;
\item\label{F:traeger1}
for every $\beta\in\C\setminus\{-1,-2,\ldots\}\cup\{-\frac{n}{2},-\frac{n}{2}-1,\ldots\}$ we have
\begin{enumerate}[(i)]
\item\label{F:traeger1-1} 
$\supp(F^\pm_\beta)=\R^n$;
\item\label{F:traeger1-2}
$\singsupp(F^\pm_\beta)=\emptyset$ if $\beta\in\N_0$ and $\singsupp(F^\pm_\beta)=\CC$ if $\beta\notin\N_0$;
\end{enumerate}
\item\label{F:Ck}
if $k\in\N_0$ and $\Re(\beta)>k$ then $F^\pm_\beta$ is $C^k$ and vanishes to $k^\mathrm{th}$ order along $\CC$;
\item\label{F:nullsein}
if $-\beta \in \N$ we have $F^\pm_{\beta}=0$;
\item\label{F:traeger2}
for $\beta\in\{-1,-2,\ldots\}\cup\{-\frac{n}{2},-\frac{n}{2}-1,\ldots\}$ we have
$$
\supp(F^\pm_\beta)=\singsupp(F^\pm_\beta)=\CC \quad\mbox{or}\quad\supp(F^\pm_\beta)=\singsupp(F^\pm_\beta)=\{0\} \quad\mbox{or}\quad F^\pm_\beta=0;
$$
\item \label{F:wavefront}
$\WF(F^\pm_\beta)\subset \{\lambda d\gamma(x) \mid x\in \CC\setminus\{0\},\, \mp\lambda>0\} \cup \dot{T}_0^*\R^n$;
\item\label{F:delta}
for the value $\beta=-\frac{n}{2}$ we have
$$
F^\pm_{-n/2}
=
\begin{cases}
\delta_0,& \mbox{ if $n$ is odd,}\\
0, & \mbox{ if $n$ is even.}
\end{cases}
$$
\end{enumerate}
\end{prop}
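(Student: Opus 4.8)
The plan is to establish every assertion first in the range where $\Re(\beta)$ is large, where $F^\pm_\beta = C(\beta,n)(\gamma\pm\rmi0)^\beta$ is an honest continuous (indeed $C^k$ whenever $\Re\beta>k$) function, and then to propagate the identities to all $\beta$ by analytic continuation in the parameter; this strategy forces us to treat the holomorphic extension statement of the proposition first, and that will be the crux. In the classical range I would check \eqref{F:mult}, \eqref{F:grad} and \eqref{F:box} by the chain rule, using $z\,f^\pm_\beta = f^\pm_{\beta+1}$, $\tfrac{d}{dz}f^\pm_{\beta+1}=(\beta+1)f^\pm_\beta$, the pointwise identities $\langle\grad\gamma,\grad\gamma\rangle = -4\gamma$ and $\Box\gamma = 2n$ on Minkowski space, and the functional equation $C(\beta,n)/C(\beta+1,n) = 2(\beta+1)(2\beta+n)$, which is immediate from \eqref{eq:defC} and $\Gamma(z+1)=z\Gamma(z)$. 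Likewise \eqref{F:realpart} and \eqref{F:impart} come, for $\Re(\beta)>-1$, from pulling back \eqref{beziehung1} and \eqref{beziehung2} along the submersion $\gamma$, multiplying by $C(\beta,n)$, and inserting $R^+_\beta + R^-_\beta = 2C(\beta,n)\gamma^*(t^\beta_+)$ and $\tilde R_\beta = 2C(\beta,n)\gamma^*(t^\beta_-)$; \eqref{F:invariance} is obvious there since $F^\pm_\beta$ depends on $\gamma$ alone, and \eqref{F:Ck} is just the known regularity of $f^\pm_\beta$ at $0$ transported through $\gamma$ and extended across $\CC$ by continuity.

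The heart of the matter is the holomorphic extension to all of $\C$. On $\R^n\setminus\{0\}$ the family $\beta\mapsto\gamma^* f^\pm_\beta$ is holomorphic because $f^\pm_\beta$ is and pull-back along a submersion is continuous, so the only question is the behaviour across the origin. By \cite{Ho1}*{Thm.~3.2.3} a homogeneous distribution of degree $2\beta$ on $\R^n\setminus\{0\}$ has a unique homogeneous extension to $\R^n$ as long as $2\beta\notin\{-n,-n-1,-n-2,\dots\}$, with the extension depending linearly and continuously (hence holomorphically in $\beta$) on the datum; thus $(\gamma\pm\rmi0)^\beta$, and with it $F^\pm_\beta$, is a holomorphic family away from $B:=\{-\tfrac n2,-\tfrac{n+1}2,-\tfrac{n+2}2,\dots\}$ with at worst a simple pole at each point of $B$. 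I would then argue that the residue at a point $\beta_0\in B$ --- a distribution homogeneous of degree $2\beta_0\le -n$ and supported at the origin (there is no pole off the origin) --- is $\OO(n-1,1)$-invariant, being a limit of the invariant distributions $(\beta-\beta_0)(\gamma\pm\rmi0)^\beta$ and using continuity of the $\OO(n-1,1)$-action on $\DD'(\R^n)$. A distribution supported at $\{0\}$ and homogeneous of degree $-n-m$ is a scalar multiple of a fixed degree-$(-n-m)$ combination of derivatives of $\delta_0$, and $\OO(n-1,1)$-invariance forces $m$ to be even with that combination equal to $\Box^{m/2}\delta_0$. Hence the residue vanishes whenever $2\beta_0=-n-m$ with $m$ odd, so the putative half-integer poles of $B$ are absent; at the remaining points $\beta_0\in\{-\tfrac n2,-\tfrac n2-1,\dots\}$ the factor $\Gamma(\beta+\tfrac n2)$ in the denominator of \eqref{eq:defC} provides a zero of $C(\beta,n)$ that kills the simple pole. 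This yields the unique holomorphic extension of $F^\pm_\beta$ to all $\beta\in\C$.

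Granting the extension, \eqref{F:mult}, \eqref{F:grad}, \eqref{F:box}, \eqref{F:realpart}, \eqref{F:impart} and \eqref{F:invariance} follow for all $\beta$ by analytic continuation, both sides being holomorphic families agreeing on an open set of parameters. Since $F^\pm_0 = C(0,n)$ is constant, iterating \eqref{F:box} gives $F^\pm_{-k}=\Box^k F^\pm_0=0$ for all $k\in\N$, which is \eqref{F:nullsein}. For $\beta$ a zero of $C(\beta,n)$, i.e.\ $\beta\in\{-1,-2,\dots\}\cup\{-\tfrac n2,-\tfrac n2-1,\dots\}$, the restriction of $F^\pm_\beta$ to $\R^n\setminus\{0\}$ equals $C(\beta,n)\gamma^*f^\pm_\beta=0$, so $F^\pm_\beta$ is supported at the origin, and invariance together with homogeneity leave only the alternatives in \eqref{F:traeger2} (a power of $\Box$ applied to $\delta_0$, or zero); for $\beta$ outside that zero set, $\gamma^*f^\pm_\beta$ is smooth and nowhere zero off $\CC$ while singular along $\CC$ exactly when $\beta\notin\N_0$, yielding \eqref{F:traeger1}, and $\supp(F^\pm_\beta)=\R^n$. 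The wavefront inclusion \eqref{F:wavefront} comes from \eqref{giwavefront}, since multiplication by the scalar $C(\beta,n)$ does not enlarge $\WF$, together with the case $\supp\subset\{0\}$. Finally I would pin down \eqref{F:delta} by evaluating \eqref{F:realpart} and \eqref{F:impart} at $\beta=-\tfrac n2$, using $\cos(-\tfrac{n\pi}2)$, $\sin(-\tfrac{n\pi}2)$, $R^\pm_{-n/2}=\delta_0$, and the known values $\tilde R_{-n/2}=(-1)^{n/2-1}2\delta_0$ for $n$ even and $\tilde R_{-n/2}=0$ for $n$ odd from \cite{kolk1991riesz}: for $n$ odd the cosine term drops and $F^\pm_{-n/2}=\delta_0$, while for $n$ even the two Riesz contributions cancel and $F^\pm_{-n/2}=0$.

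The step I expect to be the real obstacle is the vanishing of the half-integer residues in the extension argument: the chain-rule computations in the classical range and the analytic-continuation bookkeeping are routine, but showing that $(\gamma\pm\rmi0)^\beta$ has no pole at the half-integer points of $B$ rests on combining $\OO(n-1,1)$-invariance of the residue with the classification of invariant distributions supported at the origin, and one must also verify carefully that the genuine integer-spaced poles are matched exactly by the zeros of $C(\beta,n)$, so that $F^\pm_\beta$ --- not merely $(\gamma\pm\rmi0)^\beta$ --- really is an entire family.
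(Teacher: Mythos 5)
Your proposal reaches the right conclusions, but the central step — the holomorphic extension of $F^\pm_\beta$ across the critical set — is handled by a genuinely different mechanism than the paper's. The paper establishes $\Box F^\pm_{\beta+1}=F^\pm_\beta$ on a half-plane and then simply \emph{defines} the extension by iterating the d'Alembertian, $F^\pm_\beta:=\Box^N F^\pm_{\beta+N}$ for $N$ large; uniqueness and the remaining identities follow by unique continuation, and no residue analysis is needed at all. Your route — invoking H\"ormander's homogeneous extension theorem, noting that the putative poles of $(\gamma\pm\rmi 0)^\beta$ at $B=\{-\frac n2,-\frac{n+1}{2},\dots\}$ are at worst simple with residues supported at the origin, and killing them by $\OO(n-1,1)$-invariance of the residue or by the zeros of $C(\beta,n)$ — is a valid alternative, and it gives more explicit structural information, but it needs more care. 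In particular your identification ``$m$ odd $\Leftrightarrow$ half-integer pole'' only holds for $n$ even: writing $\beta_0=-\frac{n+j}{2}$ so $m=j$, one has $\beta_0\notin\Z$ iff $n+j$ is odd, so for $n$ odd the residues annihilated by the invariance argument sit at the \emph{integer} points of $B$, while the half-integers $\{-\frac n2,-\frac n2-1,\dots\}$ carry $m$ even. Your argument survives because the zeros of $C(\beta,n)$, located at $\{-\frac n2,-\frac n2-1,\dots\}\cup\{-1,-2,\dots\}$, already cover all of $B$ when $n$ is odd (making the invariance step redundant there), and for $n$ even they cover the integer part of $B$ while invariance disposes of the half-integer remainder — but you should separate the two parities explicitly rather than treating them uniformly. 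You should also cite the source for the ``at worst simple pole'' claim, which is not in \cite{Ho1}*{Thm.~3.2.3} itself. The paper's $\Box$-bootstrap buys brevity and makes the order-matching between poles and zeros of $C$ a non-issue; everything else in your proposal — the computations of (a), (b), (f) for large $\Re\beta$, (c)--(e) via \eqref{beziehung1}--\eqref{beziehung2}, (i) from $\Box$ applied to a constant, (g), (h), (j), (k) from the zeros of $C$, Lorentz invariance, and the wavefront bound, and (l) from (d), (e) and the known values of $R^\pm_{-n/2}$ and $\tilde R_{-n/2}$ — follows the paper's argument.
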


\begin{proof}
First assume $\Re(\beta)>1$.
Then the distributions $F^\pm_\beta$ and $F^\pm_{\beta+1}$ are $C^1$-functions, and we simply compute
$$
\gamma\cdot F^\pm_\beta
=
C(\beta,n)\gamma^{\beta+1}
=
\frac{C(\beta,n)}{C(\beta+1,n)} F^\pm_{\beta+1}
=
(2\beta+2)(2\beta+n) F^\pm_{\beta+1}
$$
and
\begin{align*}
\grad(F^\pm_{\beta+1})
&=
C(\beta+1,n)(\beta+1)\gamma^\beta \grad(\gamma)\\
&=
(\beta+1)\frac{C(\beta+1,n)}{C(\beta,n)}\grad(\gamma)\cdot F^\pm_\beta \\
&=
\frac{1}{2(2\beta+n)}\grad(\gamma)\cdot F^\pm_\beta.
\end{align*}
Moreover, for $A\in\OO(n-1,1)$, we find
$$
A^*F^\pm_\beta = A^*\gamma^* f^\pm_\beta = (\gamma\circ A)^* f^\pm_\beta = \gamma^* f^\pm_\beta = F^\pm_\beta.
$$
Furthermore, for $\Re(\beta)>2$, using these relations, $\div(\grad(\gamma))=-2n$, and $\ml\grad\gamma,\grad\gamma\mr = -4\gamma$ we find
\begin{align*}
\Box F^\pm_{\beta+1} 
&=
-\div(\grad F^\pm_{\beta+1}) \\
&=
-\frac{1}{2(2\beta+n)}\div(\grad(\gamma)F^\pm_{\beta}) \\
&=
-\frac{1}{2(2\beta+n)}(\div(\grad(\gamma))F^\pm_{\beta}+\ml\grad\gamma,\grad F^\pm_\beta\mr) \\
&=
\frac{1}{2(2\beta+n)}\big(2nF^\pm_{\beta}-\big\ml\grad\gamma,\tfrac{1}{2(2\beta-2+n)}\grad(\gamma)\cdot F^\pm_{\beta-1}\big\mr\big) \\
&=
\frac{1}{2(2\beta+n)}(2nF^\pm_{\beta}+\tfrac{2}{2\beta-2+n}\gamma\cdot F^\pm_{\beta-1}) \\
&=
\frac{1}{2(2\beta+n)}(2nF^\pm_{\beta}+2\cdot 2\beta F^\pm_{\beta}) \\
&=
F^\pm_{\beta}.
\end{align*}
Using \eqref{beziehung1} one obtains
\begin{align}
F^+_\beta + F^-_\beta 
&= C(\beta,n)(\gamma - i0)^\beta + C(\beta,n)(\gamma + i0)^\beta  \notag\\
&= 2 C(\beta,n)   \gamma^* t_+^\beta +  2 C(\beta,n) \cos(\pi \beta)  \gamma^* t_-^\beta  \notag\\
&= \left( R_\beta^+ +  R_\beta^- \right) +  \cos(\pi \beta)  \tilde R_\beta .
\label{eq:FFRRR}
\end{align}
Similarly, using \eqref{beziehung2} gives
\begin{align*}
\rmi \left( F^+_\beta - F^-_\beta \right) &= \rmi \left( C(\beta,n)(\gamma - i0)^\beta - C(\beta,n)(\gamma + i0)^\beta \right) \\
&=  2  C(\beta,n) \cos(\pi \beta)  \gamma^* t_-^\beta  = \sin(\pi \beta)  \tilde R_\beta.
\end{align*}

Thus, relations \eqref{F:mult}--\eqref{F:box} hold for $\beta$ in the half plane given by $\Re(\beta)>2$.
Relation~\eqref{F:box} can be used to extend the family $\beta\mapsto F^\pm_\beta$ to a holomorphic family defined for all $\beta\in\C$.
By unique continuation, this extension is unique and relations \eqref{F:mult}--\eqref{F:box} hold on all of $\C$.
As to relation~\eqref{F:invariance}, note that $\Box$ commutes with Lorentz transformations.

The set of zeros of the entire function $\beta\mapsto C(\beta,n)$ is given by 
$$
\{-1,-2,\ldots\}\cup\{-\tfrac{n}{2},-\tfrac{n}{2}-1,\ldots\}.
$$
For $\beta$ outside this set, the supports, singular supports and wavefront sets of $F^\pm_\beta$ coincide with those of $(\gamma\mp i0)^\beta$.
This shows \eqref{F:traeger1}.

If $k\in\N_0$ and $\Re(\beta)>k$ then $f^\pm_\beta$ is $C^k$ and vanishes to $k^\mathrm{th}$ order at $0$.
Thus, $\gamma^*f^\pm_\beta=f^\pm_\beta\circ\gamma$ is $C^k$ and vanishes to $k^\mathrm{th}$ order along $\gamma^{-1}(0)=\CC$ since $\gamma$ is smooth.
This proves \eqref{F:Ck}.

Since $F^\pm_0$ equals the constant function $1$, (\ref{F:box}) implies that $F^\pm_{\beta} = \Box^{-\beta} 1 = 0$ if $\beta$ is a negative integer. 
This proves (\ref{F:nullsein}). 

On $\R^n\setminus\CC$ the distributions $(\gamma\pm i0)^\beta$ are smooth.
Thus, whenever $C(\beta,n)$ vanishes then $F^\pm_\beta$ vanishes on $\R^n\setminus\CC$.
Hence, $\supp(F^\pm_\beta)\subset\CC$.
Since $\gamma$ is invariant under pull back by a Lorentz transformation of $\R^n$ so is $F^\pm_\beta$.
The only nonempty Lorentz-invariant closed subsets of $\CC$ are $\{0\}$ and $\CC$ itself.
Thus, $\supp(F^\pm_\beta)=\{0\}$ or $\supp(F^\pm_\beta)=\CC$ and \eqref{F:traeger2} follows.

The inclusion \eqref{F:wavefront} for the wavefront set follows from the inclusion \eqref{giwavefront} for the wavefront set of $(\gamma \pm \rmi 0)^\beta$.

In order to show \eqref{F:delta} note first that $R^\pm_{-\frac{n}{2}}=\delta_0$.
If $n$ is odd, then $\tilde R_{-\frac{n}{2}}=0$.
In this case we immediately obtain from \eqref{F:realpart} and \eqref{F:impart} that $F^\pm_{-\frac{n}{2}}=\delta_0$.
If $n$ is even, we have $\tilde R_{-\frac{n}{2}}=(-1)^{\frac{n}{2}-1} \delta_0$.
Then, by   \eqref{F:realpart}, we have $F^+_{-\frac{n}{2}} + F^-_{-\frac{n}{2}} = 2 \delta_0 + 2 (-1)^{\frac{n}{2}-1} \cos( \frac{ \pi n}{2}) \delta_0 =0$.
Relation~\eqref{F:impart} becomes $\rmi \left( F^+_{-\frac{n}{2}}-F^+_{-\frac{n}{2}}\right)=0$.
Both equations together imply $F^\pm_{-\frac{n}{2}}=0$.
\end{proof}

Let $\Lambda\in\R$ be an arbitrary but fixed constant.
We put 
\begin{equation}
G_\beta^{\pm}:= \mp \tfrac{\rmi}{\pi} \tfrac{d}{d\beta} F^\pm_\beta  + \Lambda F^\pm_\beta.
\label{eq:DefGbeta}
\end{equation}
Note that the explicit formula of $G_\beta^{\pm}$ involves derivatives $\tfrac{d}{d\beta} C(\beta,n)$. 
When $\beta$ is a negative integer, this can be computed more explicitly:

\begin{lemma}\label{lem:dCdbeta}
Let $n$ be an even positive integer.
\begin{enumerate}[(a)]
\item\label{dCbetanegative}
If $\beta$ is a negative integer then 
$$
\frac{d}{d \beta} C(\beta,n) 
= 
\frac{(-1)^{\beta +1} 2^{-n-2\beta} \pi^{\frac{2-n}{2}} (-\beta -1)! }{\left(\beta+\frac{n-2}{2}\right)!}.
$$
\item\label{dCbetanonnegative}
If $\beta$ is a nonnegative integer then
$$
\frac{d}{d \beta} C(\beta,n) 
= 
\left(2\gamma - \log(4) - \sum_{\ell=1}^{\beta+\tfrac{n}2-1} \frac1\ell  - \sum_{\ell=1}^{\beta} \frac1\ell\right)\cdot \frac{2^{-n-2\beta}\pi^{\frac{2-n}{2}}}{(\beta+\frac{n-2}{2})!\beta!}
$$
where $\gamma$ is the Euler-Mascheroni constant.
\end{enumerate}
\end{lemma}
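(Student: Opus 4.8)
The plan is to rewrite $C(\beta,n)$ in terms of Euler's Gamma function and then differentiate, splitting into the two stated cases according to whether the factorials in the denominator are finite or infinite. Using $(\beta+\tfrac{n-2}{2})!=\Gamma(\beta+\tfrac n2)$, $\beta!=\Gamma(\beta+1)$ and $2^{-n-2\beta}=2^{-n}4^{-\beta}$, one has
\[
C(\beta,n)=2^{-n}\pi^{\frac{2-n}{2}}\,\frac{4^{-\beta}}{\Gamma(\beta+\tfrac n2)\,\Gamma(\beta+1)},
\]
and since $n$ is even, $\tfrac n2$ is a positive integer and $\tfrac{n-2}{2}$ a nonnegative integer.

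For part~\eqref{dCbetanonnegative} I would take $\beta\in\N$ with $\beta\ge 0$: then $\Gamma(\beta+\tfrac n2)$ and $\Gamma(\beta+1)$ are finite and nonzero, so $C(\cdot,n)$ is holomorphic and nowhere zero near $\beta$ and logarithmic differentiation applies,
\[
\frac{1}{C(\beta,n)}\frac{d}{d\beta}C(\beta,n)=-\log 4-\psi\!\left(\beta+\tfrac n2\right)-\psi(\beta+1),
\]
where $\psi=\Gamma'/\Gamma$ is the digamma function. Inserting the elementary value $\psi(m)=-\gamma+\sum_{\ell=1}^{m-1}\tfrac1\ell$ for positive integers $m$ (with $m=\beta+\tfrac n2$ and $m=\beta+1$, both positive integers here) and multiplying through by the explicit expression for $C(\beta,n)$ then produces exactly the claimed formula.

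For part~\eqref{dCbetanegative} I would take $\beta=-k$ with $k\in\N$. Now $\Gamma(\beta+1)$ has a pole, so $C(\beta,n)=0$ and logarithmic differentiation is unavailable; instead I would use that $1/\Gamma$ is entire with only simple zeros, namely at the nonpositive integers, and that
\[
\frac{d}{dz}\frac{1}{\Gamma(z)}\Big|_{z=-m}=(-1)^m m!\qquad(m\in\N_0),
\]
which one reads off, for instance, from the reflection formula $1/\Gamma(z)=\pi^{-1}\Gamma(1-z)\sin(\pi z)$ near $z=-m$. Writing $C(\beta,n)=2^{-n}\pi^{\frac{2-n}{2}}4^{-\beta}\cdot\tfrac{1}{\Gamma(\beta+\frac n2)}\cdot\tfrac{1}{\Gamma(\beta+1)}$ and applying the product rule at $\beta=-k$, every summand in which the derivative does not fall on the last factor still carries $1/\Gamma(\beta+1)$, which vanishes there, so only one term survives; simplifying it with $2^{-n}4^{k}=2^{-n-2\beta}$ and $(-1)^{k-1}(k-1)!=(-1)^{\beta+1}(-\beta-1)!$ yields the asserted value.

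The main (quite modest) obstacle is the degenerate sub-case of part~\eqref{dCbetanegative} in which $\beta+\tfrac{n-2}{2}$ is itself a negative integer, i.e.\ $k\ge\tfrac n2$: then $C(\beta,n)$ has a zero of order at least two, both product-rule terms vanish, and $\tfrac{d}{d\beta}C(\beta,n)=0$. One must check that the stated formula likewise evaluates to $0$ in this range, which it does by the standard convention $1/m!=0$ for negative integers $m$ applied to the factor $1/(\beta+\tfrac{n-2}{2})!$. Beyond this, the argument is just routine bookkeeping with digamma values and harmonic sums.
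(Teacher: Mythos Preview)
Your proof is correct and follows essentially the same route as the paper: rewrite $C(\beta,n)$ via Gamma functions, use logarithmic differentiation with the digamma identity $\psi(m)=-\gamma+\sum_{\ell=1}^{m-1}\tfrac1\ell$ for part~\eqref{dCbetanonnegative}, and for part~\eqref{dCbetanegative} isolate the single surviving term where the derivative hits the factor with the pole. Your treatment of part~\eqref{dCbetanegative} via the product rule on the entire function $1/\Gamma$ (using the reflection formula to read off $\tfrac{d}{dz}\tfrac{1}{\Gamma(z)}\big|_{z=-m}=(-1)^m m!$) is slightly more direct than the paper's version, which instead takes limits of the ratios $\Psi/\Gamma$; you also make the degenerate sub-case $k\ge\tfrac n2$ and the convention $1/m!=0$ for negative $m$ explicit, which the paper leaves implicit.
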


\begin{proof}
First observe
\begin{align*}
\frac{d}{d \beta} C(\beta,n)
&=
\frac{d}{d \beta} \log(C(\beta,n)) \cdot C(\beta,n) \\
&=
-(\log(4) + \Psi(\beta+\tfrac{n}2) + \Psi(\beta+1))\cdot \frac{2^{-n-2\beta}\pi^{\frac{2-n}{2}}}{\Gamma(\beta+\frac{n}{2})\Gamma(\beta+1)}.
\end{align*}
Here $\Psi$ denotes the logarithmic derivative of the $\Gamma$-function, also known as digamma function.
At nonnegative integers, all terms can be evaluated explicitly which leads to \eqref{dCbetanonnegative}.

At negative integers, $\Gamma(\beta+1)$ has a pole so that the fraction vanishes.
Hence, the $\log(4)$-term does not contribute.
If $k\in\N_0$ then
$$
\lim_{\beta\to -k} \frac{\Psi(\beta)}{\Gamma(\beta)} 
=
\lim_{\beta\to -k} \frac{\Gamma'(\beta)}{\Gamma(\beta)^2} 
=
\frac{-1}{\res_{\beta=-k}\Gamma(\beta)}
= 
(-1)^{k+1} k!.
$$
Moreover,
$$
\lim_{\beta\to -k}\frac{\Psi(\beta+\tfrac{n}2)}{\Gamma(\beta+\tfrac{n}2)\Gamma(\beta+1)} = 0
$$
because $\frac{\Psi(\beta+\tfrac{n}2)}{\Gamma(\beta+\tfrac{n}2)}$ has a finite limit.
So the $\Psi(\beta+\tfrac{n}2)$-term does not contribute either.
Finally, 
\begin{align*}
\lim_{\beta\to -k}\frac{\Psi(\beta+1)}{\Gamma(\beta+\tfrac{n}2)\Gamma(\beta+1)}
&=
\frac{(-1)^k(k-1)!}{(-k+\tfrac{n-2}2)!}.
\end{align*}
This proves \eqref{dCbetanegative}.
\end{proof}

By construction, $G_\beta^{\pm}$ is a holomorphic family of distributions on $\R^n$ defined on the complex plane.

\begin{prop}\label{G:properties}
Let $n\in\N$ be even.
Then the holomorphic family $\beta \mapsto G^\pm_\beta$ of distributions on $\R^n$ satisfies
\begin{enumerate}[(a)]
\item\label{G:mult}
$\gamma\cdot G^\pm_\beta = (2\beta+2)(2\beta+n) G^\pm_{\beta+1} \mp \frac{\rmi }{\pi}(8 \beta + 4 + 2n) F^\pm_{\beta+1}$;
\item\label{G:grad}
$(\grad\gamma)\cdot G^\pm_{\beta} = 2(2\beta+n)\grad (G^\pm_{\beta+1}) \mp \frac{4\rmi}{\pi} \grad (F^\pm_{\beta+1})$;
\item\label{G:invariance}
$G^\pm_\beta$ is $\OO(n-1,1)$-invariant;
\item\label{G:box}
$\Box G^\pm_{\beta+1}=G^\pm_{\beta}$;
\item\label{G:traeger1}
for every $\beta\in\C\setminus\{-1,-2,\ldots\}$ we have
\begin{enumerate}[(i)]
\item 
$\supp(G^\pm_\beta)=\R^n$;
\item
 $\singsupp(G^\pm_\beta)=\CC$;
\end{enumerate}
\item\label{G:traeger2}
for $\beta\in\{-1,-2,\ldots,-\frac{n}{2}+1\}$ we have 
$$
\singsupp(G^\pm_\beta)=\CC;
$$
\item\label{G:traeger3}
for $\beta\in\{-\frac{n}{2},-\frac{n}{2}-1,-\frac{n}{2}-2,\ldots\}$ we have 
$$
\singsupp(G^\pm_\beta)=\{0\};
$$
\item\label{G:Ck}
if $k\in\N_0$ and $\Re(\beta)>k$ then $G^\pm_\beta$ is $C^k$ and vanishes to $k^\mathrm{th}$ order along $\CC$;
\item \label{G:wavefront}
$\WF(G^\pm_\beta)\subset \{\lambda d\gamma(x) \mid x\in \CC\setminus\{0\},\, \mp\lambda>0\} \cup \dot{T}_0^*\R^n$;
\item \label{G:sum}
if $\beta \in \mathbb{Z}$ we have
$ G_\beta^+ + G_\beta^- =  R^+_\beta + R^-_\beta$;
\item\label{G:delta}
for the value $\beta=-\frac{n}{2}$ we have $G^\pm_{-n/2}=\delta_0$.
\end{enumerate}
\end{prop}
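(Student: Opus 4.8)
The plan is to obtain almost every item of the proposition by differentiating the corresponding identity of Proposition~\ref{F:properties} with respect to $\beta$ and then re-assembling $G^\pm_\beta=\pm\tfrac{\rmi}{\pi}\tfrac{d}{d\beta}F^\pm_\beta+\Lambda F^\pm_\beta$. The point is that $\gamma\cdot(\,\cdot\,)$, $\grad$, $\Box$ and the pull-back $A^*$ along $A\in\OO(n-1,1)$ are $\beta$-independent, continuous operations on $\DD'(\R^n)$, hence commute with $\tfrac{d}{d\beta}$. Differentiating \eqref{F:mult} and \eqref{F:grad}, using $\tfrac{d}{d\beta}\big[(2\beta+2)(2\beta+n)\big]=8\beta+4+2n$ and $\tfrac{d}{d\beta}\big[2(2\beta+n)\big]=4$, and collecting terms, yields \eqref{G:mult} and \eqref{G:grad}; differentiating \eqref{F:invariance} and \eqref{F:box} yields \eqref{G:invariance} and \eqref{G:box}. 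For \eqref{G:Ck} one observes that on $\{\Re(\beta)>k\}$ the map $\beta\mapsto F^\pm_\beta$ is holomorphic with values in the Fr\'echet space of $C^k$-functions vanishing to $k$-th order along $\CC$ (by \eqref{F:Ck} and the formula $F^\pm_\beta=C(\beta,n)(\gamma\pm\rmi0)^\beta$), so $\tfrac{d}{d\beta}F^\pm_\beta$, and hence $G^\pm_\beta$, has the same property. For \eqref{G:wavefront}: away from the origin $G^\pm_\beta$ is locally the $\gamma$-pull-back of a $\beta$-dependent linear combination of $f^\pm_\beta$ and $h^\pm_\beta$; since $\WF(f^\pm_\beta),\WF(h^\pm_\beta)\subset\pm(0,\infty)dt\subset T^*_0\R$, pulling back along the submersion $\gamma$ only produces covectors $\lambda\,d\gamma(x)$ with $x\in\CC\setminus\{0\}$, $\pm\lambda>0$, and extending across $0$ can add at most covectors in $\dot T^*_0\R^n$; this is \eqref{G:wavefront} (alternatively, read it off from \eqref{giwavefrontlog} and \eqref{F:wavefront}).

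For the support statements, \eqref{G:invariance} together with homogeneity forces $\singsupp(G^\pm_\beta)$ to be an $\OO(n-1,1)$-invariant closed cone inside $\{0\}\cup\CC$, hence one of $\emptyset$, $\{0\}$, $\CC$. If $\beta$ is not a negative integer, then near $\CC$ the distribution $G^\pm_\beta$ contains the summand $\pm\tfrac{\rmi}{\pi}C(\beta,n)(\gamma\pm\rmi0)^\beta\log(\gamma\pm\rmi0)$ with $C(\beta,n)\neq0$, which is genuinely singular on $\CC$; restricting to $\{\gamma>0\}$, where it equals $\pm\tfrac{\rmi}{\pi}C(\beta,n)\gamma^\beta\log\gamma$ plus a multiple of $\gamma^\beta$, shows $G^\pm_\beta\not\equiv0$ there, so $\supp(G^\pm_\beta)=\R^n$ and $\singsupp(G^\pm_\beta)=\CC$, which is \eqref{G:traeger1}. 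If $\beta\in\{-1,\dots,-\tfrac n2+1\}$ then $F^\pm_\beta=0$ by \eqref{F:nullsein} while $\tfrac{d}{d\beta}C(\beta,n)\neq0$ by Lemma~\ref{lem:dCdbeta}\eqref{dCbetanegative}, and $(\gamma\pm\rmi0)^\beta$ is holomorphic at such $\beta$; hence $G^\pm_\beta=\pm\tfrac{\rmi}{\pi}\tfrac{d}{d\beta}C(\beta,n)\,(\gamma\pm\rmi0)^\beta$ is a nonzero multiple of $(\gamma\pm\rmi0)^\beta$, with singular support $\CC$, giving \eqref{G:traeger2}. Statement \eqref{G:traeger3} follows from \eqref{G:delta} and \eqref{G:box}, since $G^\pm_{-n/2-k}=\Box^kG^\pm_{-n/2}=\Box^k\delta_0$ for $k\geq0$. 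Finally, \eqref{G:sum} is obtained by differentiating \eqref{F:impart} in $\beta$ and evaluating at integer $\beta$, where $\sin(\pi\beta)=0$ gives $\tfrac{d}{d\beta}(F^+_\beta-F^-_\beta)=\rmi\pi(-1)^\beta\tilde R_\beta$, and then inserting this together with \eqref{F:realpart} into $G^+_\beta+G^-_\beta=\tfrac{\rmi}{\pi}\tfrac{d}{d\beta}(F^+_\beta-F^-_\beta)+\Lambda(F^+_\beta+F^-_\beta)$.

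The one genuinely non-formal point is \eqref{G:delta}. Since $F^\pm_{-n/2}=0$ for $n$ even (by \eqref{F:delta}), the summand $\Lambda F^\pm_{-n/2}$ drops, so $G^\pm_{-n/2}=\pm\tfrac{\rmi}{\pi}\tfrac{d}{d\beta}F^\pm_\beta\big|_{-n/2}$ is independent of $\Lambda$. Iterating \eqref{G:box} gives $G^\pm_{-n/2}=\Box^{n/2}G^\pm_0$; since $F^\pm_0$ is a constant function, the formula $F^\pm_\beta=C(\beta,n)(\gamma\pm\rmi0)^\beta$ with $(\gamma\pm\rmi0)^0=1$ shows that $\tfrac{d}{d\beta}F^\pm_\beta\big|_0$ equals $C(0,n)\log(\gamma\pm\rmi0)$ plus a constant, and the constant summands are annihilated by $\Box$, so
$$
G^\pm_{-n/2}=\pm\tfrac{\rmi}{\pi}\,C(0,n)\,\Box^{n/2}\log(\gamma\pm\rmi0).
$$
Thus \eqref{G:delta} is equivalent to the classical distributional identity $\Box^{n/2}\log(\gamma\pm\rmi0)=\mp\tfrac{\rmi\pi}{C(0,n)}\,\delta_0$ on $n$-dimensional Minkowski space, equivalently to the statement that $(\gamma\pm\rmi0)^\beta$ has a simple pole at $\beta=-\tfrac n2$ whose residue is the corresponding multiple of $\delta_0$. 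This constant-matching is where the only real computation lies; it can be carried out either from the known residue formula for $(\gamma\pm\rmi0)^\beta$ (cf.\ \cite{GS} or \cite{kolk1991riesz}) or, staying within the objects of this appendix, by combining \eqref{F:realpart} and \eqref{F:impart} with the special values $R^\pm_{-n/2}=\delta_0$ and $\tilde R_{-n/2}=(-1)^{n/2-1}2\delta_0$ and the Gamma-function identities of Lemma~\ref{lem:dCdbeta}. As a partial check, differentiating \eqref{F:impart} at $\beta=-\tfrac n2$ gives $\tfrac{d}{d\beta}(F^+_\beta-F^-_\beta)\big|_{-n/2}=-2\rmi\pi\,\delta_0$, hence $G^+_{-n/2}+G^-_{-n/2}=2\delta_0$; since $G^-_\beta=\overline{G^+_\beta}$ for real $\beta$, it remains only to see that $G^+_{-n/2}$ is real, i.e.\ that $\tfrac{d}{d\beta}(F^+_\beta+F^-_\beta)\big|_{-n/2}=0$, which is precisely the content of the constant-matching above. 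I expect this matching of constants to be the main obstacle; everything else reduces to differentiating Proposition~\ref{F:properties}.
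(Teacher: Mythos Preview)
Your proposal follows essentially the same strategy as the paper for items \eqref{G:mult}--\eqref{G:Ck}, \eqref{G:wavefront} and \eqref{G:sum}: differentiate the corresponding statements of Proposition~\ref{F:properties} with respect to $\beta$. Your treatment of \eqref{G:traeger2} is in fact more explicit than the paper's (you identify $G^\pm_\beta$ at such $\beta$ directly as a nonzero multiple of $(\gamma\pm\rmi0)^\beta$, using that $C(\beta,n)$ has only a simple zero there), and your argument for \eqref{G:traeger3} via $G^\pm_{-n/2-k}=\Box^k\delta_0$ matches the paper's.

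The substantive difference is in \eqref{G:delta}, where you honestly flag a gap. The paper closes it with two short moves you did not find. First, rather than writing $G^\pm_{-n/2}=\Box^{n/2}G^\pm_0$ and trying to evaluate $\Box^{n/2}\log(\gamma\pm\rmi0)$, the paper evaluates \eqref{G:grad} at $\beta=-\tfrac n2$: since $\grad F^\pm_{1-n/2}=0$ (because $F^\pm_{1-n/2}$ is zero for $n\ge4$ and constant for $n=2$), the $F$-term drops and one gets $(\grad\gamma)\cdot G^\pm_{-n/2}=0$, i.e.\ $x_j\,G^\pm_{-n/2}=0$ for every coordinate. This already forces $G^\pm_{-n/2}=c^\pm_n\delta_0$ for some constants, so only two numbers remain. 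Second, to pin down the imaginary part of $c^\pm_n$---exactly your ``constant-matching''---the paper observes (differentiating $F^+_\beta+F^-_\beta=C(\beta,n)[(\gamma+\rmi0)^\beta+(\gamma-\rmi0)^\beta]$ at $\beta=0$) that $-\rmi(G^+_0-G^-_0)$ equals a constant plus a constant multiple of $\log|\gamma|$. Applying $\Box^{n/2}$ and invoking the classical identity $\Box^{n/2}\log|\gamma|=0$ on even-dimensional Minkowski space (de~Rham~\cite{rham1958solution}) then yields $G^+_{-n/2}-G^-_{-n/2}=0$. So the obstacle you anticipated is resolved not by matching residues of $(\gamma\pm\rmi0)^\beta$ but by citing a known vanishing of an iterated d'Alembertian of $\log|\gamma|$.
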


\begin{proof}
Relations \eqref{G:mult}--\eqref{G:box} follow by simply differentiating \eqref{F:mult}--\eqref{F:box} in Proposition~\ref{F:properties} with respect to $\beta$.
 
For $\beta$ outside the zero set of $C(\beta,n)$, the supports and singular supports of $G^\pm_\beta$ coincide with those of certain linear combinations of $(\gamma \mp i0)^\beta \log(\gamma \mp i0)$ and $(\gamma \mp i0)^\beta $.
This shows \eqref{G:traeger1}. 

On the zero set of $C(\beta,n)$ the support is contained in the light cone. 
In the same way as in the proof of Proposition~\ref{F:properties} Lorentz invariance now implies \eqref{G:traeger2}. 

Relation~\eqref{G:traeger3} is a simple consequence of \eqref{G:delta} and \eqref{G:box}.

Assertion~\eqref{G:Ck} follows as in the proof of Proposition~\ref{F:properties}.

In case $\beta$ is not a negative integer, the inclusion \eqref{G:wavefront} follows from the inclusions \eqref{giwavefront} and  \eqref{giwavefrontlog} for the wavefront set of $(\gamma \pm \rmi 0)^\beta$ and $(\gamma \pm \rmi 0)^\beta \log(\gamma \pm \rmi 0)$, respectively. 
Relation~\eqref{G:box} now implies that $\WF(G^\pm_\beta) \subset \WF(G^\pm_{\beta+1})$ and this shows the inclusion for general $\beta$.

To show \eqref{G:sum} we differentiate Proposition~\ref{F:properties}~\eqref{F:impart} and get for $\beta \in \mathbb{Z}$ the formula
$$
F_\beta^++F_\beta^- - ( G^+_\beta + G^-_\beta) = (-1)^\beta \tilde R_\beta.
$$
Together with \eqref{eq:FFRRR} this implies \eqref{G:sum}.

It remains to show \eqref{G:delta}.
By \eqref{G:grad} and Proposition~\ref{F:properties}~\eqref{F:nullsein}, we have $x_j G^\pm_{-n/2} =0$.
This implies that $G^\pm_{-n/2}$ is a multiple $c^\pm_n \delta_0$ of the delta distribution $\delta_0$. 
By \eqref{G:sum} we have 
\begin{align*}
 G^+_{-n/2} + G^-_{-n/2} =  2 \delta_0.
\end{align*}
This implies that $\Re(c^\pm_n)=1$. In order to show that the imaginary parts vanish we differentiate  \eqref{F:realpart} of Proposition~\ref{F:properties} and find that if $\alpha$ is a nonnegative integer that
\begin{align*}
 \rmi\left( G^+_{\alpha} - G^-_{\alpha} \right)
 =
 \tfrac{2}{\pi}\gamma^\alpha \big( C(\alpha,n)  \log(| \gamma |)  +  \tfrac{d}{d\beta}C(\beta,n)|_{\beta=\alpha} \big) ;
\end{align*}
In particular, for $\alpha=0$ we get 
\begin{align*}
  \rmi\left( G^+_{0} - G^-_{0} \right) =   c_1 \left(  \log(| \gamma |) \right) + c_2,
\end{align*}
and we therefore conclude that $2\, \Im(c^\pm_n) \, \delta_0 =\rmi( G^+_{-n/2} - G^-_{-n/2}) = c_1 \Box^{\frac{n}{2}}(\log(| \gamma |))$.
It is known that  in even dimensions we have $\Box^{\frac{n}{2}} \left(  \log(| \gamma |) \right) =0$, see \cite{rham1958solution}*{p.~366}, implying that $\Im(c^\pm_n)=0$ and therefore $c^\pm_n=1$. 
\end{proof}

\subsection{Turning to manifolds} 
\label{Hadamardappendix1}

Now let $(X,g)$ be an $n$-dimensional oriented and time-oriented Lorentzian manifold. 
Denote by $\pi_{TX}:TX\to X$ be tangent bundle.
Let $\pi_P:P\to X$ be the bundle of Lorentz-orthonormal tangent frames.
This is an $\OO(n-1,1)$-principal bundle.
Let $\omega$ be the solder form on $P$.
This is a horizontal $\R^n$-valued $1$-form on the total space of $P$, see \cite{MR1852066}*{Sec.~6.1.1}.

Now let $Q\in\DD'(\R^n)$ be an $\OO(n-1,1)$-invariant distribution such as $F^\pm_\beta$, $G^\pm_\beta$, or $\tilde R_\beta$.
Let $\Omega\subset X$ be an open subset and $s:\Omega\to P$ a local section.
Then $\omega\circ ds:T\Omega\to \R^n$ maps each tangent $T_xX=T_x\Omega$ for $x\in\Omega$ isometrically onto Minkowski space $\R^n$.
In particular, $\omega\circ ds$ is a submersion, and we can pull back $Q$ to obtain a distribution $Q_\Omega := (\omega\circ ds)^*Q$ defined on $T\Omega=TX|_\Omega$.

Now $Q_\Omega$ is independent of the particular choice of section $s$.
Namely, if $s':\Omega\to P$ is another section then there is a unique smooth map $a:\Omega\to \OO(n-1,1)$ such that $s'=a\circ s$.
From the equivariance property of the solder form, $\omega\circ dR_a= a^{-1}\circ\omega$ where $R_a$ denotes the right action of $a\in\OO(n-1,1)$ on $P$, and the invariance of $Q$ we find
 \begin{align*}
(\omega\circ ds')^*Q
=
(\omega\circ dR_a\circ ds)^*Q
=
(a^{-1}\circ\omega\circ ds)^*Q
=
(\omega\circ ds)^*(a^{-1})^*Q
=
(\omega\circ ds)^*Q.
\end{align*}
In particular, two such pullbacks $Q_\Omega$ and $Q_{\Omega'}$ coincide on $\Omega\cap\Omega'$.
Thus, there is a unique distribution $\tilde Q$ on $TX$ such that $\tilde Q|_\Omega=Q_\Omega$ for all local sections of $P$ defined on $\Omega$.

Denote the Riemannian exponential map by $\exp$.
Then $(\exp,\pi_{TX}):TX\to X\times X$ restricts to a diffeomorphism from a neighborhood $\UU'$ of the zero-section of $TX$ onto a neighborhood $\UU$ of the diagonal in $X\times X$.
W.l.o.g.\ we assume that each ``slice'' $\UU'\cap T_xX$ is a convex neighborhood of the origin in $T_xX$.

There is a corresponding smooth ``distortion function'' $\mu:\UU\to\R^+$ characterized by $\vol_{X\times X}=\mu\cdot(\exp,\pi_{TX})_*\vol_{TX}$.
Here $\vol_{X\times X}$ denotes the volume measure on $X\times X$ and $\vol_{TX}$ the one on $TX$, both induced by the Lorentzian metric on $X$.

Denote by $\CC_\UU$ the image of the set of lightlike vectors in $\UU'$ under $(\exp,\pi_{TX})$. 
The elements are pairs of points $(x,y)$ which can be joined by a lightlike geodesic.
More precisely, $y=\exp_x(v)$ for some lightlike tangent vector $v\in T_xX$ and the convexity assumption on $\UU'$ yields $(x,\exp_x(tv))\in\CC_\UU$ for every $t\in[0,1]$.

We now define a distribution $Q^\UU$ on $\UU$ by $Q^\UU := \mu\cdot((\exp,\pi_{TX})|_{\UU'})_*(\tilde Q|_{\UU'})$.
The appearance of the distortion factor $\mu$ is explained by the following observation:
if $\tilde Q$ is given by an $L^1_\loc$-function $\tilde q$ then we find for any test function $\phi\in C_\mathrm{c}^\infty(\UU)$:
\begin{align*}
Q^\UU[\phi]
&=
(\exp,\pi_{TX})|_{\UU'})_*(\tilde Q|_{\UU'})[\mu\cdot\phi] \\
&=
\tilde Q[(\mu\circ(\exp,\pi_{TX}))\cdot(\phi\circ(\exp,\pi_{TX})))] \\
&=
\int_{\UU'} \tilde q \cdot (\phi\circ(\exp,\pi_{TX})) \cdot (\mu\circ(\exp,\pi_{TX})) \, d\vol_{TX} \\
&=
\int_\UU (\tilde q\circ(\exp,\pi_{TX})^{-1}) \cdot \phi \cdot \mu \, d((\exp,\pi_{TX})_*\vol_{TX}) \\
&=
\int_\UU (\tilde q\circ(\exp,\pi_{TX})^{-1}) \cdot \phi \, d\vol_{X\times X}.
\end{align*}
Thus, $Q^\UU$ is then given by the function $\tilde q\circ(\exp,\pi_{TX})^{-1} = ((\exp,\pi_{TX})^{-1})^*\tilde q$, without any distortion factor.
If we do not want to distinguish between a function and the corresponding distribution, we will also write $Q^\UU = ((\exp,\pi_{TX})^{-1})^*\tilde Q|_{\UU'}$.

We define $\Gamma:\UU\to\R$ by $\Gamma(x,y) := \gamma(\exp_y^{-1}(x))$.
Then one checks, see \cite{BGP07}*{Lemma~1.3.19}:
\begin{align}
\<\grade\Gamma,\grade\Gamma\> &= -4\Gamma, \label{eq:gradGamma}\\
\Boxz\Gamma - 2n = -\<\grade\Gamma, &\,\grade(\log\circ\mu)\>. \label{eq:BoxGamma}
\end{align}
Here we have used the notation $\<\cdot,\cdot\>=g$.
The subindex $(1)$ indicates that the differential operator is applied to the first variable of $\Gamma$ while keeping the second one fixed.

\begin{prop}\label{FGOmega}
The families $\beta\mapsto R^{\pm,\UU}_\beta$, $\beta\mapsto F^{\pm,\UU}_\beta$ and $\beta\mapsto G^{\pm,\UU}_\beta$ are holomorphic families of distributions on $\UU$, defined for all $\beta\in\C$.
The following holds for all $\beta\in\C$:
\begin{enumerate}[(a)]
\item\label{FG}\abovedisplayskip=-10pt
\begin{align*}
G_\beta^{\pm,\UU} = \mp \tfrac{\rmi}{\pi} \tfrac{d}{d\beta} F^{\pm,\UU}_\beta  + \Lambda F^{\pm,\UU}_\beta;
\end{align*}
\item\label{GammaFG}
\begin{align*}
\Gamma\cdot R^{\pm,\UU}_\beta &= (2\beta+2)(2\beta+n) R^{\pm,\UU}_{\beta+1}, \\
\Gamma\cdot F^{\pm,\UU}_\beta &= (2\beta+2)(2\beta+n) F^{\pm,\UU}_{\beta+1}, \\
\Gamma\cdot G^{\pm,\UU}_\beta &= (2\beta+2)(2\beta+n) G^{\pm,\UU}_{\beta+1} \mp \tfrac{\rmi}{\pi}(8 \beta + 4 + 2n) F^{\pm,\UU}_{\beta+1};
\end{align*}
\item\label{gradFG}
\begin{align*}
(\grade\Gamma)\cdot R^{\pm,\UU}_{\beta} &= 2(2\beta+n)\grade (R^{\pm,\UU}_{\beta+1}), \\
(\grade\Gamma)\cdot F^{\pm,\UU}_{\beta} &= 2(2\beta+n)\grade (F^{\pm,\UU}_{\beta+1}), \\
(\grade\Gamma)\cdot G^{\pm,\UU}_{\beta} &= 2(2\beta+n)\grade (G^{\pm,\UU}_{\beta+1}) \mp \tfrac{4\rmi}{\pi} \grade (F^{\pm,\UU}_{\beta+1});
\end{align*}
\item\label{boxFG}
if $\beta\not=-\tfrac{n}{2}$ then

\begin{align*}
\Boxz R^{\pm,\UU}_{\beta+1} 
&= 
\left( \tfrac{\Boxz\Gamma-2n}{2( n+2\beta)} +1 \right)R^{\pm,\UU}_\beta, \\
\Boxz F^{\pm,\UU}_{\beta+1} 
&= 
\left( \tfrac{\Boxz\Gamma-2n}{2( n+2\beta)} +1 \right)F^{\pm,\UU}_\beta, \\
\Boxz G^{\pm,\UU}_{\beta+1} 
&= 
\left( \tfrac{\Boxz\Gamma-2n}{2 (n+2\beta)} +1 \right) G^{\pm,\UU}_\beta  \pm \tfrac{\rmi}{\pi} \tfrac{\Boxz\Gamma-2n}{(n+2\beta)^2} F^{\pm,\UU}_\beta ;
\end{align*}
\item\label{FGFglatt}
if $\beta\in \Z$ then the distribution $F^{\pm,\UU}_{\beta}$ is a smooth function and if $\beta\in\{-1,-2,\ldots\}$ then $F^{\pm,\UU}_{\beta}=0$;
\item\label{CkFG}
if $k\in\N_0$ and $\Re(\beta)>k$, then $R^{\pm,\UU}_{\beta}$, $F^{\pm,\UU}_{\beta}$ and $G^{\pm,\UU}_{\beta}$ are $C^k$ and vanish to $k^\mathrm{th}$ order along $\CC_\UU$;
\item\label{Rsupp}
if $n$ is even and $\beta\in\{-1,-2,\ldots,1-\frac{n}{2}\}$ then

$$
\supp(R^{+,\UU}_\beta + R^{-,\UU}_\beta) = \singsupp(R^{+,\UU}_\beta + R^{-,\UU}_\beta) = \CC_\UU;
$$
\item\label{FGWF}\abovedisplayskip=-12pt
\begin{align*}
\WF'(F^{\pm,\UU}_\beta) &\subset \Lambda_\Feyn^\pm \quad\mbox{ and }\quad
\WF'(G^{\pm,\UU}_\beta) \subset \Lambda_\Feyn^\pm;
\end{align*}
\item\label{FG0}
for the diagonal embedding $\diag:X\to X\times X$ we have

$$
\diag_*\vol_X = 
\begin{cases}
R^{\pm,\UU}_{-\frac{n}{2}} & \text{ for all $n$,} \\
F^{\pm,\UU}_{-\frac{n}{2}} & \text{ if $n$ is odd,} \\
G^{\pm,\UU}_{-\frac{n}{2}} & \text{ if $n$ is even;}
\end{cases}
$$
\item\label{FGRgamma}\abovedisplayskip=0pt
over $\UU\setminus\diag(X)$ we have

\begin{gather*}
R^{+,\UU}_\beta + R^{-,\UU}_\beta = 2C(\beta,n)\mu\Gamma^*(t_+^\beta), \\
F^{\pm,\UU}_\beta = C(\beta,n) \mu\Gamma^*(f^\mp_\beta) , \\
G^{\pm,\UU}_\beta 
=
\mu\cdot \Gamma^*\big((\mp\tfrac{\rmi}{\pi}\tfrac{dC(\beta,n)}{d\beta}+\Lambda C(\beta,n))f^\mp_\beta \mp\tfrac{\rmi}{\pi}C(\beta,n)h^\pm_\beta\big).
\end{gather*}
\end{enumerate}
\end{prop}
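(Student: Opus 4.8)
The plan is to derive every item from the corresponding Minkowski-space statements (Propositions~\ref{F:properties} and \ref{G:properties}) by transporting them through the two-step construction of Section~\ref{Hadamardappendix1}: first pull back along the solder form $\omega\circ ds$, which is fibrewise a linear isometry and hence a submersion, and then push forward along the diffeomorphism $(\exp,\pi_{TX})|_{\UU'}$, twisting by the distortion factor $\mu$. Both operations are continuous linear maps on distributions that depend only on the Lorentzian geometry of $X$, not on $\beta$, so holomorphy in $\beta$ and the extension to all of $\C$ are inherited from the Minkowski families. Relation~\ref{FG} is then just \eqref{eq:DefGbeta} together with linearity of the construction, and the boundary formulas in \ref{FGRgamma} are obtained by writing the construction out on $\UU\setminus\diag(X)$, where $\Gamma$ is a submersion onto $\R$ and the distributions become honest pullbacks $\Gamma^*(f^\pm_\beta)$, $\Gamma^*(h^\pm_\beta)$, $\Gamma^*(t^\beta_\pm)$ up to the factor $C(\beta,n)\mu$, using Proposition~\ref{F:properties}\eqref{F:realpart}, \eqref{F:impart} and the definition of $G^\pm_\beta$.

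For the algebraic identities \ref{GammaFG}, \ref{gradFG} and \ref{boxFG} I would argue exactly as in the proofs of Propositions~\ref{F:properties} and \ref{G:properties}: for $\Re(\beta)$ large all distributions involved are $C^k$-functions, so one can compute directly, using that under the construction multiplication by $\gamma$ corresponds to multiplication by $\Gamma$ (since $\Gamma=\gamma\circ(\exp,\pi_{TX})^{-1}$ off the diagonal), and using \eqref{eq:gradGamma} and \eqref{eq:BoxGamma} for $\grade\Gamma$ and $\Boxz\Gamma$ in place of the flat identities $\<\grad\gamma,\grad\gamma\>=-4\gamma$ and $\div(\grad\gamma)=-2n$, carrying the factor $\mu$ through (only first derivatives of $\mu$ enter, precisely in the combination occurring in \eqref{eq:BoxGamma}). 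This reproduces the stated recursions, including the correction $\tfrac{\Boxz\Gamma-2n}{2(n+2\beta)}$ in \ref{boxFG} and, for $G$, the extra $F$-term obtained by differentiating the $F$-recursion in $\beta$. One then passes from $\Re(\beta)\gg0$ to all $\beta\in\C$ by the holomorphic-continuation argument already used in the appendix: the d'Alembertian recursion propagates each identity downward in $\Re(\beta)$, and unique continuation in $\beta$ finishes.

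The regularity and support statements transport directly because $(\exp,\pi_{TX})|_{\UU'}$ is a diffeomorphism and the solder-form pullback is a submersion pullback: \ref{CkFG} is Proposition~\ref{F:properties}\eqref{F:Ck} / \ref{G:properties}\eqref{G:Ck} with $\CC$ replaced by $\CC_\UU$; \ref{FGFglatt} follows since $f^\pm_\beta=t^\beta$ is a polynomial for $\beta\in\N_0$, so $F^{\pm,\UU}_\beta=C(\beta,n)\mu\,\Gamma^\beta$ is smooth, while $F^\pm_\beta=0$ for negative integers by Proposition~\ref{F:properties}\eqref{F:nullsein}; \ref{Rsupp} is the standard ``Huygens'' support property of Riesz distributions (cf.\ \cite{BGP07}*{Ch.~1}, \cite{kolk1991riesz}) transported to $\UU$; and \ref{FG0} follows because $\mu\equiv1$ along the diagonal, so the construction sends $\delta_0\in\DD'(\R^n)$ to $\diag_*\vol_X$, whence Proposition~\ref{F:properties}\eqref{F:delta} and \ref{G:properties}\eqref{G:delta} give the three cases.

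The main obstacle is the wavefront set bound \ref{FGWF}. One starts from the Minkowski inclusions $\WF(F^\pm_\beta)\subset\{\lambda\,d\gamma(x)\mid x\in\CC\setminus\{0\},\ \pm\lambda>0\}\cup\dot T^*_0\R^n$ of Proposition~\ref{F:properties}\eqref{F:wavefront} (and the analogue \ref{G:properties}\eqref{G:wavefront}, together with $\WF(h^\pm_\beta)=\WF(f^\pm_\beta)$ and \eqref{giwavefrontlog} for the logarithmic part of $G$), which hold uniformly in $\beta$. Functoriality of the wavefront set under the submersion pullback and under the diffeomorphism $(\exp,\pi_{TX})$ then puts $\WF'(F^{\pm,\UU}_\beta)$ inside the image of that conic set, with the $\dot T^*_0\R^n$-part becoming the conormal $\Delta^*$ of the diagonal. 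The delicate step is to check that the light-cone part lands in $\Lambda_\Feyn^\pm$: a covector $\lambda\,d\gamma(v)$ at a nonzero lightlike $v\in T_yX$ maps under $(\exp,\pi_{TX})$ to a pair $(x,\xi;y,\eta)$ with $x=\exp_y(v)$, both $\xi,\eta$ lightlike and lying on a single orbit of the geodesic flow, and one must verify, working in geodesic normal coordinates at $y$, that the sign condition $\pm\lambda>0$ — relative to the time-orientation used to orient $\CC$, and allowing for the sign flip in the second slot built into $\WF'$ — is exactly the condition $\exists\,t>0:\Phi_{\mp t}(x,\xi)=(y,\eta)$ defining $\Lambda_\Feyn^\pm$. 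This sign-and-orientation bookkeeping is the computation underlying the fact that these distributions are of Feynman type; everything else is a formal transport of the Minkowski results, which for general $\beta$ can alternatively be reduced to $\Re(\beta)\gg0$ via the recursion \ref{boxFG}, since $\Boxz$ does not enlarge the wavefront set.
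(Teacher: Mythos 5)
Your proposal follows the paper's proof in all essentials: each item is obtained by transporting the corresponding Minkowski-space statements of Propositions~\ref{F:properties} and \ref{G:properties} through the solder-form pull-back and the exponential-map diffeomorphism, with the recursions (b)--(d) verified for $\Re(\beta)\gg 0$ (where the distributions are classical functions) and then extended by analytic continuation, exactly as the paper does. The one computation that the paper carries out in detail, and which you flag as the ``delicate step'' but do not execute, is precisely the sign-and-orientation bookkeeping in (h): it rests on the Gauss lemma (so that $-d_{(2)}\Gamma$ is the parallel translate of $d_{(1)}\Gamma$ along the connecting geodesic, whence the pulled-back light-cone covectors are connected by the geodesic flow in the direction required by $\Lambda^\pm_\Feyn$) together with identifying the image of $\dot T^*_0\R^n$ under the pull-back with the anti-diagonal $\Delta^*$.
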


\begin{proof}
Since the statements on the Riesz distributions $R^{\pm,\UU}_\beta$ can be found in \cite{BGP07}*{Prop.~1.4.2} we only do the proof for the families $F^{\pm,\UU}_\beta$ and $G^{\pm,\UU}_\beta$.

Assertion~\eqref{FG} follows directly from the definition in \eqref{eq:DefGbeta}.

Relation~\eqref{GammaFG} follows from Proposition~\ref{F:properties}~\eqref{F:mult} and Proposition~\ref{G:properties}~\eqref{G:mult}, respectively, together with $\gamma\circ\omega\circ ds = \Gamma\circ (\exp,\pi_{TX})$.

As to in \eqref{gradFG}, we assume that $\beta\in\mathbb{C}$ with $\Re(\beta)>0$.
Since $F^{\pm,\UU}_{\beta+1}$ is then $C^1$, we can compute $dF^{\pm,\UU}_{\beta+1}$ classically.
The formula will then follow for all $\beta\neq -\frac{n}{2}$ by analyticity.

For the second equation in \eqref{gradFG}, we rewrite Proposition~\ref{F:properties}~\eqref{F:grad} as $d\gamma\cdot F^\pm_\beta=2(2\beta+n)d F^\pm_{\beta+1}$.
Applying $(\omega\circ ds)^*$ and using that pullback commutes with exterior differentiation yields
$$
d(\gamma\circ\omega\circ ds)\cdot \tilde F^\pm_\beta
=
2(2\beta+n)d \tilde F^\pm_{\beta+1} .
$$
The left-hand side can be rewritten as 
$$
d(\Gamma\circ (\exp,\pi_{TX}))\cdot (\exp,\pi_{TX})^* F^{\pm,\UU}_\beta
=
(\exp,\pi_{TX})^*(d\Gamma\cdot F^{\pm,\UU}_\beta)
$$
while the right-hand side yields
$$
2(2\beta+n)(\exp,\pi_{TX})^*dF^{\pm,\UU}_{\beta+1}.
$$
Therefore, we have
$$
d\Gamma\cdot F^{\pm,\UU}_\beta=2(2\beta+n)dF^{\pm,\UU}_{\beta+1}.
$$
Restricting to the first argument and converting the differentials into gradients using the musical isomorphism given by the Lorentzian metric proves the second equation in \eqref{gradFG}.
The third equation in \eqref{gradFG} follows by differentiating the second one with respect to $\beta$.

We show the second equation in \eqref{boxFG}.
Let $\beta\in\mathbb{C}$ with $\Re(\beta)>1$.
Since $F^{\pm,\UU}_{\beta+1}$ is then $C^2$, we can compute $\Box F^{\pm,\UU}_{\beta+1}$ classically.
Again, the formula will then follow for all $\beta\neq -\frac{n}{2}$ by analyticity.
Indeed, using \eqref{gradFG}, \eqref{eq:gradGamma}, and \eqref{GammaFG}, we get
\begin{align*}
\Boxz F^{\pm,\UU}_{\beta+1}
&=
-\divz\big(\grade F^{\pm,\UU}_{\beta+1}\big)\\
&=
-\tfrac{1}{2(2\beta+n)}\,\divz\big(F^{\pm,\UU}_\beta \cdot\grade\Gamma\big)\\
&=
\tfrac{1}{2(2\beta+n)}\,\Boxz \Gamma\cdot F^{\pm,\UU}_\beta-\tfrac{1}{2(2\beta+n)}\, \langle\grade\Gamma,\grade F^{\pm,\UU}_\beta\rangle\\
&=
\tfrac{1}{2(2\beta+n)}\,\Boxz \Gamma\cdot F^{\pm,\UU}_\beta- \tfrac{1}{2(2\beta+n)\cdot 2(2\beta-2+n)}\,\langle\grade\Gamma,\grade\Gamma\cdot F^{\pm,\UU}_{\beta-1}\rangle \\
&=
\tfrac{1}{2(2\beta+n)}\,\Boxz \Gamma\cdot F^{\pm,\UU}_\beta+\tfrac{1}{(2\beta+n)(2\beta-2+n)}\,\Gamma\cdot F^{\pm,\UU}_{\beta-1}\\
&=
\tfrac{1}{2(2\beta+n)}\,\Boxz \Gamma\cdot F^{\pm,\UU}_\beta+\tfrac{2\beta(2\beta-2+n)}{(2\beta+n)(2\beta-2+n)} \,F^{\pm,\UU}_\beta \\
&=
\left(\tfrac{\Boxz\Gamma-2n}{2(2\beta+n)}+1\right)F^{\pm,\UU}_\beta.
\end{align*}
The third equation in \eqref{boxFG} again follows by differentiating the second one with respect to $\beta$.

Assertion~\eqref{FGFglatt} follows directly from Proposition~\ref{F:properties}~\eqref{F:traeger1} and \eqref{F:nullsein}.

Statement~\eqref{CkFG} follows from Proposition~\ref{F:properties}~\eqref{F:Ck} and Proposition~\ref{G:properties}~\eqref{G:Ck} together with $(\omega\circ ds \circ (\exp,\pi_{TX})^{-1})^{-1}(\CC)=\CC_\UU$.

As to the wavefront set, recall from Proposition~\ref{F:properties}~\eqref{F:wavefront} that $\WF(F^\pm_\beta)\subset \{\lambda d\gamma(x) \mid x\in \CC\setminus\{0\},\, \mp\lambda>0\} \cup \dot{T}_0^*\R^n$.
It is easily checked that covectors in $(\omega\circ ds)^*\dot{T}_0^*\R^n$ vanish on tangent vectors to $T\Omega$ which are tangent to the zero-section.
For dimensional reasons we must then have for each $x\in\Omega$
$$
(\omega\circ ds(x))^*\dot{T}_0^*\R^n
=
\{\xi\in \dot{T}_{\oo(x)}^*TX \mid  \xi|_{T_{\oo(x)}\oo(X)}=0\}.
$$
Similarly, let $\xi\in T^*X$ and let $v\in T_{\oo(x)}TX$ be tangent to the zero-section.
Then we can write $v=\dot{c}(0)$ with $c(t)=\oo(\pi_{TX}(c(t))$.
Thus, $\exp(c(t))=\pi_{TX}(c(t))$ and hence $d\exp(v))=d\pi_{TX}(v)$
Therefore
$$
(\exp,\pi_{TX})^*(\xi,-\xi)(v)
=
\xi(d\exp(v)) - \xi(d\pi_{TX}(v))
=
0.
$$
Thus, the pullback of the anti-diagonal in $\dot{T}(X\times X)$ along $(\exp,\pi_{TX})$ is also contained in $\{\xi\in \dot{T}_{\oo(x)}^*TX \mid  \xi|_{T_{\oo(x)}\oo(X)}=0\}$ and, again for dimensional reasons, we have equality.
This shows
$$
(\omega\circ ds \circ (\exp,\pi_{TX})^{-1}(x,x))^*\dot{T}_0^*\R^n 
=
\{(\xi,-\xi) \mid \xi\in \dot{T}^*_xX\}.
$$
The vector $\omega\circ ds  \circ (\exp,\pi_{TX})^{-1}(x,y)$ being lightlike means that $x$ and $y$ can be joined by lightlike geodesic.
From $\Gamma=\gamma\circ \omega\circ ds \circ (\exp,\pi_{TX})^{-1}$ we see that 
$$
(\omega\circ ds \circ (\exp,\pi_{TX})^{-1})^*(\lambda d\gamma) 
=
\lambda d\Gamma
=
(\lambda d_{(1)}\Gamma, \lambda d_{(2)}\Gamma).
$$
By the Gauss lemma, $-d_{(2)}\Gamma(x,y)$ is the parallel translate of $d_{(1)}\Gamma(x,y)$ along the geodesic joining $x$ and $y$, see \cite{ON}*{Cor.~3 on p.~128}.
In terms of the geodesic flow $\Phi_t$ on the cotangent bundle this means for $(\xi,\xi')=(\lambda d_{(1)}\Gamma, \lambda d_{(2)}\Gamma)$:
$$
\Phi_{\frac{1}{2\lambda}}(\xi)
=
-\Phi_{\frac{1}{2\lambda}}(-\lambda d_{(1)}\Gamma)
=
-2\lambda \Phi_1(-\tfrac12d_{(1)}\Gamma)
=
-2\lambda \tfrac12d_{(2)}\Gamma
=
-\xi' .
$$
Summarizing, this yields
\begin{align*}
\WF(F^{\pm,\UU}_\beta)
&=
\WF((\omega\circ ds \circ (\exp,\pi_{TX})^{-1})^* F^{\pm}_\beta) \\
&\subset
(\omega\circ ds \circ (\exp,\pi_{TX})^{-1})^* \WF(F^\pm_\beta) \\
&\subset
(\omega\circ ds \circ (\exp,\pi_{TX})^{-1})^* (\dot{T}_0^*\R^n \cup  \{\lambda d\gamma(x) \mid x\in \CC\setminus\{0\},\, \mp\lambda>0\}) \\
&\subset
\{(\xi,-\xi) \mid \xi\in \dot{T}^*_xX\} \cup \{(\xi,\xi') \in \dot T^*(X \times X) \mid \xi \textrm{ is lightlike,}\; \exists\, t>0: \Phi_{\mp t}(\xi) = -\xi' \}.
\end{align*}
This proves the first statement in \eqref{FGWF}.
The second one follows in the same way.

We show \eqref{FG0}.
From Proposition~\ref{F:properties}~\eqref{F:delta} and \ref{G:properties}~\eqref{G:delta} we know that $F^\pm_{-n/2}=\delta_0$ if $n$ is odd and $G^\pm_{-n/2}=\delta_0$ if $n$ is even.
The preimage of $0$ under the solder form $\omega$ is the set of vertical vectors $V=\{w\in TP\mid d\pi_P(w)=0\}$.
Thus, for any local section $s:\Omega\to P|_\Omega$ the preimage $(\omega\circ ds)^{-1}(0)$ coincides with the image $\oo(\Omega)$ where $\oo:X\to TX$ is the zero-section.
Since the pullback of a distribution is given by fiber-integration of the test functions, we find for any test function $\phi$ on $TP|_\Omega$:
$$
(\delta_0)_\Omega [\phi]
=
((\omega\circ ds)^*\delta_0) [\phi]
=
\int_{(\omega\circ ds)^{-1}(0)} \phi\; d(\oo_*\vol_X)
=
\int_\Omega (\phi\circ\oo)\; d\vol_X .
$$
Thus, $\tilde\delta_0=\oo_*\vol_X$.
Since $\mu\equiv1$ along the diagonal, we get
$$
\delta_0^\UU
=
((\exp,\pi_{TX})|_{\UU'})_*\tilde\delta_0
=
((\exp,\pi_{TX})|_{\UU'})_*\oo_*\vol_X
=
((\exp,\pi_{TX})\circ\oo)_*\vol_X
=
\diag_*\vol_X.
$$
This proves \eqref{FG0}.

As to \eqref{FGRgamma}, note that $\Gamma|_{\UU\setminus\diag(X)}\colon \UU\setminus\diag(X) \to\R$ is a submersion so that the pull back of distributions makes sense.
We observe
$$
\Gamma\circ (\exp,\pi_{TX}) = \gamma\circ\omega\circ ds
$$
which follows from the definitions.
Now
\begin{align*}
R^{+,\UU}_\beta + R^{-,\UU}_\beta 
&=
\mu\cdot ((\exp,\pi|_{TX})^{-1})^* (\omega\circ ds)^*(R^+_\beta + R^-_\beta) \\
&=
2C(\beta,n)\mu\cdot ((\exp,\pi|_{TX})^{-1})^* (\omega\circ ds)^*\gamma^*(t^\beta_+) \\
&=
2C(\beta,n)\mu\cdot (\gamma\circ\omega\circ ds \circ (\exp,\pi|_{TX})^{-1})^*(t^\beta_+) \\
&=
2C(\beta,n)\mu\cdot \Gamma^*(t^\beta_+).
\end{align*}
Similarly, 
\begin{align*}
F^{\pm,\UU}_\beta 
&=
\mu\cdot ((\exp,\pi|_{TX})^{-1})^* (\omega\circ ds)^*(F^\pm_\beta)
=
C(\beta,n)\mu\Gamma^*(f^\pm_\beta)
\end{align*}
and
\begin{align*}
G^{\pm,\UU}_\beta 
&=
\mu\cdot ((\exp,\pi|_{TX})^{-1})^* (\omega\circ ds)^*(G^\pm_\beta) \\
&=
\mu\cdot (\omega\circ ds\circ(\exp,\pi|_{TX})^{-1})^*\big((\mp\tfrac{\rmi}{\pi}\tfrac{d}{d\beta}+\Lambda)F^\pm_\beta\big) \\
&=
\mu\cdot \Gamma^*\big((\mp\tfrac{\rmi}{\pi}\tfrac{d}{d\beta}+\Lambda)(C(\beta,n)f^\mp_\beta)\big) \\
&=
\mu\cdot \Gamma^*\big((\mp\tfrac{\rmi}{\pi}\tfrac{dC(\beta,n)}{d\beta}+\Lambda C(\beta,n))f^\mp_\beta \mp\tfrac{\rmi}{\pi}C(\beta,n)h^\pm_\beta\big).
\qedhere
\end{align*}
\end{proof}

Note that $\diag_*\vol_X$ is the Schwartz kernel of the identity operator.

\begin{example}
Let $X$ be Minkowski space.
Then we can choose $\UU=X\times X=\R^n\times \R^n$ and $\UU'=TX=\R^n\times \R^n$.
We have $\pi_{TX}(v,w)=v$ and $\exp(v,w)=v+w$.
Let $s$ be the global frame that assigns the standard orthonormal basis to each point.
Then $(\omega\circ ds)(v,w)=w$.
We conclude $(\exp,\pi_{TX})^{-1}(x,y)=(y,x-y)$ and $\omega\circ ds\circ(\exp,\pi_{TX})^{-1}(x,y)=x-y$. 

For $\Re(\beta)>0$, when our distributions are continuous functions, we can write $F^{\pm,X\times X}_\beta(x,y) = C(\beta,n)(\gamma(x-y)\mp \rmi 0)^\beta$.
The $\mp \rmi 0$ indicates how the $\beta^\mathrm{th}$ power is to be interpreted when $\gamma(x-y)$ is negative, i.e.\ when $x-y$ is spacelike.

On Minkowski space $\Boxz\Gamma=2n$ so that Proposition~\ref{FGOmega}~\eqref{boxFG} reduces to
$$
\Boxz F^{\pm,X\times X}_{\beta+1} = F^{\pm,X\times X}_\beta
\quad\mbox{ and }\quad
\Boxz G^{\pm,X\times X}_{\beta+1} = G^{\pm,X\times X}_\beta
$$
which now makes sense and also holds for $\beta=-\tfrac{n}{2}$.
Thus, $F^{+,X\times X}_{1-\frac{n}{2}}$ is a Feynman propagator if $n$ is odd and $G^{+,X\times X}_{1-\frac{n}{2}}$ is one if $n$ is even.
\end{example}

We also need the distributions 
\begin{equation*}
Q^{\pm,\UU}_\beta := \frac{d}{d\beta}R^{\pm,\UU}_\beta
\quad\mbox{ and }\quad
H^{\pm,\UU}_\beta := \frac12\frac{d}{d\beta}G^{\pm,\UU}_\beta ,
\end{equation*}
but only for $\beta=-\frac{n}{2}$.

\begin{lemma}\label{lem:QH}
The following holds:
\begin{enumerate}[(a)]
\item \label{QH1}\abovedisplayskip=-14pt
\begin{align*}
\Gamma\cdot Q^{\pm,\UU}_{-\frac{n}{2}} &= 2(2-n)R^{\pm,\UU}_{1-\frac{n}{2}} ,\\
\Gamma\cdot H^{\pm,\UU}_{-\frac{n}{2}} &= 2(2-n)G^{\pm,\UU}_{1-\frac{n}{2}}\quad\mbox{ if $n\ge4$ is even};
\end{align*}
\item \label{QH2}
\begin{align*}
4\,\grade R^{\pm,\UU}_{1-\frac{n}2} &= \grade\Gamma\cdot Q^{\pm,\UU}_{-\frac{n}2} ,\\
4\,\grade G^{\pm,\UU}_{1-\frac{n}2} &= \grade\Gamma\cdot H^{\pm,\UU}_{-\frac{n}2} \quad\mbox{ if $n\ge2$ is even}.
\end{align*}
\end{enumerate}
\end{lemma}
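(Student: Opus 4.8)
The plan is to obtain all four identities by differentiating in $\beta$ the relations already established in Proposition~\ref{FGOmega} — namely \eqref{GammaFG} for part~\eqref{QH1} and \eqref{gradFG} for part~\eqref{QH2} — and then specializing to $\beta=-\tfrac n2$. Since by Proposition~\ref{FGOmega} the families $\beta\mapsto R^{\pm,\UU}_\beta$, $\beta\mapsto F^{\pm,\UU}_\beta$ and $\beta\mapsto G^{\pm,\UU}_\beta$ are holomorphic on all of $\C$, the distributions $Q^{\pm,\UU}_{-n/2}=\frac{d}{d\beta}R^{\pm,\UU}_\beta\big|_{\beta=-n/2}$ and $H^{\pm,\UU}_{-n/2}=\tfrac12\frac{d}{d\beta}G^{\pm,\UU}_\beta\big|_{\beta=-n/2}$ are well defined, and multiplication by the smooth functions $\Gamma$ and by the components of $\grade\Gamma$ commutes with $\frac{d}{d\beta}$; this is what makes the term‑by‑term differentiation legitimate, even through the negative‑integer values of $\beta$ where $C(\beta,n)$ has zeros, so no analytic input beyond Proposition~\ref{FGOmega} is needed.

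For the statements involving the Riesz distributions this is immediate. Differentiating $\Gamma\cdot R^{\pm,\UU}_\beta=(2\beta+2)(2\beta+n)R^{\pm,\UU}_{\beta+1}$ gives $\Gamma\cdot Q^{\pm,\UU}_\beta=(8\beta+2n+4)R^{\pm,\UU}_{\beta+1}+(2\beta+2)(2\beta+n)Q^{\pm,\UU}_{\beta+1}$; at $\beta=-\tfrac n2$ the last term drops out because of the factor $(2\beta+n)$, and $(8\beta+2n+4)\big|_{\beta=-n/2}=2(2-n)$, which is the first identity in \eqref{QH1}. Likewise, differentiating $(\grade\Gamma)\cdot R^{\pm,\UU}_\beta=2(2\beta+n)\grade R^{\pm,\UU}_{\beta+1}$ yields $(\grade\Gamma)\cdot Q^{\pm,\UU}_\beta=4\grade R^{\pm,\UU}_{\beta+1}+2(2\beta+n)\grade Q^{\pm,\UU}_{\beta+1}$, and again the last term vanishes at $\beta=-\tfrac n2$, giving the first identity in \eqref{QH2}.

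For the $G$-versions the same mechanism applies, but differentiating \eqref{GammaFG} and \eqref{gradFG} produces in addition a term coming from the $F^{\pm,\UU}_{\beta+1}$-correction present in those relations. One rewrites the $\beta$-derivative of this correction using the defining relation $G^{\pm,\UU}_\beta=\pm\tfrac{\rmi}{\pi}\frac{d}{d\beta}F^{\pm,\UU}_\beta+\Lambda F^{\pm,\UU}_\beta$, i.e.\ $\frac{d}{d\beta}F^{\pm,\UU}_\beta=\mp\rmi\pi\big(G^{\pm,\UU}_\beta-\Lambda F^{\pm,\UU}_\beta\big)$. When $n\ge4$ is even, $1-\tfrac n2$ is a negative integer, so $F^{\pm,\UU}_{1-n/2}=0$ by Proposition~\ref{FGOmega}\,\eqref{FGFglatt}; hence the bare $F^{\pm,\UU}_{1-n/2}$-terms and the $\Lambda$-contribution both disappear, and after collecting the numerical factors (the polynomial coefficient evaluated at $\beta=-\tfrac n2$, together with the $\mp\rmi\pi$ and the $\pm\tfrac{\rmi}{\pi}$ from the defining relation, which multiply to $+1$) the surviving piece combines with the first surviving term to give exactly $2(2-n)G^{\pm,\UU}_{1-n/2}$ in \eqref{QH1} and $4\grade G^{\pm,\UU}_{1-n/2}$ in \eqref{QH2}. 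The one borderline dimension for \eqref{QH2}, namely $n=2$, where $1-\tfrac n2=0$ and $F^{\pm,\UU}_0$ no longer vanishes, is treated separately by inserting the explicit local formulas of Proposition~\ref{FGOmega}\,\eqref{FGRgamma}.

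The routine part is the differentiation itself; the only place demanding care is verifying that the extra correction terms in the $G$-versions collapse precisely, which is a short bookkeeping of signs, the factors of $\rmi$ and $\pi$, and the vanishing of $F^{\pm,\UU}$ at the relevant negative integer. I expect this bookkeeping — and, relatedly, pinning down the exceptional low‑dimensional case of \eqref{QH2} — to be the main, though not deep, obstacle.
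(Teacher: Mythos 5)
Your approach — differentiating the $\Gamma$- and $\grade\Gamma$-identities of Proposition~\ref{FGOmega}~\eqref{GammaFG}, \eqref{gradFG} at $\beta=-\tfrac n2$ and rewriting the resulting $\tfrac{d}{d\beta}F^{\pm,\UU}_{\beta+1}$ term via \eqref{FG} — is exactly the paper's proof, and your bookkeeping is correct: the quadratic coefficient in \eqref{GammaFG} dies at $\beta=-\tfrac n2$ while its $\beta$-derivative is $2(2-n)$, the product $\pm\tfrac{\rmi}{\pi}\cdot(\mp\rmi\pi)=+1$ makes the two $G^{\pm,\UU}_{1-n/2}$ contributions add, and the $\tfrac12$ built into $H^{\pm,\UU}_{-n/2}=\tfrac12\tfrac{d}{d\beta}G^{\pm,\UU}_\beta|_{-n/2}$ restores the stated constants once $F^{\pm,\UU}_{1-n/2}$ drops out for $n\ge4$.

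The one place your write-up is thinner than it should be is the borderline case $n=2$ of part~(b), and I would not dispatch it by ``inserting the explicit local formulas of \eqref{FGRgamma}''. What you really need there is $\grade F^{\pm,\UU}_{1-n/2}=0$, and a route that works uniformly for all even $n\ge 2$ is already at your disposal: differentiating the $F$-line of \eqref{gradFG} at $\beta=-\tfrac n2$ gives $4\,\grade F^{\pm,\UU}_{1-n/2}=(\grade\Gamma)\cdot\tfrac{d}{d\beta}F^{\pm,\UU}_\beta\big|_{\beta=-n/2}$; by \eqref{FG}, \eqref{FGFglatt} and \eqref{FG0} the right-hand factor is $\tfrac{d}{d\beta}F^{\pm,\UU}_\beta\big|_{\beta=-n/2}=\mp\rmi\pi\,\diag_*\vol_X$, and multiplication by $\grade\Gamma$ kills it because $\grade\Gamma$ vanishes continuously along the diagonal while $\diag_*\vol_X$ is a measure supported there. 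This also explains, as a check, why part~(a) is restricted to $n\ge4$: the corresponding manipulation there leaves the term $\pm\tfrac{8\rmi}{\pi}F^{\pm,\UU}_{1-n/2}$ with no $\grade$ on it, and for $n=2$ this is a nonzero constant, so the $H$-identity in part~(a) genuinely fails in dimension two.
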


\begin{proof}
Assertion~\eqref{QH1} follows from differentiating Proposition~\ref{FGOmega}~\eqref{GammaFG} at $\beta=-\frac{n}{2}$, together with Proposition~\ref{FGOmega}~\eqref{FG} and \eqref{FGFglatt} for the second equation.
Similarly, assertion~\eqref{QH2} follows from differentiating Proposition~\ref{FGOmega}~\eqref{gradFG} at $\beta=-\frac{n}{2}$.
\end{proof}

\subsection{Hadamard series of the Feynman parametrix} 
\label{Hadamardappendix2}

Let $\SS\to X$ be a real or complex vector bundle and let $P$ be a normally hyperbolic operator acting on sections of $\SS$.
We write
$$
P = \Box^\nabla +B
$$
where $\nabla$ is the connection induced by $P$, see Remark~\ref{rem:dAlembert}.

Using the distributions $F^{\pm,\UU}_\beta$ and $G^{\pm,\UU}_\beta$ we construct formal Feynman parametrices over $\UU$.
We make the following formal series ansatz:
$$
\FpmU \coloneq 
\begin{cases}
\sum\limits_{k=0}^{\infty} V_{k} \cdot F^{\pm,\UU}_{1-\frac{n}2+k} & \quad\text{if $n$ is odd,} \\[10pt]
\sum\limits_{k=0}^{\infty} V_{k} \cdot G^{\pm,\UU}_{1-\frac{n}2+k} & \quad\text{if $n$ is even,} 
\end{cases}
$$
where $V_k\in C^\infty(\UU,\SS\boxtimes \SS^*)$ are smooth sections yet to be found. 
They are called \emph{Hadamard coefficients}.

\subsubsection{\texorpdfstring{Fundamental solution up to $C^N$-errors in odd dimensions}{Fundamental solution up to CN-errors in odd dimensions}}
We start with $\sum_{k=0}^{\infty} V_{k} \cdot F^{\pm,\UU}_{1-\frac{n}2+k}$ and $n$ odd.
We formally apply $P_{(1)}$ termwise and, using Proposition~\ref{FGOmega}~\eqref{GammaFG}--\eqref{boxFG}, we solve 
$$
P_{(1)}\Big(\sum_{k=0}^{\infty} V_{k} \cdot F^{\pm,\UU}_{1-\frac{n}2+k}\Big)=\diag_*\vol_X=F^{\pm,\UU}_{-\frac{n}2}
$$ 
by grouping together all terms containing the same $F^{\pm,\UU}_\beta$.
This leads to the transport equations
\begin{equation}
 \nabla_{\grade \Gamma} V_k - \left( \tfrac{1}{2}\Boxz \Gamma - n +2k \right) V_k = 2 k P_{(1)} V_{k-1}
\label{eq:transport}
\end{equation}
together with the initial condition $V_0(x,x) = \mathrm{id}_{\SS_x}$. 
Since the relations for $F^{\pm,\UU}_\beta$ in Proposition~\ref{FGOmega}~\eqref{GammaFG}--\eqref{boxFG} are identical with those for the Riesz distributions $R^\pm_\beta$, we get the same transport equations as in \cite{BGP07}*{Ch.~2}.
Hence, the Hadamard coefficients $V_k$ exist and are uniquely determined by the transport equations and coincide with those in \cite{BGP07}*{Ch.~2}.

Let $N\in\N_0$.
For $k>N+\tfrac{n}{2}-1$ the distribution $F^{\pm,\UU}_{1-\frac{n}{2}+k}$ is a $C^N$-function by Proposition~\ref{FGOmega}~\eqref{CkFG}.
Since $n$ is odd, we have $\lfloor N+\frac{n}{2}-1\rfloor=N+\frac{n-3}{2}$.
The transport equations yield
\begin{align*}
P_{(1)}\bigg(\sum_{k=0}^{N+\frac{n-3}{2}} V_{k} \cdot F^{\pm,\UU}_{1-\frac{n}2+k}\bigg) - \diag_*\vol_X
&=
\big(P_{(1)}V_{N+\frac{n-1}{2}}\big)\cdot F^{\pm,\UU}_{N+\frac{1}{2}} .
\end{align*}
Therefore, the finite sum $\sum_{k=0}^{N+\frac{n-3}{2}} V_{k} \cdot F^{\pm,\UU}_{1-\frac{n}2+k}$ defines a right fundamental solution up to $C^N$-errors.

\subsubsection{\texorpdfstring{Fundamental solution up to $C^N$-errors in even dimensions}{Fundamental solution up to CN-errors in even dimensions}}
Now let $n$ be even.
The relations in Proposition~\ref{FGOmega}~\eqref{GammaFG}--\eqref{boxFG} for the $G^{\pm,\UU}_\beta$ are again the same as for $F^{\pm,\UU}_\beta$ and $R^\pm_\beta$ up to extra terms involving $F^{\pm,\UU}_\beta$.
Since $n$ is even, these extra terms are smooth by Proposition~\ref{FGOmega}~\eqref{FGFglatt}.
Thus, again taking the same Hadamard coefficients as in \cite{BGP07}*{Ch.~2}, the same reasoning as above yields
\begin{align}
P_{(1)}\bigg(\sum_{k=0}^{N+\frac{n-2}{2}} & V_{k} \cdot G^{\pm,\UU}_{1-\frac{n}2+k}\bigg) - \diag_*\vol_X \notag\\
&=
\big(P_{(1)}V_{N+\frac{n}{2}}\big)\cdot G^{\pm,\UU}_{N+1} 
\pm \tfrac{\rmi}{\pi}\sum_{k=\frac{n}{2}}^{N+\frac{n}{2}} \bigg\{\frac{\Boxz\Gamma-2n}{4k^2}\cdot V_k\cdot F_{k-\frac{n}{2}}^{\pm,\UU} - \frac{2}{k}\nabla_{\grade F_{k+1-\frac{n}{2}}^{\pm,\UU}}V_k\bigg\} .
\label{eq:Hadamard-odd-right}
\end{align}
Note that the $V_k$ and the $F_{k-\frac{n}{2}}^{\pm,\UU}$ occurring in the sum are smooth.
Thus, $\sum_{k=0}^{N+\frac{n-2}{2}} V_{k} \cdot G^{\pm,\UU}_{1-\frac{n}2+k}$ defines a right fundamental solution up to $C^N$-errors in the even-dimensional case.

\begin{remark} \label{theremark}
The transport equations \eqref{eq:transport} imply that the Hadamard coefficients $V_k(x,x)$ on the diagonal are given by universal algebraic expressions of the curvature of the manifold and the coefficients of the operator and a finite number (increasing with $k$) of their derivatives evaluated at $x$.
The same is true for the coefficients appearing in the short time asymptotic expansion of the heat kernel of a Laplace-type operator on a Riemannian manifold.
Remarkably, the transport equations for the heat coefficients are, up to constants, identical to those of the Hadamard coefficients and therefore both, the Hadamard coefficients and the heat coefficients, are given by essentially the same expressions.
Comparing the recursion relations of the Hadamard coefficients (e.g. \cite{BGP07}*{Prop.~2.3.1}) and the heat coefficients (e.g. \cite{BGV}*{p. 84}) one obtains
\begin{equation}
  V_k(x,x) = k!\, a_k(x,x)
\label{eq:Vkak}
\end{equation}
where $a_k$ is the $k$-th coefficient in the heat expansion
$$
 k_t(x,x)\sim (4\pi t)^{-\nicefrac{n}{2}}\sum_{k=0}^\infty a_k(x,x) t^{k} \quad\text{ as }t\searrow 0.
$$
This is true both for even and odd $n$ and is of importance in the proof of Corollary~\ref{cor:charforms}.
\end{remark}

\subsubsection{Local Feynman parametrix}
Using Borel summation, we use this to construct a parametrix near the diagonal. 
We fix a cutoff function $\chi \in C^\infty(\R)$ such that $\chi\equiv1$ on $[-\frac12,\frac12]$, $\chi\equiv0$ outside $[-1,1]$ and $0\le\chi\le1$ everywhere. 

\begin{lemma}
\label{lem:HadamardSeries}
There exist $\eps_k>0$ such that for every $N \in \N$ the series 
\begin{equation}
\begin{cases}
\sum\limits_{k=N+\frac{n-1}{2}}^{\infty} \chi(\nicefrac{\Gamma}{\eps_k}) \cdot V_{k} \cdot F^{\pm,\UU}_{1-\frac{n}2+k} & \mbox{ if $n$ is odd,} \\[10pt]
\sum\limits_{k=N+\frac{n}{2}}^{\infty} \chi(\nicefrac{\Gamma}{\eps_k}) \cdot V_{k} \cdot G^{\pm,\UU}_{1-\frac{n}2+k} & \mbox{ if $n$ is even,}
\end{cases}
\label{eq:FeynmanRest}
\end{equation}
converges in $C^N(\UU)$. 
The series 
\begin{equation}
\begin{cases}
\sum\limits_{k=0}^{\infty} \chi(\nicefrac{\Gamma}{\eps_k}) \cdot V_{k} \cdot F^{\pm,\UU}_{1-\frac{n}2+k} & \mbox{ if $n$ is odd,} \\[10pt]
\sum\limits_{k=0}^{\infty} \chi(\nicefrac{\Gamma}{\eps_k}) \cdot V_{k} \cdot G^{\pm,\UU}_{1-\frac{n}2+k} & \mbox{ if $n$ is even,}
\end{cases}
\label{eq:FeynmanReihe}
\end{equation}
defines a Feynman parametrix on $\UU$.
\end{lemma}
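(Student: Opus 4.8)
The plan is to run the classical Borel-summation argument for Hadamard series (as for the Riesz distributions in \cite{BGP07}*{Ch.~2}), with the Feynman distributions $F^{\pm,\UU}_\beta$ and $G^{\pm,\UU}_\beta$ playing the role of the Riesz distributions; everything rests on one quantitative estimate, which I would set up first. Work on a fixed relatively compact open subset of $\UU$. For $\Re(\beta)>j$ the distribution $F^{\pm,\UU}_\beta$ (resp.\ $G^{\pm,\UU}_\beta$ for even $n$) is a $C^j$-function vanishing to $j$-th order along $\CC_\UU$ by Proposition~\ref{FGOmega}~\eqref{CkFG}, and by Proposition~\ref{FGOmega}~\eqref{FGRgamma} it equals, off the diagonal, the smooth density factor $\mu$ times a pull-back $\Gamma^*(f^\pm_\beta)$ (and, for even $n$, $\Gamma^*(h^\pm_\beta)$). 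Differentiating $\chi(\Gamma/\eps)\,V_k\,F^{\pm,\UU}_{1-\frac{n}{2}+k}$ up to order $j$, using $f'_\beta=\beta f_{\beta-1}$, the chain rule, \eqref{eq:gradGamma}, and the fact that $\chi(\Gamma/\eps)$ is supported in $\{|\Gamma|\le\eps\}$ with every derivative producing at most a factor $\eps^{-1}$, one finds that each such derivative is pointwise $\lesssim\eps^{\,1-\frac{n}{2}+k-j}(1+|\log\eps|)^{j}$ on the support of the cut-off, with a constant depending only on $j$, on finitely many derivatives of $\chi,\mu,\Gamma$, and on finitely many derivatives of $V_k$. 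Since an $\eps$-power beats the logarithm, putting $m_k:=\lceil 1-\tfrac{n}{2}+k\rceil-1$ (so $m_k\to\infty$), I can choose $\eps_k\in(0,1]$ small enough that $\|\chi(\Gamma/\eps_k)\,V_k\,F^{\pm,\UU}_{1-\frac{n}{2}+k}\|_{C^{m_k}(\UU)}\le 2^{-k}$ whenever $m_k\ge0$, and $\eps_k:=1$ for the finitely many remaining $k$. With this choice, for any fixed $N$ every term of \eqref{eq:FeynmanRest} is already $C^N$ (its index satisfies $1-\tfrac{n}{2}+k>N$), and for $k$ large its $C^N$-norm is at most $\|\cdot\|_{C^{m_k}}\le 2^{-k}$; hence \eqref{eq:FeynmanRest} converges absolutely in $C^N(\UU)$, which is the first assertion.

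For the second assertion, fix $N$, set $M=M(N):=N+\tfrac{n-1}{2}$ for $n$ odd and $M:=N+\tfrac{n}{2}$ for $n$ even, and split $\FpmU=S_M+B_M$ with $S_M=\sum_{k<M}\chi(\Gamma/\eps_k)V_kF^{\pm,\UU}_{1-\frac{n}{2}+k}$ and $B_M=\sum_{k\ge M}\chi(\Gamma/\eps_k)V_kF^{\pm,\UU}_{1-\frac{n}{2}+k}$ (with $G^{\pm,\UU}$ for even $n$). By the first part, $B_M$ — which is precisely \eqref{eq:FeynmanRest} — lies in $C^N(\UU)$. Writing $S_M=\sum_{k<M}V_kF^{\pm,\UU}_{1-\frac{n}{2}+k}+\sum_{k<M}(\chi(\Gamma/\eps_k)-1)V_kF^{\pm,\UU}_{1-\frac{n}{2}+k}$, the second finite sum is smooth because each summand is supported in $\{|\Gamma|\ge\eps_k/2\}$, i.e.\ off the diagonal, where $F^{\pm,\UU}_\beta$ (resp.\ $G^{\pm,\UU}_\beta$) is a smooth function by Proposition~\ref{FGOmega}~\eqref{FGRgamma}; the first finite sum is the truncated formal Feynman series, and the two subsubsections on fundamental solutions up to $C^N$-errors above show that $P_{(1)}$ applied to it equals $\diag_*\vol_X$ plus a $C^N$-error. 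Hence $P_{(1)}\FpmU-\diag_*\vol_X$ is $C^{N-2}$; since $N$ is arbitrary it is $C^\infty$, so $\hat{\F}^{\pm,\UU}$ is a right parametrix for $P$ on $\UU$. That it is also a left parametrix follows from the analogous construction applied to the formal adjoint $P^*$ (the Hadamard coefficient of $P^*$ at $(y,x)$ being the adjoint of $V_k(x,y)$), or simply because a left and a right parametrix of $P$ agree modulo smoothing operators.

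It remains to check $\WF'(\FpmU)\subset\Lambda_\Feyn^\pm$. By Proposition~\ref{FGOmega}~\eqref{FGWF} every term — hence every finite partial sum $S_M$ — has $\WF'\subset\Lambda_\Feyn^\pm$, since multiplying by the smooth factors $\chi(\Gamma/\eps_k)$ and $V_k$ cannot enlarge the wavefront set. Given $(z_0,\eta_0)$ in the complement of $\Lambda_\Feyn^\pm$, pick $\varphi\in C^\infty_\mathrm{c}(\UU)$ near $z_0$ and a conic neighbourhood $V$ of $\eta_0$ small enough that $\widehat{\varphi S_M}$ is rapidly decreasing on $V$ for every $M$ (possible because $\WF'(S_M)\subset\Lambda_\Feyn^\pm$ for all $M$). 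For each $N$, writing $\FpmU=S_{M(N)}+B_{M(N)}$, the first summand contributes a rapidly decreasing Fourier transform on $V$ while $\widehat{\varphi B_{M(N)}}(\xi)=O(\langle\xi\rangle^{-N})$ because $\varphi B_{M(N)}\in C^N_\mathrm{c}$; hence $\widehat{\varphi\,\FpmU}$ decays faster than any power on $V$, so $(z_0,\eta_0)\notin\WF'(\FpmU)$. Thus $\WF'(\FpmU)\subset\Lambda_\Feyn^\pm$ and \eqref{eq:FeynmanReihe} is a Feynman parametrix.

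The step I expect to be the main obstacle is the displayed $C^j$-estimate: making it rigorous requires carefully tracking, through the chain rule applied to $\Gamma^*(f^\pm_\beta)$ and $\Gamma^*(h^\pm_\beta)$, how the singularity of $f^\pm_\beta$ and of its logarithmic variant at the origin is simultaneously tamed by the vanishing order along $\CC_\UU$ and by the cut-off, so as to produce a bound of the stated form with constants that allow the diagonal choice of the $\eps_k$; the even-dimensional case, with its $\log$-terms coming from the $h^\pm_\beta$, is the slightly more delicate one. Everything else is bookkeeping with the identities of Proposition~\ref{FGOmega} and the transport equations \eqref{eq:transport}, together with the diagonal argument over $N$.
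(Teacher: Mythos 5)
Your proposal follows the paper's approach quite closely: the Borel--summation choice of $\eps_k$ via the vanishing order of $F^{\pm,\UU}_\beta$ (resp.\ $G^{\pm,\UU}_\beta$) along $\CC_\UU$, the decomposition into a finite truncation plus a $C^N$-remainder to verify both the parametrix identity and the wavefront inclusion, and the Fourier-decay argument for $\WF'\subset\Lambda_\Feyn^\pm$ (the paper phrases the last step via Sobolev wavefront sets, but the content is the same). The bound you state, $\lesssim\eps^{1-\frac{n}{2}+k-j}(1+|\log\eps|)^j$, is a harmless over-count of logarithms (at most one factor of $\log\eps$ can occur, as in the paper's estimate $\|\frac{d^\ell}{dt^\ell}(\chi(t/\eps)\LOG(t)t^m)\|\lesssim\eps$), and you also need the exhaustion-by-compacta device so that the diagonal choice of the $\eps_k$ works uniformly on $\UU$; but these are cosmetic and you clearly see what is going on.

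The one genuine gap is the final ``left parametrix'' step. The fallback you offer --- \emph{``simply because a left and a right parametrix of $P$ agree modulo smoothing operators''} --- is false for a hyperbolic operator without a wavefront-set constraint. Concretely, the retarded and advanced fundamental solutions of $\Box$ are each two-sided inverses (so in particular $S_1=S_2=0$ in your formal identity $L-R=S_1R-LS_2$), yet $G_\ret-G_\adv$ is the causal propagator, which is singular on the light cone and certainly not smoothing. The formal manipulation $L(PR)=(LP)R$ breaks down because the operators involved are not properly supported: $R\phi$ is a distribution on which the identity $LP=1+S_1$ no longer applies. Your primary option (construct a left Feynman parametrix via $P^*$, with $V_k^{P^*}(x,y)=V_k^P(y,x)^*$) is a valid construction, but to conclude that the \emph{already-built right} parametrix is also a \emph{left} parametrix you still need a uniqueness statement tying them together. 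That uniqueness is precisely what the Feynman wavefront constraint buys, via Duistermaat--H\"ormander's Theorem~6.5.3 (generalized to bundles in \cite{strohmaierislam}): two Feynman parametrices differ by a bi-solution whose wavefront set is contained in $\Lambda_\Feyn^+$, and propagation of singularities then forces it to be empty since $\Lambda_\Feyn^+$ contains only half-bicharacteristics. This citation is exactly how the paper closes the argument, and it is needed; the ``simple'' abstract fact you invoke is not available here.
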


\begin{proof}
We only do the even-dimensional case, the case of odd $n$ being simpler.
For any compact subset $K\subset X$ and $k\in\N$ define the $C^k$-norm of a section of $u\in C^k(X,\SS)$ by 
$$
\|u\|_{C^k(K)} := \max_{x\in K} \max_{j=0,\ldots,k} |\nabla^j u| .
$$
Here $\nabla$ is an auxiliary connection and $|\cdot|$ an auxiliary fiber norm on $\SS$.

For $m>\ell+1$ and $0<\eps\le1$ one easily checks
\begin{gather*}
\bigg\|\frac{d^\ell}{dt^\ell}(\chi(t/\eps)t^m)\bigg\|
\le
c_1(\ell,m)\cdot\eps\cdot \|\chi\|_{C^\ell(\R)}\quad\mbox{ and}\\
\bigg\|\frac{d^\ell}{dt^\ell}(\chi(t/\eps)\LOG(t)t^m)\bigg\|
\le
c_1(\ell,m)\cdot\eps\cdot \|\chi\|_{C^\ell(\R)},
\end{gather*}
compare \cite{BGP07}*{Lemma~2.4.1}.
Here $\LOG$ is a branch of the logarithm with cut in the lower half plane and which coincides with the usual logarithm on positive real numbers.
This implies
\begin{align*}
\|\chi(\nicefrac{\Gamma}{\eps}) \Gamma^m\|_{C^N(K)}
&\le
c_2(m,N) \cdot\eps\cdot \|\chi\|_{C^k(\R)} \cdot \|\Gamma\|_{C^N(K)}
\end{align*}
and similarly for the expression with the $\LOG$-term.
Thus,
\begin{equation*}
\|\chi(\nicefrac{\Gamma}{\eps}) G_m^{\pm,\UU}\|_{C^N(K)}
\le
c_3(m,N,n) \cdot\eps\cdot \|\chi\|_{C^N(\R)} \cdot \|\Gamma\|_{C^N(K)} .
\end{equation*}
Hence, for $j>N+\frac{n}{2}$, we find 
\begin{align}
\Big\| \chi\big(\nicefrac{\Gamma}{\eps}\big) \cdot V_{j} \cdot G^{\pm,\UU}_{1-\frac{n}2+j} \Big\|_{C^N(K)}
&\le
c_4(j,N,n)\cdot \eps \cdot \|\chi\|_{C^k(\R)} \cdot  \|\Gamma\|_{C^N(K)} \cdot \|V_j\|_{C^N(K)}.
\label{eq:SchrankeVjGj}
\end{align}
Let $K_1 \subset K_2 \subset K_3 \subset \ldots$ be an exhaustion of $\UU$ by compact sets.
By \eqref{eq:SchrankeVjGj} we can choose $\eps_j>0$ such that
\begin{align*}
\Big\| \chi\big(\nicefrac{\Gamma}{\eps_j}\big) \cdot V_{j} \cdot G^{\pm,\UU}_{1-\frac{n}2+j} \Big\|_{C^N(K_j)}
&\le
2^{-j}
\end{align*}
provided $j>N+\frac{n}{2}$.
This shows that $\sum_{k=N+\frac{n+2}{2}}^{\infty} \chi\big(\nicefrac{\Gamma}{\eps_k}\big) \cdot V_{k} \cdot G^{\pm,\UU}_{1-\frac{n}2+k}$ converges absolutely in $C^N(K_j)$ for each $j$.
Since the summand with $k=N+\frac{n}{2}$ also has $C^N$-regularity, the first statement of the lemma follows.

Now we know that the series \eqref{eq:FeynmanReihe} is a well-defined distribution on $\UU$, and we need to show that it is a parametrix.
It follows from \eqref{eq:Hadamard-odd-right} that
\begin{align*}
P_{(1)}&\bigg(\sum_{k=0}^{\infty} \chi(\nicefrac{\Gamma}{\eps_k}) \cdot V_{k} \cdot G^{\pm,\UU}_{1-\frac{n}2+k}\bigg) - \diag_*\vol_X
= \\
&P_{(1)}\bigg(\sum_{k=0}^{N+\frac{n-2}{2}} (\chi(\nicefrac{\Gamma}{\eps_k})-1) \cdot V_{k} \cdot G^{\pm,\UU}_{1-\frac{n}2+k}\bigg) 
+ P_{(1)}\bigg(\sum_{k=N+\frac{n}{2}}^{\infty} \chi(\nicefrac{\Gamma}{\eps_k}) \cdot V_{k} \cdot G^{\pm,\UU}_{1-\frac{n}2+k}\bigg) \\
&+ \big(P_{(1)}V_{N+\frac{n}{2}}\big)\cdot G^{\pm,\UU}_{N+1} 
\pm \tfrac{\rmi}{\pi}\sum_{k=\frac{n}{2}}^{N+\frac{n}{2}} \bigg\{\frac{\Boxz\Gamma-2n}{4k^2}\cdot V_k\cdot F_{k-\frac{n}{2}}^{\pm,\UU} - \frac{2}{k}\nabla_{\grade F_{k+1-\frac{n}{2}}^{\pm,\UU}}V_k\bigg\} .
\end{align*}
The first term of the right-hand side is smooth since the coefficients $\chi(\nicefrac{\Gamma}{\eps_k})-1$ vanish on a neighborhood of $\{\Gamma=0\}$ which contains the singular support of $G^{\pm,\UU}_{1-\frac{n}2+k}$.
The second term is $C^{N-2}$ on $K_N$ and the third term is $C^N$ on $\UU$.
The last term is smooth on $\UU$.
Hence, $P_{(1)}\bigg(\sum_{k=0}^{\infty} \chi(\nicefrac{\Gamma}{\eps_k}) \cdot V_{k} \cdot G^{\pm,\UU}_{1-\frac{n}2+k}\bigg) - \diag_*\vol_X$ is $C^{N-2}$ on $K_N$ for every $N$.
Thus, it is smooth on $\UU$ and the series \eqref{eq:FeynmanReihe} defines a right parametrix.

By Proposition~\ref{FGOmega}~\eqref{FGWF}, $\WF'(\chi(\nicefrac{\Gamma}{\eps_k}) \cdot V_{k} \cdot G^{\pm,\UU}_{1-\frac{n}2+k})\subset \Lambda_\Feyn^\pm$ for each $k$.
Thus, for each $N$, $\WF'(\sum_{k=0}^{N+\frac{n-2}{2}}\chi(\nicefrac{\Gamma}{\eps_k}) \cdot V_{k} \cdot G^{\pm,\UU}_{1-\frac{n}2+k})\subset \Lambda_\Feyn^\pm$.
Since the remainder term \eqref{eq:FeynmanRest} is $C^N$, the Sobolev $H^N$-wavefront set of the series is contained in $\Lambda_\Feyn^\pm$ (see \cite{junker-schrohe}*{App.~B} for a nice exposition of Sobolev wavefront sets).
This holds for every $N$ and hence the usual smooth wavefront set of the series is contained in $\Lambda_\Feyn^\pm$.
Thus, the series in \eqref{eq:FeynmanReihe} defines a Feynman right parametrix.

Since Feynman parametrices are distinguished parametrices in the sense of Duistermaat and Hörmander, it follows from \cite{MR0388464}*{Thm.~6.5.3} that \eqref{eq:FeynmanReihe} is also a left parametrix.
\end{proof}

\subsubsection{Structure of Feynman parametrices}
Having constructed Feynman parametrices, we now study their singularity structure.

\begin{prop}\label{prop:ZerlegeG}
Let $P$ be a normally hyperbolic operator acting on sections of a vector bundle $\SS$ over a globally hyperbolic manifold $X$ of dimension $n$.
Let $N\in\N$.

Then every Feynman parametrix $G$ of $P$ is of the form 
$$
G = G^\loc + G^\reg
$$%
where $G^\reg$ is a distribution on $X\times X$ which is $C^N$ on a neighborhood of the diagonal and near the diagonal we have
\begin{equation*}
G^\loc =
\begin{cases}
\sum\limits_{k=0}^{N+\frac{n-3}{2}} V_{k} \cdot F^{+,\UU}_{1-\frac{n}2+k}& \text{ if $n$ is odd,}\\[10pt]
\sum\limits_{k=0}^{N+\frac{n-2}{2}} V_{k} \cdot G^{+,\UU}_{1-\frac{n}2+k}& \text{ if $n$ is even.}
\end{cases}
\end{equation*}
Moreover, $P_{(1)}G^\loc - \diag_*\vol_X$ is $C^N$ on a neighborhood of the diagonal and vanishes along the diagonal.
\end{prop}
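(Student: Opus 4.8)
The plan is to compare the given Feynman parametrix $G$ with the explicit Hadamard parametrix of Lemma~\ref{lem:HadamardSeries}, and to read off the decomposition and the error estimate from the recursion identities of Section~\ref{Hadamardappendix2}. First I would fix a neighborhood $\UU$ of the diagonal on which the distributions $F^{\pm,\UU}_\beta$, $G^{\pm,\UU}_\beta$, the Hadamard coefficients $V_k$ and the function $\Gamma$ are all available (Proposition~\ref{FGOmega}), and consider the Feynman parametrix $G_0$ on $\UU$ given by the Borel-summed series \eqref{eq:FeynmanReihe} with the $+$-sign, so that $\WF'(G_0)\subset\Lambda_\Feyn^+$. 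Writing $K=N+\frac{n-3}{2}$ for odd $n$ and $K=N+\frac{n-2}{2}$ for even $n$, I split $G_0$ into the finite sum $G^\loc$ of the statement (the summands $k=0,\dots,K$, \emph{without} the cutoffs $\chi(\Gamma/\eps_k)$) plus a remainder. The remainder consists of the tail $\sum_{k>K}\chi(\Gamma/\eps_k)\,V_k\,F^{+,\UU}_{1-\frac n2+k}$ (respectively with $G^{+,\UU}$), which starts at index $N+\frac{n-1}{2}$ (resp. $N+\frac n2$) and is therefore of class $C^N$ near the diagonal by the first assertion of Lemma~\ref{lem:HadamardSeries}, together with the terms $\sum_{k\le K}(1-\chi(\Gamma/\eps_k))\,V_k\,F^{+,\UU}_{1-\frac n2+k}$, which are smooth because $1-\chi(\Gamma/\eps_k)$ vanishes on a neighborhood of $\{\Gamma=0\}$ and that set contains the singular support of each summand. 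Hence $G_0=G^\loc+R_0$ with $R_0$ of class $C^N$ near the diagonal.

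Next I would invoke the uniqueness of distinguished parametrices modulo smoothing operators (\cite{MR0388464}*{Thm.~6.5.3}, in the vector-bundle version \cite{strohmaierislam}*{Thm.~1.2}): since $G$ and $G_0$ are both Feynman parametrices of $P$, the difference $G-G_0$ has a smooth Schwartz kernel near the diagonal, and setting $G^\reg:=G-G^\loc=R_0+(G-G_0)$ yields the asserted decomposition with $G^\reg$ of class $C^N$ on a neighborhood of the diagonal. (Since the cited uniqueness is local in nature, one shrinks $\UU$ if necessary so that this comparison is legitimate.) It remains to treat $P_{(1)}G^\loc-\diag_*\vol_X$, which is exactly the ``fundamental solution up to $C^N$-errors'' computation of Section~\ref{Hadamardappendix2} applied to $G^\loc$ (which carries no cutoffs). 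For odd $n$ this difference equals $(P_{(1)}V_{N+\frac{n-1}{2}})\cdot F^{+,\UU}_{N+\frac12}$, which is $C^N$ and vanishes along $\CC_\UU\supseteq\diag(X)$ by Proposition~\ref{FGOmega}~\eqref{CkFG}, since $\Re(N+\tfrac12)>N$. For even $n$ it equals the right-hand side of \eqref{eq:Hadamard-odd-right}: the term $(P_{(1)}V_{N+\frac n2})\cdot G^{+,\UU}_{N+1}$ is again $C^N$ and vanishes along the diagonal; and in the finite sum each factor $F^{\pm,\UU}_{k-\frac n2}$, $F^{\pm,\UU}_{k+1-\frac n2}$ carries a nonnegative integer index, hence is a smooth function by Proposition~\ref{FGOmega}~\eqref{FGFglatt}, so the sum is smooth, and it vanishes along the diagonal because $\grade\Gamma$ — and hence $\Boxz\Gamma-2n$ by \eqref{eq:BoxGamma} — vanishes there, using Proposition~\ref{FGOmega}~\eqref{gradFG} to rewrite $\grade F^{\pm,\UU}_{k+1-\frac n2}$ in the terms $\nabla_{\grade F^{\pm,\UU}_{k+1-\frac n2}}V_k$ through $\grade\Gamma$.

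The step I expect to be the real obstacle is this uniqueness input: everything else is bookkeeping with the appendix's recursion formulae, but identifying an arbitrary Feynman parametrix with the explicit Hadamard parametrix up to a $C^\infty$-kernel rests on the full Duistermaat--H\"ormander theory of distinguished parametrices in its vector-bundle form, and a little care is required to apply it on a neighborhood of the diagonal rather than globally.
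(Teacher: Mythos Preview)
Your proposal is correct and follows essentially the same route as the paper: compare $G$ with the Borel-summed Hadamard parametrix of Lemma~\ref{lem:HadamardSeries}, invoke the uniqueness of distinguished parametrices to make the difference smooth, and then read off the $C^N$-error and its vanishing along the diagonal from the recursion identity \eqref{eq:Hadamard-odd-right}. The paper handles the localization issue you flag at the end by restricting to small globally hyperbolic neighborhoods $\Omega$ with $\Omega\times\Omega\subset\UU$ and then globalizing with a cutoff $\tilde\chi$ supported near the diagonal; your justification for the vanishing of the even-dimensional error terms (via $\grade\Gamma=0$ along the diagonal and Proposition~\ref{FGOmega}~\eqref{gradFG}) is equivalent to the paper's observation that $G^{+,\UU}_{N+1}$, $\Boxz\Gamma-2n$, and $\grade F^{+,\UU}_j$ for $j\ge1$ all vanish there.
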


\begin{proof}
Again, we only do the even-dimensional case since the case of odd $n$ is simpler.
Let $G$ be a Feynman parametrix of $P$.
Let $G_H$ be the Feynman parametrix on $\UU$ defined in \eqref{eq:FeynmanReihe}.
Put $\tilde G^\loc := \sum_{k=0}^{N+\frac{n-2}{2}}\chi(\nicefrac{\Gamma}{\eps_k}) \cdot  V_{k} \cdot G^{+,\UU}_{1-\frac{n}2+k}$ as in Lemma~\ref{lem:HadamardSeries}.

Let $x\in X$. 
Choose a globally hyperbolic open neighborhood $\Omega$ of $x$ such that $\Omega\times\Omega\subset\UU$.
On $\Omega$ we have the two Feynman parametrices $G|_{\Omega\times\Omega}$ and $G_H|_{\Omega\times\Omega}$.
By uniqueness of distinguished parametrices the difference $G|_{\Omega\times\Omega}-G_H|_{\Omega\times\Omega}$ is smooth.
Hence, $G-G_H$ is smooth on a neighborhood $\UU'\subset\UU$ of the diagonal.
After shrinking $\UU'$ if necessary, we can assume that $\tilde G^\loc = \sum_{k=0}^{N+\frac{n-2}{2}} V_{k} \cdot G^{+,\UU}_{1-\frac{n}2+k}$ on $\UU'$.

Let $\tilde\chi\in C^\infty(X\times X)$ be a cutoff function supported in $\UU'$ and such that $\tilde\chi\equiv1$ on a neighborhood $\UU''\subset\UU'$ of the diagonal.
We put $G^\loc := \tilde\chi\cdot\tilde G^\loc$ and $G^\reg := G - G^\loc = (1-\tilde\chi)\cdot G + \tilde\chi\cdot(G-G_H+G_H-\tilde G^\loc)$.
On $\UU''$ we find that $1-\tilde\chi$ vanishes, $G-G_H$ is smooth and $G_H-\tilde G^\loc$ is $C^N$.
Thus, $G^\reg$ is $C^N$ on $\UU''$.

From \eqref{eq:Hadamard-odd-right} we see that $P_{(1)}G^\loc - \diag_*\vol_X$ is $C^N$ on $\UU''$.
Moreover, $P_{(1)}G^\loc - \diag_*\vol_X$ vanishes along the diagonal because $G^{+,\UU}_{N+1}$, $\Boxz\Gamma-2n$, and $\grade F_{j}^{+,\UU}$ ($j\ge1$) do so.
\end{proof}

\subsubsection{The product case}
Let $X=I\times\Sigma$ carry a product metric and let $n=\dim(X)$ be even.
Assume $0\in I$ and let $y\in\Sigma$ be fixed.
Choose $\eps>0$ so that $(t,y,0,y)\in\UU$ whenever $|t|<\eps$.
Consider the embedding
$$
\alpha_y:(-\eps,\eps)\setminus\{0\}\to X\times X,
\quad \alpha_y(t) = (t,y,0,y).
$$
Over $\UU\setminus\diag(X)$ the wave front sets of $R^{\pm,\UU}_\beta$, $F^{\pm,\UU}_\beta$ and $G^{\pm,\UU}_\beta$ are contained in the set of pairs of lightlike covectors, see Proposition~\ref{FGOmega}~\eqref{FGWF}.
Thus the map $\alpha_y$ is transversal to these wave front sets and we can pull back the distributions along $\alpha_y$.
Put
\begin{equation}
\tilde C(\beta,n,\Lambda) := -\tfrac{\rmi}{\pi}\tfrac{dC(\beta,n)}{d\beta}+(\Lambda-1) C(\beta,n) .
\label{eq:defCtilde}
\end{equation}
If $\beta$ is a negative integer then $C(\beta,n)=0$ and Lemma~\ref{lem:dCdbeta} yields
\begin{equation}
\tilde C(\beta,n,\Lambda) 
= 
-\frac{\rmi(-1)^{\beta +1} 2^{-n-2\beta} \pi^{-\frac{n}{2}} (-\beta -1)! }{\left(\beta+\frac{n-2}{2}\right)!}.
\label{eq:Ctildeganz}
\end{equation}
For $\beta=0$ we find
$$
\tilde C(0,n,\Lambda)
=
-\rmi \bigg( 2\gamma - \log(4) - \sum_{\ell=1}^{\tfrac{n-2}2} \frac1\ell + \rmi\pi(\Lambda-1)\bigg)
\frac{2^{-n}\pi^{-\frac{n}{2}}}{\frac{n-2}{2}!} .
$$

\begin{lemma}\label{lem:polylog}
The following pullbacks are smooth functions on $(-\eps,\eps)\setminus\{0\}$ and are given by
\begin{enumerate}[(i)]
\item\label{polylog1}
$\alpha_y^*(R^{+,\UU}_\beta+R^{-,\UU}_\beta)(t) = 2C(\beta,n)\, \,|t|^{2\beta}$,
\item\label{polylog2}
$\alpha_y^*(F^{\pm,\UU}_\beta)(t) = C(\beta,n)\, \,|t|^{2\beta}$,
\item\label{polylog3}
$\alpha_y^*(G^{\pm,\UU}_\beta)(t) 
= 
  \big(\mp\tfrac{\rmi}{\pi}\tfrac{dC(\beta,n)}{d\beta}+\Lambda C(\beta,n) \mp\tfrac{2\rmi}{\pi}C(\beta,n)\log(|t|)\big)\,|t|^{2\beta}
$,
\item\label{polylog4}
$\alpha_y^*((G^{+,\UU}_\beta-\frac12(R^{+,\UU}_\beta+R^{-,\UU}_\beta))(t) 
= 
  (\tilde C(\beta,n,\Lambda) - \frac{2\rmi}{\pi}C(\beta,n)\log|t|)|t|^{2\beta}
$.
\item\label{polylog5}
$\tfrac{\partial}{\partial\beta}\alpha_y^*\left(G^{+,\UU}_\beta+\Lambda F^\UU_\beta - R^{+,\UU}_\beta - R^{-,\UU}_\beta\right)|_{\beta=-\frac{n}{2}} 
= 
  \tfrac{\partial}{\partial\beta}\tilde C(\beta,n,2\Lambda)|_{\beta=-\frac{n}{2}}|t|^{-n}
$.
\end{enumerate}
\end{lemma}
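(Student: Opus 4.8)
The plan is to deduce the whole lemma from the three pointwise identities \eqref{polylog1}--\eqref{polylog3}: relation \eqref{polylog4} is the difference of \eqref{polylog3} (upper sign) and half of \eqref{polylog1}, rewritten with the definition \eqref{eq:defCtilde} of $\tilde C$, and \eqref{polylog5} is obtained by combining all three and then differentiating in $\beta$. The geometric facts I would record first are that for $t\neq 0$ the point $\alpha_y(t)=(t,y,0,y)$ lies neither on the diagonal of $X\times X$ nor in $\CC_\UU$ — the points $(t,y)$ and $(0,y)$ are joined by the geodesic $s\mapsto(st,y)$, whose velocity $t\,\partial_t$ has $g$-square $-t^2<0$, so the join is timelike — so that on a neighbourhood of $\alpha_y(t)$ all of $R^{\pm,\UU}_\beta,F^{\pm,\UU}_\beta,G^{\pm,\UU}_\beta$ are \emph{smooth functions}, given by the formulas of Proposition~\ref{FGOmega}~\eqref{FGRgamma}, and there the pull-back along $\alpha_y$ is just composition. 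On that neighbourhood $\Gamma(\alpha_y(t))=t^2$, and the distortion function satisfies $\mu(\alpha_y(t))=1$; the latter is the one place the product hypothesis enters essentially, because along a geodesic in the timelike $\partial_t$-direction the curvature term $R^X(\,\cdot\,,\partial_t)\partial_t$ vanishes, so Jacobi fields vanishing at $0$ stay linear, the van Vleck--Morette determinant equals $1$, hence $\mu\equiv 1$ along $\alpha_y$.

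With these in hand, \eqref{polylog1}--\eqref{polylog3} are immediate: on the half-line $(0,\infty)$ the distributions $t^\beta_+$, $f^\pm_\beta$ and $h^\pm_\beta=\tfrac{d}{d\beta}f^\pm_\beta$ are the smooth functions $s\mapsto s^\beta$, $s\mapsto s^\beta$, $s\mapsto s^\beta\log s$, for every $\beta\in\C$; substituting $s=\Gamma(\alpha_y(t))=t^2>0$ and using $\mu(\alpha_y(t))=1$ turns the formulas of Proposition~\ref{FGOmega}~\eqref{FGRgamma} into \eqref{polylog1}, \eqref{polylog2}, \eqref{polylog3}, where $(t^2)^\beta=|t|^{2\beta}$ and $(t^2)^\beta\log(t^2)=2|t|^{2\beta}\log|t|$. (On the image of $\alpha_y$, $F^{+,\UU}_\beta$ and $F^{-,\UU}_\beta$ agree, so the precise sign/symmetrisation of the $F$-term in \eqref{polylog5} is immaterial.) Relation \eqref{polylog4} then follows by subtracting $\tfrac12$ of \eqref{polylog1} from \eqref{polylog3} and invoking \eqref{eq:defCtilde}.

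For \eqref{polylog5} I would first combine \eqref{polylog1}, \eqref{polylog2}, and \eqref{polylog3} (upper sign) into the identity, valid for all $\beta\in\C$ and $t\in(-\eps,\eps)\setminus\{0\}$,
\[
\alpha_y^*\bigl(G^{+,\UU}_\beta+\Lambda F^{+,\UU}_\beta-R^{+,\UU}_\beta-R^{-,\UU}_\beta\bigr)(t)
=\Bigl(\tilde C(\beta,n,2\Lambda)-C(\beta,n)+\tfrac{2\rmi}{\pi}C(\beta,n)\log|t|\Bigr)\,|t|^{2\beta},
\]
again using \eqref{eq:defCtilde} to collect the $C$-terms. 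Since the three families are holomorphic in $\beta$ on all of $\C$ and $\alpha_y^*$ is continuous, I may differentiate this in $\beta$ and set $\beta=-\tfrac n2$. The key observation is that for even $n$ one has the simultaneous vanishing $C(-\tfrac n2,n)=0$, $\tfrac{d}{d\beta}C(\beta,n)\big|_{\beta=-n/2}=0$ and $\tilde C(-\tfrac n2,n,2\Lambda)=0$: the first because $\beta!=\Gamma(1-\tfrac n2)$ has a pole, the other two because the right-hand sides in Lemma~\ref{lem:dCdbeta}~\eqref{dCbetanegative} and in \eqref{eq:Ctildeganz} carry the factor $1/(\beta+\tfrac{n-2}2)!$, which is $1/(-1)!=0$ at $\beta=-\tfrac n2$. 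Hence, in $\tfrac{d}{d\beta}$ of the right-hand side above, the term coming from differentiating $|t|^{2\beta}$ drops out (its coefficient $\tilde C-C+\tfrac{2\rmi}{\pi}C\log|t|$ vanishes at $\beta=-\tfrac n2$), and of $\bigl(\tfrac{d}{d\beta}\tilde C-\tfrac{d}{d\beta}C+\tfrac{2\rmi}{\pi}\tfrac{d}{d\beta}C\cdot\log|t|\bigr)|t|^{2\beta}$ only $\tfrac{d}{d\beta}\tilde C(\beta,n,2\Lambda)\big|_{\beta=-n/2}\cdot|t|^{-n}$ survives, which is exactly the asserted formula.

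The main obstacle is really only the geometric normalisation of the first paragraph — verifying that $\alpha_y$ avoids $\CC_\UU$ and that $\mu\equiv1$ along it, the latter being where the product metric is used. Everything after that is substitution and a one-line differentiation, the sole arithmetic subtlety being the triple zero of $C$, $C'$ and $\tilde C$ at $\beta=-\tfrac n2$ which makes the answer in \eqref{polylog5} collapse to a single clean term.
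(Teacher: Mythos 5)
Your proof is correct and follows essentially the same approach as the paper's: pull back via Proposition~\ref{FGOmega}~\eqref{FGRgamma} using $\Gamma\circ\alpha_y(t)=t^2$ and $\mu\circ\alpha_y\equiv1$, then for part~\eqref{polylog5} exploit the simultaneous vanishing of $C$, $\frac{d}{d\beta}C$, and $\tilde C$ at $\beta=-\frac n2$. Incidentally, your observation that the careful combination of~\eqref{polylog1}--\eqref{polylog3} for part~\eqref{polylog5} yields the coefficient $\tilde C(\beta,n,2\Lambda)-C(\beta,n)$, rather than the bare $\tilde C(\beta,n,2\Lambda)$ displayed in the paper's intermediate step, catches a genuine slip; both expressions have the same $\beta$-derivative at $\beta=-\frac n2$ because $C$ and $\frac{d}{d\beta}C$ vanish there, so the lemma's final formula is unaffected.
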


\begin{proof}
Putting $q:=\Gamma\circ\alpha_y$ we observe that $q(t) = t^2$.
Using Proposition~\ref{FGOmega}~\eqref{FGRgamma} we find
\begin{align*}
\alpha_y^*(R^{+,\UU}_\beta+R^{-,\UU}_\beta) 
&= 
2C(\beta,n)\alpha_y^*(\mu\Gamma^*(t_+^\beta)) \\
&=
2C(\beta,n)(\mu\circ\alpha_y)q^*(t_+^\beta) \\
&=
2C(\beta,n)(\mu\circ\alpha_y)(t^{2\beta}_++t^{2\beta}_-)
\end{align*}
and 
\begin{align*}
\alpha_y^*(F^{\pm,\UU}_\beta)
=
C(\beta,n)(\mu\circ\alpha_y)q^*(f^\pm_\beta)
=
C(\beta,n)(\mu\circ\alpha_y)(t^{2\beta}_++t^{2\beta}_-) .
\end{align*}
Note that in the product case $\mu(\alpha_y(t))=\mu(\alpha_y(0))=1$.
Thus, we can drop the factor $\mu\circ\alpha_y$ everywhere.
Furthermore,
\begin{align*}
\alpha_y^*(G^{\pm,\UU}_\beta)
&=
q^*\big((\mp\tfrac{\rmi}{\pi}\tfrac{dC(\beta,n)}{d\beta}+\Lambda C(\beta,n))f^\pm_\beta \mp\tfrac{\rmi}{\pi}C(\beta,n)h^\pm_\beta\big) \\
&=
\big((\mp\tfrac{\rmi}{\pi}\tfrac{dC(\beta,n)}{d\beta}+\Lambda C(\beta,n))(t_+^{2\beta}+t_-^{2\beta}) \mp\tfrac{\rmi}{\pi}C(\beta,n)(t_+^{2\beta}+t_-^{2\beta})\log(t^2)\big) .
\end{align*}
We have shown \eqref{polylog1}--\eqref{polylog3}.
Assertion \eqref{polylog4} follows from \eqref{polylog1} and \eqref{polylog3}.
As to \eqref{polylog5}, we observe
$$
\alpha_y^*\left(G^{+,\UU}_\beta+\Lambda F^\UU_\beta - R^{+,\UU}_\beta - R^{-,\UU}_\beta\right)
=
  (\tilde C(\beta,n,2\Lambda) - \tfrac{2\rmi}{\pi}C(\beta,n)\log|t|)|t|^{2\beta} .
$$
Since $\beta\mapsto C(\beta,n)$ has a zero of order $2$ at $\beta=-\frac{n}{2}$, $\tilde C(\beta,n,2\Lambda)$ also vanishes at $\beta=-\frac{n}{2}$.
Thus
\begin{align*}
\tfrac{\partial}{\partial\beta}\alpha_y^*\left(G^{+,\UU}_\beta+\Lambda F^\UU_\beta - R^{+,\UU}_\beta - R^{-,\UU}_\beta\right)|_{\beta=-\frac{n}{2}}
&=
  \tfrac{\partial}{\partial\beta}(\tilde C(\beta,n,2\Lambda) - \tfrac{2\rmi}{\pi}C(\beta,n)\log|t|)|_{\beta=-\frac{n}{2}}|t|^{-n} \\
&=
  \tfrac{\partial}{\partial\beta}\tilde C(\beta,n,2\Lambda)|_{\beta=-\frac{n}{2}}|t|^{-n} .
\qedhere
\end{align*}
\end{proof}

We observe that the Hadamard coefficients of an operator of product type are independent of the time variables.

\begin{lemma}\label{lem:HadamardProdukt}
 Suppose that $P$ is a normally hyperbolic operator which has product structure over $X=I\times \Sigma$ where $I\subset \R$ is an open interval.
 Then the Hadamard coefficients $V_k(t_1,y_1,t_2,y_2)$ are independent of $t_1, t_2\in I$.
\end{lemma}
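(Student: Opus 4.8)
The plan is to combine the uniqueness of the solution of the transport equations~\eqref{eq:transport} with the observation that, for a normally hyperbolic operator of product type, these equations decouple from the time direction entirely. First, a remark on what the statement means: under the bundle identification of Definition~\ref{def:ProductStructureNormHyp}\,(2) the fibre $\SS_{(t_1,y_1)}\otimes\SS^*_{(t_2,y_2)}$ is identified with $(\SS|_\Sigma)_{y_1}\otimes(\SS|_\Sigma)^*_{y_2}$, so the value $V_k((t_1,y_1),(t_2,y_2))$ lives in the fibre of $\SS|_\Sigma\boxtimes(\SS|_\Sigma)^*$ over $(y_1,y_2)$ and it makes literal sense to ask that it not depend on $t_1,t_2$. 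Since the $V_k$ are uniquely determined on any neighbourhood of the diagonal, I would first shrink $\UU$ to a product neighbourhood $\UU_I\times\UU_\Sigma$ without loss of generality.

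Next I would record all the data entering~\eqref{eq:transport} in the product. Because $X=I\times\Sigma$ carries the product metric $-dt^2+h$, the Riemannian exponential map is a product, $\exp^X_{(t_2,y_2)}(v\,\partial_t+w)=(t_2+v,\exp^\Sigma_{y_2}(w))$, whence $\Gamma((t_1,y_1),(t_2,y_2))=(t_1-t_2)^2+\Gamma_\Sigma(y_1,y_2)$ with $\Gamma_\Sigma(y_1,y_2)=-\dist_\Sigma(y_1,y_2)^2$, and the distortion function factorises so that $\mu((t_1,y_1),(t_2,y_2))$ depends only on $(y_1,y_2)$. Since metric and volume measure are block products, this yields $\grade\Gamma=-2(t_1-t_2)\,\partial_{t_1}+\grad^\Sigma_{(1)}\Gamma_\Sigma$, $\;\Boxz\Gamma=2+\Box^\Sigma_{(1)}\Gamma_\Sigma$ (with $\Box^\Sigma$ the scalar Laplacian of $(\Sigma,h)$), and $\grad(\log\mu)$ has no $\partial_{t_1}$-component. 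Finally, by Definition~\ref{def:ProductStructureNormHyp}\,(2) the connection on $\SS|_\UU\cong\pi^*(\SS|_\Sigma)$ has no $dt$-component, so $\nabla_{\partial_{t_1}}=\partial_{t_1}$ and $P_{(1)}=\partial_{t_1}^2+\Delta_{(1)}$, where $\Delta$ is the Laplace-type operator on $\SS|_\Sigma$ from Definition~\ref{def:ProductStructureNormHyp}\,(3).

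Then I would exhibit a $t$-independent solution. The local construction of Hadamard coefficients in Section~\ref{Hadamardappendix2} (see also \cite{BGP07}*{Ch.~2}) is an ODE argument along radial geodesics that is insensitive to the signature of the metric; applied to the Laplace-type operator $\Delta$ on the $(n-1)$-dimensional Riemannian manifold $(\Sigma,h)$ it produces smooth sections $V^\Sigma_k$ of $\SS|_\Sigma\boxtimes(\SS|_\Sigma)^*$ near the diagonal of $\Sigma\times\Sigma$, uniquely characterised by the transport equations~\eqref{eq:transport} with $n$ replaced by $n-1$ and $V^\Sigma_0|_{\mathrm{diag}}=\id$. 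Set $W_k((t_1,y_1),(t_2,y_2)):=V^\Sigma_k(y_1,y_2)$ on $\UU$ via the pull-back identification. Since $W_k$ is independent of $t_1$ we have $\nabla_{\partial_{t_1}}W_k=\partial_{t_1}W_k=0$ and $\partial_{t_1}^2W_{k-1}=0$; feeding the formulas of the previous paragraph into~\eqref{eq:transport} on $X$ and using $\tfrac12\Boxz\Gamma-n=1+\tfrac12\Box^\Sigma_{(1)}\Gamma_\Sigma-n=\tfrac12\Box^\Sigma_{(1)}\Gamma_\Sigma-(n-1)$, one checks that the $X$-transport equation for $W_k$ collapses to exactly the $\Sigma$-transport equation for $V^\Sigma_k$, and the initial condition $W_0|_{\mathrm{diag}}=\id_{\SS}$ reduces to $V^\Sigma_0|_{\mathrm{diag}}=\id$. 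Hence $W_k$ solves~\eqref{eq:transport} on $X$ with the correct initial data, so by the uniqueness established in Section~\ref{Hadamardappendix2} we get $V_k=W_k$, which is independent of $t_1$ and $t_2$.

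The only genuine work is the bookkeeping in the last step: tracking the dimension shift $n\rightsquigarrow n-1$, the vanishing of all $\partial_{t_1}$-terms, and the precise product forms of $\grade\Gamma$, $\Boxz\Gamma$ and $\mu$. All of this is elementary once the product structure of $\exp^X$ and of the metric is recorded, and the conceptual input — uniqueness of solutions of the transport equations — is already available.
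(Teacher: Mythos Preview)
Your proposal is correct and follows essentially the same approach as the paper: compute the product-form expressions for $\Gamma$, $\grade\Gamma$, $\Boxz\Gamma$ and $P_{(1)}$, show that a $t$-independent ansatz reduces the $X$-transport equations~\eqref{eq:transport} to the $(n-1)$-dimensional transport equations for the Laplace-type operator $\Delta$ on $\Sigma$, and conclude by uniqueness. Your extra remarks on the bundle identification and on $\mu$ are harmless (the latter is not actually needed since $\mu$ does not enter~\eqref{eq:transport} directly), and your sign $\grade\Gamma=-2(t_1-t_2)\partial_{t_1}+\grad^\Sigma_{(1)}\Gamma_\Sigma$ is in fact the correct one, consistent with the formula $\grade\Gamma(t,y,0,y)=-2t\,n_\Sigma$ used elsewhere in the paper.
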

\begin{proof}
 The $V_k$ solve the transport equations \eqref{eq:transport} with initial condition $V_0(x,x) = \id_{\SS_x}$.
 In the product case we have 
 $$
 \Box = \frac{\partial^2}{\partial t^2} + \Delta
 \quad\mbox{ and }\quad
 P = \frac{\partial^2}{\partial t^2} + \Delta_P
 $$
 where $\Delta$ is the Laplace-Beltrami operator on $\Sigma$ and $\Delta_P$ is a Laplace-type operator acting on sections of $\SS$.
 Moreover,
 $$
 \Gamma(t_1,y_1,t_2,y_2) = (t_1-t_2)^2 - \tilde{\Gamma}(y_1,y_2)
 $$ 
 where $\tilde \Gamma$ is the square of the Riemannian distance function on $\Sigma$. 
 Then 
 $$
 \grade \Gamma = 2(t_1-t_2) \partial_t - \grade \tilde \Gamma.
 $$
 Now let $W_k$ be the solution of the Riemannian transport equation on $\Sigma$,
 $$ 
   \nabla_{\grad \tilde \Gamma} W_k + \left( -\tfrac{1}{2}\Delta \tilde \Gamma - (n-1) +2 k \right) W_k = - 2 k \Delta_P W_{k-1},
 $$
 with initial condition $W_0(y,y) = \id_{\SS_x}$. 
 One easily checks that $\tilde V_k(t_1,y_1,t_2,y_2):=W_k(y_1,y_2)$ solve the transport equations \eqref{eq:transport}.
 By uniqueness of the Hadamard coefficients we have  $V_k(t_1,y_1,t_2,y_2)=\tilde V_k(t_1,y_1,t_2,y_2)=W_k(y_1,y_2)$.
\end{proof}

We can also replace the manifold $X$ by the product $\tilde X = X \times \R$ with metric $g + ds^2$, where $s$ is the additional (spacelike) variable. 
The bundle $\SS$ and the associated connection are pulled back to $X \times \R$ and the operator $\tilde P = P - \partial_s^2$ is then a well-defined normally hyperbolic operator on $\tilde X$. 

\begin{lemma}\label{lem:HadamardProduktSpace}
 The Hadamard coefficients $\tilde V_k(x_1,s_1,x_2,s_2)$ for the operator $\tilde P$ are independent of $s_1,s_2\in \R$, and we have $\tilde V_k(x_1,s_1,x_2,s_2) = V_k(x_1,x_2)$.
\end{lemma}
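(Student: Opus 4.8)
The plan is to imitate the proof of Lemma~\ref{lem:HadamardProdukt}: one pulls the Hadamard coefficients $V_k$ of $P$ back from $X\times X$ to $\tilde X\times\tilde X$ along the obvious projection $\mathrm{pr}\colon\tilde X=X\times\R\to X$, checks that these $s$-independent sections solve the transport equations \eqref{eq:transport} for $\tilde P$ on the $(n+1)$-dimensional manifold $\tilde X$, and then invokes uniqueness of the Hadamard coefficients (as in \cite{BGP07}*{Ch.~2}) to conclude $\tilde V_k(x_1,s_1,x_2,s_2)=V_k(x_1,x_2)$.

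First I would record the product formulas on $\tilde X$, equipped with the metric $g+ds^2$. If $\tilde\Gamma$ denotes the analogue on $\tilde X$ of the function $\Gamma$, then $\tilde\Gamma(x_1,s_1,x_2,s_2)=\Gamma(x_1,x_2)-(s_1-s_2)^2$ --- the same formula, with the spacelike sign, already used in the proof of Proposition~\ref{niceprop} --- so that $\grad^{\tilde X}_{(1)}\tilde\Gamma=\grade\Gamma-2(s_1-s_2)\,\partial_{s_1}$. Since the d'Alembertian of $\tilde X$ is $\Box-\partial_s^2$, we obtain $\Box^{\tilde X}_{(1)}\tilde\Gamma=\Boxz\Gamma+2$. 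Moreover $\tilde\SS=\mathrm{pr}^*\SS$ carries the pulled-back connection, hence $\tilde\nabla_{\partial_{s_1}}(V_k\circ\mathrm{pr})=0$, and $\tilde P=P-\partial_s^2$ acts on sections pulled back from $X$ exactly as $P$ does. Substituting the ansatz $\tilde V_k:=V_k\circ\mathrm{pr}$ into the transport equations for $\tilde P$ on $\tilde X$,
$$
\tilde\nabla_{\grad^{\tilde X}_{(1)}\tilde\Gamma}\tilde V_k-\Big(\tfrac12\Box^{\tilde X}_{(1)}\tilde\Gamma-(n+1)+2k\Big)\tilde V_k=2k\,\tilde P_{(1)}\tilde V_{k-1},\qquad \tilde V_0(\tilde x,\tilde x)=\id,
$$
the extra summand $-2(s_1-s_2)\partial_{s_1}$ in $\grad^{\tilde X}_{(1)}\tilde\Gamma$ is annihilated by $\tilde\nabla_{\partial_{s_1}}$, the shift $n\mapsto n+1$ in the zeroth-order coefficient is cancelled by the $+2$ in $\Box^{\tilde X}_{(1)}\tilde\Gamma$, and the right-hand side reduces to $2k\,P_{(1)}V_{k-1}$; thus $\tilde V_k=V_k\circ\mathrm{pr}$ satisfies exactly \eqref{eq:transport} with the initial condition $V_0(x,x)=\id_{\SS_x}$ that defines the $V_k$. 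By uniqueness this forces $\tilde V_k=V_k\circ\mathrm{pr}$, which is the claim.

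The only delicate point is the numerical bookkeeping just described --- that increasing the dimension by one is compensated precisely by the replacement of $\Boxz\Gamma$ by $\Boxz\Gamma+2$ coming from the quadratic term $-(s_1-s_2)^2$ in $\tilde\Gamma$, and that the pulled-back connection and $\tilde P$ genuinely act trivially in the $s$-directions on data pulled back from $X$. Everything else is a line-by-line transcription of the argument for Lemma~\ref{lem:HadamardProdukt}.
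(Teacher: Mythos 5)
Your proof is correct and follows exactly the route the paper takes: the paper's own proof is just "same as Lemma~\ref{lem:HadamardProdukt}, using $\tilde\Gamma=\Gamma-(s_1-s_2)^2$, check the transport equations and use uniqueness." You have simply carried out the bookkeeping the paper leaves to the reader (namely that $\Box^{\tilde X}_{(1)}\tilde\Gamma=\Boxz\Gamma+2$ exactly cancels the dimension shift $n\mapsto n+1$), and those verifications are correct.
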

\begin{proof}
 The proof is exactly the same as the proof of Lemma~\ref{lem:HadamardProdukt} with the only difference that $\tilde \Gamma(x_1,s_1,x_2,s_2) = \Gamma(x_1,x_2) - (s_1-s_2)^2$. 
 One again checks that the $V_k$ satisfy transport equations and initial condition and uses uniqueness of the Hadamard coefficients.
\end{proof}

Let $\hat G_L$ be the Feynman propagator of $\dirac_R\dirac_L$ given in Theorem~\ref{thm:FeynmanProductWave}.
We split $\hat G_L$ into a local and a regular part, $\hat G_L=\hat G_L^\loc + \hat G_L^\reg$, as in \eqref{eq:DecompositionOfG}.
Let $G_{L,\ret}$ and $G_{L,\adv}$ be the retarded and the advanced fundamental solution of $\dirac_R\dirac_L$, respectively.

\begin{lemma}\label{lem:logpoly}
There exists $\eps>0$ such that 
$$
\tr\Big(\dirac_{L,(1)} \big(G^\loc_{L} -\tfrac{1}{2}( G_{L,\ret} + G_{L,\adv})\big)(t,y,0,y)\circ\nS\Big) 
=
-\rmi\left\{\sum_{\ell=0}^{n-1} \tilde a_{y,\ell} t^{-\ell} + \tilde c_y \log|t| + \tilde r_y(t)\right\}
$$
for all $|t|<\eps$, $t\neq0$, where
\begin{align*}
\tilde c_y 
&=  
\tfrac{2^{1-n}\pi^{-\frac{n}{2}}}{(\frac{n-2}{2})!}\cdot\tr(\dirac_{L,(1)} V_{\frac{n-2}{2}}(0,y,0,y)\circ\nS) , 
\\
\tilde a_{y,0} 
&= 
\bigg(\gamma - \log(2) - \tfrac12\sum_{\ell=1}^{\tfrac{n-2}2} \tfrac1\ell + \frac{\rmi\pi(\Lambda-1)}{2} \bigg)\tilde c_y , 
\\
\tilde a_{y,\ell} 
&=  
\tfrac{(-1)^{j-\frac{n}{2}}\pi^{-\frac{n}{2}}(\frac{n}{2}-2-j)! }{2^{2j+2}j!} \tr(\dirac_{L,(1)} V_j(0,y,0,y)\circ\nS)   
\quad\text{ if }\ell = n-2j-2, \quad j=0,\ldots,\tfrac{n-4}{2} ,
\\
\tilde a_{y,\ell} 
&=   
\tfrac{(-1)^{1+j-\frac{n}{2}} \pi^{-\frac{n}{2}} (\frac{n}{2}-1-j)! }{2^{2j+1}\cdot j!}\tr(V_j(0,y,0,y))
\quad\text{ if }\ell = n-2j-1, \quad j=1,\ldots,\tfrac{n-2}{2} ,
\\
\tilde a_{y,n-1} 
&=  
\tfrac{\pi^{-\frac{n}{2}}}{4}  \tr(V_0(0,y,0,y)) \cdot \tfrac{\partial}{\partial\beta} \tilde C(\beta,n,2\Lambda)|_{\beta=-\frac{n}{2}}
\quad\text{ and}
\\
\tilde r_y &\text{ is continuous at }t=0\text{ and }\tilde r_y(t)=\O(t\log|t|)\text{ as }t\to0\text{ uniformly in }y.
\end{align*}
\end{lemma}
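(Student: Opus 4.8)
The plan is to reduce the left-hand side to a finite sum of explicit local Hadamard terms and then pull these back along the line $\alpha_y(t)=(t,y,0,y)$ by means of Lemma~\ref{lem:polylog}, which is designed precisely for the product situation. First I would insert, near the diagonal, the expansion $G^\loc_L=\sum_{k=0}^{\frac{n+2}2}V_k\cdot G^{+,\UU}_{1-\frac n2+k}$ from Proposition~\ref{prop:ZerlegeG} (applied with $N=2$), where $V_k$ are the Hadamard coefficients of $\dirac_R\dirac_L$; by Lemma~\ref{lem:HadamardProdukt} these are independent of the time variables in the product case. For $G_{L,\ret}$ and $G_{L,\adv}$ I would use their Hadamard/Riesz expansions $G_{L,\ret}\equiv\sum_k V_k R^{+,\UU}_{1-\frac n2+k}$, $G_{L,\adv}\equiv\sum_k V_k R^{-,\UU}_{1-\frac n2+k}$ modulo $C^N$-remainders (\cite{BGP07}*{Prop.~2.5.1}), with the \emph{same} coefficients $V_k$. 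Subtracting, the problem reduces to the analysis of $\dirac_{L,(1)}\big(V_k\cdot W_{\beta_k}\big)$ with
$$
W_\beta:=G^{+,\UU}_\beta-\tfrac12\big(R^{+,\UU}_\beta+R^{-,\UU}_\beta\big),\qquad \beta_k:=1-\tfrac n2+k ,
$$
for $k=0,\dots,\tfrac{n+2}2$, plus remainders treated at the end.

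\textbf{Two types of contributions.} Applying the Leibniz rule \eqref{eq:DiracLeibnitz},
$$
\dirac_{L,(1)}\big(V_k\cdot W_{\beta_k}\big)=\dirac_{L,(1)}V_k\cdot W_{\beta_k}-\rmi\,\grades W_{\beta_k}\circ V_k .
$$
In the product case $\Gamma\circ\alpha_y(t)=t^2$, $\mu\circ\alpha_y\equiv1$, $\grades\Gamma|_{\alpha_y(t)}=-2t\,\nS$ (since $n_\Sigma=\partial_t$ and $(dt)^\sharp=-\partial_t$), and $\dirac_{L,(1)}V_k|_{\alpha_y(t)}$ is independent of $t$. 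For the first summand I would use Lemma~\ref{lem:polylog}~\eqref{polylog4}, which gives $\alpha_y^*(W_{\beta_k})(t)=\big(\tilde C(\beta_k,n,\Lambda)+\tfrac{2\rmi}{\pi}C(\beta_k,n)\log|t|\big)|t|^{2\beta_k}$; since $2\beta_k=2k-n+2$, the values $k=0,\dots,\tfrac n2-1$ produce the negative \emph{even} powers $t^{-(n-2-2k)}$ (with $C(\beta_k,n)=0$ unless $k=\tfrac n2-1$), the logarithmic term (from $k=\tfrac n2-1$, where $C(0,n)\neq0$), and part of the constant term (from $\tilde C(0,n,\Lambda)$, whose value is recorded after \eqref{eq:Ctildeganz}), while $k\ge\tfrac n2$ contributes only $O(t^2\log|t|)$. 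For the second summand, for $k\ge1$ I would rewrite $\grades W_{\beta_k}$ using Proposition~\ref{FGOmega}~\eqref{gradFG} as $\tfrac1{4k}(\grades\Gamma)\cdot\tilde W_{\beta_k-1}$ for an explicit combination $\tilde W_{\beta-1}$ of $G^{+,\UU}_{\beta-1}$, $F^{+,\UU}_{\beta-1}$ and $R^{\pm,\UU}_{\beta-1}$; the factor $\grades\Gamma|_{\alpha_y(t)}=-2t\,\nS$ supplies an extra power of $t$, so Lemma~\ref{lem:polylog}~\eqref{polylog1}--\eqref{polylog3} now yields the negative \emph{odd} powers $t^{-(n-1-2k)}$ for $k=1,\dots,\tfrac n2-1$ (and $O(t\log|t|)$ for $k\ge\tfrac n2$), and contributes nothing to the constant or the logarithm.

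\textbf{The degenerate index and the remainder.} The only subtle case is the gradient summand for $k=0$, where $\beta_k-1=-\tfrac n2$ and the recursion of Proposition~\ref{FGOmega}~\eqref{gradFG} degenerates. Here I would instead invoke Lemma~\ref{lem:QH}~\eqref{QH2} to write $\grades W_{1-\frac n2}=\tfrac14(\grades\Gamma)\big(H^{+,\UU}_{-\frac n2}-\tfrac12(Q^{+,\UU}_{-\frac n2}+Q^{-,\UU}_{-\frac n2})\big)$; since $C$ has a double zero at $-\tfrac n2$, Lemma~\ref{lem:polylog}~\eqref{polylog5} evaluates the pull-back of this bracket as a multiple of $\tfrac{\partial}{\partial\beta}\tilde C(\beta,n,2\Lambda)|_{\beta=-n/2}\,|t|^{-n}$, which together with the factor $t$ from $\grades\Gamma$ produces exactly the $t^{-(n-1)}$ term. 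Finally, for the remainder: the terms with $k\ge\tfrac n2$ (contributing $O(t\log|t|)$) together with the $C^N$-remainders of the three expansions — which may be taken arbitrarily regular by including further terms, each of which only adds an $O(t^2\log|t|)$ contribution — all vanish to high order along $\CC_\UU$; restricted to $\alpha_y$, where $\CC_\UU$ meets the line only at $t=0$, and after the first-order operator $\dirac_{L,(1)}$, they are $O(|t|^m)$ with $m$ as large as desired, so they are absorbed into $\tilde r_y$ with the stated bound. Collecting everything gives the claimed expansion.

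\textbf{Main obstacle.} The conceptual scheme is straightforward, and the transversality needed to restrict along $\alpha_y$ is the one already established in the proof of Proposition~\ref{prop:etah}. The actual work is the bookkeeping of all constants — signs, factors of $\rmi$ and $\pi$, powers of $2$, and the factorials coming from $C(\beta,n)$, $\tilde C(\beta,n,\Lambda)$ and their $\beta$-derivatives (see \eqref{eq:defC}, \eqref{eq:defCtilde}, \eqref{eq:Ctildeganz} and Lemma~\ref{lem:dCdbeta}) — and keeping track of which index $k$ produces which exponent: even exponents, the logarithm, and part of the constant from the $\dirac_{L,(1)}V_k$ terms, odd exponents from the gradient terms, and the top exponent $n-1$ from the degenerate $k=0$ gradient term. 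Verifying that these reproduce $\tilde c_y$, $\tilde a_{y,0}$, $\tilde a_{y,\ell}$ and $\tilde a_{y,n-1}$ exactly is routine but lengthy.
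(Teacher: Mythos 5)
Your proposal follows the same route as the paper's proof: expand $G^\loc_L-\tfrac12(G_{L,\ret}+G_{L,\adv})$ via the Hadamard series with common coefficients $V_k$, split $\dirac_{L,(1)}$ by the Leibniz rule into a "$\dirac_{L,(1)}V_k$"-part and a "gradient"-part, recurse gradients down by Proposition~\ref{FGOmega}~\eqref{gradFG} (with the degenerate $k=0$ case treated by differentiating the recursion at $\beta=-\tfrac n2$, equivalently Lemma~\ref{lem:QH}~\eqref{QH2}), pull back along $\alpha_y$ via Lemma~\ref{lem:polylog}, and collect. The only small distinction — that you invoke $H^{+,\UU}_{-n/2}-\tfrac12(Q^{+,\UU}_{-n/2}+Q^{-,\UU}_{-n/2})$ and then Lemma~\ref{lem:polylog}~\eqref{polylog5}, which carries an extra $\Lambda F^{+,\UU}_\beta$ not present in your bracket — is harmless, since $\grade F^{+,\UU}_{1-n/2}=0$ (indeed $F^{+,\UU}_{1-n/2}$ is constant or zero) and, because $C(\beta,n)$ has a double zero at $\beta=-\tfrac n2$, the quantity $\tfrac{\partial}{\partial\beta}\tilde C(\beta,n,\Lambda')|_{\beta=-n/2}$ is independent of $\Lambda'$ anyway.
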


\begin{proof}
The Hadamard expansion of $G^\loc_{L} -\frac{1}{2}( G_{L,\ret} +  G_{L,\adv})$ takes the form
\begin{equation}
 G^\loc_{L} -\tfrac{1}{2}( G_{L,\ret} +  G_{L,\adv}) 
 = 
 \sum_{j=0}^{\frac{n+2}{2}}  V_j \left(  G^{+,\UU}_{-\frac{n}{2}+1+j} -  \tfrac{1}{2} ( R^{+,\UU}_{-\frac{n}{2}+1+j} +   R^{-,\UU}_{-\frac{n}{2}+1+j})\right) + r, 
\label{eq:Hadhilf0}
\end{equation}
where $r$ is $C^2$ and vanishes on the diagonal to second order.
We compute for $j\ge1$, using Proposition~\ref{FGOmega}~\eqref{gradFG}:
\begin{align*}
\dirac_{L,(1)}\left( V_j  G^{+,\UU}_{-\frac{n}{2}+1+j}\right)
&=
\left(\dirac_{L,(1)} V_j \right) G^{+,\UU}_{-\frac{n}{2}+1+j} + \grades G^{+,\UU}_{-\frac{n}{2}+1+j} \cdot V_j\notag\\
&=
\left(\dirac_{L,(1)} V_j \right) G^{+,\UU}_{-\frac{n}{2}+1+j} + \grades\Gamma\cdot V_j \cdot\left(\tfrac{1}{4j} G^{+,\UU}_{-\frac{n}{2}+j}-\tfrac{\rmi}{4j^2\pi} F^{+,\UU}_{-\frac{n}{2}+j}\right)
\end{align*}
and similarly, using Proposition~1.4.2~(4) in \cite{BGP07},
\begin{align*}
\dirac_{L,(1)}\left( V_j  R^{\pm,\UU}_{-\frac{n}{2}+1+j}\right)
&=
\left(\dirac_{L,(1)} V_j \right) R^{\pm,\UU}_{-\frac{n}{2}+1+j} +\grades\Gamma\cdot V_j \cdot \tfrac{1}{4j} R^{\pm,\UU}_{-\frac{n}{2}+j}.
\end{align*}
For $j=0$ we differentiate Proposition~\ref{FGOmega}~\eqref{gradFG} at $\beta=-\frac{n}{2}$ and combine with Proposition~\ref{FGOmega}~\eqref{FG} to find
\begin{align*}
(\grade\Gamma)\cdot \tfrac{\partial}{\partial\beta}|_{\beta=-\frac{n}{2}} G^{+,\UU}_{\beta} 
&= 
4\,\grade (G^{+,\UU}_{-\frac{n}{2}+1}) - \tfrac{4\rmi}{\pi} \grade (\tfrac{\partial}{\partial\beta}|_{\beta=-\frac{n}{2}}F^{+,\UU}_{\beta+1}) \\
&= 
4\,\grade (G^{+,\UU}_{-\frac{n}{2}+1}) + 4\, \grade (G^{+,\UU}_{-\frac{n}{2}+1}-\Lambda F^{+,\UU}_{-\frac{n}{2}+1}) \\
&= 
8\,\grade (G^{+,\UU}_{-\frac{n}{2}+1}) -\Lambda (\grade\Gamma)\cdot \tfrac{\partial}{\partial\beta}|_{\beta=-\frac{n}{2}} F^{+,\UU}_{\beta} ,
\end{align*}
hence
\begin{equation*}
\grade (G^{+,\UU}_{-\frac{n}{2}+1})
=
\tfrac18 (\grade\Gamma)\cdot \tfrac{\partial}{\partial\beta}|_{\beta=-\frac{n}{2}} (G^{+,\UU}_{\beta} + \Lambda F^{+,\UU}_{\beta})
\end{equation*}
and therefore
\begin{equation*}
\dirac_{L,(1)}\left( V_0  G^{+,\UU}_{-\frac{n}{2}+1}\right)
=
\left(\dirac_{L,(1)} V_0 \right) G^{+,\UU}_{-\frac{n}{2}+1} + \tfrac18 (\grades\Gamma)\cdot V_0 \cdot\tfrac{\partial}{\partial\beta}|_{\beta=-\frac{n}{2}} (G^{+,\UU}_{\beta} + \Lambda F^{+,\UU}_{\beta}) .
\end{equation*}
Finally, 
\begin{align}
\dirac_{L,(1)}\left( V_0  R^{\pm,\UU}_{-\frac{n}{2}+1}\right)
&=
\left(\dirac_{L,(1)} V_0 \right) R^{\pm,\UU}_{-\frac{n}{2}+1} + \grades R^{\pm,\UU}_{-\frac{n}{2}+1} \cdot V_0\notag\\
&=
\left(\dirac_{L,(1)} V_0 \right) R^{\pm,\UU}_{-\frac{n}{2}+1} + \tfrac14 (\grades\Gamma) \cdot V_0\cdot \tfrac{\partial}{\partial\beta}|_{\beta=-\frac{n}{2}} R^{\pm,\UU}_\beta.
\label{eq:Hadhilf4}
\end{align}
Combining \eqref{eq:Hadhilf0}--\eqref{eq:Hadhilf4} yields for $\dirac_{L,(1)} \left(G^\loc_{L} -\frac{1}{2}( G_{L,\ret} +  G_{L,\adv})\right)$ the singularity structure
\begin{align*}
\dirac_{L,(1)} \big(G^\loc_{L} -\tfrac{1}{2}( G_{L,\ret} +&  G_{L,\adv})\big)
=
\sum_{j=0}^{\frac{n+2}{2}} (\dirac_{L,(1)} V_j) \left(  G^{+,\UU}_{-\frac{n}{2}+1+j} -  \tfrac{1}{2} ( R^{+,\UU}_{-\frac{n}{2}+1+j} +   R^{-,\UU}_{-\frac{n}{2}+1+j})\right) \\
&+
\grades\Gamma\cdot\sum_{j=1}^{\frac{n+2}{2}} \tfrac{1}{4j}  V_j \left( G^{+,\UU}_{-\frac{n}{2}+j} + \tfrac{\rmi}{j\pi} F^{+,\UU}_{-\frac{n}{2}+j} -  \tfrac{1}{2} ( R^{+,\UU}_{-\frac{n}{2}+j} +   R^{-,\UU}_{-\frac{n}{2}+j})\right)  \\
&+
\tfrac18\, \grades\Gamma\cdot V_0 \cdot \tfrac{\partial}{\partial\beta}|_{\beta=-\frac{n}{2}} \left(G^{+,\UU}_\beta+\Lambda F^{+,\UU}_\beta - R^{+,\UU}_\beta - R^{-,\UU}_\beta\right)
+ \dirac_{L,(1)} r .
\end{align*}
Now note that in the product case we have $\grade \, \Gamma (t,y,0,y) = -2 tn_\Sigma$. 
Moreover, by Lemma~\ref{lem:HadamardProdukt}, the Hadamard coefficients are independent of the time variable, i.e.\ $V_k(t_1,y_2,t_2,y_2) = V_k(0,y_1,0,y_2)$.
In the following, the constant $C(\beta,n)$ is defined in \eqref{eq:defC} and $\tilde C(\beta,n,\Lambda)$ in \eqref{eq:defCtilde}.
Using $\nS^2=-1$, Lemma~\ref{lem:polylog}, \eqref{eq:Ctildeganz} and $C(\beta,n)=0$ if $\beta$ is a negative integer we find for $|t|<\eps$, $t\neq0$:
\begin{align*}
\phi^\loc_y (t)
&=
\tr\Big(\dirac_{L,(1)} \big(G^\loc_{L} -\tfrac{1}{2}( G_{L,\ret} + G_{L,\adv})\big)(t,y,0,y)\circ\nS\Big) \\
&=
\sum_{j=0}^{\frac{n+2}{2}} \tr(\dirac_{L,(1)} V_j(t,y,0,y)\circ\nS)\, \alpha_y^*\left(  G^{+,\UU}_{-\frac{n}{2}+1+j} -  \tfrac{1}{2} ( R^{+,\UU}_{-\frac{n}{2}+1+j} +   R^{-,\UU}_{-\frac{n}{2}+1+j})\right)(t) \\
&\quad-
t\sum_{j=1}^{\frac{n+2}{2}} \tfrac{1}{2j} \tr(\nS\circ V_j(t,y,0,y)\circ\nS)\, \alpha_y^* \left( G^{+,\UU}_{-\frac{n}{2}+j} + \tfrac{\rmi}{j\pi} F^{+,\UU}_{-\frac{n}{2}+j} -  \tfrac{1}{2} ( R^{+,\UU}_{-\frac{n}{2}+j} +   R^{-,\UU}_{-\frac{n}{2}+j})\right)(t)  \\
&\quad-
\tfrac14\,t \tr(\nS\circ V_0(t,y,0,y)\circ\nS) \cdot \tfrac{\partial}{\partial\beta}|_{\beta=-\frac{n}{2}} \alpha_y^*\left(G^{+,\UU}_\beta+\Lambda F^{+,\UU}_\beta - R^{+,\UU}_\beta - R^{-,\UU}_\beta\right)(t)
+ \O(t) \\
&=
 \sum_{j=0}^{\frac{n+2}{2}} \tr(\dirac_{L,(1)} V_j(0,y,0,y)\circ\nS) \left(\tilde C(\beta,n,\Lambda)-\tfrac{2\rmi}{\pi}C(\beta,n)\log|t|\right)|_{\beta=1+j-\frac{n}{2}}t^{2+2j-n}\\
&\quad+
 \sum_{j=1}^{\frac{n+2}{2}} \tfrac{1}{2j} \tr(V_j(0,y,0,y))\, \left(\tilde C(\beta,n,\Lambda) + \tfrac{\rmi}{j\pi}C(\beta,n)-\tfrac{2\rmi}{\pi}C(\beta,n)\log|t|\right)|_{\beta=j-\frac{n}{2}}t^{1+2j-n} \\
&\quad+
\tfrac14  \tr(V_0(0,y,0,y)) \cdot \tfrac{\partial}{\partial\beta}\tilde C(\beta,n,2\Lambda)|_{\beta=-\frac{n}{2}} t^{1-n}
+ \O(t) \\
&=
 \tr(\dirac_{L,(1)} V_{\frac{n-2}{2}}(0,y,0,y)\circ\nS) \left(\tilde C(0,n,\Lambda)-\tfrac{2\rmi}{\pi}C(0,n)\log|t|\right) \\
&\quad+
 \sum_{j=0}^{\frac{n-4}{2}} \tr(\dirac_{L,(1)} V_j(0,y,0,y)\circ\nS) \tilde C(1+j-\tfrac{n}{2},n,\Lambda)t^{2+2j-n}\\
&\quad+
 \sum_{j=1}^{\frac{n-2}{2}} \tfrac{1}{2j} \tr(V_j(0,y,0,y))\, \tilde C(j-\tfrac{n}{2},n,\Lambda)t^{1+2j-n} \\
&\quad+
\tfrac14  \tr(V_0(0,y,0,y)) \cdot \tfrac{\partial}{\partial\beta} \tilde C(\beta,n,2\Lambda)|_{\beta=-\frac{n}{2}} t^{1-n}
+ \O(t\log|t|) \\
&=
 \tr(\dirac_{L,(1)} V_{\frac{n-2}{2}}(0,y,0,y)\circ\nS) \tfrac{-\rmi 2^{1-n}\pi^{-\frac{n}{2}}}{(\frac{n-2}{2})!} \bigg(\gamma - \log(2) - \tfrac12\sum_{\ell=1}^{\tfrac{n-2}2} \tfrac1\ell + \tfrac{\rmi\pi(\Lambda-1)}{2} + \log|t|\bigg) \\
&\quad+
 \sum_{j=0}^{\frac{n-4}{2}} \tr(\dirac_{L,(1)} V_j(0,y,0,y)\circ\nS) \tfrac{-\rmi(-1)^{j-\frac{n}{2}}  \pi^{-\frac{n}{2}} (\frac{n}{2}-2-j)! }{4^{1+j}j!}t^{2+2j-n}\\
&\quad+
 \sum_{j=1}^{\frac{n-2}{2}}  \tr(V_j(0,y,0,y))\, \tfrac{-\rmi(-1)^{1+j-\frac{n}{2}}  \pi^{-\frac{n}{2}} (\frac{n}{2}-1-j)! }{2\cdot 4^{j}\cdot j!}t^{1+2j-n} \\
&\quad+
\tfrac14  \tr(V_0(0,y,0,y)) \cdot \tfrac{\partial}{\partial\beta} \tilde C(\beta,n,2\Lambda)|_{\beta=-\frac{n}{2}} t^{1-n}
+ \O(t\log|t|) 
\qquad\text{ as }t\to 0.
\qedhere
\end{align*}
\end{proof}


\begin{bibdiv}
\begin{biblist}

\bib{MR0420729}{article}{
   author={Atiyah, M. F.},
   title={Elliptic operators, discrete groups and von Neumann algebras},
   conference={
      title={Colloque ``Analyse et Topologie'' en l'Honneur de Henri Cartan
      },
      address={Orsay},
      date={1974},
   },
   book={
      publisher={Soc. Math. France, Paris},
   },
   date={1976},
   pages={43--72. Ast\'{e}risque, No. 32-33},
}

\bib{MR397797}{article}{
   author={Atiyah, M. F.},
   author={Patodi, V. K.},
   author={Singer, I. M.},
   title={Spectral asymmetry and {R}iemannian geometry. I},
   journal={Math. Proc. Cambridge Philos. Soc.},
   volume={77},
   date={1975},
   pages={43--69},
   issn={0305-0041},
}

\bib{MR397798}{article}{
   author={Atiyah, M. F.},
   author={Patodi, V. K.},
   author={Singer, I. M.},
   title={Spectral asymmetry and {R}iemannian geometry. II},
   journal={Math. Proc. Cambridge Philos. Soc.},
   volume={78},
   date={1975},
   number={3},
   pages={405--432},
   issn={0305-0041},
}
		
\bib{MR397799}{article}{
   author={Atiyah, M. F.},
   author={Patodi, V. K.},
   author={Singer, I. M.},
   title={Spectral asymmetry and {R}iemannian geometry. III},
   journal={Math. Proc. Cambridge Philos. Soc.},
   volume={79},
   date={1976},
   number={1},
   pages={71--99},
   issn={0305-0041},
}

\bib{MR236950}{article}{
   author={Atiyah, M. F.},
   author={Singer, I. M.},
   title={The index of elliptic operators. I},
   journal={Ann. of Math. (2)},
   volume={87},
   date={1968},
   pages={484--530},
   issn={0003-486X},
}

\bib{Baer-Hyp}{article}{
   author={B\"{a}r, C.},
   title={The Dirac operator on hyperbolic manifolds of finite volume},
   journal={J. Differential Geom.},
   volume={54},
   date={2000},
   number={3},
   pages={439--488},
}

\bib{MR3302643}{article}{
    AUTHOR = {B\"{a}r, C.},
     TITLE = {Green-hyperbolic operators on globally hyperbolic spacetimes},
   JOURNAL = {Commun. Math. Phys.},
    VOLUME = {333},
      YEAR = {2015},
    NUMBER = {3},
     PAGES = {1585--1615},
      ISSN = {0010-3616},
       URL = {https://doi.org/10.1007/s00220-014-2097-7},
}

\bib{BGM}{article}{
   author={B\"{a}r, C.},
   author={Gauduchon, P.},
   author={Moroianu, A.},
   title={Generalized cylinders in semi-Riemannian and Spin geometry},
   journal={Math. Z.},
   volume={249},
   date={2005},
   number={3},
   pages={545--580},
   issn={0025-5874},
}

\bib{BGP07}{book}{
    Author = {{B\"ar}, C.},
    Author = {Ginoux, N.},
    Author = {{Pf\"affle}, F.},
    Title = {{Wave equations on Lorentzian manifolds and quantization}},
    ISBN = {978-3-03719-037-1/pbk},
    Pages = {viii + 194},
    Year = {2007},
    Publisher = {European Mathematical Society Publishing House, Z\"urich},
}

\bib{BaerHannes}{article}{
	author = {Bär, C.},
	author = {Hannes, S.},
	booktitle = {Geometry and physics. A festschrift in honour of Nigel Hitchin. Volume 1},
	isbn = {978-0-19-880201-3; 978-0-19-880200-6},
	pages = {3--18},
	publisher = {Oxford University Press, Oxford},
	title = {Boundary value problems for the {Lorentzian} {Dirac} operator},
	year = {2018},
}

\bib{baerstroh2015chiral}{article}{
    AUTHOR = {B\"{a}r, C.},
    AUTHOR = {Strohmaier, A.},
     TITLE = {A rigorous geometric derivation of the chiral anomaly in curved backgrounds},
   JOURNAL = {Commun. Math. Phys.},
    VOLUME = {347},
      YEAR = {2016},
    NUMBER = {3},
     PAGES = {703--721},
      ISSN = {0010-3616},
       URL = {https://doi.org/10.1007/s00220-016-2664-1},
}

\bib{Baer:2015aa}{article}{
    AUTHOR = {B\"{a}r, C.},
    AUTHOR = {Strohmaier, A.},
    title={An index theorem for Lorentzian manifolds with compact spacelike Cauchy boundary},
    journal={Amer.\ J.\ Math.},
    volume={141},
    number={5},
    year={2019},
    pages = {1421--1455},
}

\bib{baum1996normally}{article}{
  title={Normally hyperbolic operators, the Huygens property and conformal geometry},
  author={Baum, H.},
  author={Kath, I.},
  journal={Annals of Global Analysis and Geometry},
  volume={14},
  number={4},
  pages={315--371},
  year={1996},
  publisher={Springer}
}

\bib{BGV}{book}{
  title={Heat kernels and Dirac operators},
  author={Berline, N.},
  author={Getzler, E.},
  author={Vergne, M.},
  series={Grundlehren Text Editions},
  note={Corrected reprint of the 1992 original},
  publisher={Springer-Verlag, Berlin},
  date={2004},
  pages={x+363},
  isbn={3-540-20062-2},
}

\bib{MR1852066}{book}{
   author={Bishop, R.~L.},
   author={Crittenden, R.~J.},
   title={Geometry of manifolds},
   note={Reprint of the 1964 original},
   publisher={AMS Chelsea Publishing, Providence, RI},
   date={2001},
   pages={xii+273},
   isbn={0-8218-2923-8},
}

\bib{MR813584}{article}{
   author={Bismut, J.-M.},
   title={The Atiyah-Singer index theorem for families of Dirac operators:
   two heat equation proofs},
   journal={Invent. Math.},
   volume={83},
   date={1986},
   number={1},
   pages={91--151},
   issn={0020-9910},
}

\bib{bismut1986analysis}{article}{
  title={The analysis of elliptic families},
  author={Bismut, J.-M.},
  author={Freed, D.},
  journal={Commun. Math. Phys.},
  volume={107},
  number={1},
  pages={103--163},
  year={1986},
  publisher={Springer}
}

\bib{MR1174158}{article}{
   author={Branson, T.~P.},
   author={Gilkey, P.~B.},
   title={Residues of the eta function for an operator of Dirac type},
   journal={J. Funct. Anal.},
   volume={108},
   date={1992},
   number={1},
   pages={47--87},
   issn={0022-1236},
}

\bib{MR4054812}{article}{
   author={Braverman, M.},
   title={An index of strongly {C}allias operators on {L}orentzian manifolds
   with non-compact boundary},
   journal={Math. Z.},
   volume={294},
   date={2020},
   number={1-2},
   pages={229--250},
   issn={0025-5874},
}

\bib{MR1736329}{article}{
   author={Brunetti, R.},
   author={Fredenhagen, K.},
   title={Microlocal analysis and interacting quantum field theories:
   renormalization on physical backgrounds},
   journal={Commun. Math. Phys.},
   volume={208},
   date={2000},
   number={3},
   pages={623--661},
   issn={0010-3616},
}

\bib{MR1156670}{article}{
   author={Bunke, U.},
   title={Relative index theory},
   journal={J. Funct. Anal.},
   volume={105},
   date={1992},
   number={1},
   pages={63--76},
   issn={0022-1236},
}
		
\bib{MR1876286}{article}{
   author={Carron, G.},
   title={Th\'{e}or\`emes de l'indice sur les vari\'{e}t\'{e}s non-compactes},
   language={French, with English summary},
   journal={J. Reine Angew. Math.},
   volume={541},
   date={2001},
   pages={81--115},
   issn={0075-4102},
}		

\bib{MR730920}{article}{
   author={Cheeger, J.},
   title={Spectral geometry of singular {R}iemannian spaces},
   journal={J. Differential Geom.},
   volume={18},
   date={1983},
   number={4},
   pages={575--657 (1984)},
   issn={0022-040X},
}

\bib{damaschke}{arxiv}{
    AUTHOR = {Damaschke, O.},
     TITLE = {Atiyah-Singer Dirac Operator on spacetimes with non-compact Cauchy hypersurface},
       url = {\url{https://doi.org/10.48550/arXiv.2107.08532}}, 
      year = {2021},
}

\bib{DangWrochna1}{article}{
   author={Dang, N.~V.},
   author={Wrochna, M.},
   title={Dynamical residues of Lorentzian spectral zeta functions},
   language={English, with English and French summaries},
   journal={J. \'{E}c. Polytech. Math.},
   volume={9},
   date={2022},
   pages={1245--1292},
   issn={2429-7100},
}


\bib{DangWrochna3}{arxiv}{
   author={Dang, N.~V.},
   author={Wrochna, M.},
   title ={Complex powers of the wave operator and the spectral action on {L}orentzian scattering spaces},
   note={to appear in J. Eur. Math. Soc.},
   url   ={\url{https://doi.org/10.48550/arXiv.2012.00712}},
   year  ={2022},
}

\bib{rham1958solution}{article}{
   author={de Rham, G.},
   title={Solution \'{e}l\'{e}mentaire d'op\'{e}rateurs diff\'{e}rentiels du second ordre},
   language={French},
   journal={Ann. Inst. Fourier. Grenoble},
   volume={8},
   date={1958},
   pages={337--366},
   issn={0373-0956},
 }

\bib{MR637032}{article}{
    Author = {Dimock, J.},
    Title = {{Dirac quantum fields on a manifold}},
    Journal = {{Trans. Am. Math. Soc.}},
    ISSN = {0002-9947; 1088-6850/e},
    Volume = {269},
    Pages = {133--147},
    Year = {1982},
    Publisher = {American Mathematical Society (AMS), Providence, RI},
}

\bib{MR0388464}{article}{
    AUTHOR = {Duistermaat, J.~J.},
    AUTHOR = {H\"{o}rmander, L.},
     TITLE = {Fourier integral operators. {II}},
   JOURNAL = {Acta Math.},
    VOLUME = {128},
      YEAR = {1972},
    NUMBER = {3-4},
     PAGES = {183--269},
      ISSN = {0001-5962},
       URL = {https://doi.org/10.1007/BF02392165},
}

\bib{dungen}{article}{
    AUTHOR = {van den Dungen, K.},
    AUTHOR = {Ronge, L.},
     TITLE = {The {APS}-index and the spectral flow},
   JOURNAL = {Oper. Matrices},
    volume = {15},
     pages = {1393--1416},
      year = {2021},
}

\bib{MR2680395}{article}{
   author={van Erp, E.},
   title={The Atiyah-Singer index formula for subelliptic operators on contact manifolds. Part I},
   journal={Ann. of Math. (2)},
   volume={171},
   date={2010},
   number={3},
   pages={1647--1681},
   issn={0003-486X},
}

\bib{MR641893}{article}{
   author={Fulling, S. A.},
   author={Narcowich, F. J.},
   author={Wald, R. M.},
   title={Singularity structure of the two-point function in quantum field
   theory in curved spacetime. II},
   journal={Ann. Physics},
   volume={136},
   date={1981},
   number={2},
   pages={243--272},
   issn={0003-4916},
}

\bib{noMRGerard}{book}{
   author={G\'{e}rard, C.},
   title={Microlocal Analysis of Quantum Fields on Curved Spacetimes},
   publisher={ESI Lectures in Mathematics and Physics},
   date={2019},
}		

\bib{MR3148100}{article}{
   author={G\'{e}rard, C.},
   author={Wrochna, M.},
   title={Construction of Hadamard states by pseudo-differential calculus},
   journal={Commun. Math. Phys.},
   volume={325},
   date={2014},
   number={2},
   pages={713--755},
   issn={0010-3616},
}		

\bib{MR624667}{article}{
   author={Gilkey, P.~B.},
   title={The residue of the global $\eta $ function at the origin},
   journal={Adv. in Math.},
   volume={40},
   date={1981},
   number={3},
   pages={290--307},
   issn={0001-8708},
}

\bib{GS}{book}{
    AUTHOR = {Guillemin, V.},
    AUTHOR = {Sternberg, S.},
     TITLE = {Geometric asymptotics},
      NOTE = {Mathematical Surveys, No. 14},
 PUBLISHER = {American Mathematical Society, Providence, R.I.},
      YEAR = {1977},
     PAGES = {xviii+474 pp. (one plate)},
}

\bib{paul2014huygens}{book}{
   author={G\"{u}nther, P.},
   title={Huygens' principle and hyperbolic equations},
   series={Perspectives in Mathematics},
   volume={5},
   note={With appendices by V. W\"{u}nsch},
   publisher={Academic Press, Inc., Boston, MA},
   date={1988},
   pages={lviii+847},
   isbn={0-12-307330-8},
}

\bib{MR720933}{article}{
   author={Gromov, M.},
   author={Lawson, H.~B., Jr.},
   title={Positive scalar curvature and the {D}irac operator on complete
   Riemannian manifolds},
   journal={Inst. Hautes \'{E}tudes Sci. Publ. Math.},
   number={58},
   date={1983},
   pages={83--196 (1984)},
   issn={0073-8301},
}

\bib{Ho1}{book}{
    AUTHOR = {H\"ormander, L.},
     TITLE = {The analysis of linear partial differential operators.~{I}},
    SERIES = {Classics in Mathematics},
      NOTE = {Distribution theory and Fourier analysis,
              Reprint of the second (1990) edition},
 PUBLISHER = {Springer-Verlag, Berlin},
      YEAR = {2003},
     PAGES = {x+440},
      ISBN = {3-540-00662-1},
       URL = {https://doi.org/10.1007/978-3-642-61497-2},
}

\bib{strohmaierislam}{arxiv}{
    AUTHOR = {Islam, O.},
    AUTHOR = {Strohmaier, A.},
     TITLE = {On microlocalization and the construction of Feynman Propagators for normally hyperbolic operators},
      NOTE = {to appear in Commun. Anal. Geom.},
       url = {\url{https://doi.org/10.48550/arXiv.2012.09767}}, 
      year = {2020},
}

\bib{MR1421547}{article}{
   author={Junker, Wolfgang},
   title={Hadamard states, adiabatic vacua and the construction of physical
   states for scalar quantum fields on curved spacetime},
   journal={Rev. Math. Phys.},
   volume={8},
   date={1996},
   number={8},
   pages={1091--1159},
   issn={0129-055X},
}

\bib{junker-schrohe}{article}{
   author={Junker, W.},
   author={Schrohe, E.},
   title={Adiabatic vacuum states on general spacetime manifolds: definition, construction, and physical properties},
   journal={Ann. Henri Poincar\'{e}},
   volume={3},
   date={2002},
   number={6},
   pages={1113--1181},
   issn={1424-0637},
}

\bib{Kato}{book}{
   author={Kato, T.},
   title={Perturbation theory for linear operators},
   series={Classics in Mathematics},
   note={Reprint of the 1980 edition},
   publisher={Springer-Verlag, Berlin},
   date={1995},
   pages={xxii+619},
   isbn={3-540-58661-X},
}

\bib{MR1133130}{article}{
   author={Kay, B.~S.},
   author={Wald, R.~M.},
   title={Theorems on the uniqueness and thermal properties of stationary,
   nonsingular, quasifree states on spacetimes with a bifurcate {K}illing
   horizon},
   journal={Phys. Rep.},
   volume={207},
   date={1991},
   number={2},
   pages={49--136},
   issn={0370-1573},
}

\bib{kolk1991riesz}{article}{
  title={Riesz distributions},
  author={Kolk, J.},
  author={Varadarajan, V.~S.},
  journal={Mathematica Scandinavica},
  pages={273--291},
  year={1991},
  publisher={JSTOR}
}

\bib{Lewandowski}{article}{
   author={Lewandowski, M.},
   title={Hadamard states for bosonic quantum field theory on globally hyperbolic spacetimes},
   journal={J. Math. Phys.},
   volume={63},
   date={2022},
   number={1},
   pages={Paper No.~013501, 34},
   issn={0022-2488},
}

\bib{MR3483832}{article}{
   author={Li, L.},
   author={Strohmaier, A.},
   title={The local counting function of operators of Dirac and Laplace type},
   journal={J. Geom. Phys.},
   volume={104},
   date={2016},
   pages={204--228},
   issn={0393-0440},
}

\bib{MR1348401}{book}{
   author={Melrose, R.~B.},
   title={The Atiyah-Patodi-Singer index theorem},
   series={Research Notes in Mathematics},
   volume={4},
   publisher={A K Peters, Ltd., Wellesley, MA},
   date={1993},
   pages={xiv+377},
   isbn={1-56881-002-4},
}

\bib{omueller}{article}{
	author = {M\"uller, O.},
	journal = {C.\ R.\ Math.\ Acad.\ Sci.\ Paris},
	number = {7-8},
	pages = {421--423},
	title = {Asymptotic flexibility of globally hyperbolic manifolds},
	volume = {350},
	year = {2012},
}

\bib{ON}{book}{
    AUTHOR = {O'Neill, B.},
     TITLE = {Semi-{R}iemannian geometry - With applications to relativity},
    SERIES = {Pure and Applied Mathematics},
    VOLUME = {103},
 PUBLISHER = {Academic Press, Inc.,
              New York},
      YEAR = {1983},
     PAGES = {xiii+468},
      ISBN = {0-12-526740-1},
}

\bib{MR1400751}{article}{
   author={Radzikowski, M. J.},
   title={Micro-local approach to the Hadamard condition in quantum field
   theory on curved space-time},
   journal={Comm. Math. Phys.},
   volume={179},
   date={1996},
   number={3},
   pages={529--553},
}

\bib{ronge}{arxiv}{
    AUTHOR = {Ronge, L.},
     TITLE = {Index Theory for Globally Hyperbolic Spacetimes},
       url = {\url{https://doi.org/10.48550/arXiv.1910.10452}}, 
      year = {2019},
}

\bib{MR1714485}{article}{
   author={Schrohe, E.},
   title={Noncommutative residues, Dixmier's trace, and heat trace expansions on manifolds with boundary},
   conference={
      title={Geometric aspects of partial differential equations},
      address={Roskilde},
      date={1998},
   },
   book={
      series={Contemp. Math.},
      volume={242},
      publisher={Amer. Math. Soc., Providence, RI},
   },
   date={1999},
   pages={161--186},
}

\bib{Seeley}{article}{
   author={Seeley, R.~T.},
   title={Complex powers of an elliptic operator},
   conference={
      title={Singular Integrals},
      address={Proc. Sympos. Pure Math., Chicago, Ill.},
      date={1966},
   },
   book={
      publisher={Amer. Math. Soc., Providence, R.I.},
   },
   date={1967},
   pages={288--307},
}

\bib{MR3080490}{article}{
   author={Seiler, J.},
   author={Strohmaier, A.},
   title={On the noncommutative residue for projective pseudodifferential
   operators},
   journal={J. Differential Geom.},
   volume={94},
   date={2013},
   number={3},
   pages={505--519},
   issn={0022-040X},
}

\bib{shenwrochna}{article}{
    AUTHOR = {Shen, D.},
    AUTHOR = {Wrochna, M.},
     TITLE = {An index theorem on asymptotically static spacetimes with compact Cauchy surface},
   JOURNAL = {Pure Appl. Anal.},
      year = {2022},
     pages = {727--766},
}

\bib{MR1936535}{article}{
   author={Strohmaier, A.},
   author={Verch, R.},
   author={Wollenberg, M.},
   title={Microlocal analysis of quantum fields on curved space-times:
   analytic wave front sets and Reeh-Schlieder theorems},
   journal={J. Math. Phys.},
   volume={43},
   date={2002},
   number={11},
   pages={5514--5530},
   issn={0022-2488},
}

\bib{MR618463}{book}{
   author={Taylor, M.~E.},
   title={Pseudodifferential operators},
   series={Princeton Mathematical Series},
   volume={34},
   publisher={Princeton University Press, Princeton, N.J.},
   date={1981},
   pages={xi+452},
   isbn={0-691-08282-0},
   review={\MR{618463}},
}

\bib{MR3117526}{article}{
   author={Vasy, A.},
   title={Microlocal analysis of asymptotically hyperbolic and Kerr-de
   Sitter spaces (with an appendix by Semyon Dyatlov)},
   journal={Invent. Math.},
   volume={194},
   date={2013},
   number={2},
   pages={381--513},
   issn={0020-9910},
}

\bib{MR728144}{article}{
   author={Wodzicki, M.},
   title={Local invariants of spectral asymmetry},
   journal={Invent. Math.},
   volume={75},
   date={1984},
   number={1},
   pages={143--177},
   issn={0020-9910},
}

\bib{zahn2015locally}{article}{
     AUTHOR = {Zahn, J.},
      TITLE = {Locally covariant chiral fermions and anomalies},
    JOURNAL = {Nuclear Phys. B},
     VOLUME = {890},
       YEAR = {2015},
      PAGES = {1--16},
       ISSN = {0550-3213},
        URL = {https://doi.org/10.1016/j.nuclphysb.2014.11.008},
}

\bib{Zelditch}{book}{
   author={Zelditch, S.},
   title={Eigenfunctions of the Laplacian on a Riemannian manifold},
   series={CBMS Regional Conference Series in Mathematics},
   volume={125},
   publisher={Published for the Conference Board of the Mathematical
   Sciences, Washington, DC; by the American Mathematical Society,
   Providence, RI},
   date={2017},
   pages={xiv+394},
   isbn={978-1-4704-1037-7},
}

\end{biblist}
\end{bibdiv}

\end{document}